\documentclass[11pt,reqno,a4paper]{amsart}
\usepackage{amsmath,amsthm,verbatim,amscd,amssymb,setspace,enumitem,hyperref}
\usepackage{exscale,color}

\usepackage{enumitem}
\setcounter{tocdepth}{2}

\usepackage{mathrsfs}

\usepackage{soul}
\usepackage{cleveref}
\usepackage[T1]{fontenc}
\usepackage{listings}
\lstset{
  basicstyle=\ttfamily,
  mathescape
}

\usepackage[toc,page]{appendix}
\usepackage{adjustbox}
\usepackage{amsfonts}
\usepackage[english]{babel}
\usepackage{array,epsfig}
\usepackage{amsxtra}
\usepackage{latexsym}
\usepackage{dsfont}
\usepackage{mathrsfs}
\usepackage[mathscr]{eucal}
\usepackage{color}
\usepackage[all]{xy}
\usepackage{hyperref}
\usepackage{graphicx}
\usepackage{mathtools}
\usepackage{caption}
\usepackage{subcaption} 
\usepackage{relsize}
\usepackage{url}
\usepackage{dynkin-diagrams}
\usepackage{tikz}
\tikzstyle{color}=[circle,draw=black!50,fill=black!20,thick, inner sep=0pt,minimum size=1mm]

\usepackage{fancyvrb}
\usepackage{xcolor}

\renewcommand{\epsilon}{\varepsilon}
\newcommand{\N}{\mathbb{N}}

\newcommand{\R}{\mathbb{R}}
\newcommand{\C}{\mathbb{C}}

\renewcommand{\Re}{\operatorname{Re}}

\newcounter{mtheorem}
\newtheorem{mtheorem}[mtheorem]{Theorem}

\newtheorem{mexample}[mtheorem]{Example}

\setcounter{mtheorem}{0}

\newcommand{{\vol}}{\rm vol}
\newcommand{\p}{\partial}

\newcommand{\set}[1]{\left\{#1\right\}}

\newcommand{\sprod}[1]{\left<#1\right>}

\newcommand{\ol}[1]{\overline{#1}}

\newcommand{\wt}[1]{\widetilde{#1}}

\newcommand{\tfrak}{\mathfrak{t}}

\newcommand{\Cbb}{\mathbb{C}}

\newcommand{\Nbb}{\mathbb{N}}

\newcommand{\Pbb}{\mathbb{P}}
\newcommand{\Qbb}{\mathbb{Q}}
\newcommand{\Rbb}{\mathbb{R}}

\newcommand{\Zbb}{\mathbb{Z}}

\newcommand{\Ccal}{\mathcal{C}}

\newcommand{\Fcal}{\mathcal{F}}

\newcommand{\Hcal}{\mathcal{H}}
\newcommand{\Ical}{\mathcal{I}}

\newcommand{\Lcal}{\mathcal{L}}
\newcommand{\Mcal}{\mathcal{M}}
\newcommand{\Pcal}{\mathcal{P}}
\newcommand{\Ocal}{\mathcal{O}}
\newcommand{\Qcal}{\mathcal{Q}}

\newcommand{\Xcal}{\mathcal{X}}
\newcommand{\Ycal}{\mathcal{Y}}
\newcommand{\Ncal}{\mathcal{N}}

\renewcommand{\phi}{\varphi}

\newcommand{\del}{\partial}
\newcommand{\delb}{\overline{\partial}}

\newcommand{\SL}{\operatorname{SL}}
\newcommand{\Spec}{\operatorname{Spec}}


\newtheoremstyle{fancy}{}{}{\itshape}{}{\textbf\bgroup}{.\egroup}{ }{}
\newtheoremstyle{fancy2}{}{}{\rm}{}{\textbf\bgroup}{.\egroup}{ }{}

\theoremstyle{fancy}
\newtheorem{theorem}{Theorem}[section]
\newtheorem{lemma}[theorem]{Lemma}
\newtheorem{corollary}[theorem]{Corollary}

\newtheorem{prop}[theorem]{Proposition}
\newtheorem*{conj}{Conjecture}

\theoremstyle{fancy2}
\newtheorem{definition}[theorem]{Definition}
\newtheorem{example}[theorem]{Example}
\newtheorem{remark}[theorem]{Remark}

\textheight250mm
\textwidth167mm

 \setcounter{MaxMatrixCols}{20}

\addtolength{\topmargin}{-15mm}
\addtolength{\oddsidemargin}{-20mm}
\addtolength{\evensidemargin}{-20mm}

\setlist{leftmargin=*}

\numberwithin{equation}{section}

\makeatletter
\newcommand{\Spvek}[2][r]{%
  \gdef\@VORNE{1}
  \left(\hskip-\arraycolsep%
    \begin{array}{#1}\vekSp@lten{#2}\end{array}%
  \hskip-\arraycolsep\right)}

\def\vekSp@lten#1{\xvekSp@lten#1;vekL@stLine;}
\def\vekL@stLine{vekL@stLine}
\def\xvekSp@lten#1;{\def\temp{#1}%
  \ifx\temp\vekL@stLine
  \else
    \ifnum\@VORNE=1\gdef\@VORNE{0}
    \else\@arraycr\fi%
    #1%
    \expandafter\xvekSp@lten
  \fi}
\makeatother

\captionsetup[subfloat]{labelfont=normalfont}

\begin{document}

\title{Examples of complete Calabi--Yau metrics on affine smoothings of irregular toric Calabi--Yau cones}
\date{\today}

\author[R.~Conlon]{Ronan J.~Conlon}
\address[]{Department of Mathematical Sciences, The University of Texas at Dallas, Richardson, TX 75080, USA}
\email{ronan.conlon@utdallas.edu}

\author{Tran-Trung Nghiem}
\address{Institut Camille Jordan, Université Claude Bernard Lyon 1, CNRS, Lyon, France}
\email{nghiem@math.univ-lyon1.fr}

\maketitle
\date{\today}

\begin{center}
    \emph{Dedicated to the memory of T.-T.N.'s paternal grandmother}
\end{center}

\begin{abstract}
We present new examples of affine Calabi--Yau manifolds of Euclidean volume growth and quadratic curvature decay, whose tangent cones at infinity are irregular and have smooth links. In the process, we demonstrate (and provide the relevant computer code) how to explicitly compute the Reeb field and all Minkowski decompositions of a given toric Calabi--Yau cone with smooth link from the data of its toric polytope. Minkowski decompositions of this polytope into lattice segments and/or triangles give rise to smoothings of the given cone. Furthermore, we propose an effective strategy to generate smoothable Calabi--Yau cones from a given non-smoothable one by taking Minkowski sums of certain toric diagrams, and provide an example to illustrate the method. 
\end{abstract}

\markboth{Ronan J.~Conlon and Tran-Trung Nghiem}{Calabi--Yau metrics on affine smoothings of irregular toric cones}

\section{Introduction}

\subsection{Overview}
Non-compact complete Calabi--Yau\footnote{By ``Calabi--Yau'', we mean ``Ricci-flat K\"ahler with trivial canonical sheaf''.} manifolds with Euclidean volume growth have a tangent cone at infinity \cite{cheeger1, colding1} which itself is a Calabi--Yau cone. When the curvature of the Calabi--Yau metric decays at a quadratic rate, the base of the tangent cone at infinity is smooth and the tangent cone is unique \cite{cheegertian, CM}. Moreover, the Calabi--Yau metric converges to the Calabi--Yau cone metric at a polynomial rate \cite{sun1}. Such manifolds are called \emph{asymptotically conical} (AC) Calabi--Yau manifolds. In \cite{CH24}, these manifolds, and in particular non-compact Calabi--Yau manifolds with Euclidean volume growth and quadratic curvature decay, were classified by proving that every such manifold is obtained from its tangent cone via a suitable deformation and K\"ahler crepant resolution in a reversible and exhaustive process. 

\subsection{Main results}
In \cite{Conlon2}, the first example of an affine Calabi--Yau manifold of Euclidean volume growth with irregular tangent cone at infinity
was constructed. It was then asked in \cite[Section 1.4, Question 1]{CH24} 
whether other such examples exist. Our main theorems--Theorems \ref{mtheorem_cfo} and \ref{mtheorem_quadrilateral+segment}--provide an affirmative answer to this question.

Before we give a precise statement, some prerequisites are necessary. Recall that an $n$-dimensional 
toric K\"ahler cone $C_{0}$ is a K\"ahler cone endowed with an effective 
holomorphic action of the complex torus $T_{\mathbb{C}}=(\mathbb{C}^{*})^{n}$ 
exhibiting an open dense orbit
and with fixed point set an isolated point, namely the apex of the cone. K\"ahler cones are affine algebraic with (at most) one singular point at the apex \cite[Section 3.1]{vC11}, and the action of the complex torus determines the action of a real torus $T^{n}\subset T_{\mathbb{C}}$ acting on the cone in 
a holomorphic and isometric fashion. Such a cone admits a Calabi--Yau cone metric if and only if it has 
torsion canonical sheaf, namely the apex is a \( \Qbb\)-Gorenstein singularity; cf.~\cite[Theorem 1.2]{CFO08} and \cite{FOW}. For simplicity, we will restrict ourselves to the case where the canonical sheaf is actually a trivial line bundle, i.e., when the apex of the cone is a Gorenstein singularity.

Let $r$ denote the distance from the apex of the cone and $J_{0}$ its complex structure. We identify the link of the cone with the level set $\{r=1\}$. The vector field $\xi:=J_{0}r\partial_{r}$, known as the ``Reeb field'', is tangent to the (compact, smooth) link of the cone, hence its flow is complete. The flow of the Reeb field $\xi$ generates an abelian one-parameter subgroup $G_{\xi}$ of the isometry group of the link of the cone, the closure of which is a real torus $(S^{1})^k$ of rank $k$. The Calabi--Yau cone is called \emph{irregular} if $k > 1$ (in which case there exists a non-compact flow-line of $\xi$)
and \emph{quasi-regular} if $k=1$. In this case, every flow-line of $\xi$ is compact. The Calabi--Yau metric is unique up to pullback by a biholomorphism of the underlying complex cone
\cite[Theorem 1.4]{Esp25}.

Every toric Calabi--Yau cone is determined by a ``simple convex lattice polytope'' of dimension $(n-1)$, called a ``toric diagram'' (see Definition \ref{def:toric_diagram}). Two such cones are $T_{\C}$-equivariantly biholomorphic if and only if their toric diagrams are related by a translation and an $\SL_{n-1}(\Zbb)$-transformation. In particular, two such polytopes must have the same number of vertices.

In three dimensions, the simplest examples of toric Calabi--Yau cones are affine cones over the five smooth toric del Pezzo surfaces, embedded using their anti-canonical bundle. The corresponding toric diagrams are well-known to be reflexive polygons in $\Rbb^2$ (cf.~Example \ref{example_reflexive_cone} and the corresponding Figure \ref{figure_delpezzo_family}). From the results of Altmann \cite[Section (4.4)] {Alt94} (based on privately communicated computations of van Straten), Martelli--Sparks--Yau \cite[p.61, equation (3.48)]{MSY06}, and Futaki--Ono--Wang \cite[Corollary 1.3]{FOW}, the only such cone admitting an irregular Calabi--Yau metric \emph{and} an affine smoothing is the cone over \(\textnormal{Bl}_{p_1,\,p_2} (\Pbb^2)\) embedded using its anti-canonical bundle. Calabi--Yau metrics on the smoothing of this cone were found by the first author and Hein \cite{Conlon2}. 

The first examples of irregular toric Calabi--Yau cones were found by Gauntlett--Martelli--Sparks--Waldram \cite{GMSW}. They are defined by quadrilateral toric diagrams $\Ycal^{p,q}$ parametrized by a pair of integers $(p,q)$, $p> q>0$, yielding irregular Calabi--Yau cones if and only if $4p^2 - 3q^2$ is not a square number. Topologically, they are cones over $S^{2}\times S^{3}$. However, in light of Altmann's toric deformation theory (cf.~Theorem \ref{theorem_deformation_classification}), these cones are not smoothable because the corresponding toric diagrams have no maximal Minkowski decomposition into lattice polygons (cf.~Proposition \ref{prop:gmsw_trivial}).
In \cite[p.457]{CFO08}, for $r, \,s \in \Nbb_{> 0}$, Cho--Futaki--Ono consider a family of three-dimensional toric Calabi--Yau cones defined by lattice polygons of $(2r + 3)$ vertices with $(s-1)$ interior lattice points; cf.~Figure \ref{figure_cfo_family}. 
The vertices are given by
\begin{equation*}
\begin{split}
(p_0, \,q_0)&= (0,0), \dots,(p_k, q_k) = \left(k,\frac{k(k+1)}{2}\right), \qquad 0 \leq k \leq r, \\
(p_{r+1+j},q_{r+1+j}) &= \left(r+1-j, \frac{(r+1)(r+2)}{2} + s-\frac{j(j+1)}{2}\right), \qquad 0 \leq j \leq r-1, \\
(p_{2r+2}, q_{2r+2}) &= (0,1).
\end{split}
\end{equation*}
They give rise to pairwise
non-$T_{\C}$-equivariantly biholomorphic
toric Calabi--Yau cones, due to the fact that the polygons have differing numbers of interior lattice points for a fixed number of vertices, and therefore different areas by Pick’s theorem. Our first main theorem is concerned with these specific Calabi--Yau cones.

\begin{mtheorem}[Examples of smoothable irregular Calabi--Yau cones] \label{mtheorem_cfo}
Every toric Calabi--Yau cone from the Cho--Futaki--Ono family $(r,\,s)\in\mathbb{N}^{2}_{>0}$, realised as an affine variety, admits an $r$-parameter affine smoothing over an irreducible versal base affine-isomorphic to $\Cbb^r$. Moreover, each cone appears as the tangent cone of an AC Calabi--Yau metric on the corresponding smoothing. Finally, when $r=1$, the toric Calabi--Yau cones are irregular. 
\end{mtheorem}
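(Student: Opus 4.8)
The plan is to establish the three assertions separately, using toric combinatorics for the smoothing, the classification of \cite{CH24} for the metric, and an explicit volume minimization for the irregularity. For the first part I would start from the toric diagram $P_{r,s}$ with the vertices listed above and apply Altmann's deformation theory (Theorem \ref{theorem_deformation_classification}). The key step is to exhibit an explicit Minkowski decomposition $P_{r,s}=P_0+P_1+\cdots+P_r$ into $r+1$ lattice summands, each a segment or a triangle, obtained by splitting the successive edge vectors $(1,1),(1,2),\ldots,(1,r)$ of the parabolic lower boundary together with the residual top and left edges into elementary pieces. By Altmann's correspondence, a decomposition into $r+1$ summands governs an $r$-parameter deformation, and since each summand is a segment or a triangle the generic fibre is smooth, so the deformation is a genuine smoothing. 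To identify the base I would then prove that this is the \emph{unique} maximal Minkowski decomposition of $P_{r,s}$ up to the allowed lattice equivalences—this is exactly where the rigid ``staircase/parabola'' shape of the lower boundary is used—so that the versal deformation has a single reduced component; computing the relevant graded pieces of $T^1$ to have total dimension $r$ and checking the vanishing of the obstruction space $T^2$ in those degrees then shows the versal base is smooth and irreducible, hence affine-isomorphic to $\Cbb^r$.

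For the second part I would invoke the existence (``reverse'') direction of the classification in \cite{CH24}. The hypotheses to verify are that (a) the smooth fibre $X_t$ is an affine variety with trivial canonical bundle, (b) the deformation parameters sit in strictly negative weight with respect to the grading induced by the Reeb field, so that the smoothing perturbation decays at infinity and $C_0$ remains the tangent cone at infinity with the cone metric as asymptotic model, and (c) the torus-equivariance of Altmann's construction is compatible with the Reeb data of $C_0$. Granting these, \cite{CH24} produces an AC Calabi--Yau metric on $X_t$ of Euclidean volume growth and quadratic curvature decay whose tangent cone at infinity is the given cone, which is precisely the assertion.

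For the third part, at $r=1$ the toric diagram reduces to the quadrilateral with vertices $(0,0),(1,1),(2,3+s),(0,1)$, and I would compute its Reeb field via the Martelli--Sparks--Yau volume minimization: normalize $b=(b_1,b_2,3)$ by the Gorenstein functional $m=(0,0,1)$ and minimize the explicit rational volume function $\operatorname{Vol}(b)$ of the associated cone over the interior of its Reeb cone. Imposing $\partial_{b_1}\operatorname{Vol}=\partial_{b_2}\operatorname{Vol}=0$ yields algebraic equations whose solution $(b_1(s),b_2(s))$ I expect to be irrational; making this precise amounts to showing that the discriminant produced by the minimization is never a perfect square for $s\in\Nbb_{>0}$, in the same spirit as the GMSW criterion ``$4p^2-3q^2$ not a square'' recalled above. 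Irrationality of the minimizer $b^\ast$ forces the closure of the Reeb flow to be a torus of rank $k>1$, so the cone is irregular.

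The main obstacle I anticipate is the uniform analysis of the versal base in the first step: Altmann's theory encodes the versal deformation through the combinatorics of \emph{all} Minkowski decompositions, and for a family indexed by $(r,s)$ one must rule out competing maximal decompositions and verify unobstructedness simultaneously for every $r$, rather than case-by-case by computer, to conclude that the base is globally $\Cbb^r$. A secondary delicate point is the number-theoretic claim underlying irregularity at $r=1$, namely that the discriminant arising from the volume minimization fails to be a square for all $s$.
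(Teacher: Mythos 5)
Your overall architecture coincides with the paper's: exhibit the unique lattice maximal Minkowski decomposition of $\Pcal^{r,s}$ into $r$ segments and one triangle, feed it into Altmann's theory to get the $\Cbb^r$ versal base and the smoothing, quote \cite[Theorem 4.3]{CH24} for the metrics, and prove irregularity at $r=1$ by volume minimization. However, there are three concrete problems. First, in step one you propose to compute $T^1$ and check vanishing of $T^2$ to conclude that the base is $\Cbb^r$; this is both unnecessary and potentially false as stated. Theorem \ref{thm:minkschemecomp} already identifies the irreducible components of the \emph{reduced} Minkowski scheme with the maximal lattice decompositions, so uniqueness of the decomposition into $r+1$ summands immediately gives $\ol{\Mcal}^{\textnormal{red}}\simeq\Cbb^r$; on the other hand the unreduced scheme $\ol{\Mcal}$ can be non-reduced even when the reduced base is a point (cf.\ $\Qcal_3$, where $\ol{\Mcal}\simeq\Spec\Cbb[\varepsilon]/\varepsilon^2$), so an unobstructedness check is not the right tool. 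Similarly, "each summand is a segment or triangle, hence the generic fibre is smooth" is asserted without justification; the paper instead gets generic smoothness from the fact that $\pi_0$ is a dominant morphism of varieties over $\Cbb$ (Lemma \ref{lemma_generally_smooth}).

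Second, your $r=1$ diagram is wrong: $\Pcal^{1,s}$ has $2r+3=5$ vertices, i.e.\ it is a pentagon (the vertex between $(2,\ast)$ and $(0,1)$ is missing from your list). This is not cosmetic: a lattice quadrilateral admits a lattice Minkowski decomposition only if it is a parallelogram (Lemma \ref{lem:practical-crit}), and your quadrilateral is not one, so it would contradict the $1$-parameter smoothing you established in step one; moreover the volume function you would minimize is that of the wrong cone. Third, the irregularity argument "show the discriminant is not a perfect square, as for GMSW" presupposes that the critical-point equations reduce to a quadratic in one variable. They do not: for the pentagon $\Pcal^{s}$ the minimization is genuinely two-dimensional, and after eliminating $b$ via a Gr\"obner basis the relevant factor $P(a)$ has degree $8$ in $a$ with $s$-dependent coefficients. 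The paper's proof of Proposition \ref{prop:irreg} must (i) use the Reeb-cone inequalities to show the minimizer's coordinate is a root of $P$ and not of the spurious linear factors produced by elimination, and (ii) establish irreducibility of $P$ over $\Qbb$ (done computationally), whence no rational root exists. A "discriminant" criterion has no meaning here, so this step of your plan would not go through as written.
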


In \cite[p.441]{CFO08}, the authors speculate that ``most'' of the family should be irregular. Here, we are only able to verify this for the subfamily $r=1$ due to computational constraints\footnote{
The computation in Mathematica involving large values of $r$ results in excessive run times.}. In the process of proving this theorem, we give the explicit codes that can be used to compute the Minkowski decompositions (cf.~Appendix \ref{s:A1}) and Reeb field (cf.~Appendix \ref{s:A2}) of a toric Calabi--Yau cone. 

It follows from \cite{CdlO, Got12, vC10, vC11} that every three-dimensional toric Calabi--Yau cone admits a (toric) crepant resolution that carries AC Calabi--Yau metrics. Theorem \ref{mtheorem_cfo} therefore yields new examples of \emph{geometric transitions}, whereby one ``transitions'' from the Calabi--Yau metric on the resolution to that on the smoothing via the cone through first shrinking the exceptional set of the resolution to a point and then deforming.
We refer the reader to \cite{Rossi}
for more information on geometric transitions in the study of Calabi--Yau manifolds and their relevance to physics. Since these smoothings admit a $(\mathbb{C}^{*})^{n-1}$-action \cite[Section (5.3)]{Alt97}
and the resolutions are toric, we see from \cite{Gro01} that both
admit special Lagrangian fibrations. 
Theorem \ref{mtheorem_cfo} should therefore be of interest in the Strominger--Yau--Zaslow conjecture and mirror symmetry.

With hindsight, one possible way to generate examples answering \cite[Section 1.4, Question 1]{CH24} is the following.
\begin{enumerate}
    \item Take a lattice polygon $\Pcal$ in $\Rbb^2$ that defines a toric diagram (cf.~Definition \ref{def:toric_diagram}) with non-trivial lattice maximal Minkowski decompositions. We only consider toric diagrams of toric Kähler cones (i.e., with one singular point) to ensure that the relevant deformation theory is finite-dimensional (see Remark \ref{rmk:toric_diagram} and Altmann's rigidity theorem as recalled in Theorem \ref{thm:rigidity}). A toric cone with infinite-dimensional Kodaira--Spencer space prevents a direct application of \cite[Theorem 4.3]{CH24}. 
    
    The polygon $\Pcal$ can be built in an ad-hoc manner by taking the Minkowski sum of a lattice segment $\Lcal$ in $\Rbb^2$ with no interior lattice point and a toric diagram $\Qcal$  in $\mathbb{R}^{2}$ with no edge parallel to $\Lcal$ (and no lattice Minkowski decomposition). This always generates a toric diagram, hence a Calabi--Yau cone; see Lemma \ref{lem:sumtoricdiag}. A subtle point here is that $\Qcal + \Lcal$ is obviously a lattice decomposition, but might not always have a lattice maximal decomposition, that is, a Minkowski decomposition into lattice segments and lattice triangles. In many situations however, such a decomposition can always be guaranteed by a careful choice of $\Lcal$; see Lemma \ref{cor:sumtoricdiag} and Example \ref{ex:latticemaxvsmaxlattice}.
    
    \item By Altmann's classification \cite{Alt97} (as stated in Theorem \ref{theorem_deformation_classification}), the toric Calabi--Yau cone corresponding to $\Pcal$ has a non-trivial versal family over a complex affine space with generally smooth fibre (cf.~Lemma \ref{lemma_generally_smooth} which includes the definition of ``generally''). An application of \cite[Theorem 4.3]{CH24} then yields a family of AC Calabi--Yau metrics in every Kähler class of those fibers that are smooth. 
    
    \item Under the normalization condition in Lemma \ref{lem:reebnormalization}, the Reeb field associated with the toric Calabi--Yau metric being irregular is equivalent to at least one of its coordinate $(\xi_1,\ldots,\xi_{n-1})$ being irrational. This is determined by our code in Appendix 
    \ref{s:A2} (and Figure \ref{figure_mathematica_cfo13}).
\end{enumerate}

This approach is only relevant for polygons in $\Rbb^2$ (i.e., three-dimensional toric cones) and breaks down for toric diagrams with $\dim_{\Rbb} \Pcal \geq 3$ (i.e., $\dim_{\Cbb} C_0 \geq 4$) due to Altmann's rigidity theorem, namely Theorem \ref{thm:rigidity}. Our next result illustrates the above strategy. As previously mentioned, the Gauntlett--Martelli--Sparks--Waldram family has only trivial deformations. This is reflected by the fact that $\Ycal^{p,q}$ does not admit any lattice maximal Minkowski decomposition. To turn the situation around, we take the Minkowski sum of $\Ycal^{p,q}$ with a lattice segment in the following way.

\begin{mtheorem}[Generating smoothable Calabi--Yau cones] \label{mtheorem_quadrilateral+segment}
Let $\mathcal{Q}^{p,q}$ be the polygon obtained as the Minkowski sum 
 \begin{equation*}
\mathcal{Q}^{p,q} = \Ycal^{p,q} + \Lcal,   
 \end{equation*}
where $\Ycal^{p,q}$ is a Gauntlett--Martelli--Sparks--Waldram toric diagram and $\Lcal$ is a lattice segment in $\Rbb^2$. Then:
 \begin{enumerate}[label=\textnormal{(\alph{*})}, ref=(\alph{*})]
  \item \label{mtheorem_diagram} $\mathcal{Q}^{p,q}$ is a toric diagram associated with a toric Calabi--Yau cone $C_0^{p,q}$ with an isolated singularity if and only if $\Lcal$ has no interior lattice points and is not parallel to any edge of $\Ycal^{p,q}$.
  
  \item \label{mtheorem_smoothable} If $\mathcal{Q}^{p,q}$ is a toric diagram, then it has a lattice maximal Minkowski decomposition if and only if either 
  \begin{itemize} 
  \item $q =1$ and $\Lcal = \text{Conv} \set{\Spvek{0;0}, \Spvek{1;1}}$, or \item $(p-q)$ is odd and $\Lcal = \text{Conv} \set{\Spvek{0;0}, \Spvek{-p+q+2;-p+q}}$.
  \end{itemize}
  In both cases, as an affine variety $C_0^{p,q}$ admits a one-parameter affine smoothing over an irreducible versal base affine-isomorphic to $\Cbb$ and appears as the tangent cone of AC Calabi--Yau metrics on the smoothing. 
  \end{enumerate}
\end{mtheorem}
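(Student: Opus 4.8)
The plan is to reduce both parts to the combinatorics of the edge vectors of the hexagon $\Qcal^{p,q}=\Ycal^{p,q}+\Lcal$ and then to feed the resulting Minkowski decomposition into Altmann's classification (Theorem \ref{theorem_deformation_classification}) and \cite[Theorem 4.3]{CH24}. Write $E_1,E_2,E_3,E_4$ for the edge vectors of $\Ycal^{p,q}$ in cyclic order, so $E_1+E_2+E_3+E_4=\Spvek{0;0}$; since $\Ycal^{p,q}$ is a toric diagram each $E_i$ is primitive, and a direct check shows that no two of them are parallel. Let $\nu$ denote the primitive direction of $\Lcal$.

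For part \ref{mtheorem_diagram} I would analyse the edges of the Minkowski sum. The normal fan of $\Qcal^{p,q}$ is the common refinement of those of $\Ycal^{p,q}$ and $\Lcal$; hence if $\nu$ is parallel to no $E_i$, the edges of $\Qcal^{p,q}$ are $E_1,\dots,E_4$ together with two translates of $\Lcal$, while if $\nu\parallel E_i$ then that edge is lengthened by $\Lcal$. In the first case $\Qcal^{p,q}$ is a hexagon all of whose edges are primitive exactly when $\Lcal$ is primitive, i.e.\ has no interior lattice point; using that the cone over a lattice polygon has an isolated singularity precisely when every edge is primitive, this is equivalent to $\Qcal^{p,q}$ being a toric diagram of a cone with isolated singularity. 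In the second case the lengthened edge has lattice length $\ge 2$, so the singularity is not isolated. This yields the stated equivalence (it is Lemma \ref{lem:sumtoricdiag} together with its converse).

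For part \ref{mtheorem_smoothable} the crucial point is that every edge of $\Qcal^{p,q}$ has lattice length one: the $E_i$ are primitive and $\Lcal$ contributes the two primitive edges $\pm\nu$. Consequently no edge can be subdivided in a lattice Minkowski decomposition, so each summand is built from whole edges of $\Qcal^{p,q}$. A segment summand would require an antiparallel pair of edges; the only such pair is $\set{\nu,-\nu}$, and splitting it off leaves $\Ycal^{p,q}$, which has no lattice maximal decomposition by Proposition \ref{prop:gmsw_trivial}. Hence any lattice maximal decomposition consists of triangles alone, and as each triangle absorbs three of the six edges it must read $\Qcal^{p,q}=T_1+T_2$ with $T_1,T_2$ lattice triangles. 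Since $\nu$ and $-\nu$ cannot belong to the same triangle, after relabelling $T_1=\set{\nu,E_a,E_b}$, $T_2=\set{-\nu,E_c,E_d}$ with $\set{a,b,c,d}=\set{1,2,3,4}$, so $\nu=-(E_a+E_b)$ is, up to sign, one of the three partial sums $E_1+E_2,\ E_1+E_3,\ E_2+E_3$.

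It then remains to decide for which partial sums $\nu$ is primitive (this is forced by part \ref{mtheorem_diagram}). A direct computation with the vertices of $\Ycal^{p,q}$ gives the three partial sums $p\Spvek{1;1}$, $q\Spvek{1;1}$ and $\Spvek{p-q-2;p-q}$ (up to sign). The first is never primitive since $p\ge 2$; the second is primitive iff $q=1$, giving $\Lcal=\text{Conv}\set{\Spvek{0;0},\Spvek{1;1}}$; the third has $\gcd(p-q-2,p-q)=\gcd(2,p-q)$, hence is primitive iff $p-q$ is odd, giving $\Lcal=\text{Conv}\set{\Spvek{0;0},\Spvek{-p+q+2;-p+q}}$. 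In each admissible case one checks that $\nu$ is parallel to no $E_i$ (so part \ref{mtheorem_diagram} holds) and that $T_1,T_2$ are genuine triangles; conversely the chosen $\Lcal$ produces the explicit decomposition $\Qcal^{p,q}=T_1+T_2$. This gives the ``if and only if''. Finally, a decomposition into two summands yields, by Theorem \ref{theorem_deformation_classification}, a reduced irreducible component of the versal base affine-isomorphic to $\Cbb$, and Lemma \ref{lemma_generally_smooth} gives that the general fibre is smooth; as $C_0^{p,q}$ is Gorenstein with smooth link it carries a Calabi--Yau cone metric, so \cite[Theorem 4.3]{CH24} produces the desired AC Calabi--Yau metrics with tangent cone $C_0^{p,q}$. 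The main obstacle is the forward direction of part \ref{mtheorem_smoothable}: one must be sure that the uniform lattice length one of the edges really leaves no decomposition unaccounted for, and that the general fibre is smooth even though $T_1$ need not be a unit triangle, for which one leans on Lemma \ref{lemma_generally_smooth} (whose hypothesis only asks the summands to be lattice triangles or segments).
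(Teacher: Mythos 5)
Your proposal is correct and follows essentially the same route as the paper: part \ref{mtheorem_diagram} is the paper's Lemma \ref{lem:sumtoricdiag}, and part \ref{mtheorem_smoothable} reproduces the content of Lemma \ref{cor:sumtoricdiag} (the hexagon must split as two lattice triangles, since the segment option forces a parallelogram/antiparallel pair that does not exist) together with Lemma \ref{lem:gmsw-extrarigid}'s enumeration of the partial sums $p\Spvek{1;1}$, $q\Spvek{1;1}$, $\pm\Spvek{-p+q+2;-p+q}$ and their primitivity, before concluding via Theorem \ref{theorem_deformation_classification}, Lemma \ref{lemma_generally_smooth}, and \cite[Theorem 4.3]{CH24} exactly as in Section \ref{s:6.1}.
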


As an application, we have:

\begin{mexample} \label{ex:gmsw21+segment}
For both choices of $\Lcal$, the above construction with $\Ycal^{2,1}$ yields pairwise non-$T_{\C}$-equivariantly biholomorphic toric Gorenstein K\"ahler cones with irregular toric Calabi--Yau metrics. 
\end{mexample}

Of course, one can build a toric diagram with lattice maximal decompositions from any diagram $\Ycal^{p,q}$ (as indicated by the proof of Lemma \ref{lem:gmsw-extrarigid}). However, this will produce toric diagrams of cones with \emph{non-isolated} singularities, consequently having possibly infinite-dimensional deformation theory. Even though Altmann's theory still applies to some extent in this context \cite{Alt00}, construction of Calabi--Yau metrics on an affine smoothing of an algebraic cone with non-isolated singularities remains unsolved even for toric cones, despite the recent progress in \cite{BD19, Ngh24a} and the references therein. 

For obvious reasons, the code in
Appendix \ref{s:A2} (and Figure \ref{figure_macaulay2_cfo13}) can only determine whether or not a Calabi--Yau cone is irregular on a case-by-case basis. This prompts us to formulate the following conjecture. 

\begin{conj}
Let $\Ycal^{p,q}$ be a Gauntlett--Martelli--Sparks--Waldram toric diagram and $\Lcal$ a lattice segment in $\Rbb^2$ such that $\mathcal{Q}^{p,q} = \Ycal^{p,q} + \Lcal$ is a toric diagram. Then the toric Calabi--Yau cone associated with $\mathcal{Q}^{p,q}$ is irregular if and only if the one associated with $\Ycal^{p,q}$ is irregular.
\end{conj}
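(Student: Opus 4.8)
The plan is to reduce the conjecture to a single arithmetic invariant and then to track that invariant's field of definition under Minkowski summation. By Lemma \ref{lem:reebnormalization}, once the Calabi--Yau normalization is imposed, the cone attached to a two-dimensional toric diagram is irregular precisely when the Reeb field $\xi=(\xi_1,\xi_2,\xi_3)$ has an irrational coordinate among $\xi_1,\xi_2$; and for the Gauntlett--Martelli--Sparks--Waldram diagram $\Ycal^{p,q}$ this occurs exactly when $4p^2-3q^2$ is not a perfect square. The conjecture is therefore equivalent to the claim that the irregularity of the cone of $\mathcal{Q}^{p,q}=\Ycal^{p,q}+\Lcal$ is governed by the \emph{same} integer $4p^2-3q^2$ for \emph{every} admissible $\Lcal$. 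Concretely, I would aim to show that the Reeb field of $\mathcal{Q}^{p,q}$ lies in the quadratic field $\Qbb(\sqrt{4p^2-3q^2})$ and is rational if and only if $4p^2-3q^2$ is a square; both implications of the biconditional then drop out simultaneously.

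The Reeb field is the unique minimizer, over the interior of the Reeb cone, of the Martelli--Sparks--Yau volume functional. For a two-dimensional toric diagram with cyclically ordered vertices $w_1,\dots,w_d$ and height-one lifts $\tilde w_i=(w_i,1)\in\Rbb^3$, this functional is, up to a positive constant, the rational function
\begin{equation*}
V(\xi)=\sum_{i=1}^{d}\frac{\det(\tilde w_{i-1},\tilde w_i,\tilde w_{i+1})}{\det(\xi,\tilde w_{i-1},\tilde w_i)\,\det(\xi,\tilde w_i,\tilde w_{i+1})},
\end{equation*}
and, with $\xi_3$ fixed by the Gorenstein normalization, $\xi$ is cut out by $\partial V/\partial\xi_1=\partial V/\partial\xi_2=0$. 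Clearing denominators produces a polynomial system whose coefficients are integer polynomials in the vertex coordinates, so $(\xi_1,\xi_2)$ is algebraic over $\Qbb$ and its field of definition is controlled by the discriminant of that system. As a check, for $\Ycal^{p,q}$ one recovers that this discriminant equals $4p^2-3q^2$ up to a rational square.

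I would then run the same computation for $\mathcal{Q}^{p,q}$. Writing $\Lcal=\mathrm{Conv}\{0,w\}$, the hypotheses of Theorem \ref{mtheorem_quadrilateral+segment}\ref{mtheorem_diagram} (no interior lattice point, not parallel to any edge of $\Ycal^{p,q}$) mean that the Minkowski sum is the sweep of the quadrilateral $\Ycal^{p,q}$ along $w$: its vertices are those of $\Ycal^{p,q}$, their translates by $w$, and the two supporting vertices split by a copy of $\Lcal$, giving an explicit hexagon. Substituting its six lifted vertices into $V$ and forming the critical system yields a polynomial system parametrized by $(p,q)$ and $w=(w_1,w_2)$. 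The target is to show that the discriminant of this hexagonal system equals $4p^2-3q^2$ times a rational square \emph{uniformly in $w$}, which would force $\xi$ into $\Qbb(\sqrt{4p^2-3q^2})$ with the same rationality criterion and thereby prove the conjecture.

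The hard part is exactly this uniformity in $w$. The volume functional is neither additive nor multiplicative under Minkowski sum, and the admissible $\Lcal$ form an infinite family, so the case-by-case verification performed by the code of Appendix \ref{s:A2} cannot be promoted to a proof by inspection. What is needed is either an explicit factorization of the hexagon discriminant isolating the factor $4p^2-3q^2$ independently of $w$, or a structural decoupling argument: one expects the component of $\xi$ in the direction $w$ to be pinned down rationally by the two new, mutually parallel edges, reducing the minimization to the one-parameter problem already solved for $\Ycal^{p,q}$ and inheriting its quadratic irrationality. Making such a decoupling rigorous—rather than confirming it numerically in instances such as $\Ycal^{2,1}$ (Example \ref{ex:gmsw21+segment})—is the crux, and is the reason the statement is at present only a conjecture.
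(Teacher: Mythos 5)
The statement you are addressing is stated in the paper as a \emph{conjecture}: the authors explicitly say that their code can only decide irregularity case by case, that they ``are not aware of any general and non-computational method to determine whether or not a toric Calabi--Yau cone is irregular,'' and they offer no proof. So there is no proof in the paper to compare against, and your text is likewise not a proof but a proposed strategy --- which you yourself acknowledge in your final paragraph. That self-assessment is accurate: the argument as written has a genuine gap at exactly the point you name, the uniformity in $w$ of the factorization of the discriminant of the critical system. Nothing in the Martelli--Sparks--Yau variational setup controls how the minimizer's field of definition behaves under Minkowski summation, and the admissible segments $\Lcal$ form an infinite family, so the finitely many numerical checks (such as Example \ref{ex:gmsw21+segment}) cannot close this.

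Beyond that, be aware that your proposed reduction is strictly \emph{stronger} than the conjecture and is itself unsupported. You want to show that the Reeb field of $\mathcal{Q}^{p,q}$ always lies in $\Qbb\bigl(\sqrt{4p^2-3q^2}\bigr)$ with rationality governed by whether $4p^2-3q^2$ is a square. The conjecture only requires that irrationality be preserved in both directions; it would survive even if the quadratic field changed under summation with $\Lcal$, in which case your discriminant identity would be false while the conjecture remained true. Even in the one worked case, $\Ycal^{2,1}$ (where $4p^2-3q^2=13$ and the cone of $\Ycal^{2,1}$ has Reeb coordinate $-4+\sqrt{13}$), the paper only exhibits the minimizers for the two hexagons $\mathcal{Q}^{2,1}$ numerically ($a=b=0.979\ldots$ and $a=b=1.38\ldots$); it is not verified that these lie in $\Qbb(\sqrt{13})$. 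Note also the paper's remark that even granting the Martelli--Sparks--Yau conjecture on the algebraic degree of the normalized volume, the authors do not see how to decide irregularity --- so the ``structural decoupling'' you hope for (the new pair of parallel edges pinning the $w$-component of $\xi$ rationally) would be a genuinely new idea, and you give no mechanism for it: the critical equations $\partial V/\partial\xi_1=\partial V/\partial\xi_2=0$ for the hexagon do not visibly split along the direction of $w$. In short: the statement remains open, your outline identifies a plausible line of attack, but the key step is missing and the intermediate claim you reduce to may be false as stated.
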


We are not aware of any general and non-computational method to determine whether or not a toric Calabi--Yau cone is irregular. Such a method should involve non-trivial arithmetic properties of the cones, formulated in terms of suitable properties of the corresponding toric diagrams. On the other hand, 
Martelli--Sparks--Yau conjectured that the degree of the normalized volume as an algebraic number is equal to $(n-1)^{k-1}$ \cite[equation (1.19)]{MSY08}, where $k$ is the dimension of the compact torus generated by the Calabi--Yau Reeb field. Even assuming this conjecture, it is not clear to us how to determine the irregularity of the cone metric.

\subsection{Outline of paper}
In what follows, all computer code has been relegated to Appendix \ref{appendix_hilbert_volume}, where an introduction to the Macaulay2Web Interface is given in Appendix \ref{s:started}.

We begin in Section \ref{toric-section} 
by defining and discussing toric Calabi--Yau cones. Specifically, in Section \ref{coness} we define what we mean by a Calabi--Yau cone and an AC Calabi--Yau manifold in Definitions \ref{d:cycone} and \ref{ACCY}, respectively (in particular, both have trivial canonical bundle), and in Section \ref{toric-geom} we present the relevant background material on toric geometry that we require. Important material introduced includes the definition of a toric Gorenstein cone in Definition \ref{gorenstein}, and how this property is reflected in the corresponding toric diagram in Theorem \ref{goren-equiv} and Proposition \ref{proposition_combinatorial_qgorenstein}.

In Section \ref{reeb-identify}, we describe how to determine the Reeb field of a Calabi--Yau cone metric on a toric Gorenstein K\"ahler cone.
This vector field minimizes the volume function which is introduced, along with the index character, in Section \ref{subsection_volume_minimizer}. These are defined in Definitions \ref{d:volume} and \ref{d:index} respectively. A computer algorithm for how to compute this minimizer using the index character is then given in Section \ref{s:minimizer}. Finally, we provide examples of how to implement the relevant computer code for these computations in Section \ref{exampless}.

The subject of Section \ref{section_deformation} is the deformation theory of toric Calabi--Yau cones. The relevant facts on deformation theory that we need are collected together in Section \ref{prelim}. Minkowski decompositions and Minkowski schemes, along with some examples, are introduced in Section \ref{s:versal}. Necessary and sufficient conditions for such a decomposition to be maximal are mentioned in Corollary \ref{cor:minkalways}, Lemma \ref{lem:practical-crit} identifies which lattice quadrilaterals admit Minkowski decompositions, and Lemma \ref{lem:sumtoricdiag} notes when the sum of a toric diagram and a lattice segment is again a toric diagram. Examples illustrating the relevant concepts are given throughout this section. The correspondence between deformations of a toric Gorenstein cone and lattice maximal Minkowski decompositions given by Altmann's theory is explained in 
Section \ref{s:4.3}, where the Minkowski scheme is identified with the versal deformation base; see Theorems \ref{thm:versalfamily} and \ref{theorem_deformation_classification}.
We then reinforce these ideas in Example \ref{examplesss}. A computer algorithm, based on Altmann's theory, for how to compute 
the versal deformation of a given toric Gorenstein cone is the content of Section \ref{subsection_minkowski}. This we demonstrate with an example in Section
\ref{egsss}. 

Section \ref{s:thmb} comprises the proof of Theorem \ref{mtheorem_cfo}. In particular, Theorem \ref{theorem_minkowski_decomposition} in Section \ref{s:mink} gives the unique lattice maximal Minkowski decomposition of the Cho--Futaki--Ono family of Gorenstein cones, and Proposition \ref{prop_volume} in Section \ref{s:minn} provides a formula for the volume for the pertinent subfamily of these cones. The minimizer is irregular, as stated in Proposition \ref{prop:irreg}. We conclude Section \ref{s:thmb} by explicitly computing in Section \ref{egs} the Reeb field of the Calabi--Yau cone metric for some specific cones in this family.

In Section \ref{s:thma}, we prove Theorem \ref{mtheorem_quadrilateral+segment} in Section \ref{s:6.1} and provide the justification for Example \ref{ex:gmsw21+segment} in Section \ref{s:6.2}. 

\subsection{Summary of notation}
We provide a summary of the notation regarding the various toric Calabi--Yau cones and toric diagrams that we consider throughout.
\begin{itemize}
    \item $\Qcal_1,\Qcal_2, \Qcal_3, \Qcal_4,\Qcal_5$ denote 
    the toric diagrams of the del Pezzo cones over $\Pbb^1 \times \Pbb^1$, $\Pbb^2$, $\text{Bl}_{p_1}(\Pbb^2)$, $\text{Bl}_{p_1,\,p_2}(\Pbb^2)$, $\text{Bl}_{p_1,\,p_2,\,p_3}(\Pbb^2)$ embedded using their anti-canonical bundle, respectively.
    \item $\Pcal^{r,s}$ denotes the toric diagram of a member of the Cho--Futaki--Ono family of toric Calabi--Yau cones. Moreover, we denote by $\Pcal^{s}$ the toric diagram $\Pcal^{1,s}$. 
    \item $\Ycal^{p,q}$ is the toric diagram of the Gauntlett--Martelli--Sparks--Waldram cone $Y^{p,q}$. 
    \item $\Qcal^{p,q}$ is the sum of $\Ycal^{p,q}$ and a lattice segment such that $\Qcal^{p,q}$ is a toric diagram. 
\end{itemize}

\subsection{Acknowledgements}
We thank Tristan Collins for showing us how to explicitly compute the Reeb field of a toric Calabi--Yau cone; Klaus Altmann, Andrei Balakin, Charles Cifarelli, Hans-Joachim Hein, Yuji Odaka, Bernd Sturmfels, Song Sun, and Junsheng Zhang for valuable discussions; Eveline Legendre for helpful exchanges regarding \cite{MSY08}; and Eveline and Junsheng for their insightful comments on a preliminary version of this article.

This material is based upon work supported by the National Science Foundation under Grant
No. DMS-1928930 while the first author was in residence at the Simons Laufer Mathematical Sciences Institute (formerly MSRI) in Berkeley, California, during the Fall 2024 semester. He wishes to thank
SLMath for their excellent working conditions and hospitality during this time.

The second author is supported by the ANR--FAPESP-21-CE40-0017 project BRIDGES and partially supported by the Crafoord Foundation grant CR2024-0029. He is grateful to the Institutionen för matematik och matematisk statistik at Ume\r{a} universitet and Jakob Hultgren for their hospitality during the drafting phase of this work.

\section{Toric Calabi--Yau cones}\label{toric-section}

The main objective of this section is to
introduce what we mean by a ``toric Calabi--Yau cone'' and relate their algebraic properties to their moment polytope. In passing, we mention some other important definitions. We begin by introducing the model geometries that we are concerned with in Section \ref{coness}. A background in toric geometry is then given in Section \ref{toric-geom}, where the notion of a  ``Gorenstein'' K\"ahler cone is introduced, a
``toric Calabi--Yau cone'' is defined, and the relationship between the two is given.

\subsection{Asymptotically conical Calabi--Yau manifolds and Calabi--Yau cones}\label{coness}

\subsubsection{Riemannian cones} For us, {the definition of a Riemannian cone will take the following form}.

\begin{definition}\label{cone}
Let $( L, g)$
be a compact connected Riemannian manifold. The \emph{Riemannian cone} $C$ with link $L$ is defined to be $\R^+ \times L$ with metric $g_0 = dr^2 \oplus r^2g$ up to isometry. The radius function $r$ is then characterized intrinsically as the distance from the apex in the metric completion. 
\end{definition}

Throughout, we denote by $C_0$ the metric completion of $C$, realized by adding the apex to $C$. This is only to make statements precise, as it will be essentially irrelevant whether or not we are referring to the manifold $C$ or to the singular affine cone $C_0$.

\subsubsection{K{\"a}hler and Calabi--Yau cones}\label{conesss}
Boyer--Galicki \cite{BG08} is a comprehensive reference here.

\begin{definition}\label{kahlercone} A \emph{K{\"a}hler cone} is a Riemannian cone $(C,g_0)$ such that $g_0$ is K{\"a}hler, together with a choice of $g_0$-parallel complex structure $J_0$. This will in fact often be unique up to sign. We then have a K{\"a}hler form $\omega_0(X,Y) = g_0(J_0X,Y)$, and $\omega_0 = \frac{i}{2}\p\bar{\p} r^2$ with respect to $J_0$.
\end{definition}

The vector field $r\partial_{r}$ on a K\"ahler cone is real holomorphic and $J_{0}r\partial_r$ is real holomorphic and Killing \cite[Appendix A]{MSY08}. This latter vector field is known as the \emph{Reeb field}. The closure of its flow in the isometry group of the link of the cone generates the holomorphic isometric action of a real torus on $C$ that fixes the apex of the cone. We call a K{\"a}hler cone ``quasiregular'' if this action is an
$S^1$-action (and, in particular, ``regular'' if this $S^1$-action is free), and ``irregular'' if the action generated is that of a real torus of rank $>1$.

Given a K\"ahler cone $(C,\,\omega_{0}=\frac{i}{2}\partial\bar{\partial}r^{2})$ with radius function $r$, it is true that
\begin{equation}\label{conemetric}
\omega_{0}=rdr\wedge\eta +\frac{1}{2}r^{2}d\eta
\end{equation}
where $\eta=i(\bar{\partial}-\partial)\log r$ is a contact one-form on the link of the cone.  

After adding the apex, a K\"ahler cone becomes a normal affine algebraic variety.

\begin{theorem}\label{t:affine}
For every K{\"a}hler cone $(C,g_0,J_0)$, the complex manifold $(C,J_0)$ is isomorphic to the smooth part of a normal algebraic variety $V \subset \C^N$ with one singular point. In addition, $V$ can be taken to be invariant under a $\C^*$-action $(t, z_1,...,z_N) \mapsto (t^{w_1}z_1,...,t^{w_N}z_N)$ such that all $w_i > 0$.
\end{theorem}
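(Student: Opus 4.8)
The plan is to realize $(C,J_0)$ as an affine cone by producing enough homogeneous holomorphic functions and then invoking a positive-weight $\C^*$-action to pass from the analytic to the algebraic category. The two structures I would exploit are: first, the commuting pair consisting of the dilation field $r\partial_r$ (whose flow is $(r,x)\mapsto(e^{t}r,x)$) and the Reeb field $\xi=J_0 r\partial_r$, both real holomorphic and complete, the closure of whose joint flow is a compact torus $T^k$ acting holomorphically and isometrically while fixing the apex; and second, the function $\tfrac12 r^2$, which by $\omega_0=\tfrac{i}{2}\partial\bar\partial r^2$ is a strictly plurisubharmonic exhaustion of $C_0$. The latter makes $C_0$ Stein (Grauert), so that holomorphic functions separate points and tangent directions.

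First I would decompose the holomorphic functions of polynomial growth into joint eigenspaces under $T^k$ and $r\partial_r$. A weight vector $f_\mu$ of weight $\mu\in\Z^k$ satisfies $\mathcal{L}_\xi f_\mu=i\langle\mu,\xi\rangle f_\mu$, and since $f_\mu$ is holomorphic and $r\partial_r=-J_0\xi$ this forces $\mathcal{L}_{r\partial_r}f_\mu=\langle\mu,\xi\rangle f_\mu$; thus $f_\mu$ is homogeneous of real degree $\langle\mu,\xi\rangle$, positive exactly when $\mu$ lies in the open half-space dual to $\xi$. Averaging the point- and tangent-separating functions supplied by Steinness over the compact torus $T^k$ and projecting onto weights, I obtain enough such positive-degree weight vectors; each is bounded by a power of $r$, hence extends holomorphically across the apex by Riemann extension (the apex has complex codimension $n\ge 2$, the case $n=1$ being trivial). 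Choosing finitely many that still separate points and tangents gives a proper embedding $\iota\colon C_0\hookrightarrow\C^N$, $z_i=f_{\mu_i}$, whose image is invariant under $T^k_\C$ acting by $t\cdot z_i=t^{\mu_i}z_i$. Since the finitely many $\mu_i$ all lie in the open cone $\{\langle\cdot,\xi\rangle>0\}$, I can pick an integral cocharacter $\lambda$ in that cone, and the resulting sub-$\C^*$ acts by $z_i\mapsto t^{w_i}z_i$ with $w_i=\langle\mu_i,\lambda\rangle>0$, giving the asserted positive weights.

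It remains to show $V:=\iota(C_0)$ is algebraic and normal with a single singular point. The main obstacle is the analytic-to-algebraic passage: here I would use that a closed analytic subvariety of $\C^N$ through the origin invariant under a linear $\C^*$-action with strictly positive weights is cut out by weighted-homogeneous polynomials — expand any element of the vanishing ideal into its finitely many relevant weighted-homogeneous Taylor components and use Noetherianity to conclude the ideal is generated in bounded degree. Normality then follows because $V\setminus\{0\}\cong C$ is smooth (so $V$ is regular in codimension one, as $n\ge 2$) and the positive-degree holomorphic functions extend across the apex, which yields the $S_2$/depth condition; hence $V$ is normal with singular locus contained in the single point $\iota(\text{apex})$. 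The genuinely delicate steps are the construction of sufficiently many homogeneous holomorphic functions to embed (combining Steinness, torus averaging, and the weight grading) and this final algebraization; the remaining verifications are routine.
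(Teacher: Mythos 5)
The paper gives no proof of Theorem \ref{t:affine} beyond the pointer to \cite{vC11}, so the comparison is really with the standard arguments in the literature. Your strategy --- decompose holomorphic functions into torus weight spaces, deduce homogeneity of weight vectors from $r\partial_r=-J_0\xi$, embed by finitely many positive-weight eigenfunctions, and algebraize using the positive-weight $\C^*$-action --- is one of the two standard routes; the route closer to the Sasakian literature instead performs a Type~I deformation to a quasi-regular Reeb field (which changes $r$ and $g_0$ but not $J_0$), realizes the link as an orbifold circle bundle over a projective orbifold polarized by an ample orbifold line bundle, and identifies $(C,J_0)$ with the punctured affine cone over that orbifold via sections of powers of the polarization. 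Within your route, the homogeneity computation, the positivity of the relevant weights, the properness of the resulting map, and the weighted-homogeneous algebraization of a $\C^*$-invariant analytic germ are all correct as sketched. (A small imprecision: the closure of the \emph{joint} flow of $r\partial_r$ and $\xi$ is not compact; the compact torus $T^k$ is the closure of the Reeb flow alone.)

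The genuine gap is the very first substantive step: ``$\tfrac12 r^2$ is a strictly plurisubharmonic exhaustion of $C_0$, so $C_0$ is Stein (Grauert).'' Grauert--Narasimhan applies to a complex \emph{space}, whereas a priori $C_0$ is only the metric completion of the complex manifold $C$; endowing the added apex with the structure of a normal complex space is essentially the content of the theorem, not a free input. Note also that $C$ itself is not Stein (it retracts onto the $(2n-1)$-dimensional link, and $r^2$ does not exhaust $C$), so one cannot work on $C$ alone either. Everything downstream in your argument --- separation of points and tangents by global holomorphic functions, Riemann extension of the bounded weight vectors ``across the apex,'' and the $R_1+S_2$ verification of normality of $V$ at the origin --- presupposes exactly this structure, so the circularity is not cosmetic. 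To close the gap one must either invoke the filling theorems of Rossi and Andreotti--Siu for the compact strictly pseudoconvex link (which produce a normal Stein filling when $n\ge 3$, with additional care required when $n=2$, where not every strictly pseudoconvex CR $3$-manifold is fillable), or quasi-regularize first as above and read off the normal affine structure from the projective orbifold quotient. With that step supplied, the rest of your outline goes through.
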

\noindent This can be deduced from arguments written down by van Coevering in \cite[\S 3.1]{vC11}.

The particular type of K\"ahler cone that concerns us is the following.
\begin{definition}\label{d:cycone}
We say that $(C,g_0,J_0,\Omega_0)$ is a \emph{Calabi--Yau cone} if
\begin{enumerate}[label=\textnormal{(\roman{*})}, ref=(\roman{*})]
\item $(C,g_0, J_0)$ is a Ricci-flat K\"ahler cone of complex dimension $n$,
\item the canonical bundle $K_{C}$ of $C$ with respect to $J_0$ is trivial, and
\item $\Omega_{0}$ is a nowhere vanishing section of $K_{C}$ with $\omega_0^n = i^{n^2}\Omega_0 \wedge \bar{\Omega}_0$.
\end{enumerate}
\end{definition}

These serve as the asymptotic models for our manifolds of interest.

\begin{definition}\label{ACCY}
Let $(C,g_0,J_0,\Omega_0)$ be a Calabi--Yau cone as above. Let $(M,g,J,\Omega)$ be a Ricci-flat K\"ahler manifold with a parallel holomorphic volume form such that $\omega^n = i^{n^2}\Omega \wedge \overline\Omega$. We call $M$ an \emph{asymptotically conical Calabi--Yau manifold with asymptotic cone $C$} if there exist a compact subset $K\subset M$ and a diffeomorphism $\Phi: \{r > 1\} \to M\setminus K$ such that for some $\lambda_1, \lambda_2 < 0$ and all $j \in \N_0$,
\begin{align}
|\nabla_{g_0}^j(\Phi^{*}g-g_{0})|_{g_{0}} =O(r^{\lambda_1-j}),\label{e:asympt:g}\\
|\nabla_{g_0}^j(\Phi^{*}J-J_{0})|_{g_{0}} =O(r^{\lambda_2-j}),\label{e:asympt:J}\\
|\nabla_{g_0}^j(\Phi^{*}\Omega-\Omega_{0})|_{g_{0}} = O(r^{\lambda_2-j}).\label{e:asympt:Om}
\end{align}
We abbreviate the words ``asymptotically conical'' by AC.
\end{definition}

Implicit in this definition is the fact that
AC Calabi--Yau manifolds have one end. This follows from the Cheeger--Gromoll splitting theorem. We refer the reader to \cite[Section 1.2.2]{CH24} for more remarks on these manifolds. 

\subsection{Toric geometry}\label{toric-geom}
In this subsection, we collect together some standard facts from toric geometry.

\subsubsection{Toric manifolds}
We begin with the following definition.

\begin{definition}\label{toricmanifold}
A \emph{toric manifold} is an $n$-dimensional complex manifold $M$ endowed
with an effective holomorphic action of the algebraic torus $T_{\mathbb{C}}=(\mathbb{C}^{*})^{n}$ such that the following hold true.
\begin{itemize}
  \item The fixed point set of the $T_{\mathbb{C}}$-action is compact.
  \item There exists a point $p\in M$ with the property that the orbit $T_{\mathbb{C}}\cdot p \subset M$ forms a dense open subset of $M$.
\end{itemize}
\end{definition}
We will often denote the dense orbit simply by $(\mathbb{C}^{*})^{n} \subset M$ in what follows. The $T_{\mathbb{C}}$-action of course determines the action of the real torus $T^n \subset T_{\mathbb{C}}$.

\subsubsection{K\"ahler metrics on toric manifolds}\label{finito}

For a given toric manifold $M$ endowed with a Riemannian metric $g$ invariant under the action of the real torus $T^n \subset T_{\mathbb{C}}$ and K\"ahler with respect to the underlying complex structure
of $M$, the K\"ahler form $\omega$ of $g$ is also invariant under the $T^n$-action. We call such a manifold a \emph{toric K\"ahler manifold}.
In what follows, we always work with a fixed complex structure on $M$.
We then have:
\begin{definition}
Let $(M,\,\omega)$ be a symplectic manifold and let $T$ be a real torus acting by symplectomorphisms on $(M,\,\omega)$.
Denote by $\mathfrak{t}$ the Lie algebra of $T$ and by $\mathfrak{t}^{*}$ its dual. Then we say that the action of $T$ is \emph{Hamiltonian}
if there exists a smooth map $\mu:M\to\mathfrak{t}^{*}$ such that for all $\zeta\in\mathfrak{t}$,
$-\omega\lrcorner\zeta=du_{\zeta}$, where $u_{\zeta}(x)=\langle\mu(x),\,\zeta\rangle$ for all $\zeta\in\mathfrak{t}$ and $x\in M$. We call
$\mu$ the \emph{moment map} of the $T$-action and we call $u_{\zeta}$ the \emph{Hamiltonian (potential)} of $\zeta$.
\end{definition}
Notice that $u_{\zeta}$ is invariant under the flow of $\zeta$. Indeed, we have that
$$\mathcal{L}_{\zeta}u_{\zeta}=du_{\zeta}\lrcorner\zeta=-\omega(\zeta,\,\zeta)=0.$$
Consequently, each integral curve of $\zeta$ must be contained in a level set of $u_{\zeta}$.

Hamiltonian K\"ahler metrics have a useful characterisation due to Guillemin.

\begin{prop}[{\cite[Theorem 4.1]{Guil}}]\label{propB6}
	Let $\omega$ be any $T^n$-invariant K\"ahler form on $M$. Then the $T^{n}$-action is Hamiltonian with respect to $\omega$ if and only if the restriction of $\omega$ to the dense orbit $(\mathbb{C}^{*})^{n} \subset M$ is exact, i.e., there exists a $T^{n}$-invariant potential $\phi$ such that
	\begin{equation*}
		\omega = 2i\partial\bar{\partial} \phi.
	\end{equation*}
\end{prop}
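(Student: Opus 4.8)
The plan is to reduce the whole statement to a computation on the dense orbit in suitable coordinates, and then to transfer the conclusion back to all of $M$ by a codimension argument. Fix the dense orbit $(\Cbb^{*})^{n}\subset M$ and introduce logarithmic coordinates $w_{j}=x_{j}+i\theta_{j}$, where $z_{j}=e^{w_{j}}$ are the standard coordinates on $(\Cbb^{*})^{n}$, so that the orbit is identified with $\Rbb^{n}_{x}\times T^{n}_{\theta}$ and the $T^{n}$-action is translation in $\theta$ with infinitesimal generators $\zeta_{l}=\partial_{\theta_{l}}$. A $T^{n}$-invariant function is then a function of $x$ alone. The elementary identity $2i\partial\bar{\partial}\phi=\sum_{j,k}\phi_{jk}\,dx_{j}\wedge d\theta_{k}$, where $\phi_{jk}=\partial^{2}\phi/\partial x_{j}\partial x_{k}$, will be used throughout, together with its consequence $\iota_{\zeta_{l}}\omega=-d(\partial\phi/\partial x_{l})$ when $\omega=2i\partial\bar{\partial}\phi$.

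For the direction assuming a potential, I would first record the standard fact that for a symplectic torus action the moment map exists if and only if each closed one-form $\iota_{\zeta_{l}}\omega$ is exact on $M$; the components of $\mu$ are then the primitives, and $T^{n}$-invariance of these primitives is arranged by averaging over $T^{n}$. Thus, given an invariant $\phi$ with $\omega=2i\partial\bar{\partial}\phi$ on the orbit, the identity above shows that $\iota_{\zeta_{l}}\omega$ is exact \emph{on the orbit}. The remaining point is to promote this to exactness on $M$, which I would do by noting that $M\setminus(\Cbb^{*})^{n}$ is a proper analytic subset, hence of real codimension at least two, so the restriction $H^{1}(M;\Rbb)\to H^{1}((\Cbb^{*})^{n};\Rbb)$ is injective; a closed one-form on $M$ that is exact on the orbit is therefore exact on $M$. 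This yields the global Hamiltonians, and hence the moment map.

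For the converse, I would analyze an arbitrary $T^{n}$-invariant closed $(1,1)$-form in the $(x,\theta)$ coordinates. Writing it as $\tfrac{i}{2}\sum_{j,k} h_{jk}(x)\,dw_{j}\wedge d\bar{w}_{k}$ with $h$ Hermitian and splitting $h$ into its real symmetric part $s$ and its imaginary antisymmetric part $t$, one finds $\omega=\sum_{j,k}s_{jk}(x)\,dx_{j}\wedge d\theta_{k}-\tfrac12\sum_{j,k}t_{jk}(x)\big(dx_{j}\wedge dx_{k}+d\theta_{j}\wedge d\theta_{k}\big)$. Imposing $d\omega=0$ forces $t$ to be \emph{constant} and forces $\partial_{x_{m}}s_{jk}$ to be totally symmetric; this integrability condition gives $s=\operatorname{Hess}\phi$ for an invariant $\phi$. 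The class of $\omega$ on the orbit lives in $H^{2}(T^{n})$ and is represented precisely by $-\tfrac12\sum_{j,k} t_{jk}\,d\theta_{j}\wedge d\theta_{k}$, so $\omega$ is exact on the orbit if and only if $t=0$, in which case $\omega=\sum_{j,k}s_{jk}\,dx_{j}\wedge d\theta_{k}=2i\partial\bar{\partial}\phi$. Finally, if the action is Hamiltonian then each $\iota_{\zeta_{l}}\omega$ is exact on $M$, hence on the orbit; since a nonzero $t$ makes some $\iota_{\zeta_{l}}\omega$ carry a nontrivial class in $H^{1}(T^{n})$, this forces $t=0$ and recovers the potential.

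The step I expect to be the main obstacle is the passage between the dense orbit and all of $M$: the Kähler potential $\phi$ is genuinely defined only on the orbit, whereas the moment map must be global, so the argument hinges on the codimension-two cohomology injectivity $H^{1}(M;\Rbb)\hookrightarrow H^{1}((\Cbb^{*})^{n};\Rbb)$ to convert orbit-exactness of the one-forms $\iota_{\zeta_{l}}\omega$ into exactness on $M$. The remaining computations—the normal form of an invariant closed $(1,1)$-form and the identification $s=\operatorname{Hess}\phi$—are routine once the coordinates are fixed.
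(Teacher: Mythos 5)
The paper offers no proof of this proposition: it is quoted verbatim from Guillemin \cite[Theorem 4.1]{Guil}, so there is no in-text argument to compare against. Judged on its own, your proof is correct and complete, and it is essentially the standard argument behind Guillemin's result. The normal form $\omega=\sum_{j,k}s_{jk}(x)\,dx_j\wedge d\theta_k-\tfrac12\sum_{j,k}t_{jk}\bigl(dx_j\wedge dx_k+d\theta_j\wedge d\theta_k\bigr)$ is right, closedness does force $t$ constant and $s=\operatorname{Hess}\phi$, and the identification of the obstruction to both exactness and Hamiltonianity with the class $-\tfrac12\sum t_{jk}[d\theta_j\wedge d\theta_k]\in H^2(T^n)$ (respectively $-\sum_k t_{lk}[d\theta_k]\in H^1(T^n)$ for $\iota_{\zeta_l}\omega$) is exactly the point of the theorem. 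Your key transfer step is also sound, though two small facts deserve to be made explicit: first, $\iota_{\zeta_l}\omega$ is closed on all of $M$ (not just on the orbit) because $d\iota_{\zeta_l}\omega=\mathcal{L}_{\zeta_l}\omega-\iota_{\zeta_l}d\omega=0$ by $T^n$-invariance, which is what licenses applying the injectivity of $H^1(M;\Rbb)\to H^1((\Cbb^*)^n;\Rbb)$; second, the complement of the open orbit really is a proper analytic subset of complex codimension $\ge 1$ — it is the zero locus of the holomorphic section $X_1\wedge\cdots\wedge X_n$ of $\Lambda^nTM$, where the $X_j$ generate the action — so loops in $M$ can indeed be pushed off it and the period argument goes through. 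With those two sentences added, your write-up is a self-contained replacement for the citation.
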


Fix once and for all a $\mathbb{Z}$-basis $(X_1,\ldots,X_n)$ of $\Gamma \subset \mathfrak{t}$. This in particular induces a background coordinate system $\xi=(\xi_1, \dots, \xi_n)$ on $\mathfrak{t}$.
Using the natural inner product on $\mathfrak{t}$ to identify $\mathfrak{t} \cong \mathfrak{t}^*$, we can also identify $\mathfrak{t}^* \cong \mathbb{R}^n$.
We denote the induced coordinates on $\mathfrak{t}^*$ by $y=(y_1,\ldots,y_n)$.
Let $(z_1, \dots, z_n)$ be the natural
coordinates on $(\mathbb{C}^{*})^{n}$ as an open subset of $\mathbb{C}^n$. There is a natural diffeomorphism $\textnormal{Log} \colon
(\mathbb{C}^{*})^{n} \to \mathfrak{t}\times T^n$ which provides a one-to-one correspondence between $T^n$-invariant smooth functions on
 $(\mathbb{C}^{*})^{n}$ and smooth functions on $\mathfrak{t}$. Explicitly,
\begin{equation}\label{diffeoo}
(z_1, \dots, z_n)\xmapsto{\operatorname{Log}}(\log(r_1), \dots, \log(r_n), \theta_1, \dots, \theta_n)=(\xi_{1},\ldots,\xi_{n},\,\theta_{1},\ldots,\theta_{n}),
\end{equation}
where $z_j = r_j e^{i \theta_j}$,\,$r_{j}>0$. Given a function $H(\xi)$ on $\mathfrak{t}$, we can extend $H$ trivially to $\mathfrak{t}\times T^n$ and pull back by Log to
obtain a $T^n$-invariant function on $(\mathbb{C}^{*})^{n}$. Clearly, any $T^n$-invariant function on $(\mathbb{C}^{*})^{n}$ can be written in this form.

Choose any branch of $\log$ and write $w = \log(z)$. Then clearly $w = \xi + i \theta$, where $\xi=(\xi_1,\ldots,\xi_n)$ are real coordinates on $\mathfrak{t}$
(or, more precisely, there is a corresponding lift of $\theta$ to the universal cover with respect to which this equality holds),
and so if $\phi$ is $T^n$-invariant and $\omega = 2i \p \bar{\partial} \phi$, then we have that
\begin{equation}\label{e:T5}
	\omega = 2i\frac{\partial^2 \phi}{ \p w_{i} \p\bar{w}_j} dw_i \wedge d\bar{w}_j = \frac{\partial^2 \phi}{ \p \xi_{i} \p\xi_{j}} d\xi_{i} \wedge d\theta_{j}.
\end{equation}
In this setting, the metric $g$ corresponding to $\omega$ is given on $\tfrak \times T^n$ by
\begin{equation*}
	g = \phi_{ij}(\xi)d\xi_{i} d\xi_{j} + \phi^{ij}(\xi)d\theta_{i} d\theta_{j},
\end{equation*}
and the moment map $\mu$ as a map $\mu: \tfrak \times T^n \to \tfrak^*$ is defined by the relation
		\begin{equation*}
		\langle \mu(\xi, \theta), b \rangle = \langle \nabla \phi(\xi), b \rangle
	\end{equation*}
for all $b \in \mathfrak{t}$, where $\nabla \phi$ is the Euclidean gradient of $\phi$.
The $T^n$-invariance of $\phi$ implies that it depends only on $\xi$ when considered a function on $\tfrak \times T^n$
via \eqref{diffeoo}. Since $\omega$ is K\"ahler, we see from \eqref{e:T5} that the Hessian of $\phi$ is positive-definite so that $\phi$ itself is strictly convex.
In particular, $\nabla \phi$ is a diffeomorphism onto its image.
Using the identifications mentioned above, we view $\nabla \phi$ as a map from $\mathfrak{t}$ into an open subset of $\tfrak^*$.

\subsubsection{Toric K\"ahler cones}
The following is sourced from \cite{MS06, MSY06, MSY08, P06}.

Recall Definition \ref{kahlercone} of a K\"ahler cone with metric completion $(C_{0},\,g_{0})$. Let $n:=\dim_{\mathbb{C}}C_{0}$.
As previously noted in Section \ref{conesss}, $(C_0,\,g_0)$ is a normal affine algebraic variety and the flow of the Reeb field $\xi$ generates an abelian one-parameter subgroup of the isometry group of the link of the cone, the closure of which is a real torus. If the flow of $\xi$ lies in a real torus $T^{n}$ of maximal possible rank (i.e., $n$) in the isometry group of the link of the cone, then the action of $T^{n}$ induces an effective isometric action on $(C_{0},\,g_{0})$ fixing the slices and the apex of $C_{0}$. 
$T^{n}$ then acts holomorphically on $C_{0}$ (cf.~\cite[Lemma 2.17 \& Remark 2.18]{hein-sun}) and so can be complexified to give the action of a complex torus $T_{\mathbb{C}}=(\mathbb{C}^{*})^{n}$ on $C_{0}$ with the apex of the cone as the unique fixed point. In this case, we say that $(C,\,g_{0},\,T_{\mathbb{C}})$ is a \emph{toric K\"ahler cone} and the corresponding K\"ahler form $\omega_{0}$ is invariant under the action of $T^{n}$.

Given the above situation,
write $\eta$ for the induced contact form on the link of the cone $C_{0}$ and 
let $\mathfrak{t}$ denote the Lie algebra of $T^{n}$. Fix a $\mathbb{Z}$-basis $X_{1},\ldots,X_{n}$ of $\mathfrak{t}$. As explained above, this allows us to identify $\mathfrak{t}$
with $\mathbb{R}^{n}$ with coordinates $(\xi_{1},\ldots,\xi_{n})$, and in turn, we obtain induced coordinates $(y_{1},\ldots,y_{n})$ on $\mathfrak{t}^{*}$.
We denote the integral lattice of $T^{n}$ by $\mathcal{N}\subset\mathfrak{t}$ and the dual weight
lattice by $\mathcal{M}\subset\mathfrak{t}^{*}$.
Recall that $\mathcal{N}:=\ker(\exp:\mathfrak{t}\to T^{n})$ and $\mathcal{M}:=\operatorname{Hom}_{\mathbb{Z}}(\mathcal{N},\,\mathbb{Z})$. Thanks to the
invariance of $\xi$ and $\omega_{0}$ under the action of $T^{n}$, the action of $T^{n}$ on $(C_{0},\,g_{0})$ is automatically Hamiltonian, and so we have a moment map $\mu:C_{0}\to\mathfrak{t}^{*}\cong\mathbb{R}^{n}$, the components of which are given by 
\[y_{i}:=\frac{r^{2}}{2}\eta(X_{i}).\]
Since $\xi\in\mathfrak{t}$ (meaning that $(C_{0},\,g_{0})$ is of ``Reeb type\footnote{Symplectic toric cones that are \emph{not} of Reeb type are rather uninteresting: they are either cones over
$S^{2}\times S^{1}$, cones over principle $T^{3}$-bundles over $S^{2}$, or cones over products
$T^{m}\times S^{m+2j-1}$, $m>1,\,j\geq0$ \cite{Ler03}.}'')
the image of the moment map is a strictly convex rational polyhedral cone
$\mu(C_{0})$ in $\mathbb{R}^{n}$
 with the apex of the cone mapping to the origin.
 This means in particular that the image $\mu(C_{0})$ is a subset of $\mathbb{R}^{n}$ of the form
\begin{equation}\label{momentimage}
\mathcal{C}^{*}:=\{y\in\mathbb{R}^{n}\,|\,\langle y,\,v_{i}\rangle\geq0,\,i=1,\ldots,d\},
\end{equation}
where the $v_{i}$ are the inward pointing normal vectors to the $d$ facets of the polyhedral cone. These normals are rational and so can be normalised to be primitive elements of $\mathbb{Z}^{n}$, i.e., they cannot be written as $nv$ where $1\neq n\in\mathbb{Z}$ and $v$ is also a vector with integral entries. We can also assume that this set is minimal in the sense that removing any vector $v_{i}$ in the definition \eqref{momentimage} of $\mathcal{C}^{*}$ changes $\mathcal{C}^{*}$. 

The condition that $\mathcal{C}^{*}$ be strictly convex means that $\mathcal{C}^{*}$ is a cone over a convex polytope of dimension $(n-1)$. $\mathcal{C}^{*}$ is furthermore ``good'' in the following sense.

\begin{definition}[{\cite[Definition 2.17]{Ler03}}] \label{def:goodcone}
A rational polyhedral cone $\mathcal{C}^{*}$ with $d$ facets of the form
\eqref{momentimage} with
primitive integral minimal inward normals $\mathcal{C}^{*}(1)=\{v_{1},\ldots,v_{d}\}$
is \emph{good} if given any subset $\set{v_{i_1},\ldots,v_{i_k}}$ of these normals such that
\begin{equation} \label{eq:codimkface}
\set{x\in\mathcal{C}^{*}\,|\,\langle y,\,v_{i_j}\rangle = 0\quad\textrm{for all $j = 1, \dots, k$}}
\end{equation}
defines a non-empty face of $\mathcal{C}^{*}$,
the set $\{v_{i_1}, \dots, v_{i_k}\}$ is linearly independent over $\Zbb$ and 
\[ \set{\sum_{j=1}^k a_j v_{i_j} \mid a_j \in \Rbb} \cap \Ncal = \set{\sum_{j=1}^k m_j v_{i_j} \mid m_j \in \Zbb}. \] 
\end{definition}
This is a generalisation of the Delzant condition to symplectic toric cones. 
The \emph{dual cone} $\mathcal{C}\subset\mathfrak{t}$ of $\mathcal{C}^{*}$ is defined by $$\mathcal{C}:=\{\xi\in\mathbb{R}^{n}\,|\,\langle \xi,\,y\rangle\geq0\quad\textrm{for all $y\in\mathcal{C}^{*}$}\}\subset\mathfrak{t}.$$
This is also a convex rational polyhedral cone by
Farkas’ Theorem. Every face $\tau^{*} \subset \Ccal^{*}$ given by \eqref{eq:codimkface} corresponds bijectively to a face of $\Ccal$ defined by 
\begin{equation} \label{eq:facecorr}
\tau := \Rbb_{\geq 0 }v_{i_1} + \dots + \Rbb_{\geq 0} v_{i_k}
\end{equation}
in such a way that $\dim \tau + \dim \tau^{*} = n$ (cf.~\cite[Proposition 1.2.10]{CLS11}).  
Moreover,  the set $\set{v_{i_1}, \dots, v_{i_k}}$ is exactly the minimal set of primitive generators of $\tau$. 
With this, we have the following characterization of good cones. 

\begin{prop} \label{prop:goodequiv}
Given a rational polyhedral cone $\mathcal{C}^{*}$ of the form \eqref{momentimage}, the following are equivalent. 
\begin{enumerate}
    \item \label{goodmoment} The cone $\mathcal{C}^{*}$ is good.
    \item \label{primitivegenerators} For every proper face $\tau \subsetneq \Ccal$, the minimal set of primitive generators $\set{v_{i_1}, \ldots, v_{i_k}}$ forms part of a lattice basis of $\Ncal$. Namely, the set is linearly independent over $\Zbb$ and 
 \[ \set{\sum_{j=1}^k a_j v_{i_j} \mid a_j \in \Rbb} \cap \Ncal = \set{\sum_{j=1}^k m_j v_{i_j} \mid m_j \in \Zbb}. \]
     \item \label{uniquesingularity} $C = C_0 \backslash \set{0}$ is a toric Kähler cone, i.e., $C_0$ has a unique isolated singularity at the apex.   
    \item \label{smoothlink} For any given radius function $r$, the link $L = \set{r=1}$ is a smooth manifold. 
\end{enumerate}
\end{prop}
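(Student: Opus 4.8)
The plan is to establish the four conditions as a single cycle: \ref{goodmoment} $\Leftrightarrow$ \ref{primitivegenerators}, then \ref{primitivegenerators} $\Leftrightarrow$ \ref{uniquesingularity}, then \ref{uniquesingularity} $\Rightarrow$ \ref{smoothlink}, and finally \ref{smoothlink} $\Rightarrow$ \ref{goodmoment}. The first link will be a combinatorial restatement, the second is the algebraic heart of the matter, and the cycle will be closed using Lerman's theorem.

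First I would treat the equivalence \ref{goodmoment} $\Leftrightarrow$ \ref{primitivegenerators} as a pure translation. For a linearly independent set of integral vectors, the saturation identity appearing in both Definition \ref{def:goodcone} and \ref{primitivegenerators} is equivalent, by Smith normal form, to the vectors forming part of a $\Zbb$-basis of $\mathcal{N}$; so both conditions say ``the relevant normals extend to a lattice basis.'' The face correspondence \eqref{eq:facecorr} recalled before the proposition matches the two index sets: a subset $\set{v_{i_1},\dots,v_{i_k}}$ cutting out a nonempty face $\tau^{*}$ of $\mathcal{C}^{*}$ through \eqref{eq:codimkface} is exactly the minimal set of primitive generators of the dual face $\tau=\Rbb_{\geq0}v_{i_1}+\dots+\Rbb_{\geq0}v_{i_k}$ of $\mathcal{C}$, and $\tau^{*}\neq\set{0}$ precisely when $\tau\subsetneq\mathcal{C}$. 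Quantifying goodness over the nonempty faces of $\mathcal{C}^{*}$ other than its apex is therefore identical to quantifying \ref{primitivegenerators} over the proper faces of $\mathcal{C}$, the apex $\set{0}\subset\mathcal{C}^{*}$ (which corresponds to the full cone $\mathcal{C}$) being exactly the locus where a cone singularity is permitted to sit.

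The substantive step is \ref{primitivegenerators} $\Leftrightarrow$ \ref{uniquesingularity}, which I would route through algebraic toric geometry. With the conventions of Section \ref{toric-geom}, the complex cone $C_{0}$ is $T_{\Cbb}$-equivariantly the affine toric variety $U_{\mathcal{C}}=\Spec\Cbb[\mathcal{C}^{*}\cap\mathcal{M}]$ attached to the dual cone $\mathcal{C}\subset\mathfrak{t}$ (cf.~Theorem \ref{t:affine} and \cite[\S3.1]{vC11}). By the orbit--cone correspondence its torus orbits $O(\tau)$ are indexed by the faces $\tau\preceq\mathcal{C}$, with $\dim O(\tau)=n-\dim\tau$; the apex is the unique fixed point $O(\mathcal{C})$, so $C_{0}\setminus\set{0}$ is the union of the orbits over the proper faces $\tau\subsetneq\mathcal{C}$. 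The toric smoothness criterion \cite{CLS11} then states that $U_{\mathcal{C}}$ is smooth along $O(\tau)$ precisely when $\tau$ is generated by part of a $\Zbb$-basis of $\mathcal{N}$. Consequently $C_{0}\setminus\set{0}$ is smooth if and only if every proper face of $\mathcal{C}$ is smooth in this sense, which is exactly \ref{primitivegenerators}; equivalently, the singular locus of $C_{0}$ is contained in the apex, which is \ref{uniquesingularity}. The main obstacle will be the bookkeeping of this step: keeping the dualization $\tau\leftrightarrow\tau^{*}$ straight, confirming that the variety's apex corresponds to the full cone $\mathcal{C}$ (so that its potential non-smoothness is precisely the permitted singularity), and correctly invoking the identification of the K\"ahler cone with $U_{\mathcal{C}}$.

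Finally I would close the cycle. The implication \ref{uniquesingularity} $\Rightarrow$ \ref{smoothlink} is immediate: if $C_{0}\setminus\set{0}$ is a smooth variety then it is a complex manifold on which $r$ is smooth with $dr\neq0$, so the regular level set $L=\set{r=1}$ is a smooth manifold. For \ref{smoothlink} $\Rightarrow$ \ref{goodmoment} I would appeal to Lerman's theorem \cite{Ler03}, according to which the contact link cut out by the moment cone $\mathcal{C}^{*}$ is a genuine smooth manifold (as opposed to an orbifold or a more singular quotient) if and only if $\mathcal{C}^{*}$ is good; routing the converse direction this way avoids the delicate passage from ``topological manifold'' back to ``smooth variety'' at a singular point. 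Together with the two bidirectional steps above, this closes the cycle and yields the equivalence of all four conditions.
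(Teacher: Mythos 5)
Your proposal is correct, and for the two substantive equivalences it coincides with the paper's argument: \ref{goodmoment} $\Leftrightarrow$ \ref{primitivegenerators} is the same translation through the face correspondence \eqref{eq:facecorr}, and \ref{primitivegenerators} $\Leftrightarrow$ \ref{uniquesingularity} is the same appeal to the toric smoothness criterion of \cite[Theorem 3.1.19]{CLS11} applied to the fan of proper faces of $\Ccal$. The only divergence is how condition \ref{smoothlink} is wired in. The paper simply notes that \ref{uniquesingularity} $\Leftrightarrow$ \ref{smoothlink} is immediate from Definition \ref{cone}: since $C = C_0 \setminus \set{0}$ is $\Rbb^{+}\times L$ with the product cone structure, $C$ is smooth if and only if $L$ is. You instead prove \ref{uniquesingularity} $\Rightarrow$ \ref{smoothlink} by the regular value theorem and close the cycle with \ref{smoothlink} $\Rightarrow$ \ref{goodmoment} via Lerman's classification of compact contact toric manifolds by good cones. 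That route is valid (the link does carry the contact toric structure with moment cone $\Ccal^{*}$, so Lerman applies), but it imports a substantially heavier theorem to replace a one-line observation, and it makes the chain depend on Lerman's result in a way the paper's proof does not; the elementary product-structure argument is both shorter and self-contained, so you may want to adopt it for that link of the chain.
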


\begin{proof}
The equivalence \ref{goodmoment} $\Leftrightarrow$ \ref{primitivegenerators} follows from the faces correspondence \eqref{eq:facecorr} and Definition \ref{def:goodcone}, whereas \ref{uniquesingularity} $\Leftrightarrow$ \ref{smoothlink} is clear by Definition \ref{cone}. It remains to show that \ref{primitivegenerators} $\Leftrightarrow$ \ref{uniquesingularity}. From the algebro--geometric point of view, \( C = C_0 \backslash \set{0} \) is a toric variety defined by the fan consisting of all proper faces of $\Ccal$ and its apex. By \cite[Theorem 3.1.19]{CLS11}, $C$ is smooth if and only if every proper face $\tau$ is smooth in the sense of \cite[Definition 1.2.16]{CLS11}; that is, $\set{v_{i_1}, \dots, v_{i_k}}$ forms part of a lattice basis of $\Ncal$. This allows us to conclude the result.
\end{proof}

Now, the torus fibration is non-degenerate over the interior $\mathcal{C}^{*}_{0}$ of
$\mathcal{C}^{*}$, and so the $T^{n}$-action is free on the corresponding subset $\mu^{-1}(\mathcal{C}^{*}_{0})$. Moreover, the latter is a Lagrangian torus fibration over $\mathcal{C}^{*}_{0}$.
The boundary $\partial\mathcal{C}^{*}$ of $\mathcal{C}^{*}$ then describes $C_{0}$ as a compactification of $\mathcal{C}^{*}_{0}\times T^{n}$. More precisely, the normal vector $v_{i}$ to a facet $\{\langle y,\,v_{i}\rangle=0\}$ determines a one-cycle in $T^{n}$ which collapses over this facet. Thus, each facet corresponds to a toric symplectic submanifold of $C_{0}$ of real codimension two. Similarly, lower-dimensional facets of the cone correspond to higher codimension toric symplectic submanifolds. The fact that the polytope is good  guarantees that this compactification yields a cone over a smooth manifold.

The image of the link of the cone in $\mathcal{C}^{*}$ is given by 
$$\left\{y\in\mathcal{C}^{*}\,|\,2\langle y,\,\xi\rangle=1\right\},$$
as is easily seen by evaluating $\mu(\{r=1\})$ on $\xi$. This is called the \emph{characteristic hyperplane} and intersects $\mathcal{C}^{*}$ to form a compact $n$-dimensional polytope.  Since $\mu(\xi)=\frac{r^{2}}{2}>0$ on $C_{0}$, this immediately implies that $\xi\in\mathcal{C}_{0}$, the interior of $\mathcal{C}$. Thus, the data of a toric K\"ahler cone gives rise to a good strictly convex rational polyhedral cone $\mathcal{C}^{*}\subset\mathfrak{t}^{*}$, together with an element $\xi\in\mathcal{C}_{0}\subset\mathfrak{t}$. The converse is also true in the sense that the latter gives rise to a toric K\"ahler cone with Reeb field generated by $\xi$ whose moment map image is precisely $\mathcal{C}$; cf.~\cite{Ler03, MSY06}.

Given the data of $\mathcal{C}^{*}$ and $\xi\in\mathcal{C}_{0}$ as before, we can find the defining equations of the corresponding toric K\"ahler cone in some affine space in the following way. Let $\mathcal{S}_{\mathcal{C}^{*}}=\mathcal{C}^{*}\cap\mathcal{M}$, the lattice cone of $\mathcal{C}^{*}$. This is an abelian semi-group which, by Gordan's lemma, is finitely generated, i.e., there are a finite number of elements 
$\alpha_{1},\ldots,\alpha_{N_0}\in\mathcal{S}_{\mathcal{C}^{*}}$ such that every element of $\mathcal{S}_{\mathcal{C}^{*}}$ is of the form
$$a_{1}\alpha_{1}+\cdots+a_{N_0}\alpha_{N_0},\qquad a_{A}\in\mathbb{N}.$$ 
We first find the minimal set of generators $\Hcal$, which goes by the name of \emph{Hilbert basis} in the literature \cite{CLS11}. The embedding of $(\Cbb^{*})^n$ into $(\Cbb^{*})^{N_0}$ given by $\Hcal$  is determined by the coordinate functions
    \[ Z_j := z_1^{\alpha_{1j}} \dots z_n^{\alpha_{nj}}, \]
    where \( \alpha_j = (\alpha_{1j}, \dots, \alpha_{nj} ), \; j = 1, \dots, N_0 \) run over the elements of \( \Hcal \).  We next write down all 
the linear relations among
the generators. Suppose that there are $S$ such relations. 
Then the cone is defined as an intersection of $S$ binomial varieties, i.e., varieties defined by a difference of two monomials 
in $\mathbb{C}^{N_0}$, where the $S$ linear relations translate to  
multiplicative relations among the respective complex variables. For example, the intersection is contained in a quadric given by $Z_i^a Z_j^b - Z_k^c Z_l^d$ if and only if  
\begin{equation} 
a \alpha_i + b \alpha_j  - c \alpha_k - d \alpha_l = 0. 
\end{equation}
The computer code in Appendix \ref{s:A1} (and Figure \ref{figure_macaulay2_cfo13}) provides an explicit expression of the toric ideal for a given toric Calabi--Yau cone. 

\subsubsection{Toric Gorenstein cones}

We begin with the following definition. 

\begin{definition}\label{gorenstein}
A K\"ahler cone $C$ is said to be \emph{Gorenstein} if 
the singular normal affine cone $C_0$ is Cohen--Macaulay and
the canonical sheaf \( K_{C_0} \) of $C_{0}$ is a locally free sheaf of rank $1$.

A strictly convex rational polyhedral cone $\Ccal \subset \tfrak\cong\Rbb^n$ is called a \textit{Gorenstein cone} if there is a (necessarily unique) primitive element $\gamma \in \Mcal\subset\mathfrak{t}^{*}$ such that
\[ \sprod{\gamma,\,v} = 1\qquad \textrm{for all $v \in \mathcal{C}(1)$}, \]
where $\Ccal(1)$ is the minimal set of primitive generators of $\Ccal$. 
\end{definition}

Recall from Theorem \ref{t:affine} that a K\"ahler cone (including the apex) has the structure of a normal affine variety. If the cone is in addition toric, then the apex is a rational singularity \cite[Theorem 11.4.2]{CLS11}, hence Cohen--Macaulay \cite[Proposition 1.17]{FZ03}. With this in mind, the relationship between the two notions is given by the following theorem.
\begin{theorem}[{\cite[Fact (1.2)]{Alt94}}]\label{goren-equiv}
Let $(C,T_{\Cbb})$ be a toric Kähler cone
corresponding to the good rational polyhedral cone $\mathcal{C}^{*}$ with dual $\Ccal$. Then the following are equivalent. 
\begin{enumerate}
    \item $(C,T_{\Cbb})$ is Gorenstein. 
    \item $\Ccal$ is a Gorenstein cone. 
\end{enumerate}
\end{theorem}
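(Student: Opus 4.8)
The plan is to reduce the equivalence to the standard toric criterion for a Weil divisor to be Cartier, applied to the canonical divisor. As recalled immediately before the statement, a toric K\"ahler cone has a rational, hence Cohen--Macaulay, singularity at the apex, so the Cohen--Macaulay hypothesis in Definition \ref{gorenstein} is automatic. Consequently, the Gorenstein property of $(C,T_{\Cbb})$ is equivalent to the canonical sheaf $K_{C_0}$ being locally free of rank one, i.e.\ to the canonical Weil divisor being Cartier; this is the only content I need to extract.

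The key identification is that $C_0 = \Spec \Cbb[\Ccal^{*}\cap\Mcal]$ is exactly the affine toric variety attached to the full-dimensional cone $\Ccal\subset\tfrak$, whose rays have primitive generators $\Ccal(1) = \set{v_1,\ldots,v_d}$; this is the face correspondence \eqref{eq:facecorr} with $k=1$. First I would recall from \cite{CLS11} that the canonical divisor of this toric variety is $K_{C_0} = -\sum_{i=1}^{d} D_{v_i}$, where $D_{v_i}$ is the torus-invariant prime divisor associated with the ray $\Rbb_{\geq 0}v_i$, and that $\mathcal{O}_{C_0}(K_{C_0})$ is the dualizing sheaf. By the Cartier criterion for affine toric varieties \cite{CLS11}, the divisor $-\sum_i D_{v_i}$ is Cartier on $U_{\Ccal}$ if and only if there exists $\gamma\in\Mcal$ whose support-function data agree with it on every ray, namely $\sprod{\gamma,v_i}=1$ for all $i=1,\ldots,d$.

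This last condition is precisely the defining property of $\Ccal$ being a Gorenstein cone in Definition \ref{gorenstein}, which closes the circle. For $(1)\Rightarrow(2)$, Cartierness of $K_{C_0}$ produces such a $\gamma$, and I would observe that $\gamma$ is automatically primitive: if $\gamma = k\gamma'$ with $\gamma'\in\Mcal$ and $k\geq 2$, then $\sprod{\gamma',v_i} = 1/k\notin\Zbb$, a contradiction. For $(2)\Rightarrow(1)$, the element $\gamma$ furnishes exactly the local data making $-\sum_i D_{v_i}$ Cartier, so $K_{C_0}$ is a line bundle, and combined with Cohen--Macaulayness this yields the Gorenstein property. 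Uniqueness of $\gamma$ (asserted in Definition \ref{gorenstein}) follows because $\Ccal^{*}$ is strictly convex, so $\Ccal$ is full-dimensional and the $v_i$ span $\tfrak$, forcing the linear system $\sprod{\gamma,v_i}=1$ to have at most one solution.

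The only genuinely delicate point is the passage from ``locally free of rank one / Cartier'' to the single linear condition on the $v_i$: here one must invoke that on an affine toric variety $U_{\Ccal}$ a torus-invariant divisor is Cartier if and only if it is Cartier at the distinguished point, which is exactly the existence of one $\gamma\in\Mcal$ realizing the support function on all of $\Ccal$. Everything else is bookkeeping with the toric dictionary of \cite{CLS11}, together with keeping track of the sign convention $K_{C_0} = -\sum_i D_{v_i}$ so that the pairing equals $+1$ rather than $-1$.
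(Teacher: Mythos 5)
Your argument is correct. Note, however, that the paper does not prove this statement at all: it is quoted directly from Altmann \cite[Fact (1.2)]{Alt94}, with the only in-text contribution being the observation (which you also use) that the Cohen--Macaulay hypothesis in Definition \ref{gorenstein} is automatic because a toric singularity is rational. What you have written is therefore a self-contained proof filling in that citation, and it is the standard one: identify $C_0=\Spec\Cbb[\Ccal^{*}\cap\Mcal]$ with the affine toric variety of $\Ccal\subset\tfrak$ whose rays are generated by $\Ccal(1)=\set{v_1,\ldots,v_d}$, use $K_{C_0}=-\sum_i D_{v_i}$, and apply the criterion that a torus-invariant divisor on an affine toric variety is Cartier precisely when a single $\gamma\in\Mcal$ realizes its support function, which here reads $\sprod{\gamma,v_i}=1$ for all $i$ --- exactly the Gorenstein condition on $\Ccal$. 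Your treatment of the side issues is also right: primitivity of $\gamma$ is forced since $\sprod{\gamma,v_i}=1$ cannot be divisible, and uniqueness follows because strict convexity of $\Ccal^{*}$ makes $\Ccal$ full-dimensional so the $v_i$ span $\tfrak$. The sign bookkeeping ($a_i=-1$ giving $\sprod{\gamma,v_i}=+1$) is handled correctly, which is the only place such an argument typically goes wrong.
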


\begin{remark} \label{rmk:canotrivial}
In the situation of Theorem \ref{goren-equiv}, the canonical sheaf $K_{C_{0}}$, being locally trivial and of rank $1$, defines a line bundle, the canonical bundle of the singular cone $C_{0}$. As it turns out, this line bundle admits a 
$T^{n}$-invariant global trivializing section, explicit on the dense $T_{\mathbb{C}}$-orbit and unique up to scaling; see \cite[Proof of Proposition 3.2]{Ber23} for details. Such a section is called a \emph{holomorphic volume form}. We consider cones with trivial canonical bundle rather than torsion canonical bundle (so-called ``$\mathbb{Q}$-Gorenstein'' cones) because only the former property is preserved under general deformations when the cone singularity is Gorenstein; cf.~\cite[Table 9.1.3]{Ishii}.
\end{remark}

The data of a Gorenstein cone reduces to that of a ``toric diagram'', which we define next. 
\begin{definition} \label{def:toric_diagram}
A polytope $\Pcal \subset \Rbb^{n-1}$ is called the \textit{toric diagram} associated with a toric Gorenstein Kähler cone if:
\begin{enumerate}
[label=(\Alph*{})]
\item \label{lattice} $\Pcal$ is an $(n-1)$-dimensional convex affine simple polytope with integral vertices. Here, ``simple'' means that every vertex is the intersection of precisely $(n-1)$ facets. 
\item \label{properface} The proper faces of $\Pcal$ do not contain any relative interior lattice point. 
\end{enumerate} 
\end{definition}

\begin{remark} \label{rmk:toric_diagram}
A polytope $\Pcal$ given by \ref{lattice} satisfies \ref{properface} if and only if the dual cone $\Ccal^{*}$ is good, i.e., if and only if $C$ has smooth link by Proposition \ref{prop:goodequiv}. In \cite{MSY06}, a toric diagram is understood in the more general sense of \ref{lattice}; see \cite[Figure 3]{MSY06}. Toric diagrams in this sense give rise to cones over a possibly singular link, resulting in possibly infinite-dimensional deformation theory; see Theorem \ref{thm:rigidity}. 
\end{remark}

We next describe the resulting correspondence between toric diagrams and toric Gorenstein K\"ahler cones. 
Given a polytope $\Pcal \subset \Rbb^{n-1}$ satisfying \ref{lattice}, the cone 
\begin{equation} \label{eq:gorcone}
\Ccal = \textnormal{Cone}(\Pcal \times \set{1}) = \set{\lambda(v,1)\,|\, v \in \Pcal,\,\lambda \in \Rbb_{\geq 0}}
\end{equation} 
is clearly Gorenstein. The dual $\Ccal^{*}$ is good thanks to \ref{properface}, therefore defines a toric Gorenstein Kähler cone. Indeed, let $(v_1,1),\ldots, (v_k,1)$ be the vertices of a proper face $\Fcal \subsetneq \Pcal$ and $(v_0,1) \in \Fcal, v_0 \in \Zbb^{n-1}$ a relative interior point. Then since $\Ccal^{*}$ is good and $(v_0,1)$ lies in the convex hull of the vertices, there exist $m_1, \ldots,m_k \in \Nbb$ such that 
$(v_0,1) = m_1(v_1,1) + \dots + m_k(v_k,1)$. In particular, $1 = m_1 + \dots + m_k$, so at least one $m_j$ must equal $1$ and $(v_0,1) = (v_j,1)$, a contradiction. 

Conversely, every Gorenstein cone $\Ccal \subset \Rbb^n$ can be put in the form \eqref{eq:gorcone}. To see this, observe that the element $\gamma$ can be written as
$\gamma = (0,\dots,0,1)$ after a linear transformation by an element in $\SL(n,\Zbb)$ \cite[Proposition 3.3]{CFO08}. The vertices of 
\begin{equation} \label{eq:gorpoly} \Pcal = \set{v \in \Ccal \mid \sprod{\gamma,v} = 1}
\end{equation}
in these new linear coordinates are then of the form $\set{(v,1)\,|\, v \in \Zbb^{n-1}}$. Thus, $\Ccal$ is given by
\[ \Ccal = \textnormal{Cone}(\Pcal) = \set{ \lambda(v,1)\,|\, (v,1) \in \Pcal,\,\lambda \in \Rbb_{\geq 0}}.\]
A similar argument as above then shows that the polytope \eqref{eq:gorpoly} satisfies \ref{properface} if $\Ccal^{*}$ is good.

The above discussion can be summarized as follows.
\begin{prop}[{\cite[Theorem 1.2]{CFO08}}] \label{proposition_combinatorial_qgorenstein}
Given a complex torus $T_{\Cbb} = (\Cbb^{*})^n$, the following data are equivalent and determine a toric Gorenstein K\"ahler cone $(C,\,T_{\Cbb})$, up to 
$T_{\mathbb{C}}$-equivariant biholomorphisms fixing the apex. 
\begin{enumerate} 
    \item A good rational polyhedral strictly convex cone $\mathcal{C}^{*} \subset \mathfrak{t}^{*} \cong\mathbb{R}^{n}$ such that $\Ccal$ is Gorenstein. 
    \item A toric diagram $\Pcal \subset \Rbb^{n-1}$ such that $\Ccal = \textnormal{Cone}(\Pcal \times \set{1})$ is Gorenstein with dual $\mathcal{C}^{*}$ good.
\end{enumerate}
\end{prop}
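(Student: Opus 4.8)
The plan is to read the proposition as a bookkeeping statement that organizes the correspondence already developed in the paragraphs preceding it into two separate assertions: a combinatorial bijection between the two kinds of data, and the claim that either datum pins down the toric Gorenstein K\"ahler cone $(C,T_\Cbb)$ up to $T_\Cbb$-equivariant biholomorphism fixing the apex. I would present the bijection as a pair of mutually inverse constructions and then invoke the structural results of this section to promote it to a statement about isomorphism classes; the underlying content is \cite[Theorem 1.2]{CFO08}.

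For the implication (2) $\Rightarrow$ (1), I would start from a toric diagram $\Pcal \subset \Rbb^{n-1}$ and form $\Ccal = \textnormal{Cone}(\Pcal \times \set{1})$ as in \eqref{eq:gorcone}. This cone is manifestly Gorenstein, the distinguished primitive functional being $\gamma = (0,\dots,0,1)$, which evaluates to $1$ on each generator $(v_i,1)$ with $v_i$ a vertex of $\Pcal$. That the dual $\Ccal^{*}$ is good then follows from condition \ref{properface} of Definition \ref{def:toric_diagram} via Remark \ref{rmk:toric_diagram}; this is the content of the displayed argument in the discussion above, showing that a relative interior lattice point on a proper face would force an equality $1 = \sum_j m_j$ with all $m_j \in \Nbb$, and hence a contradiction. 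By Proposition \ref{prop:goodequiv} the good cone $\Ccal^{*}$ is then the moment cone of a toric K\"ahler cone with smooth link, and this cone is Gorenstein by Theorem \ref{goren-equiv}.

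For the implication (1) $\Rightarrow$ (2), I would start from a good cone $\Ccal^{*}$ whose dual $\Ccal$ is Gorenstein, extract the necessarily unique primitive $\gamma \in \Mcal$ with $\sprod{\gamma,v} = 1$ for all $v \in \Ccal(1)$, and normalise $\gamma = (0,\dots,0,1)$ by an element of $\SL(n,\Zbb)$, as in \eqref{eq:gorpoly}. Slicing $\Ccal$ at height one then produces a polytope $\Pcal \subset \Rbb^{n-1}$ whose vertices are precisely the primitive generators $v_i$ placed at height $1$, and which therefore lie in $\Zbb^{n-1}$. I would then verify that $\Pcal$ is a toric diagram in the sense of Definition \ref{def:toric_diagram}: integrality and convexity are built into the construction, while the face-theoretic condition \ref{lattice} and the no-interior-point condition \ref{properface} are exactly the reformulations of the goodness of $\Ccal^{*}$ recorded in Proposition \ref{prop:goodequiv} and Remark \ref{rmk:toric_diagram}. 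The two constructions are visibly inverse to one another, up to the $\SL(n,\Zbb)$-normalisation of $\gamma$ and a lattice translation inside the height-one hyperplane.

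It remains to upgrade this combinatorial bijection to the asserted statement about $T_\Cbb$-equivariant biholomorphism classes, and this is the step I expect to be the main obstacle. Here I would argue that the underlying complex variety depends only on $\Ccal^{*}$, and not on the auxiliary Reeb vector $\xi \in \Ccal_0$ used to build a metric: the affine coordinate ring is $\Cbb[\Ccal^{*} \cap \Mcal]$, finitely generated by the Hilbert basis $\Hcal$ via Gordan's lemma as recalled before Definition \ref{gorenstein}, so distinct admissible $\xi$ yield the same $C_0$. A lattice automorphism identifying two copies of $\Ccal^{*}$ induces a $T_\Cbb$-equivariant biholomorphism fixing the apex, and on the toric-diagram side the lattice automorphisms fixing $\gamma$ are exactly the $\SL_{n-1}(\Zbb)$-transformations together with translations; combined with Theorem \ref{goren-equiv}, which matches the algebraic Gorenstein property of $C$ with the combinatorial Gorenstein property of $\Ccal$, this shows that both data determine one and the same equivariant-biholomorphism class. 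The delicate points are precisely to check carefully that the complex-analytic type is insensitive to the choice of $\xi$, and to keep track of the $\SL(n,\Zbb)$- and translation-ambiguities so that the bijection descends to a genuine bijection of isomorphism classes rather than merely of normalised representatives.
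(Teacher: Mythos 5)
Your proposal is correct and follows essentially the same route as the paper, which establishes this proposition precisely by the two constructions you describe: passing from a toric diagram to $\Ccal=\textnormal{Cone}(\Pcal\times\{1\})$ with $\gamma=(0,\dots,0,1)$, and conversely normalising $\gamma$ by an element of $\SL(n,\Zbb)$ and slicing the Gorenstein cone at height one, with goodness of $\Ccal^{*}$ matched to condition \ref{properface} exactly as in the displayed argument you cite. Your closing paragraph on the independence of the complex structure from the choice of $\xi$ and on the $\SL_{n-1}(\Zbb)$- and translation-ambiguities is a slight elaboration of what the paper leaves implicit, but it is consistent with the surrounding discussion and with Theorem \ref{goren-equiv}.
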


We conclude this subsection with an example.
\begin{example} \label{example_reflexive_cone}
A Gorenstein cone \( \mathcal{C} \subset \Rbb^n \) is called \emph{reflexive} if its dual \( \mathcal{C}^{*} \) is also Gorenstein. A Gorenstein cone \( \mathcal{C} \) is reflexive if and only if its toric diagram $\mathcal{P}$ is reflexive \cite{BB97}. Namely, $\mathcal{P}$ contains only one lattice point in its interior and the \emph{dual polytope} $\mathcal{P}^{*}$ of 
$\mathcal{P}$, defined by
\[ \mathcal{P}^{*} := \set{u \in \Rbb^{n-1} \mid \sprod{u,\,v} \geq -1\quad\textrm{for all $v \in \mathcal{P}$}},\]
is also a lattice polytope. 
In particular, $\mathcal{C}$ is good (i.e., $C$ is smooth) if and only if \( \mathcal{P}\) is smooth in the sense that the primitive lattice edges based at each vertex form a basis of $\Zbb^{n-1}$; i.e., $\mathcal{P}$ is a Delzant polytope.

There are precisely five isomorphism classes of smooth three-dimensional reflexive Gorenstein cones, corresponding to the five smooth reflexive polygons \( \Qcal_1, \Qcal_2, \Qcal_3, \Qcal_4, \Qcal_5 \) of the del Pezzo surfaces \( \linebreak\Pbb^1 \times \Pbb^1, \Pbb^2, \textnormal{Bl}_{p_1} (\Pbb^2), \textnormal{Bl}_{p_1,\,p_2}(\Pbb^2), \textnormal{Bl}_{p_1,\,p_2,\,p_3} (\Pbb^2) \), respectively (see Figure \ref{figure_delpezzo_family} for their toric diagrams). 
\end{example}

\subsubsection{Toric Calabi--Yau cones}
For a toric K\"ahler cone, being Gorenstein and Calabi--Yau are equivalent properties.
\begin{theorem}[{\cite[Theorem 1.2]{CFO08} \& \cite[Theorem 1.2]{FOW}}]\label{equiv}
Let $(C, g_0, T_{\Cbb})$ be a toric Kähler cone. 
The following are equivalent. 
\begin{enumerate}
\item $(C,g_0, T_{\Cbb})$ is a toric Calabi--Yau cone in the sense of Definition \ref{d:cycone}.
    \item $(C_0, T_{\Cbb})$ is Gorenstein in the sense of Definition \ref{gorenstein}. 
\end{enumerate}
\end{theorem}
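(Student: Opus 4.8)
The plan is to reduce the equivalence to the combinatorial Gorenstein condition already isolated in Theorem \ref{goren-equiv}, and then to treat the two implications separately, confining the analytic content to one of them. By Theorem \ref{goren-equiv}, condition (2) is equivalent to the assertion that the dual cone $\Ccal$ is a \emph{Gorenstein cone}, i.e.\ that there is a primitive $\gamma \in \Mcal$ with $\sprod{\gamma,\,v} = 1$ for every $v \in \Ccal(1)$; moreover, as recalled just before Theorem \ref{goren-equiv}, $C_0$ is automatically Cohen--Macaulay since a toric cone has rational (hence Cohen--Macaulay) singularities. Thus it suffices to relate condition (1) to the existence of such a $\gamma$.

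For (1) $\Rightarrow$ (2): if $(C,g_0,J_0,\Omega_0)$ is Calabi--Yau, then by Definition \ref{d:cycone} the canonical bundle $K_C$ is trivial with nowhere-vanishing section $\Omega_0$. Working on the dense orbit $(\Cbb^*)^n$ with coordinates $(z_1,\dots,z_n)$, the homogeneity of $\Omega_0$ under the holomorphic cone action together with $T^n$-invariance (up to a phase) makes $\Omega_0$ a $T_\Cbb$-eigenform, forcing $\Omega_0 = c\,z^{\gamma}\,\tfrac{dz_1}{z_1}\wedge\cdots\wedge\tfrac{dz_n}{z_n}$ for some $\gamma\in\Mcal$ and $c\in\Cbb^*$. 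Since the standard form $\tfrac{dz_1}{z_1}\wedge\cdots\wedge\tfrac{dz_n}{z_n}$ represents $K = -\sum_i D_i$ and $z^{\gamma}$ contributes $\sum_i \sprod{\gamma,\,v_i}\,D_i$, the order of $\Omega_0$ along the toric prime divisor $D_i$ attached to the ray $v_i$ of $\Ccal$ is $\sprod{\gamma,\,v_i}-1$. The requirement that $\Omega_0$ have neither zero nor pole along any $D_i$ is exactly $\sprod{\gamma,\,v_i}=1$ for all $i$ (which also forces $\gamma$ primitive), so $\Ccal$ is a Gorenstein cone and $K_{C_0}$ is locally free of rank $1$; combined with the automatic Cohen--Macaulayness, this is (2).

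For (2) $\Rightarrow$ (1): given the combinatorial $\gamma$ supplied by Theorem \ref{goren-equiv}, running the computation above in reverse shows that $z^{\gamma}\,\tfrac{dz_1}{z_1}\wedge\cdots\wedge\tfrac{dz_n}{z_n}$ extends to a nowhere-vanishing $T_\Cbb$-equivariant holomorphic section $\Omega_0$ of $K_{C_0}$, so $K_C$ is trivial. It remains to produce a Ricci-flat K\"ahler cone metric compatible with the torus action and satisfying $\omega_0^n = i^{n^2}\Omega_0\wedge\bar\Omega_0$, equivalently a Sasaki--Einstein metric on the link. This is the volume-minimization and existence theorem of Martelli--Sparks--Yau and Futaki--Ono--Wang: one minimizes the volume functional over Reeb candidates in the interior $\Ccal_0$ to select the unique Reeb field $\xi$, then solves a transverse complex Monge--Amp\`ere equation on the compact Sasaki link to obtain the transverse K\"ahler--Einstein, hence Ricci-flat cone, metric. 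I would cite this rather than reprove it. Note the metric subtlety: the Ricci-flat representative carries its own volume-minimizing Reeb field, generally distinct from that of the given $g_0$, so (2) $\Rightarrow$ (1) is to be read as the existence of \emph{some} Calabi--Yau cone metric on the underlying complex cone $(C,J_0)$, not as Ricci-flatness of the initially chosen $g_0$.

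The main obstacle is precisely this last step in (2) $\Rightarrow$ (1): the transverse Monge--Amp\`ere existence on the link after volume-minimization fixes the Reeb field. Everything else is the toric dictionary (orders of monomial top-forms along toric divisors) together with the automatic Cohen--Macaulayness of toric cones, and would be routine to fill in.
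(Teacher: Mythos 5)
The paper does not prove this theorem at all: it is stated as a citation of \cite[Theorem 1.2]{CFO08} and \cite[Theorem 1.2]{FOW}, and the text moves on immediately afterwards. Your sketch is a correct reconstruction of how those cited results combine: the reduction of (2) to the combinatorial condition $\sprod{\gamma,v_i}=1$ via Theorem \ref{goren-equiv}, the toric-divisor order computation showing that a $T_{\Cbb}$-equivariant trivializing section of $K_C$ exists precisely in the Gorenstein case (this is essentially the content of Remark \ref{rmk:canotrivial} and \cite{Ber23}), and the appeal to volume minimization plus the transverse Monge--Amp\`ere existence theorem for (2) $\Rightarrow$ (1). Your closing caveat --- that (2) $\Rightarrow$ (1) produces \emph{some} Ricci-flat K\"ahler cone metric on $(C,J_0)$, generally with a different Reeb field than the given $g_0$ --- is the correct reading of the statement and is consistent with how the paper uses the theorem elsewhere (e.g.\ the reformulation of \cite[Theorem 1.1]{FOW} in Section \ref{subsection_volume_minimizer}, where the conformal factor $r^2=r_0^2e^f$ is introduced). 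No gap; the one step you defer (the Sasaki--Einstein existence on the link) is exactly the step the paper also defers to \cite{FOW}.
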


\newpage
\section{Identification of the Reeb field}\label{reeb-identify}

The main aim of this section is to describe
how to identify, via computer code, the Reeb field of a toric Calabi--Yau cone through the volume function and index character. Section \ref{subsection_volume_minimizer} 
introduces these latter two notions. Using 
their properties, the algorithm to find the Reeb field is contained in Section
\ref{s:minimizer}. Examples are then given in Section \ref{exampless}.

\subsection{Volume function and index character} 

\label{subsection_volume_minimizer} 

Let $(C_0,T_{\Cbb})$ be a toric Kähler cone given by a good polyhedral cone $\Ccal^{*} \subset \tfrak^{*} \simeq \Rbb^n$ with dual cone $\Ccal \subset \tfrak$. Recall that a general Reeb field of $C_0$ can be written as 
\[ \xi = \sum_{j=1}^n \xi_j \frac{\partial}{\partial \xi_{j}}, \]
where $(\xi_1, \dots, \xi_n) \in \mathcal{C}_0\subset\mathfrak{t}$, the interior cone of $\Ccal$. In \cite[equation (2.51)]{MSY06}, based on Guillemin's boundary condition \cite{Guil} and the construction of toric cones by symplectic reduction, the authors showed that the space of smooth Kähler cone potentials on $C = C_0 \backslash \set{0}$ is naturally in one-to-one correspondence with
\[ \mathcal{K}_c = \mathcal{C}_0 \times \Hcal(1), \]
where
$\Hcal(1)$ is the space of smooth homogeneous degree one functions on $\mathcal{C}^{*}$ satisfying a convexity condition. The correspondence is given as follows. By Proposition \ref{propB6}, the Kähler cone metric $\omega_0$ (see Definition \ref{kahlercone}) can be written as $\omega_0 = 2i \del \delb \phi$ on the open dense orbit  $(\Cbb^{*})^n$. Here, the $T^n$-invariant Kähler cone potential $\phi$ is naturally identified with a smooth and strictly convex function on $\tfrak \simeq \Rbb^n$. Let $\psi:\tfrak^{*} \simeq \Rbb^n \to \Rbb$ denote the Legendre transform of $\phi$; cf.~\cite[Section 2.5.5]{CCD}. Recall that $\set{v_1, \dots, v_d} \subset \tfrak$ is the set of primitive inward-pointing normal vectors to the $d$ facets of $\Ccal^{*}$. Let $\xi$ be the Reeb field associated to $\omega_0$. Then in the symplectic coordinates $(y_1,\dots,y_n)$ of $\tfrak^{*}$, $\psi$ has the following expression: 
\begin{equation} \label{eq:reebconemetric}
\psi(y) = h(y) + G_{\xi}(y) + G^{\textnormal{can}}(y), 
\end{equation}
where $h \in \Hcal(1)$ and given the linear functions $l_i(y) = \sprod{v_i,y}$, $l_{\xi}(y) = \sprod{\xi, y}$, and $l_{\infty}(y) = \sprod{\sum_{i=1}^d v_i, y}$, we have that
\[ G_{\xi}(y) = \frac{1}{2} l_{\xi}(y) \log l_{\xi}(y) - \frac{1}{2} l_{\infty}(y) \log l_{\infty}(y), \qquad G^{\textnormal{can}}(y) = \frac{1}{2} \sum_{i=1}^d l_{i}(y) \log l_i(y). \]

On the other hand, from the variational point of view developed in \cite{MSY06}, the potential of the Calabi--Yau metric on a Gorenstein toric cone is the critical point of the \emph{Einstein--Hilbert action} $\mathcal{S} \colon \mathcal{K}_c \to \Rbb$:  
\begin{equation*}
 \mathcal{S}(\phi)\coloneqq \int_L\left(R_L + 2(n-1)(3-2n)\right)\,d\mu_{L},
\end{equation*}
where $\phi$ is the $T^n$-invariant potential of a Kähler cone metric $g_0$, and $R_L$ and $d \mu_{L}$ are the scalar curvature and volume form of the metric $g_L =g_0|_L$ on the link $L$ of the cone, respectively. 
A crucial observation of Martelli--Sparks--Yau is that the Einstein--Hilbert functional $\mathcal{S}$ actually depends only on $\xi$ and not on the homogeneous part $\Hcal(1)$ \cite[equation (3.9)]{MSY06}. 
Moreover, given a toric Calabi--Yau cone $(C,g_0,J_0,\Omega_0)$ with Reeb field $\xi$ and holomorphic volume form $\Omega_{0}$ invariant under the action of the real torus  $T^{n}$ (see Remark \ref{rmk:canotrivial}),
$\Omega_0$ necessarily satisfies the gauge-fixing condition \cite[equation (2.65)]{MSY06}
\begin{equation} \label{eq:reebnormalization}
\Lcal_{\xi} \Omega_0 = \sqrt{-1}n\Omega_0, 
\end{equation}
where $\Lcal_{\xi}$ is the Lie derivative of $\xi$. This determines one of the coordinates of the Reeb field of the Calabi--Yau cone metric.

\begin{lemma} [{\cite[equation (2.92)]{MSY06}}]\label{lem:reebnormalization} 
For a toric Calabi--Yau cone associated with a toric diagram $\Pcal$ in $\Rbb^{n-1}$, condition \eqref{eq:reebnormalization} is equivalent to the Reeb field $\xi=(\xi_{1},\ldots,\xi_{n})$ of the Calabi--Yau cone metric lying in the (relative) interior of the polytope
\( \set{(v,n) \in \Ccal \mid v \in \Pcal}\), i.e., $\xi_n = n$. 
\end{lemma}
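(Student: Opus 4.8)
The plan is to reduce the infinitesimal gauge-fixing condition \eqref{eq:reebnormalization} to the single linear equation $\sprod{\gamma,\xi}=n$, where $\gamma\in\Mcal$ is the Gorenstein element of Definition \ref{gorenstein}, and then to read off $\xi_n=n$ after the $\SL(n,\Zbb)$-normalization $\gamma=(0,\dots,0,1)$ of \cite[Proposition 3.3]{CFO08}. The entire computation takes place on the dense orbit $(\Cbb^{*})^n\subset C$, where everything is explicit.

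\emph{Step 1: identify $\Omega_0$.} First I would pin down the trivializing section $\Omega_0$ of $K_{C_0}$ up to scale. Writing $\Omega_{\mathrm{triv}}=\frac{dz_1}{z_1}\wedge\cdots\wedge\frac{dz_n}{z_n}$ for the invariant form on $(\Cbb^{*})^n$, recall that for a character $z^m$ with $m\in\Mcal$ one has $\Div(z^m)=\sum_{v\in\Ccal(1)}\sprod{m,v}D_v$ and $\Div(\Omega_{\mathrm{triv}})=-\sum_{v\in\Ccal(1)}D_v$, the sums running over the primitive generators $v\in\Ccal(1)$ with associated invariant divisors $D_v$. Hence $\Div(z^m\Omega_{\mathrm{triv}})=\sum_{v\in\Ccal(1)}(\sprod{m,v}-1)D_v$ vanishes precisely when $\sprod{m,v}=1$ for all $v\in\Ccal(1)$, that is, exactly when $m=\gamma$ by the Gorenstein condition. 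Thus $\Omega_0=c\,z^{\gamma}\,\Omega_{\mathrm{triv}}$ for some constant $c\neq 0$ (cf.\ the explicit description in \cite[Proof of Proposition 3.2]{Ber23}).

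\emph{Step 2: compute the Lie derivative.} Next I would express $\xi$ as a vector field on $(\Cbb^{*})^n$. Under the diffeomorphism \eqref{diffeoo}, the element $\xi=\sum_j\xi_j X_j\in\tfrak$ acts along the torus directions, so $\xi=\sum_j\xi_j\,\partial_{\theta_j}=\sqrt{-1}\sum_j\xi_j(z_j\partial_{z_j}-\bar z_j\partial_{\bar z_j})$, whose $(1,0)$-part is $\xi^{1,0}=\sqrt{-1}\sum_j\xi_j z_j\partial_{z_j}$. Because $\Omega_0$ is a holomorphic $(n,0)$-form, its antiholomorphic part annihilates it ($\iota_{\xi^{0,1}}\Omega_0=0$ by type, and $d\Omega_0=0$), so $\Lcal_\xi\Omega_0=\Lcal_{\xi^{1,0}}\Omega_0$. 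Passing to the coordinates $w_j=\log z_j$, in which $\Omega_0=c\,e^{\sprod{\gamma,w}}\,dw_1\wedge\cdots\wedge dw_n$ and $\xi^{1,0}=\sqrt{-1}\sum_j\xi_j\partial_{w_j}$ has constant coefficients, the form part $dw_1\wedge\cdots\wedge dw_n$ is preserved and only $e^{\sprod{\gamma,w}}$ is differentiated, giving
\begin{equation*}
\Lcal_\xi\Omega_0=\sqrt{-1}\,\sprod{\gamma,\xi}\,\Omega_0 .
\end{equation*}

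\emph{Step 3: conclude.} Comparing with \eqref{eq:reebnormalization} shows that the gauge condition holds if and only if $\sprod{\gamma,\xi}=n$. After the change of basis of \cite[Proposition 3.3]{CFO08} bringing $\gamma$ to $(0,\dots,0,1)$, this is exactly $\xi_n=n$. Since a Reeb field always satisfies $\xi\in\Ccal_0$ (the interior of $\Ccal$), imposing in addition $\sprod{\gamma,\xi}=n$ places $\xi$ in the relative interior of the slice $\set{x\in\Ccal\mid\sprod{\gamma,x}=n}$, which is the polytope appearing in the statement; this proves the equivalence. The main obstacle I anticipate is Step 1: getting the weight of $\Omega_0$ exactly right, which requires care with the conventions linking the lattices $\Ncal,\Mcal$, the $z$-coordinates and the inward-pointing primitive generators $\Ccal(1)$, since a sign slip there would shift the final normalization. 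The remaining steps are routine once $\Omega_0=c\,z^{\gamma}\Omega_{\mathrm{triv}}$ is in hand. As a consistency check one may note that the scaling $r\partial_r$ of the cone metric built from $\xi$ acts on $\Omega_0$ with weight $\sprod{\gamma,\xi}$, so that the Calabi--Yau matching $\omega_0^n=i^{n^2}\Omega_0\wedge\bar\Omega_0$ forces this weight to be $n$, in agreement with \eqref{eq:reebnormalization}.
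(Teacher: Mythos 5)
Your proof is correct. The paper itself gives no argument for this lemma --- it is quoted directly from \cite[equation (2.92)]{MSY06} --- so there is nothing internal to compare against; your derivation is the standard one underlying that reference. The key points all check out: since $C_0=\Spec\Cbb[\Ccal^{*}\cap\Mcal]$ is the toric variety of the cone $\Ccal\subset\tfrak$, the rays of its fan are generated by $\Ccal(1)=\{v_1,\dots,v_d\}$, so $\Div(z^{m}\Omega_{\mathrm{triv}})=\sum_{v}(\sprod{m,v}-1)D_v$ vanishes exactly for $m=\gamma$, pinning down $\Omega_0=c\,z^{\gamma}\Omega_{\mathrm{triv}}$; the Lie derivative computation in the $w=\log z$ coordinates then gives $\Lcal_{\xi}\Omega_0=\sqrt{-1}\sprod{\gamma,\xi}\Omega_0$, and the normalization $\gamma=(0,\dots,0,1)$ turns $\sprod{\gamma,\xi}=n$ into $\xi_n=n$, which together with $\xi\in\Ccal_0$ places $\xi$ in the relative interior of the height-$n$ slice of $\Ccal$ as claimed.
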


As remarked in \cite[p.56]{MSY06}, finding a critical metric of the functional $\mathcal{S}$ under the gauge-fixing condition then boils down to considering the function
\[\textnormal{vol}[L]: \Ccal_0 \to \Rbb, \]
defined such that $\textnormal{vol}[L](\xi)$ is the volume of $L$ with respect to the metric obtained via the relation \eqref{eq:reebconemetric}, where we can take $h = 0$ since $\mathcal{S}$ does not depend on the $\Hcal(1)$-part. 
In addition, $\textnormal{vol}[L](\xi)$ is essentially the Euclidean volume of the truncated cone
\[  \set{y \in \mathcal{C}^{*}\,|\, \sprod{\xi,y} \leq \frac{1}{2}}; \]
see \cite[equation (2.74)]{MSY06}.

We define the ``volume function'' as the following ratio. 
\begin{definition} \label{d:volume}
For a given Reeb field $\xi \in \mathcal{C}_0$, the \emph{volume function} $a_0 (\xi)$ is defined as the volume of $L$ normalized by the volume of the unit sphere $S^{2n-1}$, i.e.,
\begin{equation*}
a_0(\xi) := \frac{\textnormal{vol}[L](\xi)}{\textnormal{vol}[S^{2n-1}]}.    
\end{equation*}
\end{definition}
\noindent This is always an algebraic number for Calabi--Yau cones \cite{MSY08}.

\begin{remark}
In \cite{CS18}, the volume function is defined as $a_0(\xi) / (n-1)!.$
\end{remark}
 
\noindent This function has the following properties.
\begin{theorem}[{\cite[p.56]{MSY06} \& \cite[equation (7.26)]{MSY08}}]
Let $(C, T_{\Cbb})$ be an $n$-dimensional toric Gorenstein Kähler cone associated with $\Ccal = \textnormal{Cone}(\Pcal \times \set{1})$, where $\Pcal \subset \Rbb^{n-1}$ is a toric diagram. Then the volume function $a_0$ is smooth, strictly convex, and proper on the Reeb cone $\mathcal{C}_0$. In particular, $a_0$ admits a unique minimizer $\xi = (\xi_1,\dots,\xi_n)$ satisfying $\xi_n = n$.    \end{theorem}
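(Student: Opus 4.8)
The plan is to reduce everything to a single integral representation of the volume function and then read off the three analytic properties from it. Recall from Section~\ref{subsection_volume_minimizer} that, up to a fixed positive constant depending only on $n$, the quantity $\textnormal{vol}[L](\xi)$ is the Euclidean volume of the truncated cone $\{y\in\Ccal^*:\langle\xi,y\rangle\le\tfrac12\}$, so by Definition~\ref{d:volume} the same is true of $a_0$. Since $\Ccal^*$ is a cone, a one–dimensional layer-cake computation (using that $\{y\in\Ccal^*:\langle\xi,y\rangle\le s\}=s\{y\in\Ccal^*:\langle\xi,y\rangle\le 1\}$) converts this truncated volume into a positive multiple of a Laplace transform, and I would fix
\[
a_0(\xi)=c_n\int_{\Ccal^*}e^{-\langle\xi,y\rangle}\,dy,\qquad c_n>0,
\]
valid for $\xi\in\Ccal_0$; the integral converges precisely because $\xi\in\Ccal_0$ forces $\langle\xi,y\rangle>0$ for all $y\in\Ccal^*\setminus\{0\}$, and the substitution $y\mapsto\lambda y$ exhibits the homogeneity $a_0(\lambda\xi)=\lambda^{-n}a_0(\xi)$ that I will need later.

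For smoothness and strict convexity I would differentiate under the integral sign, which is justified on compact subsets of $\Ccal_0$ since the integrand and all its $\xi$-derivatives are there dominated by integrable functions of $y$. This makes $a_0$ real-analytic on $\Ccal_0$, with Hessian
\[
\partial_{\xi_i}\partial_{\xi_j}a_0(\xi)=c_n\int_{\Ccal^*}y_iy_j\,e^{-\langle\xi,y\rangle}\,dy .
\]
For $a=(a_1,\dots,a_n)\ne 0$ the associated quadratic form equals $c_n\int_{\Ccal^*}\langle a,y\rangle^2 e^{-\langle\xi,y\rangle}\,dy$, which is strictly positive because $\Ccal^*$ is $n$-dimensional and so $\langle a,\cdot\rangle$ cannot vanish on it; hence the Hessian is positive definite everywhere and $a_0$ is strictly convex. (The same facts follow from the closed rational form obtained by triangulating $\Ccal^*$ into simplicial subcones $\sigma=\textnormal{Cone}(w_1^\sigma,\dots,w_n^\sigma)$, namely $a_0(\xi)=c_n\sum_\sigma |\det(w_1^\sigma,\dots,w_n^\sigma)|/\prod_k\langle\xi,w_k^\sigma\rangle$, which is manifestly smooth wherever all denominators are positive.)

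For properness and the minimizer I would pass to the Gorenstein slice. Since $\gamma$ satisfies $\langle\gamma,v\rangle=1$ on the generators of $\Ccal$, it lies in the interior of $\Ccal^*$, so the cross-section $\bar\Sigma:=\{\xi\in\Ccal:\xi_n=n\}$, which equals $\{(v,n):v\in\Pcal\}$ after the normalization $\gamma=e_n$, is a compact polytope; by Lemma~\ref{lem:reebnormalization} the admissible Reeb fields are exactly its relative interior $\Sigma=\Ccal_0\cap\{\xi_n=n\}$. To prove properness I would show $a_0(\xi)\to+\infty$ as $\xi$ approaches $\partial\Ccal$: if $\xi\to\xi_0\in\partial\Ccal$ then $\langle\xi_0,u\rangle=0$ for some extreme ray $u$ of $\Ccal^*$, and choosing a simplicial subcone $\sigma\subseteq\Ccal^*$ with $u$ among its generators $w_1=u,w_2,\dots,w_n$ and restricting the integral gives
\[
a_0(\xi)\ \ge\ c_n\,\frac{|\det(w_1,\dots,w_n)|}{\prod_{k=1}^n\langle\xi,w_k\rangle}\ \longrightarrow\ +\infty,
\]
since the factor $\langle\xi,u\rangle\to 0$ while the remaining factors stay bounded, forcing the product in the denominator to zero. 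Consequently the sublevel sets of $a_0|_\Sigma$ are compact subsets of $\Sigma$, so $a_0|_\Sigma$ attains a minimum in $\Sigma$, and strict convexity on the convex set $\Sigma$ makes this minimizer unique; by construction it satisfies $\xi_n=n$.

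The analytic heart of the argument is the properness step, together with the conceptual point of reconciling ``proper on $\Ccal_0$'' with the degree $-n$ homogeneity $a_0(\lambda\xi)=\lambda^{-n}a_0(\xi)$, which shows $a_0$ has no minimizer on the \emph{full} cone. The resolution is exactly the reduction to the compact Gorenstein cross-section $\bar\Sigma$, on which the boundary blow-up supplies the needed coercivity. The one place where genuine care is required is producing the simplicial lower bound and selecting the correct degenerating extreme ray $u$ at each boundary point; smoothness and strict convexity, by contrast, are routine once the Laplace representation is established.
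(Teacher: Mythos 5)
Your argument is correct and complete: the paper does not prove this theorem (it is quoted from Martelli--Sparks--Yau), and your route — the Laplace-transform representation of the truncated-cone volume, the Hessian identity $\partial_i\partial_j a_0 = c_n\int_{\Ccal^*}y_iy_j e^{-\langle\xi,y\rangle}\,dy$ for strict convexity, and the simplicial lower bound forcing blow-up at $\partial\Ccal$ along the normalized slice — is essentially the argument of the cited source. Your remark that ``proper'' must be read on the compact cross-section $\{\xi_n=n\}$ rather than on all of $\Ccal_0$ (where degree $-n$ homogeneity rules out properness and a global minimizer) is the correct reading of the statement and is exactly what the ``in particular'' clause requires.
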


Recall Theorem \ref{equiv}. We also have the
following characterisation of toric Calabi--Yau cones 
given by the existence theorem of Futaki--Ono--Wang \cite{FOW}, which can be reformulated in algebro-geometric terms in the following way.

\begin{theorem}[{\cite[Theorem 1.1]{FOW}}]
Let $(C,\,g_0,\,J_0,\,T_{\Cbb})$ be a toric K\"ahler cone and $\omega_0 = g_0(J_0(\cdot),\,\cdot) = \frac{1}{2} \del_{J_0} \delb_{J_0} r_0^2$ the associated K\"ahler form.
The following are equivalent. 
\begin{enumerate}
    \item There is a smooth function $f$ on $C$ such that $r^2 = r_0^2 e^f$ is the K\"ahler cone potential of a toric Calabi--Yau metric $\omega =  \frac{1}{2}\del_{J_0} \delb_{J_0}  r^2$ (having the same Reeb field as $\omega_0$).
    \item The corresponding Reeb field $\xi$ is a minimizer of the volume function. 
\end{enumerate} 
\end{theorem}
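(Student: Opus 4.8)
The plan is to establish the two implications separately. The direction $(1)\Rightarrow(2)$ is essentially variational and soft, while $(2)\Rightarrow(1)$ is the genuine existence statement and amounts to a Sasakian analogue of Yau's solution of the Calabi conjecture. Throughout, the Reeb field $\xi\in\mathcal{C}_0$ of $\omega_0$ is fixed, and I work with K\"ahler cone potentials of the form $r^2=r_0^2e^f$ where $f$ is \emph{basic}, i.e.\ invariant under both $r\p_r$ and the Reeb flow of $\xi$; such deformations preserve $\xi$ and correspond, via \eqref{eq:reebconemetric} and Proposition \ref{propB6}, to transverse K\"ahler deformations within a fixed transverse class.

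For $(1)\Rightarrow(2)$, suppose a Calabi--Yau cone metric $\omega=\tfrac12\del_{J_0}\delb_{J_0}r^2$ with Reeb field $\xi$ exists, so that its link carries a Sasaki--Einstein metric. I would invoke the Martelli--Sparks--Yau first-variation formula \cite{MSY06}: the differential of the volume function $a_0$ at $\xi$ along a direction $\zeta\in\mathcal{C}_0$ equals a fixed nonzero multiple of the transverse Futaki invariant of $\zeta$ for the polarization determined by $\xi$. Since the transverse metric is K\"ahler--Einstein, this Futaki obstruction vanishes identically, so $da_0(\xi)=0$ and $\xi$ is a critical point of $a_0$. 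As recalled above, $a_0$ is smooth, strictly convex and proper on $\mathcal{C}_0$; hence its unique critical point is the global minimizer, and $\xi$ minimizes $a_0$. I expect this direction to be routine, relying only on the variational identity together with convexity.

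For $(2)\Rightarrow(1)$, fix $\xi$ to be the volume minimizer, normalized so that $\xi_n=n$ by Lemma \ref{lem:reebnormalization} (equivalently the gauge condition \eqref{eq:reebnormalization}). The Gorenstein hypothesis supplies, via Remark \ref{rmk:canotrivial}, a nowhere-vanishing $T^n$-invariant holomorphic volume form $\Omega_0$, and I would seek the Calabi--Yau potential in the form $r^2=r_0^2e^f$ with $f$ basic. The Ricci-flatness condition $\omega^n=i^{n^2}\Omega_0\wedge\overline{\Omega}_0$ then becomes a complex Monge--Amp\`ere equation for $f$ that descends to a \emph{transverse} Monge--Amp\`ere equation on the link, posed in a fixed transverse K\"ahler class and with the positive orientation coming from the transverse Fano condition $c_1^B\propto[\omega^T]$, which the gauge condition $\mathcal{L}_\xi\Omega_0=\sqrt{-1}\,n\,\Omega_0$ guarantees. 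I would solve this by the continuity method in the foliated category, using El Kacimi-Alaoui's transverse Hodge theory to run the Aubin--Yau scheme: openness reduces to invertibility of the linearized transverse Laplacian and is standard, so the whole argument hinges on the a priori $C^0$ estimate.

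The main obstacle is precisely this $C^0$ estimate, and it is here that hypothesis (2) does its work. In the positively oriented (transverse Fano) case the Aubin--Yau $C^0$ bound is obstructed exactly by the transverse Futaki invariant; the point of choosing $\xi$ to be the volume minimizer is that $da_0(\xi)=0$, which by the first-variation identity used in the direction $(1)\Rightarrow(2)$ forces the toric transverse Futaki invariant to vanish in every direction of $\mathcal{C}_0$. With the obstruction removed, I would exploit the $T^n$-symmetry to take $f$ invariant and reduce the transverse equation to a real Monge--Amp\`ere equation for a convex function on the moment polytope, where the strict convexity and properness of $a_0$ furnish the uniform control near the boundary facets needed to close the estimate. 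The higher-order bounds then follow from Yau's $C^2$ and Calabi's $C^3$ estimates adapted to the transverse setting, and a continuity and bootstrapping argument produces the basic function $f$. This yields the Calabi--Yau cone metric with prescribed Reeb field $\xi$ and completes $(2)\Rightarrow(1)$.
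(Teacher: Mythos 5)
The paper does not prove this statement at all: it is imported verbatim as \cite[Theorem 1.1]{FOW} (reformulated via Theorem \ref{equiv}), so there is no in-paper proof to compare against. Your reconstruction is nonetheless essentially the correct strategy of the cited sources: the direction $(1)\Rightarrow(2)$ is exactly the Martelli--Sparks--Yau argument (first variation of $a_0$ equals the transverse Futaki invariant, which vanishes for a transverse K\"ahler--Einstein metric, and strict convexity plus properness of $a_0$ on the normalized slice $\xi_n=n$ promotes the critical point to the unique minimizer), and $(2)\Rightarrow(1)$ is the Futaki--Ono--Wang continuity method for the transverse Monge--Amp\`ere equation in the basic/foliated category. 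One point in your sketch is imprecise: in the toric transverse Fano setting the a priori $C^0$ estimate is not obtained from ``strict convexity and properness of $a_0$ furnishing uniform control near the boundary facets''; it is the Wang--Zhu argument transplanted to the Sasakian setting, where the vanishing of the transverse Futaki invariant (equivalently, a barycenter condition on the relevant polytope, which is what $da_0(\xi)=0$ delivers) is the input that bounds the infimum of the invariant potential. You correctly identify that the minimizing property of $\xi$ kills the Futaki obstruction, so the gap is one of attribution of mechanism rather than of substance; also note that statement (1) tacitly requires the cone to be Gorenstein (cf.~Theorem \ref{equiv} and Remark \ref{rmk:canotrivial}), which you use but which the bare phrase ``toric K\"ahler cone'' in the statement does not make explicit.
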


Computational-wise, the volume function can be determined using the ``index character'', introduced by Martelli--Sparks--Yau \cite{MSY08} and refined by Collins--Székelyhidi \cite{CS18} in a rigorous and purely algebraic fashion for affine cones with more general singularities. It is this last point of view that we adopt in the following. 

Given a toric affine variety $(C_0,T_{\Cbb})$, there is an embedding $C_0 \subset \Cbb^{N_0}$ defined by the ideal $I \subset R := \Cbb[Z_1, \dots, Z_{N_0}]$ and a faithful representation $T_{\Cbb} \to \textnormal{GL}(N_0,\Cbb)$ such that the image of $T_{\Cbb}$ is a diagonal subgroup acting holomorphically and effectively on $C_0$ (cf.~Theorem \ref{t:affine} and its proof in \cite{vC11}). Let $R[C_0] = R/I$ be the ring of regular functions of $C_0$. Then $R[C_0]$ inherits an action of $T^n$ and has a weight decomposition as a $T^n$-module:
\begin{equation} \label{eq:weight_decomposition}
R[C_0] = \bigoplus_{\alpha \in \tfrak^{*}} R_{\alpha},
\end{equation}
where each $R_{\alpha}$ is isomorphic as a $\Cbb$-vector space to the ring
$\set{f \in R/I \mid \Lcal_{\xi} f= \sqrt{-1} \alpha(\xi)f, \; \forall \xi \in \tfrak}$. With this data, the index character is defined as follows.

\begin{definition}[{\cite[equation (1.16)]{MSY08} \& \cite[Definition 4.1]{CS18}}] \label{d:index}
The (\emph{equivariant}) \emph{index character} is defined for $\xi \in \mathcal{C}_0$ and $t \in \Cbb$ with $\Re(t) > 0$ (for now as a formal series) by 
\[ F(\xi, t) = \sum_{\alpha \in \tfrak^{*}} e^{-t \alpha(\xi)} \dim R_{\alpha}. \]
In other words, this is the trace of the matrix representing the action of $\xi$ on $R[C_0]$. 
\end{definition}

Let $\alpha_j$ be the weight of the torus action $T^n$ on $Z_j$. Fixing an integral basis of $\tfrak^{*}$ and expressing $\alpha_j$ as $(\alpha_{j1}, \dots, \alpha_{jn}) \in \Zbb^n$ gives rise to an $(n \times N_0)$-matrix $W$ with integral coefficients, the set of columns of which turn out to be precisely the Hilbert basis of the lattice cone $\mathcal{S}_{\Ccal^{*}}$.
If \( e_1, \dots, e_n \) are the generators of the \( T^n \)-action, i.e., a basis of $\Ncal = \ker(\exp:\tfrak \to T^{n})$,
then  \( \alpha_{ij} \) is the weight of the action of \( e_i \) on \( Z_j \). 
 In particular, every $\xi \in \tfrak$ with coordinates $(\xi_1,\dots,\xi_n)$ induces an action
\[ \xi. Z_j = e^{- \sqrt{-1}\alpha_j(\xi)}. Z_j = e^{-\sqrt{-1}(\alpha_{1j} \xi_1 + \dots + \alpha_{nj} \xi_n)} Z_j.\]
The Reeb field $\xi$ is then quasi-regular (respectively irregular) if $\xi_i/ \xi_n \in \Qbb$ for all $1 \leq i \leq n-1$ (resp.~$\xi_i/\xi_n \in \mathbb{R}\setminus\Qbb$ for some $1 \leq i \leq n-1$).     
Moreover, $e_1, \dots, e_n$ can be identified with the rows of $W$:
    \begin{equation} \label{eq:hilbertbasismatrix}
    W =  (\alpha_{ij} ) = \begin{pmatrix}
        e_1 \\
        \vdots \\
        e_n
    \end{pmatrix}. 
    \end{equation}
Clearly, $(\xi_1,\dots,\xi_n) \in \Ccal_0$ if and only if $\xi_1 \alpha_{1j} + \dots + \xi_n \alpha_{nj} > 0$ for all $1 \leq j \leq N_0$. The Reeb cone is thus determined by the $N_0$ inequalities
\begin{equation} 
\mathcal{C}_0 = \set{(\xi_1, \dots, \xi_n) \in \Rbb^n \mid  \xi_1 e_1 + \dots + \xi_n e_n > 0}. 
\end{equation}
In this situation, we say that $R = \Cbb[Z_1,\dots,Z_{N_0}]$ is \emph{graded} by $W$. Since $I$ is homogeneous, the ring $R[C_0] = R/I$ also inherits a $W$-grading. For every weight $\alpha$ in the decomposition \eqref{eq:weight_decomposition}, the \emph{multigraded Hilbert function of $R[C_0]$} (\emph{with respect to the grading $W$}) is defined as
\[ HF_{W, I}(\alpha) := \dim_{\Cbb} R_{\alpha}.\]
By choice of the basis on $\tfrak^{*}$, we can identify every weight $\alpha$ with an integral vector $(\alpha_1,\dots, \alpha_n) \in \Zbb^n$. The \emph{multivariate (formal) power Hilbert series of $R[C_0]$} (\emph{with respect to the grading $W$}) is given by 
\begin{equation} \label{eq:hilbertseries}
H_{W,I}(z_1,\dots,z_n) := \sum_{(\alpha_1,\dots,\alpha_n) \in \Zbb^n} HF_{W,I}(\alpha_1,\dots,\alpha_n) z_1^{\alpha_1} \dots z_n^{\alpha_n}. 
\end{equation}
Using intrinsic properties of the Hilbert series, Collins and Sz\'ekelyhidi showed that 
\begin{equation} \label{eq:indexchar}
F(\xi,t) = H_{W, I}(e^{-\xi_1 t}, \dots, e^{-\xi_n t});
\end{equation}
see \cite[Lemma 4.8 \& Proof of Theorem 4.10]{CS18} for details. 
The index character converges for Reeb fields and extends as a mermomorphic function to all of $\mathbb{C}$, as noted by the next proposition.
\begin{prop}[{\cite[Lemma 4.2 \& Theorem 4.10]{CS18}}] \label{prop:indexcharexpansion}
The formal series $F(\xi,t)$ converges if $\xi$ is a Reeb field and $\Re(t) > 0$. Moreover, $F(\xi,t)$ as a function of $t$ has a meromorphic extension to $\Cbb$ with poles along $\Re(t) = 0$ and with a Laurent expansion near $t = 0$ given by 
\[ F(\xi,t) = \frac{a_0(\xi)}{t^{n}} + \frac{a_1(\xi)}{t^{n-1}} + O(t^{2-n}),\]
where $a_0(\xi)$ is the volume function of $C_0$. Namely, 
\[ a_0(\xi) = \lim_{t \to 0} t^n F(\xi,t). \]
\end{prop}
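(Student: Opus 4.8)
The plan is to reduce the statement to the classical analysis of the generating function of lattice points in a rational cone, and then to extract the leading Laurent coefficient by comparison with a Riemann sum. Since $C_0$ is toric, its coordinate ring is the semigroup algebra $R[C_0] = \Cbb[\mathcal{S}_{\Ccal^{*}}]$ with $\mathcal{S}_{\Ccal^{*}} = \Ccal^{*} \cap \Mcal$, so in the weight decomposition \eqref{eq:weight_decomposition} one has $\dim_{\Cbb} R_\alpha = 1$ for $\alpha \in \Ccal^{*} \cap \Mcal$ and $0$ otherwise, and Definition \ref{d:index} reads
\[ F(\xi,t) = \sum_{\alpha \in \Ccal^{*} \cap \Mcal} e^{-t\sprod{\xi,\alpha}}. \]
For convergence I would use that $\xi \in \Ccal_0$ lies in the interior of the dual cone, so $\sprod{\xi,\cdot}$ is strictly positive on $\Ccal^{*} \setminus \set{0}$; by compactness of a cross-section it is bounded below by $c\abs{\alpha}$ for some $c>0$ on the whole cone. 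As the number of lattice points of norm at most $N$ in $\Ccal^{*}$ grows polynomially in $N$, the series converges absolutely whenever $\Re(t) > 0$.

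For the meromorphic extension and the pole structure, I would fix a triangulation of $\Ccal^{*}$ into $n$-dimensional simplicial subcones $\sigma_a$ whose rays are spanned by primitive lattice vectors $u_{a,1}, \dots, u_{a,n}$, using a half-open decomposition so that each lattice point of $\Ccal^{*}$ is counted exactly once. On each simplicial piece the lattice-point generating function is the rational function
\[ \sum_{\alpha \in \sigma_a \cap \Mcal} x^{\alpha} = \frac{P_a(x)}{\prod_{i=1}^n (1 - x^{u_{a,i}})}, \]
with $P_a$ a finite Laurent polynomial recording the lattice points of the fundamental half-open parallelepiped. Substituting $x_i = e^{-\xi_i t}$ — consistent with the specialisation \eqref{eq:indexchar} — turns each summand into a quotient of finite exponential sums, hence a function meromorphic on all of $\Cbb$, whose only poles occur where $1 - e^{-t\sprod{\xi,u_{a,i}}} = 0$, that is, where $t\sprod{\xi,u_{a,i}} \in 2\pi\sqrt{-1}\,\Zbb$, all of which lie on the line $\Re(t) = 0$. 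Summing over $a$ gives the asserted meromorphic extension with poles confined to the imaginary axis.

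Near $t = 0$ I would expand each factor as $1 - e^{-t\sprod{\xi,u_{a,i}}} = t\,\sprod{\xi,u_{a,i}} + O(t^2)$ and note that $P_a(e^{-\xi t}) \to d_a$ as $t \to 0$, where $d_a$ is the number of lattice points of the fundamental domain, equal to $\abs{\det(u_{a,1}, \dots, u_{a,n})}$ and in particular nonzero. Consequently each simplicial piece contributes a pole of order exactly $n$ with leading term $d_a\, t^{-n} \prod_i \sprod{\xi,u_{a,i}}^{-1}$, so $F(\xi,t)$ has a pole of order $n$ at the origin with a Laurent expansion $a_0(\xi) t^{-n} + a_1(\xi) t^{1-n} + O(t^{2-n})$, the subleading coefficients arising automatically from the next terms of these expansions. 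To identify the leading coefficient with the volume function I would invoke the elementary simplicial-cone identity
\[ n!\,\operatorname{vol}\!\left(\sigma_a \cap \set{\sprod{\xi,\cdot} \leq 1}\right) = \frac{\abs{\det(u_{a,1},\dots,u_{a,n})}}{\prod_{i=1}^n \sprod{\xi,u_{a,i}}}, \]
sum over the triangulation to obtain $\lim_{t\to 0} t^n F(\xi,t) = n!\,\operatorname{vol}(\set{y \in \Ccal^{*} : \sprod{\xi,y} \leq 1})$, and match this against the normalisation in Definition \ref{d:volume} together with the truncated-cone description of $\operatorname{vol}[L]$ recorded after Lemma \ref{lem:reebnormalization}.

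The main obstacle is precisely this last identification. One must carry out the triangulation with the correct half-open bookkeeping so that no lattice point, and correspondingly no contribution to the volume, is double-counted across shared faces; and one must then reconcile the Euclidean-volume normalisation produced by the lattice-point count (which implicitly uses that $\Mcal \cong \Zbb^n$ has covolume $1$, so the lattice index $d_a$ coincides with the Euclidean determinant) with the sphere-normalised definition of $a_0$, absorbing the factors coming from the level $\tfrac{1}{2}$ in the truncated cone versus level $1$ above and from $\operatorname{vol}[S^{2n-1}]$. By contrast, the convergence and meromorphic-continuation steps are routine once the cone generating function has been put in rational form.
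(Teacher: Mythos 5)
Your argument is correct, but it is not the route the paper takes: the paper gives no proof of this proposition at all, quoting it directly from Collins--Sz\'ekelyhidi \cite{CS18} (Lemma 4.2 and Theorem 4.10 there), whose argument is carried out for an arbitrary polarized affine cone with a torus action by analysing the Hilbert series of the graded coordinate ring, without presupposing a multiplicity-free weight decomposition. You instead exploit the toric structure from the outset --- $R[C_0]$ is the semigroup algebra of $\Ccal^{*}\cap\Mcal$, so $F(\xi,t)$ is the lattice-point generating function of a rational cone, and a half-open simplicial triangulation reduces everything to explicit rational functions of $e^{-t\sprod{\xi,u_{a,i}}}$. This is essentially the original Martelli--Sparks--Yau computation (which the paper notes the proposition recovers, cf.~equation (1.17) of \cite{MSY08}), and it buys explicitness: your per-simplex leading terms $|\det(u_{a,1},\dots,u_{a,n})|/\prod_{i}\sprod{\xi,u_{a,i}}$ are precisely the localization summands used later in the proof of Proposition \ref{prop_volume}. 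What \cite{CS18} buys in exchange is generality beyond the toric case and a uniform treatment of irrational Reeb fields. The one step you leave open --- reconciling $\lim_{t\to 0}t^{n}F(\xi,t)=n!\operatorname{vol}\bigl(\set{y\in\Ccal^{*}\mid \sprod{\xi,y}\leq 1}\bigr)$ with Definition \ref{d:volume} --- does close: by equation (2.74) of \cite{MSY06} one has $\operatorname{vol}[L](\xi)=2n(2\pi)^{n}\operatorname{vol}\bigl(\set{\sprod{\xi,y}\leq \tfrac12}\bigr)$, and with $\operatorname{vol}[S^{2n-1}]=2\pi^{n}/(n-1)!$ the factor $2^{-n}$ from rescaling the level from $1$ to $\tfrac12$ cancels exactly, giving $a_0(\xi)=n!\operatorname{vol}\bigl(\set{\sprod{\xi,y}\leq 1}\bigr)$; the sanity check $C_0=\Cbb^{n}$, $\xi=(1,\dots,1)$ yields $a_0=1$ on both sides. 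So your proposal is a complete and more elementary proof in the toric setting, differing from the paper only in that the paper outsources the statement to the general theory.
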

\noindent This last expression recovers \cite[equation (1.17)]{MSY08}. 

\subsection{Computing the volume minimizer}\label{s:minimizer}
By virtue of its purely algebraic character, the volume minimizer can be algorithmically computed, that is, by using built-in functions in suitable mathematical software. For this purpose, we devise a scheme, with the main steps specialized to three-dimensional toric K\"ahler cones, as they form our main class of interest (see Section \ref{section_deformation}). 

\begin{itemize}
 \item Given the moment cone \( \mathcal{C}^{*} \subset \Mcal_{\mathbb{R}} \) of a toric affine cone $C_0$, find the Hilbert basis \( \Hcal \) of the lattice cone \( S_{\mathcal{C}} := \mathcal{C}^{*} \cap \Mcal \). This yields an embedding of \( C_0 \) in \( \mathbb{C}^{N_0}, \) where \( N_{0} \) is the cardinality of \( \Hcal \). The basis $\Hcal$ is represented in a $(3 \times N_0)$ -matrix $W$ as in \eqref{eq:hilbertbasismatrix}.
    \item Use computer algebra to obtain the ideal \(  I\) of \( C_0 \subset \mathbb{C}^{N_0} \) by viewing \( R = \mathbb{C}[Z_1, \dots, Z_{N_0}] \) as a ring graded by $W$. 
    \item Compute the multivariate Hilbert series \( H_{W,I} \) as in \eqref{eq:hilbertseries}. In the non-reduced form, this is a three-variable series with denominator being given by
    \[ \prod_{j=1}^{N_0} (1 - T_0^{\alpha_{1j}} T_1^{\alpha_{2j}} T_2^{\alpha_{3j}}). \] 
    \item  Recall that the Reeb cone \( \mathcal{C}_0 \) (i.e., the interior of \( \mathcal{C} \)) is determined by the $N_0$ inequalities
    \[ \mathcal{C}_0 = \set{(a,b,c) \in \Rbb^3 \mid  a e_1 + b e_2 + c e_3 > 0}. \]  
    For a given Reeb field \( \xi = a e_1 + b e_2 + c e_3 \), the index character \eqref{eq:indexchar} is obtained from $H_{W,I}$ by the change of variables 
    \[ F(\xi,t) = H_{W,I}(e^{-a t}, e^{-b t}, e^{-c t}), \]
    which admits an expansion near $t=0$ up to order \( t^{-1} \), namely 
        \[ F(\xi,t) = a_0(\xi) t^{-3} + a_1(\xi) t^{-2} + o(t^{-1}), \]
        by Proposition \ref{prop:indexcharexpansion}.
    \item Finally, minimize \( a_0(\xi)\) under the constraints \( \xi \in \Ccal_0 \) and $c = 3$ (cf.~Lemma \ref{lem:reebnormalization}). 
\end{itemize}

To carry out the above steps, we run codes in Macaulay2 to compute the Hilbert series of a toric cone given its toric diagram, then proceed to obtain an expansion of the index character using Mathematica. The Calabi--Yau Reeb field is finally obtained using the \verb|Minimize| function in Mathematica. The translation of Macaulay2 strings to Mathematica-friendly ones requires a few more lines of code and is necessary in order to make use of the \verb|Minimize| function, which, to our knowledge, is not available in Macaulay2. For details of the Macaulay2 and Mathematica codes, we refer the reader to Appendices \ref{s:A1} and \ref{s:A2} (as well as to the companion Figures \ref{figure_macaulay2_cfo13} and \ref{figure_mathematica_cfo13}), respectively.

\subsection{Examples}\label{exampless}
The following examples only concern affine cones over toric del Pezzo surfaces embedded using their anti-canonical bundle. In each example, we compute the Hilbert series using the code in Appendix \ref{s:A1} and recover the Calabi--Yau Reeb field and minimized volume value by means of the code in Appendix \ref{s:A2}. The toric diagram input and relevant outputs are also provided.

\begin{example}
Consider the del Pezzo cone $C_1$ over \( \Pbb^1 \times \Pbb^1 \), embedded using \( \Ocal(2,2) \). This slightly differs from the cone referred to as the ``conifold'' in the literature \cite{CdlO} (see  \cite[Section 3]{MSY06} and \cite[Section 7.4]{MSY08} for the toric point of view), which is precisely the affine cone over $\Pbb^1 \times \Pbb^1$ embedded using $\Ocal(1,1)$. The conifold is actually an algebraic finite covering of degree two over $C_1$, ramified at the apex. In our situation, the toric diagram of $C_1$ is given by 
\[\Qcal_1 := \textnormal{Conv} \set{ \Spvek{-1;0},\Spvek{0;-1},\Spvek{1;0},\Spvek{0;1}}.\]
The dual polygon $\Qcal_1^{\vee}$ is then given by $\Qcal_1^{\vee} := \textnormal{Conv} \set{\Spvek{-1;1},\Spvek{-1;-1},\Spvek{1;-1},\Spvek{1;1}}$. Since $\Qcal_1$ is reflexive, the moment cone $\mathcal{C}^{*}$ of $C_1$ is precisely
\[ \mathcal{C}^{*} = \textnormal{Cone}(\Qcal_1^{\vee} \times \set{1}) = \set{(\lambda u, \lambda) \mid u \in \Qcal_1^{\vee}, \lambda \in \Rbb_{\geq 0}}; \]
see \cite[Definition 2.10]{BB97}. By \cite[Lemma 2.2.14]{CLS11}, the Hilbert basis of the semigroup $S_{\mathcal{C}} = \mathcal{C}^{*} \cap \Zbb^3$ is exactly the set of lattice points of $\Qcal_1^{\vee} \times \set{1}$, comprising nine points identified with the columns of the weight matrix
\[ W = \begin{pmatrix}
-1 & -1 & -1 & 0 & 0 & 0 & 1 & 1 & 1\\
-1 & 0 & 1 & -1 & 0 & 1 & -1 & 0 & 1 \\
1 & 1 & 1 & 1 & 1 & 1 & 1 & 1 & 1
\end{pmatrix}. \] 
The ideal \( I \) is given by the intersection of the following $20$ quadrics in \( \Cbb^9\): 
\begin{align*}
&z_{8}^{2}-z_{7}z_{9},\,z_{6}z_{8}-z_{5}z_{9},\,z_{5}z_{8}-z_{4}z_{9}, z_{3}z_{8}-z_{2}z_{9},\\
&z_{2}z_{8}-z_{1}z_{9},\,z_{6}z_{7}-z_{4}z_{9},\,z_{5}z_{7}-z_{4}z_{8}, z_{3}z_{7}-z_{1}z_{9},\\
&z_{2}z_{7}-z_{1}z_{8},z_{6}^{2}-z_{3}z_{9},\,z_{5}z_{6}-z_{2}z_{9}, ,z_{4}z_{6}-z_{1}z_{9}, \\
&z_{5}^{2}-z_{1}z_{9},z_{4}z_{5}-z_{1}z_{8},\,z_{3}z_{5}-z_{2}z_{6},\,z_{2}z_{5}-z_{1}z_{6}, \\
&z_{4}^{2}-z_{1}z_{7},z_{3}z_{4}-z_{1}z_{6},\,z_{2}z_{4}-z_{1}z_{5},z_{2}^{2}-z_{1}z_{3}. 
\end{align*}
The Hilbert series of \( (W,I) \) is then given by 
\[ H_{W,I} (T_0,T_1,T_2) =\frac{1+T_{2}(1 + T_0+ T_0^{-1} + T_1 + T_1^{-1}) -T_{2}^2(1 + T_0 + T_0^{-1} + T_1 + T_1^{-1})-T_{2}^{3}}{\left(1-T_{0}^{-1}T_{1}^{-1}T_{2}\right)\left(1-T_{0}^{-1}T_{1}T_{2}\right)\left(1-T_{0}T_{1}^{-1}T_{2}\right)\left(1-T_{0}T_{1}T_{2}\right)}. \]
The volume is thus
\[ a_0(\xi) = \frac{8 c}{(a - b - c) (a + b - c) (a - b + c) (a + b + c)}. \]
Under the gauge fixing condition \eqref{eq:reebnormalization}, we have \( c = 3 \) by Lemma \ref{lem:reebnormalization}, and
the volume is minimized by \( \xi_0 = (0,0,3)\) with minimum value
\[ a_0(\xi_0) = \frac{8}{27}. \]
\end{example}

\begin{example}
Consider the cone \( C_3 \) over the del Pezzo surface of degree \( 8 \) (which is the blowup \( \textnormal{Bl}_{p_1} (\Pbb^2) \) of $\Pbb^2$ at one point). The toric diagram of $C_3$ is given by the polygon 
\[\Qcal_3 := \textnormal{Conv} \set{\Spvek{-1;0}, \Spvek{-1;-1}, \Spvek{1;0}, \Spvek{0;1}}. \] 
The dual polygon is then
\( \Qcal_3^{\vee} := \textnormal{Conv} \set{ \Spvek{-1;2}, \Spvek{-1;-1},\Spvek{1;-1}, \Spvek{1;0}} \). Again, by reflexivity, the moment cone $\mathcal{C}^{*}$ of $C_3$ is $ \mathcal{C}^{*} = \textnormal{Cone}(\Qcal_3^{\vee} \times \set{1})$. The Hilbert basis of the semigroup $S_{\mathcal{C}} = \mathcal{C}^{*} \cap \Zbb^3$ is exactly the set of lattice points of $\Qcal_3^{\vee} \times \set{1}$, consisting of nine points identified with the columns of the weight matrix
\[ W = \begin{pmatrix}
-1 & -1 & -1 & -1 & 0 & 0 & 0 & 1 & 1 \\
-1 & 0 & 1 & 2 & -1 & 0 & 1 & -1 & 0 \\
1 & 1 & 1 & 1 & 1 & 1 & 1 & 1 & 1 
\end{pmatrix}. \] 
Then \( Y_3 \) is the intersection of the following \( 20 \) quadrics in \( \mathbb{C}^{10} \):
\begin{align*}
&z_{7}z_{8}-z_{6}z_{9},\,z_{6}z_{8}-z_{5}z_{9},\,z_{4}z_{8}-z_{3}z_{9}, z_{3}z_{8}-z_{2}z_{9},\,z_{2}z_{8}-z_{1}z_{9},\\
&z_{7}^{2}-z_{4}z_{9},\,z_{6}z_{7}-z_{3}z_{9},\,z_{5}z_{7}-z_{2}z_{9},\,z_{6}^{2}-z_{2}z_{9},\,z_{5}z_{6}-z_{1}z_{9},\\
&z_{5}^{2}-z_{1}z_{8},\,z_{4}z_{6}-z_{3}z_{7},z_{3}z_{6}-z_{2}z_{7},\,z_{2}z_{6}-z_{1}z_{7},\,z_{4}z_{5}-z_{2}z_{7},\\
&z_{3}z_{5}-z_{1}z_{7},\,z_{2}z_{5}-z_{1}z_{6},\,z_{3}^{2}-z_{2}z_{4},\,z_{2}z_{3}-z_{1}z_{4},\,z_{2}^{2}-z_{1}z_{3}.
\end{align*}
The Hilbert series of $(W,\,I)$ is 
\begin{align*} 
H_{W,I}(T_0,T_1,T_2) = &\frac{1+T_{1}T_{2}+T_{2}+T_ {1}^{-1}T_{2}+T_ {0}^{-1}T_{1}T_{2} }{G(T)} \\
&\frac{+T_ {0}^{-1}T_{2}-T_{0}T_ {2}^{2}-T_{0}T_ {1}^{-1}T_ {2}^{2}-T_{1}T_{2}^{2}-T_ {2}^{2}-T_ {1}^{-1}T_ {2}^{2}-T_ {2}^{3}}{G(T)},
\end{align*}
where  $$G(T) = {\left(1-T_{0}T_{2}\right)\left(1-T_{0}^{-1}T_{1}^{-1}T_{2}\right)\left(1-T_{0}T_{1}^{-1}T_{2}\right)\left(1-T_{0}^{-1}T_{1}^{2}T_{2}\right)}.$$ 
The volume of a Reeb field $\xi = (a,b,c)$ is then 
\[ a_0(\xi) = \frac{2 (2 a - b + 4 c)}{(a + c) (a + b - c) (a - b + c) (a - 2 b - c)}. \] 
Under the gauge fixing condition \( c = 3 \), we find that
\[ a_0(\xi_0) = \frac{46 + 13 \sqrt{13}}{324}, \qquad a = -4 + \sqrt{13},\qquad b = 0.\]
We thus recover the Calabi--Yau Reeb field and minimized volume found in \cite[equation (5.43)]{MSY08}. Note that the value of the volume in \cite{MSY08} is written as ``$\displaystyle\frac{43 + 13 \sqrt{13}}{324}$'', but this is a typo as the reader can readily check by plugging the Calabi--Yau Reeb field given therein as $(b_1,b_2,b_3) =(3, -4+\sqrt{13},0)$ into the volume formula \cite[equation (5.42)]{MSY08}
\[V(b) = \frac{8b_1 + 4b_2}{(b_1^2 - b_2^2)^2},\]
obtained using non-toric methods under the assumption that the metric is only $T^2$-invariant. 
\end{example}

\begin{figure} 
    \begin{subfigure}[b]{0.18\textwidth}
        \centering
        \resizebox{\linewidth}{!}{
                            \begin{tikzpicture}
                \draw[step=1cm,gray,dashed] (-1.9,-1.9) grid (1.9,1.9);
                                \node at (0,0) [circle, fill, inner sep = 1.5pt] {};

                \draw (1,0) -- (0,1) -- (-1,0) -- (0,-1) -- cycle;
            \end{tikzpicture}
        }
        \subcaption{\( \Qcal_1 \)}
        \label{fig:subfigdp0}
    \end{subfigure}
        \begin{subfigure}[b]{0.18\textwidth}
        \centering
        \resizebox{\linewidth}{!}{
                            \begin{tikzpicture}
                \draw[step=1cm,gray,dashed] (-1.9,-1.9) grid (1.9,1.9);
                                \node at (0,0) [circle, fill, inner sep = 1.5pt] {};

                \draw (1,0) -- (0,1) -- (-1,-1) -- cycle;
            \end{tikzpicture}
        }
        \caption{\( \Qcal_2 \)}
        \label{fig:subfigdpproj}
    \end{subfigure}
    \begin{subfigure}[b]{0.18\textwidth}
    \centering
        \resizebox{\linewidth}{!}{
                    \begin{tikzpicture}
                \draw[step=1cm,gray,dashed] (-1.9,-1.9) grid (1.9,1.9);
                \node at (0,0) [circle, fill, inner sep = 1.5pt] {};
                \draw (-1,-1) -- (-1,0) -- (0,1) -- (1,0) -- cycle;
                \end{tikzpicture}
        }
        \caption{\( \Qcal_3 \)}   
        \label{fig:subfigdp1}
    \end{subfigure}
    \begin{subfigure}[b]{0.18\textwidth}
        \centering
        \resizebox{\linewidth}{!}{
            \begin{tikzpicture}
                \draw[step=1cm,gray,dashed] (-1.9,-1.9) grid (1.9,1.9);
                                \node at (0,0) [circle, fill, inner sep = 1.5pt] {};

                \draw (0,-1) -- (1,0) -- (0,1) -- (-1,1) -- (-1,0) -- cycle;
            \end{tikzpicture}
        }
        \caption{\( \Qcal_4 \) }
        \label{fig:subfigdp2}
    \end{subfigure}
        \begin{subfigure}[b]{0.18\textwidth}
        \centering
        \resizebox{\linewidth}{!}{
                            \begin{tikzpicture}
                \draw[step=1cm,gray,dashed] (-1.9,-1.9) grid (1.9,1.9);
                                \node at (0,0) [circle, fill, inner sep = 1.5pt] {};

                \draw (-1,-1) -- (0,-1) -- (1,0) -- (1,1) -- (0,1) -- (-1,0) -- cycle;
            \end{tikzpicture}
        }
        \caption{\( \Qcal_5 \)}
        \label{fig:subfigdp3}
    \end{subfigure}    
    \begin{subfigure}[b]{0.18\textwidth}
        \centering
        \resizebox{\linewidth}{!}{
                            \begin{tikzpicture}
                \draw[step=1cm,gray,dashed] (-1.9,-1.9) grid (1.9,1.9);
                                \node at (0,0) [circle, fill, inner sep = 1.5pt] {};

                \draw (-1,1) -- (-1,-1) -- (1,-1) -- (1,1) -- cycle;
            \end{tikzpicture}
        }
        \caption{\( \Qcal_1^{\vee} \)}
        \label{fig:subfigdp00}     
    \end{subfigure}
            \begin{subfigure}[b]{0.18\textwidth}
        \centering
        \resizebox{\linewidth}{!}{
                            \begin{tikzpicture}
                \draw[step=1cm,gray,dashed] (-1.9,-1.9) grid (1.9,2.9);
                                \node at (0,0) [circle, fill, inner sep = 1.5pt] {};

                \draw (-1,2) -- (2,-1) -- (-1,-1) -- cycle;
            \end{tikzpicture}
        }
        \caption{\( \Qcal_2^{\vee} \)}
        \label{fig:subfigdpprojj}
    \end{subfigure}
    \begin{subfigure}[b]{0.18\textwidth}
        \centering
        \resizebox{\linewidth}{!}{
                            \begin{tikzpicture}
                \draw[step=1cm,gray,dashed] (-1.9,-1.9) grid (1.9,2.9);
                                \node at (0,0) [circle, fill, inner sep = 1.5pt] {};

                \draw (-1,2) -- (-1,-1) -- (1,-1) -- (1,0) -- cycle;
            \end{tikzpicture}
        }
        \caption{\( \Qcal_3^{\vee} \)}
        \label{fig:subfigdp11}
    \end{subfigure}
    \begin{subfigure}[b]{0.18\textwidth}
        \centering
        \resizebox{\linewidth}{!}{
                            \begin{tikzpicture}
                \draw[step=1cm,gray,dashed] (-1.9,-1.9) grid (1.9,1.9);
                                \node at (0,0) [circle, fill, inner sep = 1.5pt] {};

                \draw (-1,1) -- (-1,-1) -- (0,-1) -- (1,0) -- (1,1) -- cycle;
            \end{tikzpicture}
        } \caption{\( \Qcal_4^{\vee} \)}
        \label{fig:subfigdp22}
    \end{subfigure}         
        \begin{subfigure}[b]{0.18\textwidth}
        \centering
        \resizebox{\linewidth}{!}{
                            \begin{tikzpicture}
                \draw[step=1cm,gray,dashed] (-1.9,-1.9) grid (1.9,1.9);
                                \node at (0,0) [circle, fill, inner sep = 1.5pt] {};

                \draw (0,-1) -- (1,-1) -- (1,0) -- (0,1) -- (-1,1) -- (-1,0) -- cycle;
            \end{tikzpicture}
        }
        \caption{\( \Qcal_5^{\vee} \)}
        \label{fig:subfigdp33}
    \end{subfigure}
\caption{Polygons 
\protect(\subref{fig:subfigdp0})--\protect(\subref{fig:subfigdp3}) are the toric diagrams of the affine cones over the toric del Pezzo surfaces $\Pbb^1 \times \Pbb^1$, $\Pbb^2$, $\textnormal{Bl}_{p_1} (\Pbb^2)$, $\textnormal{Bl}_{p_1,\,p_2}(\Pbb^2)$, $\textnormal{Bl}_{p_1,p_2,p_3}(\Pbb^2)$, respectively, embedded using their anti-canonical bundle. Figures \protect(\subref{fig:subfigdp00})--(\protect\subref{fig:subfigdp33})
are the corresponding dual polygons given by $\Qcal_j^{\vee} = \set{u \in \Rbb^2 \mid \sprod{u,v} \geq -1, \forall v \in \Qcal_j}$.}
\label{figure_delpezzo_family}
\end{figure}

\begin{example} \label{ex:dp2reeb}
Let \( C_4 \) be the cone over the del Pezzo surface of degree \( 7 \) (which is geometrically \( \textnormal{Bl}_{p_1,\,p_2}(\Pbb^2) \)). The  toric diagram of the cone is given by the lattice polytope 
\[ \Qcal_4 := \textnormal{Conv}\set{\Spvek{-1;1},\Spvek{-1;0}, \Spvek{0;-1}, \Spvek{1;0},\Spvek{0;1}}.\]
The dual polygon is then
\( \Qcal_4^{\vee} := \textnormal{Conv} \set{ \Spvek{-1;1}, \Spvek{-1;-1}, \Spvek{0;-1}, \Spvek{1;0}, \Spvek{1;1}} \). By reflexivity, we have that $\mathcal{C}^{*} = \textnormal{Cone} (\Qcal_4^{\vee} \times \set{1})$ and that
$ (\Qcal_4^{\vee} \cap \Zbb^2) \times \set{1}$ is the Hilbert basis of $S_{\mathcal{C}}$, forming the columns of the weight matrix
\[ W = \begin{pmatrix}
-1 & -1 & -1 & 0 & 0 & 0 &  1 & 1 \\
-1 & 0 & 1 & -1 & 0 & 1 &  -1 & 0 \\
1 & 1 & 1 & 1 & 1 & 1 &  1 & 1 
\end{pmatrix}. \] 
The cone \( Y_4 \) is the intersection of the following \( 14 \) quadrics in \( \mathbb{C}^8 \):
\begin{align*}
&z_{6}z_{7}-z_{5}z_{8},\,z_{5}z_{6}-z_{3}z_{8},\,z_{5}z_{7}-z_{4}z_{8},\,z_{3}z_{7}-z_{2}z_{8}, \\
&z_{4}z_{6}-z_{2}z_{8},\,z_{5}^{2}-z_{2}z_{8},\,z_{3}z_{5}-z_{2}z_{6},\,z_{2}z_{7}-z_{1}z_{8},\\
&z_{4}z_{5}-z_{1}z_{8},\,z_{2}z_{5}-z_{1}z_{6},\,z_{3}z_{4}-z_{1}z_{6},\,z_{4}^{2}-z_{1}z_{7},\\
&z_{2}z_{4}-z_{1}z_{5},\,z_{2}^{2}-z_{1}z_{3}. 
\end{align*}
The Hilbert series of \( (W,I) \) is then given by 
\begin{align*}
H_{W,I}(T_0, T_1, T_2) = &\frac{1+T_{2}+T_{1}^{-1}T_{2}+T_{0}^{-1}T_{2}-T_{0}T_{2}^{2}-T_{1}T_{2}^{2}-T_{0}T_{1}^{-1}T_{2}^{2} }{G(T)} \\ &\frac{-2\,T_{2}^{2}-T_{0}^{-1}T_{1}T_{2}^{2}-T_{1}^{-1}T_{2}^{2}-T_{0}^{-1}T_{2}^{2}+T_{0}T_{2}^{3}+T_{1}T_{2}^{3}+T_{2}^{3}+T_{2}^{4}}{G(T)},
\end{align*}
where 
\[ G(T) = \left(1-T_{0}^{-1}T_{1}^{-1}T_{2}\right)\left(1-T_{0}^{-1}T_{1}T_{2}\right)\left(1-T_{0}T_{1}^{-1}T_{2}\right)\left(1-T_{1}T_{2}\right)\left(1-T_{0}T_{2}\right). \]
We then obtain the volume of a Reeb field \( \xi = (a,b,c) \) as
\[ a_0(\xi) = \frac{-a^2 + 2 a b - b^2 + 2 a c + 2 b c + 
 7 c^2}{(a - b - c) (a + b - c) (a + c) (a - b + c) (b +
    c)}. \]
Under the normalized condition \( c = 3 \), we find that the minimized volume is 
\[a_0(\xi_0) = \frac{59 + 11 \sqrt{33}}{486}, \qquad a = b = \frac{-57 + 9 \sqrt{33}}{16}. \]
This agrees with \cite[equations (3.48) \& (3.49)]{MSY06} and \cite[equations (7.41) \& (7.42)]{MSY08}. 
\end{example}

\section{Deformation theory of toric Calabi--Yau cones} \label{section_deformation}

In this section, we recall the main results in the deformation theory of isolated toric Gorenstein singularities developed by Altmann \cite{Alt97}. We begin with some background in Section \ref{prelim}, before introducing Minkowski decompositions and the Minkowski scheme of a toric diagram in Section \ref{s:versal}. Their relation with 
deformations of a toric Gorenstein cone and maximal Minkowski decompositions given by Altmann's theory are discussed in Section \ref{s:4.3}. Based on this theory, an algorithm is laid out allowing one to compute the versal deformation space of a Gorenstein toric cone in Section \ref{subsection_minkowski}. An example of its implementation is given in Section
\ref{egsss}. All relevant computer code is contained in Appendix \ref{s:A1}.

\subsection{Deformation theory}\label{prelim} Recall that a \emph{deformation family of a complex scheme $X$ over a given base $S$} is defined as a triplet $(\Xcal, \phi, 0)$, where $\phi: \Xcal \to S$ is a flat morphism and $0$ is a marked point of $S$ such that $\phi^{-1}(0) \simeq X$. The set of such deformation families is denoted by $\textnormal{Def}_{X}(S,0)$. A deformation $\phi:\Xcal \to S$ is called \emph{trivial} if it is equivalent to the projection family $\pi_2:X\times S \to S$; that is, there is a scheme isomorphism $\Xcal \to X \times S$ compatible with the maps $\pi_2$ and $\phi$. 

A deformation $\phi:\Xcal \to S$ is said to be \emph{complete} if every other deformation $(\wt{\phi}:\wt{\Xcal} \to T) \in \textnormal{Def}_X(T,0)$ is obtained from $\Xcal \to S$ by a base change via a morphism of schemes $f: T \to S$, i.e., $\wt{\Xcal}$ is equivalent to the family $\pi_2: \Xcal \times_S T \to T $, where 
\[ \Xcal \times_S T = \set{(x,t) \in \Xcal \times T\,|\, \phi(x) = f(t)}. \]
If in addition the morphism $f$ is uniquely determined on the level of tangent spaces, then the deformation $\phi:\Xcal \to S$ is called \emph{versal} (or \emph{semi-universal}). 
In this situation, the base space $S$ is said to be \emph{versal} and is unique up to scheme isomorphism. By the work of Artin \cite{Art76} and Elkik \cite{Elk73},
if $X$ is an affine variety with a finite set of singularities, then $X$ always admits a versal family of deformations, which is an affine family (meaning that the morphism $\phi$ is affine) over an affine base. If in addition a fiber $X_t = \phi^{-1}(t)$ ($t \neq 0$) is smooth, then $X_t$ is called an \emph{affine smoothing} of $X$.  

The infinitesimal deformations of $X$ are parametrized by the \emph{Kodaira--Spencer space}, defined as
\begin{equation}
T^1_X :=  \textnormal{Def}_{X}(\Spec \Cbb[\varepsilon] / \varepsilon^2).
\end{equation}
These are the set of flat morphisms to the scheme $\Spec \Cbb[\varepsilon] / \varepsilon^2$ (the ``fat point''). If $X$ has a versal deformation family over a versal base $S$ (in particular, when $X$ is an affine variety with a finite set of singularities), then by definition of versality, we know that
\[ \textnormal{Def}_{X}(\Spec \Cbb[\varepsilon]/ \varepsilon^2) = \textnormal{Hom} (\Spec \Cbb[\varepsilon]/ \varepsilon^2 \to (S,0)), \]
where the latter is the set of scheme morphisms sending the fat point to the marked point $0$ of $S$, which is bijective onto the Zariski tangent space of $S$ at $0$ \cite[Chapter II, Exercise 2.8]{Har77}. Thus, in this situation, one can view $T^1_{X}$ as $T_0 S$--the \emph{Zariski tangent space at the marked point of the versal base $S$}.     

Following Hartshorne \cite[Chapter III, Exercise 9.10]{Har77}, we say that $X$ is \emph{rigid} if \( T^1_{X} = 0 \). Any versal deformation family of a rigid scheme is trivial, but there may be non-rigid schemes with a trivial versal family; see \cite[(9.2)]{Alt97} for an example, or Proposition \ref{prop:gmsw_trivial} for an infinite family of examples. In the toric case, we have the following rigidity statement of Altmann. Recall that a K\"ahler cone with a smooth link defines an isolated singularity (see Theorem \ref{t:affine}).

\begin{theorem}[\cite{Alt97}] \label{thm:rigidity}
If \( C_0 \) is an isolated toric singularity, then $\dim_{\Cbb} T^1_{C_0} < +\infty$. If in addition $C_0$ is Gorenstein and $\dim_{\mathbb{C}}(C_0) \geq 4$, then $C_0$ is rigid.  
\end{theorem}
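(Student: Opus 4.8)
The plan is to follow Altmann's computation of the $\Mcal$-graded Kodaira--Spencer module $T^1_{C_0} = \bigoplus_{R \in \Mcal} T^1_{C_0}(-R)$, where the grading is induced by the $T_{\Cbb}$-action, and to treat the two assertions separately. Throughout I identify $C_0$ with the affine toric variety $\Spec \Cbb[\Ccal^{*} \cap \Mcal]$ attached to the cone $\Ccal = \textnormal{Cone}(\Pcal \times \set{1})$, whose primitive ray generators $a_1,\ldots,a_d$ are the vertices of $\Pcal$ lifted to height one.

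For the first assertion, finite-dimensionality, I would argue that $T^1_{C_0}$ is a finitely generated $\Ocal_{C_0}$-module whose support is contained in the singular locus. Since $C_0$ is an isolated singularity, $C = C_0 \setminus \set{0}$ is smooth, so $T^1_{C_0}$ localizes to zero away from the apex and is therefore supported at the apex alone; a finitely generated module supported at a single closed point has finite length, hence finite $\Cbb$-dimension. Equivalently, and more in the spirit of \cite{Alt97}, one checks directly from Altmann's formula that $T^1_{C_0}(-R) \neq 0$ forces the degree $R \in \Mcal$ into a bounded region (the values $\langle R, a_i \rangle$ on the generators are constrained by the smoothness of every proper face of $\Ccal$), so that only finitely many degrees contribute; as each graded piece is a subquotient of finite-dimensional spaces attached to the finitely many faces of $\Ccal$, the conclusion follows either way.

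For the rigidity assertion I would use the Gorenstein hypothesis to pin down the contributing degrees. Because all generators satisfy $\langle \gamma, a_i \rangle = 1$, the height-one slice of $\Ccal$ is exactly the $(n-1)$-dimensional polytope $\Pcal$, and Altmann's formula expresses each nonzero $T^1_{C_0}(-R)$ as the first cohomology of a complex built from the vertices, edges, and two-faces of a bounded cross-section $Q(R) = \Ccal \cap [R=1]$ of the cone (after normalizing by $\gamma$, a cross-section of the same dimension as $\Pcal$). The point is that this complex computes the deformation data on the boundary $\partial Q(R)$: first-order deformations are assignments of lengths to the compact edges that close up around every two-face, modulo those arising from genuine Minkowski summands of $Q(R)$.

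The crux is then a topological vanishing. When $\dim_{\Cbb} C_0 = n \geq 4$, the cross-section $Q(R)$ has dimension $\geq 3$, so its boundary is a sphere $S^{n-2}$ with $n-2 \geq 2$, which is simply connected; consequently the relevant $H^1$ vanishes and $T^1_{C_0}(-R) = 0$ for every $R$, whence $C_0$ is rigid. This is precisely the feature that fails in dimension three, where $\partial Q(R) \cong S^1$ has nontrivial first cohomology and produces the Minkowski deformations exploited in Theorems \ref{mtheorem_cfo} and \ref{mtheorem_quadrilateral+segment}. I expect the main obstacle to be setting up Altmann's graded complex precisely---correctly identifying the cross-section $Q(R)$, the finitely many degrees $R$ that can contribute in the Gorenstein isolated case, and the cohomological degree in which the summand data lives---so that the simple-connectivity of $S^{n-2}$ applies cleanly; the remaining steps are then formal.
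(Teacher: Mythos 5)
The paper offers no proof of this statement: it is quoted directly from Altmann \cite{Alt97}, so there is no internal argument to measure yours against, and the assessment below is of your attempt on its own terms. Your proof of the first assertion is correct and standard: $T^1_{C_0}$ is a coherent module that vanishes on the smooth locus, hence is supported at the apex when the singularity is isolated, hence has finite length and finite $\Cbb$-dimension.

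The rigidity assertion, however, has a genuine gap precisely at what you call the crux. The vanishing of $T^1_{C_0}(-R)$ for $n\geq 4$ is \emph{not} a consequence of the simple connectivity of $\partial Q(R)\cong S^{n-2}$. Altmann's graded piece is computed by a complex over the face poset of the cross-section whose coefficient spaces depend on the lattice geometry of each face; it is not the cochain complex of a local system on the boundary sphere, so the topology of $\partial Q(R)$ alone cannot force the vanishing. Your mechanism in fact proves too much: take the four-dimensional Gorenstein toric singularity whose toric diagram is the unit cube $[0,1]^3$ (the cone over the Segre embedding of $(\Pbb^1)^3$). Its cross-section has boundary $S^2$, which is simply connected, yet the Minkowski decomposition of the cube into three segments produces a nontrivial two-parameter piece of $T^1$ in degree $-R^{*}$ (on each square $2$-face the closing-up condition only equates \emph{opposite} edge parameters, leaving $V(\Pcal)\cong\Rbb^3$). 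The hypothesis that fails there is isolatedness, which your topological step never invokes. The correct mechanism is combinatorial: isolatedness forces every proper face of $\Ccal$ to be a smooth cone, hence every $2$-face of $\Pcal$ to be a unimodular triangle; on a triangle the relation $\sum t_i d_i=0$ forces its three parameters to coincide, and the strong connectedness of the boundary complex of a polytope of dimension $\geq 3$ propagates this to $V(\Pcal)=\Rbb\cdot(1,\dots,1)$, i.e.\ $T^1(-R^{*})=0$; the vanishing in the remaining degrees likewise uses smoothness of the proper faces rather than topology. A further small slip: the first-order data is the space of closing-up edge-length assignments modulo the \emph{tautological} summand $\Rbb\cdot(1,\dots,1)$, not modulo all Minkowski summands of $Q(R)$ --- the latter quotient would be zero for every polytope and would make the three-dimensional theory trivial as well.
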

\noindent In particular, one can capture the essence of Altmann's theory in terms of lattice polytopes with \emph{real dimension at most two} 
in light of the rigidity theorem and the classification of toric Gorenstein cones with an isolated singularity; cf.~Proposition \ref{proposition_combinatorial_qgorenstein}.

In complex dimension two, a rational Gorenstein K\"ahler cone is precisely a quotient of $\mathbb{C}^{2}$ by a finite subgroup of $\text{SU}(2)$, known as a \emph{Du Val singularity}. The toric examples correspond to 
quotients by finite cyclic subgroups, whose deformation theory was studied by Arndt \cite{Arn88}, Christophersen \cite{Chr91}, Koll\'{a}r--Shepherd-Barron \cite{KSB88}, Riemenschneider \cite{Rie74}, and Stevens \cite{Ste91}, resulting in a detailed description of the reduced versal base space, including the number and dimension of their components (which are all smooth). On the other hand, two-dimensional complete Calabi--Yau metrics of maximal volume growth and $L^{2}$-integrable curvature were classified by Kronheimer using spinorial methods \cite{Kro1, Kro2} and by the first author and Hein \cite{CH24} using non-spinorial methods. Such metrics exist only on smoothings or crepant resolutions of Du Val singularities, the latter all being \textit{quasi-regular} Calabi--Yau cones. Thus, since our focus is on potentially non-rigid, irregular Calabi--Yau cones, we are primarily interested in three-dimensional toric Gorenstein K\"ahler cones, whose deformation theory can be effectively described using lattice polygons.

Recall from Definition \ref{def:toric_diagram} that toric diagrams in two dimensions are convex simple lattice polygons such that the one-dimensional edges have no relative interior lattice point. 
Throughout this section, we fix a two-dimensional toric diagram $\Pcal$ and its associated three-dimensional isolated toric Gorenstein singularity $C_0$ (cf.~Proposition \ref{proposition_combinatorial_qgorenstein}). Unless otherwise stated, we work solely with toric diagrams in the sense of Definition \ref{def:toric_diagram}, though much of the discussion involving Minkowski summand cone and Minkowski decomposition also holds for more general polygons (cf. Remark \ref{rmk:toric_diagram} and Example \ref{ex:trapezoid}).

\subsection{Minkowski decompositions}\label{s:versal}

\subsubsection{Minkowski summand cone and decomposition}
Recall that a (lattice) Minkowski decomposition of $\Pcal$ is a collection of (lattice) polytopes $(\Pcal_0, \dots, \Pcal_m)$ such that 
\begin{equation} \label{eq:minkdecomposition}
\Pcal = \Pcal_0 + \dots + \Pcal_m,
\end{equation}
where the right-hand sum is $$\set{p_0 + \dots + p_m\,|\, p_j \in \Pcal_j,\quad 0 \leq j \leq m}.$$ A polytope $\Pcal_k$ appearing in such a decomposition is called a \emph{summand} of $\Pcal$, defined up to homothety. The Minkowski decomposition is said to be \emph{maximal} if each $\Pcal_k$ does not split into any non-trivial Minkowski summands. We say that $\Pcal$ has a \emph{lattice Minkowski decomposition} if every summand in the decomposition is a lattice polytope. The notion of a \emph{lattice maximal Minkowski decomposition} is self-explanatory. This should not be confused with the notion of \emph{lattice maximal decomposition} (which is not further discussed in this article), meaning that each lattice summand has no further lattice decomposition; see, for example, the sum of the quadrilateral and the segment in Figure \ref{figure_latticemaxvsmaxlattice}.

Let $N$ be the number of vertices of $\Pcal$ and \( d_1, \dots, d_N \) be the edges of \( \Pcal \), oriented in a way such that 
\[ d_1 + \dots + d_N  = 0.\]
Unless stated otherwise, we always assume a counter-clockwise orientation of $d_1,\dots,d_N$, starting from a given point. Next, associate to $\Pcal$ a vector space \( V(\Pcal) \subset \mathbb{R}^N \) defined by
\begin{equation} \label{eq:summandvectorspace}
V(\Pcal) := \set{(t_1, \dots, t_N)\,|\,  \sum_{i=1}^N t_i d_i = 0}. 
\end{equation}
Since the edges originating from each vertex generate $\Rbb^{\dim_{\Rbb} \Pcal}$ (because $\Pcal$ is simple), $V(\Pcal)$ is a real $(N- \dim_{\Rbb} \Pcal)$-dimensional vector space. 
\begin{lemma} [{\cite[Lemma (2.2)]{Alt97}}]\label{lemma_minkowski_cone} 
The cone \( C(\Pcal) := V(\Pcal) \cap \mathbb{R}^N_{\geq 0} \) is a rational polyhedral cone in \( V(\Pcal) \), the integral points of which are in one-to-one correspondence with the Minkowski summands of positive multiples of \(  \Pcal \). In particular, the minimal set of primitive generators of $C(\Pcal)$ correspond precisely to the set of homothety classes of Minkowski summands of $\Pcal$ appearing in the maximal decomposition. 
\end{lemma}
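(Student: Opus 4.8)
The plan is to build an explicit dictionary between points of the cone $C(\Pcal)$ and convex polygons assembled from rescaled copies of the edge vectors $d_1,\dots,d_N$, and then to read off all three assertions from this dictionary together with elementary facts about pointed rational polyhedral cones. Since the edges $d_i$ are lattice vectors, the defining relation $\sum_{i=1}^N t_i d_i = 0$ is a system of $\dim_{\Rbb}\Pcal$ linear equations with integer coefficients, so $V(\Pcal)$ is a rational linear subspace of $\Rbb^N$ and $C(\Pcal) = V(\Pcal)\cap \Rbb^N_{\geq 0}$ is the intersection of $V(\Pcal)$ with a rational polyhedral cone; hence it is itself a rational polyhedral cone, and it is pointed because it sits inside $\Rbb^N_{\geq 0}$. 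This settles the first assertion. Next, to a point $t=(t_1,\dots,t_N)\in C(\Pcal)$ I would associate the polygon $\Pcal(t)$ whose boundary is traced by the vectors $t_1 d_1,\dots,t_N d_N$ in their counter-clockwise cyclic order, starting from the origin. The condition $\sum t_i d_i = 0$ is exactly the requirement that this broken path closes up, while $t_i\geq 0$ ensures the directions still appear in counter-clockwise order, so $\Pcal(t)$ is a genuine convex polygon (edges with $t_i=0$ being omitted). This gives a map $t\mapsto \Pcal(t)$ to polygons up to translation, with $\Pcal(\mathbf{1})=\Pcal$, $\Pcal(\lambda t)=\lambda \Pcal(t)$, and the crucial additivity $\Pcal(t)+\Pcal(s)=\Pcal(t+s)$, since Minkowski summation of polygons with prescribed edge directions adds edge lengths direction by direction.

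For the second assertion I would show that $t\mapsto \Pcal(t)$ restricts to a bijection between $C(\Pcal)\cap\Zbb^N$ and lattice Minkowski summands of positive multiples of $\Pcal$. For well-definedness, set $\lambda=\max_i t_i$ and note that $\lambda\mathbf{1}-t\in C(\Pcal)$, since it is non-negative and $\sum(\lambda - t_i)d_i = \lambda\sum d_i - \sum t_i d_i = 0$; then $\Pcal(t)+\Pcal(\lambda\mathbf{1}-t)=\lambda\Pcal$, exhibiting $\Pcal(t)$ as a summand of $\lambda\Pcal$. Integrality of $t$ makes the partial sums $\sum_{j\leq i} t_j d_j$ lattice points, so $\Pcal(t)$ is a lattice polygon. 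Conversely, a Minkowski summand $Q$ of $\lambda\Pcal$ has normal fan coarsening that of $\Pcal$, so in the plane each edge of $Q$ is a non-negative multiple $t_i d_i$ of some $d_i$, producing a tuple $t\in C(\Pcal)$ with $\Pcal(t)=Q$; here I would invoke hypothesis \ref{properface}, which forces each $d_i$ to be primitive, so that $Q$ being a lattice polygon yields $t_i\in\Zbb_{\geq 0}$. Injectivity is immediate, as $t$ records the edge vectors of $\Pcal(t)$.

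For the final assertion I would use the additivity above to transport decompositions of summands into decompositions inside $C(\Pcal)$: one has $\Pcal(t)=\Pcal(t')+\Pcal(t'')$ if and only if $t=t'+t''$ with $t',t''\in C(\Pcal)$. Consequently $\Pcal(t)$ is Minkowski-indecomposable up to homothety precisely when $t$ cannot be written as a sum of two non-proportional elements of $C(\Pcal)$, i.e. precisely when $t$ spans an extremal ray of the pointed cone $C(\Pcal)$. Since a pointed rational polyhedral cone is generated by its extremal rays and each ray carries a unique primitive integral generator, the minimal primitive generators of $C(\Pcal)$ are in bijection with the homothety classes of indecomposable summands; these are exactly the homothety classes that occur in a maximal Minkowski decomposition, the decomposition of $\Pcal$ itself corresponding to an expression of $\mathbf{1}$ as a positive combination of such generators.

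The rationality statement and the edge-length bookkeeping are routine. The one genuine input, which I expect to be the main obstacle, is the surjectivity step in the second assertion: that \emph{every} Minkowski summand of $\Pcal$ has all of its edges parallel to edges of $\Pcal$. I would obtain this from the standard fact that a weak Minkowski summand has support function whose normal fan refines to that of $\Pcal$. The two-dimensionality of $\Pcal$ is essential here, for then the single vectorial closure relation $\sum t_i d_i = 0$ is the only constraint on edge data; in higher dimensions one would need a closure relation for every two-dimensional face, which is precisely the complication governed by the rigidity phenomenon of Theorem \ref{thm:rigidity}.
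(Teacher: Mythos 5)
Your proposal is correct and follows essentially the same route as the paper, which only gives a brief sketch of the correspondence $t \mapsto \Pcal_t$ (walking along the rescaled edges $t_i d_i$) and defers the details to Altmann's Lemma (2.2). Your write-up simply fills in the details the paper omits — rationality and pointedness of $C(\Pcal)$, additivity of the edge-data map, the normal-fan argument for surjectivity (where you correctly use primitivity of the $d_i$, guaranteed by Definition \ref{def:toric_diagram}\ref{properface}), and the identification of indecomposable summands with extremal rays.
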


\begin{proof}[Sketch of proof]
The correspondence is roughly given as follows. If \( t = (t_1, \dots, t_N) \in C(\Pcal) \), then the corresponding summand \( \Pcal_{t} \) is built going along the edges \( t_i d_i \). Conversely, a Minkowski summand, whose edges are only edges of \( \Pcal \) and which is given by walking along \( t_i d_i\), is sent to \( (t_i) \). 
\end{proof}

We call $C(\Pcal)$ the \emph{Minkowski summand cone} and denote by $\rho$ the map from the set of Minkowski summands to $C(\Pcal)$. The minimal set of primitive generators of $C(\Pcal)$ (equivalently, the homothety classes of summands) depends on the orientation of the edges, but $\dim_{\mathbb{R}}C(\Pcal)$ is independent of any orientation. We highlight these notions with some examples.  

\begin{example}
We have that \( \rho(\Pcal) = (1, \dots, 1) \) and \( \rho(t\Pcal) = (t, \dots, t) \). 
\end{example}

\begin{example} \label{ex:segmenttriangle}
Lattice segments and lattice triangles have no non-trivial Minkowski decomposition. In both cases, $\dim_{\Rbb} V(\Pcal) = 1$ and $C(\Pcal)$ has only one primitive generator corresponding to the segment/triangle as the unique Minkowski summand (up to homothety) of itself. 
\end{example}

\noindent The existence of a Minkowski decomposition is given by the following corollary.

\begin{corollary} \label{cor:minkalways}
A toric diagram $\Pcal$ always admits a Minkowski decomposition which is non-trivial if $N > 3$. Moreover, the decomposition is maximal if and only if it involves only segment and triangle summands. 
\end{corollary}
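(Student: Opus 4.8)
The plan is to reduce the whole statement to the geometry of the Minkowski summand cone $C(\Pcal)$ provided by Lemma \ref{lemma_minkowski_cone}, together with one elementary fact: \emph{a positive-dimensional convex polytope in the plane is Minkowski-indecomposable if and only if it is a segment or a triangle}. Granting this fact, the ``maximal if and only if segments and triangles'' part is immediate from the definition of a maximal decomposition, while existence and non-triviality follow from the cone structure of $C(\Pcal)$.

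First I would establish the key fact. For any planar polytope $Q$ with $N_Q$ edges, the dimension count preceding Lemma \ref{lemma_minkowski_cone} gives $\dim_{\Rbb} V(Q) = N_Q - \dim_{\Rbb} Q$; the simplicity hypothesis used there is automatic in the plane, since every vertex of a polygon lies on exactly two edges. Because $\rho(Q) = (1,\dots,1)$ lies in the open orthant $\Rbb^{N_Q}_{>0}$, it is a relative interior point of $C(Q) = V(Q) \cap \Rbb^{N_Q}_{\geq 0}$, so $\dim_{\Rbb} C(Q) = \dim_{\Rbb} V(Q)$. Since $C(Q)$ always contains the ray $\Rbb_{\geq 0}(1,\dots,1)$ of homothets of $Q$, Lemma \ref{lemma_minkowski_cone} shows that $Q$ is indecomposable exactly when this ray is all of $C(Q)$, i.e. when $\dim_{\Rbb} C(Q) = 1$. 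For a two-dimensional $Q$ this reads $N_Q - 2 = 1$, forcing a triangle; a one-dimensional $Q$ is a segment. The converse, that segments and triangles are indecomposable, is recorded in Example \ref{ex:segmenttriangle}, completing the fact.

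The ``if and only if'' in the second assertion now follows at once: a decomposition is maximal precisely when none of its summands splits further, i.e. when every summand is indecomposable, which by the key fact means every summand is a segment or triangle. For existence, note that $C(\Pcal)$ is rational polyhedral and pointed (being cut out inside $\Rbb^N_{\geq 0}$), hence generated by the primitive generators $g_1,\dots,g_s$ of its extreme rays; by Lemma \ref{lemma_minkowski_cone} the associated summands $\Pcal_{g_1},\dots,\Pcal_{g_s}$ are the indecomposable summands occurring in the maximal decomposition, so each is a segment or triangle. Writing the relative interior point $\rho(\Pcal)=(1,\dots,1)$ as $\sum_k \lambda_k g_k$ with $\lambda_k \geq 0$, and using that $\rho$ converts addition in $C(\Pcal)$ into Minkowski sums, yields $\Pcal = \sum_k \lambda_k \Pcal_{g_k}$, a maximal decomposition into segments and triangles. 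For non-triviality when $N>3$, we have $\dim_{\Rbb} C(\Pcal) = N - 2 \geq 2$, so the relative interior point $(1,\dots,1)$ lies on no single extreme ray; hence at least two of the $\lambda_k$ are positive, and the corresponding summands, being segments or triangles, are not homothetic to the $N$-gon $\Pcal$, so the decomposition is genuinely non-trivial.

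The only non-formal input is the dimension bookkeeping applied to the summands $\Pcal_{g_k}$ rather than to $\Pcal$: one must check that the edge-counting formula $\dim_{\Rbb} V(\cdot) = (\#\text{edges}) - \dim_{\Rbb}(\cdot)$ and the relative-interior argument transfer verbatim to each $\Pcal_{g_k}$, so that an extreme ray of $C(\Pcal)$ really does correspond to a one-dimensional summand cone and hence to an indecomposable polygon. This is precisely the content of Lemma \ref{lemma_minkowski_cone}, so the remaining work is bookkeeping rather than a new idea; I expect this, and the careful distinction between a decomposition being trivial versus maximal, to be where most of the care is needed.
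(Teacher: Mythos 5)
Your proposal is correct and follows essentially the same route as the paper: both arguments rest on Lemma \ref{lemma_minkowski_cone} together with the dimension count $\dim_{\Rbb} C(\Pcal) = N - \dim_{\Rbb}\Pcal$ (so $\geq 2$ when $N>3$) and Example \ref{ex:segmenttriangle} for the indecomposability of segments and triangles. You simply spell out more explicitly what the paper leaves implicit, namely the realization of the decomposition by writing $\rho(\Pcal)=(1,\dots,1)$ as a nonnegative combination of the extreme-ray generators and the observation that an interior point of a cone of dimension $\geq 2$ cannot lie on a single ray.
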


\begin{proof}
If $\Pcal$ is a segment or a triangle, then it only has a trivial Minkowski decomposition by Example \ref{ex:segmenttriangle}. If $\Pcal$ has more than three vertices, then $\dim_{\R} C(\Pcal) \geq 2$, hence $\Pcal$ has at least one Minkowski summand. Clearly, the decomposition is maximal if it involves only segments and triangles. Conversely, if a summand has more than three vertices, then it has a non-trivial decomposition by the same argument as before. 
\end{proof}

We next compute Minkowski decompositions in some examples.  

\begin{example} \label{ex:dp1mink}
 The oriented edges of the reflexive polygon $\Qcal_1$ in Figure \ref{figure_delpezzo_family} (starting from $(0,-1)$) are $d_1 = (1,1), d_2 = (-1,1), d_3 = (-1,-1), d_4 = (1,-1)$. The vector space 
 \[ V(\Qcal_1) = \set{(t_1,\dots,t_4) \in \Rbb^4\,|\, t_1 - t_2 - t_3 + t_4 = 0,\quad t_1 + t_2 - t_3  - t_4 = 0}\]
has basis $\{u_1 = (1,0,1,0), u_2 = (0,1,0,1)\}$. These are also the generators of the Minkowski summand cone $C(\Qcal_1)$, therefore the only possible Minkowski summands of $\Qcal_1$ up to homothety are
\[\Lcal_1 = \textnormal{Conv} \set{\Spvek{0;0},\Spvek{1;1}}, \qquad \Lcal_2 = \textnormal{Conv} \set{\Spvek{0;0},\Spvek{-1;1}}. \]
Here, $\Lcal_1$ is obtained by going forward along $d_1$ and backward along $d_3$, and $\Lcal_2$ is obtained likewise from $d_2$ and $d_4$, respectively. In particular, $\rho(\Lcal_1) = (1,0,1,0)$ and $\rho(\Lcal_2) = (0,1,0,1)$. The unique lattice decomposition is 
given by \[\Qcal_1 = \Lcal_1 + \Lcal_2. \] This decomposition corresponds to a partition of $\set{d_1, \dots, d_4}$ into two disjoint subsets $\set{d_1,d_3} \cup \set{d_2, d_4}$. 
\end{example}

\begin{example} \label{ex:trapezoid}
Consider the trapezoid 
\begin{equation*}
 \Qcal = \textnormal{Conv} \set{\Spvek{0;0},\Spvek{3;0},\Spvek{2;1},\Spvek{1;1}},   
\end{equation*}
where the oriented edges are $d_1 = (3,0), d_2 = (-1,1), d_3= (-1,0), d_4 = (-1,-1)$. A basis of $V(\Qcal)$ is given by 
\[ \{u_1 = (1,0,3,0), \quad u_2= (0,1,-2,1)\}.\]
The first summand $\Lcal$ of $\Qcal$ corresponding to $u_1$ is obtained by first moving along $d_1$, then taking three steps along $d_3$. The second one $\Delta$ is obtained by moving first along $d_2$, then taking two steps along $(-d_3)$ and finally one step along $d_4$. Hence, we have that \[ \Lcal = \textnormal{Conv} \set{\Spvek{0;0},\Spvek{3;0}} , \quad \Delta = \textnormal{Conv} \set{\Spvek{0;0},\Spvek{1;1},\Spvek{-1;1}}. \]
The unique lattice Minkowski decomposition of $\Qcal$ is given by
\[ \Qcal = \frac{1}{3} \Lcal + \Delta. \]
Note that $\Qcal$ does not define a toric diagram of an isolated toric Gorenstein singularity because $d_1$ contains interior lattice points. 
\end{example}

\begin{remark} \label{rmk:toricdiagramminkdec}
Examples \ref{ex:dp1mink} and \ref{ex:trapezoid} suggest that \emph{for the toric diagram $\Pcal$ of an isolated toric Gorenstein singularity, a lattice Minkowski decomposition of $\Pcal$ into $(m+1)$-summands is a partition of the set $\set{d_1, \dots, d_N}$ into $(m+1)$ disjoint subsets, each of which sums up to zero}. More precisely, given a subset $I \subset \set{1, \dots, N}$ of the partition such that $\sum_{i \in I} d_i  =0,$
then, after a possible reordering, the summand polytope is obtained by moving exactly one step along the oriented edges in their respective order. In particular, if \( \Pcal = \Pcal_0 + \dots + \Pcal_m \) is a lattice decomposition, then each \( \rho(\Pcal_k) \) comprises only \( 0 \)'s and \( 1\)'s. Indeed, since each $\Pcal_k$ is given by $(d_{i})_{i \in I}$, we have that $\rho(\Pcal_k) =(\delta_I^1, \dots, \delta_I^N)$, where 
\[ \delta_I^i = 
\begin{cases}
1, \; i \in I, \\
0, \; i \notin I.
\end{cases}
\]
Moreover, let $v(\Pcal)$ denote the number of vertices of $\Pcal$. Then
\begin{equation} \label{eq:verticesconstraint}
v(\Pcal) = N = \sum_{j=0}^m v(\Pcal_j).
\end{equation}
As demonstrated by Example \ref{ex:trapezoid}, this relation does not hold if $\Pcal$ is not a good toric diagram. 
\end{remark}

The following lemma gives a practical criterion to determine whether or not a given toric diagram has a lattice Minkowski decomposition. 

\begin{lemma} \label{lem:practical-crit}
Let $\Pcal\subset\R^{2}$ be a toric diagram. If $\Pcal$ has a pair of parallel oriented edges (necessarily of equal length), then $\Pcal$ has a lattice Minkowski decomposition. In particular, a toric diagram given by a lattice quadrilateral has a lattice Minkowski decomposition if and only if it is a parallelogram. 
\end{lemma}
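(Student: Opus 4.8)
The plan is to reduce everything to the combinatorial description of lattice Minkowski decompositions recorded in Remark \ref{rmk:toricdiagramminkdec}: for the toric diagram $\Pcal$ with counter-clockwise oriented edges $d_1,\dots,d_N$, a lattice Minkowski decomposition is the same data as a partition of $\set{d_1,\dots,d_N}$ into disjoint blocks each summing to zero, the summand attached to a block $I$ being the polytope whose $\rho$-image is the indicator vector $\delta_I\in\set{0,1}^N$. The key preliminary observation is that, because $\Pcal$ is a toric diagram, each edge vector $d_i$ is \emph{primitive}: condition \ref{properface} in Definition \ref{def:toric_diagram} forbids relative interior lattice points on edges. Hence if $d_i$ and $d_j$ (with $i\neq j$) are parallel, primitivity forces $d_i=\pm d_j$, and convexity together with the counter-clockwise orientation forces distinct edges to have distinct outer normals, so parallel edges must have antipodal normals and therefore opposite directions. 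Thus in fact $d_i=-d_j$. This proves the parenthetical claim that the two edges have equal length, and more usefully it gives $d_i+d_j=0$.

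For the first assertion I would take such a pair $d_i,d_j$ and form the partition with blocks $I=\set{i,j}$ and its complement $\set{1,\dots,N}\setminus I$. The block $I$ sums to zero by the observation above, and its complement sums to $\bigl(\sum_{k}d_k\bigr)-(d_i+d_j)=0$ as well. By Lemma \ref{lemma_minkowski_cone} the indicator vectors $\delta_I$ and $\mathbf{1}-\delta_I$ are integral points of the Minkowski summand cone $C(\Pcal)$, corresponding respectively to a lattice segment $\Lcal=\textnormal{Conv}\set{0,d_i}$ and a lattice polygon $\Pcal'$. Additivity of $\rho$ together with $\rho(\Pcal)=\mathbf{1}$ then yields $\Pcal=\Lcal+\Pcal'$, a lattice Minkowski decomposition. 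It is non-trivial because $N\geq 4$ whenever $\Pcal$ has parallel edges: a triangle cannot have $d_i=-d_j$ without forcing the third edge to vanish.

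For the quadrilateral equivalence I would argue as follows. If $\Pcal$ is a parallelogram then its opposite edges are parallel, so the first part applies. Conversely, suppose the lattice quadrilateral $\Pcal$ (so $N=4$) has a non-trivial lattice Minkowski decomposition. By Remark \ref{rmk:toricdiagramminkdec} this is a partition of $\set{d_1,d_2,d_3,d_4}$ into at least two blocks, each summing to zero; since a single nonzero edge cannot form a zero-sum block, every block has at least two elements, forcing the partition to consist of exactly two pairs. A pair of \emph{adjacent} edges summing to zero would force two consecutive vertices to coincide, contradicting that $\Pcal$ is a genuine quadrilateral; hence the only admissible pairing is into the two pairs of opposite edges, giving $d_1=-d_3$ and $d_2=-d_4$. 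Thus opposite sides are parallel and of equal length, i.e.\ $\Pcal$ is a parallelogram.

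All steps are elementary once Remark \ref{rmk:toricdiagramminkdec} and Lemma \ref{lemma_minkowski_cone} are in hand; the single point deserving care — and the main potential obstacle — is the primitivity-plus-convexity argument upgrading ``parallel'' to ``antiparallel of equal length.'' This is precisely where the toric-diagram hypothesis is essential (it fails for the trapezoid of Example \ref{ex:trapezoid}, whose parallel horizontal edges have different lengths), and it is also what excludes the adjacent-edge pairings in the quadrilateral case.
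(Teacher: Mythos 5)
Your proof is correct and follows essentially the same route as the paper's: both reduce to the edge-partition description of lattice decompositions, use primitivity of the edges (no relative interior lattice points) plus convexity to upgrade ``parallel'' to $d_i=-d_j$, split off the pair $\set{d_i,d_j}$ against its complement, and settle the quadrilateral converse by observing that the only admissible splitting is into two segments coming from opposite edge pairs. Your treatment of the antiparallel step and the exclusion of adjacent-edge pairings is slightly more explicit than the paper's, but it is the same argument.
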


\begin{proof}
Any pair of parallel edges on a toric diagram $\Pcal\subset\mathbb{R}^{2}$ must be of equal length, for otherwise one edge would contain interior lattice points. 
Let $d_i, d_j$ be such a pair of edges. Then under our orientation convention, $d_i + d_j = 0$, and so we can partition $\Pcal$ into disjoint subsets $\set{d_i, d_j}$ and $\bigcup_{k \neq i,j} \set{d_k}$, each of which sums up to zero. It follows that $\Pcal$ has a lattice Minkowski decomposition. In particular, a parallelogram has such a decomposition. Conversely, if $\Pcal$ is a quadrilateral, then by the vertices constraint \eqref{eq:verticesconstraint}, the only way to split it into lattice polygons is by summing two lattice segments, in which case $\Pcal$ must be a parallelogram.
\end{proof}

By summing two polytopes, we can also generate toric diagrams that admit a lattice Minkowski decomposition.

\begin{lemma} \label{lem:sumtoricdiag}
Let $\Qcal$ be a toric diagram in $\Rbb^2$ and $\Lcal$ a lattice segment in $\Rbb^2$. Then $\Pcal = \Qcal + \Lcal$ is a toric diagram if and only if $\Lcal$ is not parallel to any edge of $\Qcal$ and $\Lcal$ contains no interior lattice points.
\end{lemma}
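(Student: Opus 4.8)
The plan is to reduce the statement to the single condition \ref{properface} of Definition \ref{def:toric_diagram}, namely that every edge of $\Pcal$ is a primitive lattice segment. First I would observe that condition \ref{lattice} holds automatically: the Minkowski sum of two lattice polytopes is a lattice polytope, and since $\Qcal$ is two-dimensional and everything lies in $\Rbb^2$, the sum $\Pcal = \Qcal + \Lcal$ is a two-dimensional convex lattice polygon; moreover every convex polygon is simple, each vertex meeting exactly two edges. Thus $\Pcal$ satisfies \ref{lattice} unconditionally, and the entire content of the lemma is whether the edges of $\Pcal$ contain relative interior lattice points.

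The key tool is the standard description of the edges of a Minkowski sum of convex polygons, which I would record first: the face of $\Qcal + \Lcal$ supported by an outer normal $u$ is the sum of the faces of $\Qcal$ and $\Lcal$ supported by $u$, so the boundary of $\Pcal$ is traced by taking the oriented edge vectors of $\Qcal$, say $d_1, \dots, d_N$ (each primitive, since $\Qcal$ is a toric diagram), together with the two oriented edge vectors $\ell$ and $-\ell$ contributed by the segment $\Lcal$ (where $\ell$ is the edge vector of $\Lcal$), sorting all of them by angle and merging, by vector addition, any two pointing in the same direction. For the \emph{if} direction, assume $\Lcal$ is not parallel to any edge of $\Qcal$ and has no interior lattice point. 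Then $\ell$ is primitive, and neither $\ell$ nor $-\ell$ is codirectional with any $d_i$, so no merging occurs and the edge vectors of $\Pcal$ are exactly $d_1, \dots, d_N, \ell, -\ell$, all primitive. Hence \ref{properface} holds and $\Pcal$ is a toric diagram.

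For the \emph{only if} direction I would argue by contraposition, exhibiting in each case an edge of $\Pcal$ carrying a relative interior lattice point. If $\Lcal$ is parallel to some edge $d_i$, then after replacing $\ell$ by $-\ell$ if necessary, $\ell$ is codirectional with $d_i$; as both are lattice vectors parallel to the primitive vector $d_i$, we have $\ell = k d_i$ with $k \geq 1$, and the merging rule produces the edge $d_i + \ell = (k+1)d_i$ of $\Pcal$, which contains the interior lattice point $d_i$. If instead $\Lcal$ is not parallel to any edge but has an interior lattice point, then $\ell = m \ell_0$ with $\ell_0$ primitive and $m \geq 2$ appears unmerged as an edge of $\Pcal$ and contains the interior lattice point $\ell_0$. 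Since these two cases exhaust the negation of the hypotheses, \ref{properface} fails in either case and $\Pcal$ is not a toric diagram. The main obstacle is purely combinatorial bookkeeping: one must pin down the edge structure of the Minkowski sum precisely and track orientations carefully (same versus opposite direction, and the resulting merges) so that the parallel case genuinely yields a non-primitive edge.
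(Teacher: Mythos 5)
Your proof is correct and follows essentially the same route as the paper's: condition (A) is automatic, and everything reduces to identifying the edges of $\Qcal+\Lcal$ as the edges of $\Qcal$ together with two copies of $\Lcal$, with parallel edges merging into a non-primitive edge. Your treatment of the converse via explicit case analysis and the merging rule is somewhat more detailed than the paper's (which invokes Remark \ref{rmk:toricdiagramminkdec}), but the underlying argument is the same.
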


\begin{proof}
The Minkowski sum of two convex affine lattice polytopes is clearly again a convex affine lattice polytope, hence $\Pcal$ satisfies Definition \ref{def:toric_diagram}\ref{lattice} as a polygon. It remains to show that $\Pcal$ satisfies Definition \ref{def:toric_diagram}\ref{properface}, i.e., has no interior lattice points over the edges. This follows from the fact that the edges of $\Pcal$ are the union of the edges of $\Qcal$ and two copies of $\Lcal$.  

Conversely, suppose that $\Pcal$ is a toric diagram with a lattice decomposition $\Qcal + \Lcal$. Then by Remark \ref{rmk:toricdiagramminkdec}, $\Pcal$ has an edge parallel to $\Lcal$ and of equal length, and so $\Lcal$ does not contain any interior lattice points. Furthermore, $\Lcal$ is not parallel to an edge of $\Qcal$, for otherwise the sum would give an edge with at least one interior lattice point.  
\end{proof}

A lemma we need is:
\begin{lemma} \label{cor:sumtoricdiag}
If $\Qcal$ is a quadrilateral toric diagram with oriented edges $d_1,d_2,d_3,d_4$ which is not a parallelogram, then the toric diagram $\Qcal + \Lcal$ has a maximal decomposition into lattice summands if and only if given the suitably oriented edge $d$ built from $\Lcal$, either:
\begin{enumerate}
\item \label{12} $d_1 + d_2 = d$, or
\item \label{13} 
$d_1 + d_3 = -d$, or
\item \label{14} $d_1 + d_4 = d$.
\end{enumerate}
In all cases, the lattice maximal decomposition is the sum of two triangles. 
\end{lemma}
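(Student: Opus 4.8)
The plan is to reduce the entire statement to the combinatorics of the oriented edges of $\Pcal := \Qcal + \Lcal$, exploiting the dictionary of Remark~\ref{rmk:toricdiagramminkdec} between lattice Minkowski decompositions and partitions of the edge set into zero-sum blocks, and then to run through the finitely many admissible partitions. First I would record the edge structure of $\Pcal$. Since $\Pcal$ is assumed to be a toric diagram, Lemma~\ref{lem:sumtoricdiag} tells us that $\Lcal$ is not parallel to any edge of $\Qcal$ and contains no interior lattice point; write $d$ for a primitive edge vector of $\Lcal$. In a Minkowski sum the edge in each direction is the vector sum of the summands' edges in that direction, so because $d\not\parallel d_i$ for all $i$, the oriented edges of $\Pcal$ are exactly $\{d_1,d_2,d_3,d_4,d,-d\}$. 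In particular $\Pcal$ is a hexagon, consistent with the vertex count \eqref{eq:verticesconstraint}.

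Next I would determine the possible shapes of a maximal decomposition. By Corollary~\ref{cor:minkalways} a lattice decomposition is maximal if and only if every summand is a lattice segment or lattice triangle, and by Remark~\ref{rmk:toricdiagramminkdec} such a decomposition is precisely a partition of $\{d_1,d_2,d_3,d_4,d,-d\}$ into subsets each summing to $0$, where a size-$2$ block is an antiparallel pair (a segment) and a size-$3$ block is a zero-sum triple (a triangle). Partitioning six edges into blocks of size $2$ and $3$ only, the block sizes must be $\{2,2,2\}$ or $\{3,3\}$. The shape $\{2,2,2\}$ would require three antiparallel pairs; one of them is forced to be $\{d,-d\}$, leaving $\{d_1,d_2,d_3,d_4\}$ to split into two antiparallel pairs. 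In a convex quadrilateral only opposite edges can be antiparallel, and parallel edges of a toric diagram must have equal length (as noted at the start of the proof of Lemma~\ref{lem:practical-crit}); hence both opposite pairs being antiparallel would make $\Qcal$ a parallelogram, against the hypothesis. Thus every maximal decomposition has shape $\{3,3\}$, i.e.\ is a sum of two lattice triangles, which already establishes the final assertion of the lemma.

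It then remains to decide when a partition into two zero-sum triples exists. Since $d+(-d)+d_i = d_i \neq 0$, the edges $d$ and $-d$ lie in different blocks, so the two triples are $\{d,d_a,d_b\}$ and $\{-d,d_c,d_e\}$ with $\{a,b\}\sqcup\{c,e\}=\{1,2,3,4\}$; because $d_1+d_2+d_3+d_4=0$, the single requirement $d_a+d_b=-d$ automatically yields $d_c+d_e=d$, so a decomposition exists exactly when some pair $\{d_a,d_b\}$ sums to $\pm d$. Fixing $d_1$ and letting its partner run over $d_2,d_3,d_4$ enumerates the three pairings, producing precisely the three alternatives \ref{12}, \ref{13}, \ref{14}, with the sign on $d$ in each fixed once $\Lcal$ is suitably oriented. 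For the converse I would observe that any one of these identities gives a zero-sum partition, hence a genuine lattice decomposition by Remark~\ref{rmk:toricdiagramminkdec}; each block is a non-degenerate lattice triangle because its three edge vectors are pairwise non-parallel (consecutive and opposite edges of the non-parallelogram $\Qcal$ are non-parallel, and $d\not\parallel d_i$), and, taken in the counterclockwise order inherited from $\partial\Pcal$, they bound a convex lattice triangle. Since a triangle is Minkowski-indecomposable by Example~\ref{ex:segmenttriangle}, the decomposition is maximal.

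The step I expect to be the main obstacle is the sign bookkeeping in the last paragraph: verifying that the orientation making both summand triangles positively oriented produces exactly the pattern $+,-,+$ displayed in \ref{12}--\ref{14}, rather than some other sign assignment. Concretely, one must track the angular positions of the two $\Lcal$-edges $\pm d$ relative to $d_1,\dots,d_4$ in the counterclockwise boundary of the hexagon $\Pcal$, so as to see which of $+d$ or $-d$ completes each triangle; the adjacent pairs $\{d_1,d_2\},\{d_1,d_4\}$ and the opposite pair $\{d_1,d_3\}$ behave differently in this respect, which is the source of the asymmetry in the signs. Since the choice of orientation of $\Lcal$ flips all signs simultaneously, the logical content is the orientation-free condition $d\in\{\pm(d_1+d_2),\pm(d_1+d_3),\pm(d_1+d_4)\}$, and the remaining work is a short finite check.
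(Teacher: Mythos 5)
Your proof is correct and follows essentially the same route as the paper's: both reduce the question to partitions of the six oriented edges of $\Qcal+\Lcal$ into zero-sum blocks, rule out the three-segment shape via the non-parallelogram hypothesis, and enumerate the three pairings of $\set{d_1,d_2,d_3,d_4}$, with the forward direction checked by exhibiting the two zero-sum, pairwise non-parallel triples. Your extra observations (that $d$ and $-d$ must lie in different triples, and that $d_a+d_b=-d$ automatically forces the complementary triple to close up) are correct refinements of the same argument, and your remark that the sign pattern in \ref{12}--\ref{14} is absorbed by the ``suitably oriented'' clause is consistent with how the paper treats it.
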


\begin{proof}
Let $\Qcal$ be the quadrilateral as given in the statement of the lemma. Then \ref{12} implies that there is a partition of the edges of $\Qcal + \Lcal$ into two disjoint sets, namely $\set{d_1,d_2,-d}$ and $\set{d_3,d_4,d}$, each summing up to zero (because $d_1 + d_2 = -(d_3 +d_4) = d$). Furthermore, given that $\Qcal$ is not a parallelogram and $\Lcal$ is not parallel to any edge of $\Qcal$ by Lemma \ref{lem:sumtoricdiag}, there are no parallel edges in each set. Consequently, each forms a triangle by walking along the corresponding edges. $\Qcal + \Lcal$ therefore has a maximal decomposition into lattice triangles. By the same principle, the conclusion holds if we assume either \ref{13} or \ref{14}. 

Conversely, suppose that $\Qcal + \Lcal$ has a lattice maximal decomposition. Then by the vertices constraint \eqref{eq:verticesconstraint}, $\Qcal + \Lcal$ is a hexagon, and so a lattice maximal decomposition of $\Qcal + \Lcal$ must be the sum of three segments or two triangles. In the former case, $\Qcal$ must have a pair of parallel edges, hence is a parallelogram. This is excluded by our assumption. It follows that there is a partition of the edges of $\Qcal + \Lcal$ into two disjoints sets, each summing up to zero. The only possibilities for this are \ref{12}--\ref{14} for a suitably chosen orientation of $d$.  
\end{proof}

\begin{figure} [b]
        \centering
        \resizebox{\linewidth}{!}{
                            \begin{tikzpicture}[ thick,decoration={markings,mark=at position 0.5 with {\arrow{Stealth}}}]
                \draw[step=1cm,gray,dashed] (-1.9,-6.9) grid (8.9,1.9);
                                \node at (0,0) [circle, fill, inner sep = 1.5pt] {};
                \draw[postaction={decorate}] (0,-1) -- (2,0);
                \draw[postaction={decorate}] (2,0) -- (1,1);
                \draw[postaction={decorate}] (1,1) -- (0,1);
                \draw[postaction={decorate}] (0,1) -- (-1,0);
                \draw[postaction={decorate}] (-1,0)-- (-1,-1);
                \draw[postaction={decorate}] (-1,-1) -- (0,-1);
            \node at (2.5,0) {\huge \textbf{=}};   
            \node at (4,0) [circle, fill, inner sep = 1.5pt]{};
            \draw[postaction={decorate}] (3,-1) -- (5,0);
            \draw[postaction={decorate}] (5,0) -- (4,1);
            \draw[postaction={decorate}] (4,1) -- (3,0);
            \draw[postaction={decorate}] (3,0) -- (3,-1);
             \node at (6,0) {\huge \textbf{+}};
             \node at (7,0) [circle, fill, inner sep = 1.5pt]{};
             \draw[postaction={decorate}] (7,0) -- (8,0);
             \draw[postaction={decorate}] (8,0) -- (7,0);
            
            \node at (0,-5) [circle, fill, inner sep = 1.5pt] {};
            \draw[postaction={decorate}] (-1,-6) -- (1,-5);
            \draw[postaction={decorate}] (1,-5) -- (2,-2);
            \draw[postaction={decorate}] (2,-2) -- (1,-1);
            \draw[postaction={decorate}] (1,-1) -- (0,-2);
            \draw[postaction={decorate}] (0,-2) -- (-1,-5);
            \draw[postaction={decorate}] (-1,-5) -- (-1,-6);
             \node at (2.5, -4) {\huge \textbf{=}};
              \node at (4,-4) [circle, fill, inner sep = 1.5pt]{};
            \draw[postaction={decorate}] (3,-5) -- (5,-4);
            \draw[postaction={decorate}] (5,-4) -- (4,-3);
            \draw[postaction={decorate}] (4,-3) -- (3,-4);
            \draw[postaction={decorate}] (3,-4) -- (3,-5);
             \node at (6,-4) {\huge \textbf{+}};
              \node at (7,-5) [circle, fill, inner sep = 1.5pt]{};
             \draw[postaction={decorate}] (7,-5) -- (8,-2);
             \draw[postaction={decorate}] (8,-2) -- (7,-5);
            \end{tikzpicture}
        }
\caption{The toric diagrams of $\Pcal_1 := \Qcal_{3} + \text{Conv} \set{\Spvek{0;0},\Spvek{1;0}}$ and $\Pcal_2 := \Qcal_{3} + \text{Conv} \set{\Spvek{0;0},\Spvek{1;3}}$ respectively, considered in Example \ref{ex:latticemaxvsmaxlattice}. The former has a lattice maximal decomposition into two lattice triangles, whereas the latter has none. } 
\label{figure_latticemaxvsmaxlattice}
\end{figure}

We illustrate this lemma with a simple example.

\begin{example} \label{ex:latticemaxvsmaxlattice}
Consider the toric diagram $\Qcal_{3}$. Then $\Pcal_1 := \Qcal_{3} + \text{Conv} \set{\Spvek{0;0},\Spvek{1;0}}$ has a lattice maximal decomposition, but $\Pcal_2 := \Qcal_{3} + \text{Conv} \set{\Spvek{0;0},\Spvek{1;3}}$ has none, even though both define toric diagrams; see Figure \ref{figure_latticemaxvsmaxlattice}. Indeed, as explained in the proof of Lemma \ref{cor:sumtoricdiag}, if a lattice maximal decomposition of $\Pcal_2$ exists, then it must be the sum of two lattice triangles.
However, one can readily verify
from the associated toric diagram that no three oriented edges of $\Pcal_2$ sum up to zero.  
\end{example}

\begin{figure} 
    \begin{subfigure}[b]{0.45\textwidth}
    \centering
        \resizebox{\linewidth}{!}{
                    \begin{tikzpicture}[ thick,decoration={markings,mark=at position 0.5 with {\arrow{Stealth}}}]
                \draw[step=1cm,gray,dashed] (-1.9,-1.9) grid (5.9,1.9);
                \node at (0,0) [circle, fill, inner sep = 1.5pt] {};
                \draw[postaction={decorate}] (1,0) -- (0,1);
                \draw[postaction={decorate}] (0,1) -- (-1,0);
                \draw[postaction={decorate}] (-1,0)-- (0,-1);
                \draw[postaction={decorate}] (0,-1) -- (1,0);
                \node at (1.5,0) {\huge \textbf{=}};
                \node at (2,0) [circle, fill, inner sep = 1.5pt]{};
                \draw[postaction={decorate}] (2,0) -- (3,1);
                \draw[postaction={decorate}] (3,1) -- (2,0);
                \node at (3,0) {\huge \textbf{+}};
                \node at (4,0) [circle, fill, inner sep = 1.5pt]{};
                \draw[postaction={decorate}] (4,0) -- (5,-1);
                \draw[postaction={decorate}] (5,-1)--(4,0);
                \end{tikzpicture}
        }
        \caption{\( \Qcal_1 \)}  
        \label{subfig:dp1mink}
    \end{subfigure}
    \begin{subfigure}[b]{0.45\textwidth}
        \centering
        \resizebox{\linewidth}{!}{
                            \begin{tikzpicture}[ thick,decoration={markings,mark=at position 0.5 with {\arrow{Stealth}}}]
                \draw[step=1cm,gray,dashed] (-1.9,-1.9) grid (6.9,1.9);
                                \node at (0,0) [circle, fill, inner sep = 1.5pt] {};

                \draw[postaction={decorate}] (0,-1) -- (1,0); 
                \draw[postaction={decorate}] (1,0) -- (0,1);
                \draw[postaction={decorate}] (0,1) -- (-1,1);
                \draw[postaction={decorate}] (-1,1) -- (-1,0);
                \draw[postaction={decorate}] (-1,0) -- (0,-1);
                \node at (1.5,0) {\huge \textbf{=}}; 
                \node at (3,0) [circle, fill, inner sep = 1.5pt]{};
                \draw[postaction={decorate}] (3,0) -- (2,1);
                \draw[postaction={decorate}] (2,1) -- (3,0);
                \node at (4,0) {\huge \textbf{+}};
                \node at (5,0) [circle, fill, inner sep = 1.5pt]{};
                \draw[postaction={decorate}] (5,0) -- (6,1);
                \draw[postaction={decorate}] (6,1) -- (5,1);
                \draw[postaction={decorate}] (5,1) -- (5,0);
            \end{tikzpicture}
        }
        \caption{\( \Qcal_4 \)}
        \label{subfig:dp3mink}
    \end{subfigure}

\caption{Lattice Minkowski decompositions for the toric diagrams $\Qcal_1$ and $\Qcal_4$. Each solid circle is the origin of the relative vector space.}
\label{figure_dp_family_decomposition}
\end{figure}
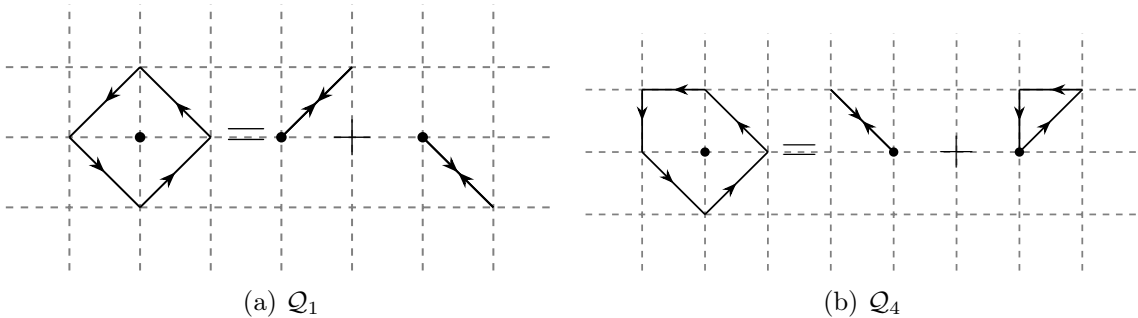

In the next example, we collect together the lattice Minkowski decompositions of the del Pezzo toric diagrams given in the above figures.
\begin{example} \label{example_toricdelpezzo_minkowski_decomposition}
Recall the del Pezzo toric diagrams $\Qcal_1, \Qcal_2, \Qcal_3, \Qcal_4, \Qcal_5$ represented in Figure \ref{figure_delpezzo_family} (see also Figure \ref{figure_dp_family_decomposition}).
\begin{itemize}
\item $\Qcal_1$ admits only one lattice Minkowski decomposition (cf. ~Example \ref{ex:dp1mink} and Figure \ref{subfig:dp1mink}). 
\item $\Qcal_2$ is a triangle, hence does not admit any Minkowski decomposition. 
\item $\Qcal_3$ has no parallel edges, and so has no lattice decomposition by Lemma \ref{lem:practical-crit}.
\item $\Qcal_4$ has one pair of parallel edges of equal length and the remaining edges form a lattice triangle. $\Qcal_4$ therefore has (up to homothety) at least two Minkowski summands, namely
\[\Lcal = \textnormal{Conv} \set{\Spvek{0;0},\Spvek{-1;1}}, \qquad \Delta = \textnormal{Conv} \set{\Spvek{0;0},\Spvek{0;1},\Spvek{1;1}}.\]
The summand cone $\Ccal(\Qcal_4)$ is two-dimensional, so the possible summands of $\Qcal_4$ are (up to homothety) $\Lcal$ and $\Delta$. The only lattice Minkowski decomposition of $\Qcal_4$ is therefore given by $\Qcal_4 = \Lcal + \Delta$ (cf.~Figure \ref{subfig:dp3mink}). 
\item $\Qcal_5$ has precisely two lattice maximal Minkowski decompositions:~into three segments, and into two triangles. This example is considered throughout \cite{Alt97}. 
\end{itemize}
\end{example}

\subsubsection{Minkowski schemes}
For each \( k \geq 1 \), we define an $\Rbb^2$-valued polynomial by
 $g_{k}(t) := \sum_{i=1}^N t_i^k d_i$. 
The ideal 
\[ \Ical := \sprod{ g_k(t), \; k \geq 1} \subset \mathbb{C}[t_1,\dots,t_N] \]
determines a complex affine closed subscheme 
\begin{equation} 
\Mcal := \Spec \mathbb{C}[t_1,\dots,t_N] / \Ical \subset V_{\mathbb{C}} = V\otimes_{\Rbb} \Cbb.
\end{equation}
\noindent The following criterion is useful in practice. 
\begin{lemma}[{\cite[Sections (2.3) \& (3.3)]{Alt97}}]
If \( \Pcal \) is contained in a pair of parallel lines of lattice distance \( \leq k_0 \), then all the polynomials \( g_k \) with \( k > k_0 \) are superfluous. 
\end{lemma}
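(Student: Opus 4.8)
The plan is to establish the equivalent ideal-membership statement: after a harmless normalisation, $g_k(t)\in\langle g_1(t),\dots,g_{k_0}(t)\rangle$ for every $k>k_0$, where each $g_j$ is understood to contribute its two scalar components as generators. First I would fix a convenient frame. A translation of $\Pcal$ leaves every edge vector $d_i$, hence every $g_k$, unchanged, while a change of lattice basis $M\in\SL_2(\Zbb)$ sends $d_i\mapsto Md_i$ and therefore $g_k\mapsto Mg_k$ simultaneously; as $M$ is invertible this preserves the ideal $\langle g_1,\dots,g_{k_0}\rangle$. Taking the two parallel lines to be supporting lines, I may thus assume they are $\{y=0\}$ and $\{y=k_0\}$, so that $\ell:=$ the second coordinate is a primitive functional ranging over $[0,k_0]$ on $\Pcal$, with $c_i:=\ell(d_i)\in\{-k_0,\dots,k_0\}$. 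The governing idea is that bounded lattice width forces the ``ascending'' and ``descending'' edge data to share one characteristic polynomial of degree $k_0$, after which a Cayley--Hamilton recurrence makes each $g_k$ with $k>k_0$ redundant.

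The clean core is the component $\ell(g_k)$. Since $\Pcal$ is convex, $\ell$ is unimodal along the counter-clockwise boundary, so the total upward variation equals the range $k_0$; that is, $\sum_{c_i>0}c_i=\sum_{c_i<0}|c_i|=k_0$. I encode this as two size-$k_0$ multisets: $U$, in which each $t_i$ with $c_i>0$ occurs $c_i$ times, and $D$, in which each $t_i$ with $c_i<0$ occurs $|c_i|$ times. Then $\ell(g_m)=\sum_i c_i t_i^m=p_m(U)-p_m(D)$, the difference of $m$-th power sums. The relations $\ell(g_1)=\dots=\ell(g_{k_0})=0$ say that $U$ and $D$ have equal power sums of orders $1,\dots,k_0$ modulo $\langle g_1,\dots,g_{k_0}\rangle$, so by Newton's identities (invertible over $\Qbb$) their elementary symmetric functions $e_1,\dots,e_{k_0}$ agree modulo this ideal. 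As each multiset has only $k_0$ elements, every higher power sum $p_k(U),p_k(D)$ with $k>k_0$ is the \emph{same} universal polynomial in $e_1,\dots,e_{k_0}$; hence $\ell(g_k)=p_k(U)-p_k(D)\in\langle g_1,\dots,g_{k_0}\rangle$ for all $k>k_0$. This disposes of the $\ell$-component outright.

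For the transverse component $\alpha_k:=\ell'(g_k)$ (with $\ell'$ the first coordinate) I would use the common degree-$k_0$ characteristic polynomial $\chi(X)=X^{k_0}-e_1X^{k_0-1}+\dots+(-1)^{k_0}e_{k_0}$ produced above. Every value of $U$ is an exact root of $\chi$, and every value of $D$ is a root modulo the ideal, since $\chi$ reduces modulo $\langle g_1,\dots,g_{k_0}\rangle$ to the characteristic polynomial of $D$. Multiplying each relation $\chi(t_i)\equiv 0$ by the appropriate monomial (weighted by the constant transverse slope of the corresponding edge) and summing, the ascending-plus-descending contribution $S_k$ to $\alpha_k$ satisfies $S_k\equiv\sum_{j=1}^{k_0}(-1)^{j+1}e_j S_{k-j}$ modulo the ideal for all $k>k_0$. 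If the contribution $H_k=t_{\mathrm{bot}}^k-t_{\mathrm{top}}^k$ of the edges parallel to $\ell$ obeys the \emph{same} recurrence, then so does $\alpha_k=S_k+H_k$; and since the initial values $\alpha_1,\dots,\alpha_{k_0}$ are themselves generators of the ideal, an induction on $k$ then places $\alpha_k$, and hence all of $g_k$, in $\langle g_1,\dots,g_{k_0}\rangle$ for every $k>k_0$.

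The hard part will be handling the edges parallel to $\ell$ (those with $c_i=0$), which feed the transverse component but are invisible to the height bookkeeping. Convexity limits them to at most one top and one bottom edge, so their contribution is exactly $H_k$ times the horizontal unit vector; to keep the recurrence at order $k_0$ one must prove that $t_{\mathrm{bot}}$ and $t_{\mathrm{top}}$ are roots of $\chi$ modulo $\langle g_1,\dots,g_{k_0}\rangle$, i.e.\ that $\chi$ absorbs these two extra variables. Geometrically this is the Minkowski-summand correspondence (Lemma \ref{lemma_minkowski_cone} and Remark \ref{rmk:toricdiagramminkdec}): the scaling variable of a horizontal edge equals that of the summand containing it, and that summand either is a segment parallel to $\ell$ (forcing $t_{\mathrm{bot}}=t_{\mathrm{top}}$, so $H_k\equiv 0$) or contains a genuine ascending edge (forcing the variable into $U$). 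Upgrading this closed-point reasoning to the required scheme-theoretic congruence is the delicate point, and it is precisely the combinatorial bookkeeping carried out in \cite[Sections (2.3) \& (3.3)]{Alt97}; once in place, the recurrence of the previous paragraph yields $g_k\in\langle g_1,\dots,g_{k_0}\rangle$ for all $k>k_0$, as claimed.
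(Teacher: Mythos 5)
First, a remark on the comparison itself: the paper does not prove this lemma at all --- it is quoted directly from \cite[Sections (2.3) \& (3.3)]{Alt97} --- so there is no in-paper argument to measure yours against, and your attempt has to stand on its own. The part of your argument concerning the component along the strip direction $\ell$ is correct and complete: $\ell(g_m)$ is literally one of the generators for $m\le k_0$, Newton's identities (invertible over $\Qbb$) then give $e_j(U)\equiv e_j(D)$ modulo $J:=\langle g_1,\dots,g_{k_0}\rangle$, and since $|U|=|D|=w\le k_0$ all higher power sums agree modulo $J$. The recurrence you derive for the contribution of the non-horizontal edges to the transverse component is also sound, because each such $t_i$ lies in $U$ or in $D$ and therefore satisfies $\chi(t_i)\in J$.

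The genuine gap is the treatment of the edges parallel to the strip, and the sufficient condition you propose in order to close it is in fact false. Take $\Pcal$ to be the unit square with vertices $(0,0),(1,0),(1,1),(0,1)$ (a legitimate toric diagram --- it is the diagram of the conifold), lying between $y=0$ and $y=1$, so $k_0=w=1$. Here $U=\{t_2\}$, $\chi(X)=X-t_2$, and $J=\langle t_1-t_3,\ t_2-t_4\rangle$; but $\chi(t_{\mathrm{top}})=t_3-t_2\notin J$, and likewise $\chi(t_{\mathrm{bot}})=t_1-t_2\notin J$. The lemma nevertheless holds in this example because the two horizontal contributions must be treated \emph{jointly}: $t_1\equiv t_3\pmod J$, so $H_k=t_1^k-t_3^k\in J$ outright, and $0$ satisfies any recurrence. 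So the horizontal edges cannot be absorbed by showing each is separately a root of $\chi$ modulo $J$, which is the step your plan hinges on. Your two-case discussion via Minkowski summands does gesture at the correct dichotomy, but it is carried out only on closed points of the reduced Minkowski scheme, which begs the question: the lemma is a statement about the ideal $\Ical$ itself, and $\Mcal$ is typically non-reduced (cf.\ Proposition \ref{prop:gmsw_trivial}). Since you explicitly defer this step to Altmann's bookkeeping, the proposal reduces the lemma to an unproved (and, as stated, incorrect) sub-claim rather than proving it.
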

\noindent We illustrate an application of this in the following example.
\begin{example}
Since $\Qcal_1$ is clearly contained between two pairs of lines of lattice distance $2$, the scheme $\Mcal$ is defined by 
\[ \Ical = \sprod{t_1 - t_2 - t_3 + t_4, t_1 + t_2 - t_3 - t_4, t_1^2 - t_2^2 - t_3^2 + t_4^2, \; t_1^2 + t_2^2 - t_3^2 - t_4^2}.\]
A Gröbner basis of $\Ical$ (computed using Mathematica) is then given by $\set{t_2 - t_4, t_1 - t_3}$. It follows that $\Mcal$ is topologically the complex affine space $\set{(u,v,u,v)\,|\, u,v \in \Cbb} \simeq \Cbb^2$. 
\end{example}

We denote by
\begin{equation} l: V_{\mathbb{C}} \to V_{\mathbb{C}} / \mathbb{C} \rho(\Pcal) \simeq \mathbb{C}^{N-2} / \mathbb{C}(1, \dots, 1) 
\end{equation}
the projection map. We have the following definition.

\begin{definition} \label{def:minkscheme}
The scheme \( \ol{\Mcal} := l(\Mcal) \) is called the \emph{Minkowski scheme} of $\Pcal$ or (as we shall see in Theorem \ref{thm:versalfamily} below) the \emph{versal base space} of the associated toric cone $C_0$.      
\end{definition}
\noindent In general, \( \Mcal \) and \( \ol{\Mcal} \) are not reduced nor irreducible. 
We denote by $\Mcal^{\textnormal{red}}, \ol{\Mcal}^{\textnormal{red}}$ the reduced scheme structures of \( \Mcal, \ol{\Mcal} \), respectively. Then we have:

\begin{theorem}[{\cite[Sections (2.5) \& (3.5)]{Alt97}}] \label{thm:minkschemecomp}
The set of irreducible components of $\Mcal^{\textnormal{red}}$ consists of complex affine spaces and is in bijection with 
the maximal decompositions of \( \Pcal \) into lattice summands. $\ol{\Mcal}^{\textnormal{red}}$ is the union of the corresponding quotient spaces under $l$. In particular, if the decomposition has $(m+1)$ summands, then the corresponding irreducible affine components of $\Mcal^{\textnormal{red}}$ and $\ol{\Mcal}^{\textnormal{red}}$ are isomorphic as toric affine varieties to $\Cbb^{m+1}$ and $\Cbb^{m}$, respectively. 
\end{theorem}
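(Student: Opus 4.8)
The plan is to analyze $\Mcal^{\textnormal{red}}$ set-theoretically, exhibiting it as a finite union of linear subspaces of $V_{\Cbb}$; since $\Mcal^{\textnormal{red}}$ is reduced by construction, its irreducible components will then automatically be the maximal such subspaces, hence affine spaces. A closed point of $\Mcal$ is a tuple $t=(t_1,\dots,t_N)\in\Cbb^N$ with $g_k(t)=\sum_{i=1}^N t_i^k d_i=0$ for all $k\geq 1$ (the case $k=1$ being exactly the condition $t\in V_{\Cbb}$). First I would run a \emph{Vandermonde argument}: grouping the indices by the common value of $t_i$ and setting $D_c:=\sum_{t_i=c} d_i$ for each distinct nonzero value $c$, the relations read $\sum_c c^k D_c=0$ for every $k\geq 1$. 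Since the matrix $(c^k)_{k,c}$ over the finitely many distinct nonzero values is a scaled Vandermonde matrix, hence invertible, each $D_c=0$. Thus the indices carrying a fixed nonzero value, together with those carrying the value $0$, partition $\set{d_1,\dots,d_N}$ into blocks each summing to zero, i.e.~into a lattice Minkowski decomposition of $\Pcal$ in the sense of Remark \ref{rmk:toricdiagramminkdec}.

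Next I would establish the converse inclusion. Given a lattice decomposition $\mathcal{P}$ of $\Pcal$ into blocks $B_0,\dots,B_m$ with $\sum_{i\in B_j} d_i=0$, define $L_{\mathcal{P}}\subset\Cbb^N$ to be the subspace of tuples constant on each block. Assigning the value $\lambda_j$ to $B_j$ gives $\sum_i t_i^k d_i=\sum_{j=0}^m \lambda_j^k\sum_{i\in B_j} d_i=0$, so $L_{\mathcal{P}}\subseteq\Mcal$ and clearly $L_{\mathcal{P}}\cong\Cbb^{m+1}$. Together with the Vandermonde argument this shows $\Mcal^{\textnormal{red}}=\bigcup_{\mathcal{P}} L_{\mathcal{P}}$, the union ranging over all lattice decompositions. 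The crucial bookkeeping is that refinement of partitions reverses inclusion: $\mathcal{P}'$ is finer than $\mathcal{P}$ if and only if $L_{\mathcal{P}}\subseteq L_{\mathcal{P}'}$. Since every lattice decomposition refines, after finitely many subdivisions, to a finest one whose blocks are lattice-indecomposable, the maximal members of $\set{L_{\mathcal{P}}}$ — hence the irreducible components of $\Mcal^{\textnormal{red}}$ — are exactly the $L_{\mathcal{P}}$ attached to maximal decompositions of $\Pcal$ into lattice summands; distinct such decompositions are incomparable under refinement and so yield incomparable subspaces. A decomposition into $(m+1)$ summands produces a component $\cong\Cbb^{m+1}$, giving the asserted bijection.

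Finally, for $\ol{\Mcal}^{\textnormal{red}}=l(\Mcal^{\textnormal{red}})$ I would use that $\rho(\Pcal)=(1,\dots,1)$ lies in every $L_{\mathcal{P}}$ (take all $\lambda_j$ equal). Hence $l$ restricts on each component to the quotient $L_{\mathcal{P}}\to L_{\mathcal{P}}/\Cbb(1,\dots,1)\cong\Cbb^m$, exhibiting $\ol{\Mcal}^{\textnormal{red}}$ as the union of the corresponding affine spaces $\Cbb^m$, so that a component with $(m+1)$ summands contributes a $\Cbb^m$.

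I expect the main obstacle to be the combinatorial matching in the third step: correctly identifying the components with \emph{maximal decompositions into lattice summands}, keeping the refinement lattice straight, distinguishing maximality over $\Rbb$ (segments and triangles, cf.~Corollary \ref{cor:minkalways}) from maximality among lattice decompositions, and treating the block of zero-valued coordinates on the same footing as the others. The Vandermonde step and the verification $L_{\mathcal{P}}\subseteq\Mcal$ are routine. The genuinely delicate point, which I would invoke from \cite[Sections (2.5) \& (3.5)]{Alt97} rather than reprove, is the promotion of this reduced, set-theoretic picture to the full scheme-theoretic statement — namely that the components of $\Mcal$ itself (before passing to $\Mcal^{\textnormal{red}}$) are honest affine spaces.
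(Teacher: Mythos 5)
The paper offers no proof of this statement: it is imported directly from Altmann \cite[Sections (2.5) \& (3.5)]{Alt97}. Your argument is correct and is in substance Altmann's original one — partition the coordinates of a closed point by their common value, use the (scaled) Vandermonde determinant over the distinct nonzero values to force each block of edges to sum to zero, observe conversely that every zero-sum partition contributes a linear subspace $L_{\mathcal{P}}\cong\Cbb^{m+1}$ inside $\Mcal$, and then use the fact that refinement of partitions reverses inclusion of the $L_{\mathcal{P}}$ to identify the irreducible components with the finest lattice decompositions. Two remarks. First, your closing caveat about promoting the set-theoretic picture to the non-reduced scheme $\Mcal$ is unnecessary for this statement: the theorem concerns only $\Mcal^{\textnormal{red}}$ and $\ol{\Mcal}^{\textnormal{red}}$, so your analysis of closed points, together with the observation that a finite union of linear subspaces has the maximal ones as its irreducible components, already gives everything claimed. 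Second, you resolved the terminological ambiguity the right way: ``maximal decompositions into lattice summands'' must mean maximal with respect to refinement among \emph{lattice} decompositions (the trivial decomposition included), not decompositions into lattice segments and triangles — the latter reading would leave $\Mcal^{\textnormal{red}}$ with no components at all for diagrams such as $\Qcal_3$ or $\Pcal_2$ of Example \ref{ex:latticemaxvsmaxlattice}, which is absurd since $\Mcal$ is nonempty. The only step that genuinely leans on the toric-diagram hypothesis is the identification of zero-sum partitions with lattice Minkowski decompositions (primitivity of the edges), which you correctly delegate to Remark \ref{rmk:toricdiagramminkdec}.
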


Let $\Pcal = \Pcal_0 + \dots + \Pcal_m$ be a lattice Minkowski decomposition. As discussed in Remark \ref{rmk:toricdiagramminkdec}, this corresponds to a partition of the vertices set $\set{d_1, \dots, d_N}$ into disjoint subsets $\bigcup_{k=0}^m I_k$. The summand $\Pcal_k$ is sent by $\rho$ to $(t_1, \dots, t_N)$, where $t_i = 1$ if $i \in I_k$ and $t_i = 0$ otherwise. Consequently, the affine complex space $\Cbb \rho(\Pcal_0) + \dots + \Cbb \rho(\Pcal_m)$, defined by setting $t_i = t_j$ if $i,j \in I_k$, is clearly isomorphic as a vector space to $\Cbb^{m+1}$. 

We discuss the Minkowski schemes of the del Pezzo toric diagrams in the next example.

\begin{example} \label{example_toricdelpezzo_minkowski_scheme}
It is immediate from Example \ref{example_toricdelpezzo_minkowski_decomposition} that among toric diagrams of the toric del Pezzo cones, the polygons $\Qcal_1, \Qcal_2, \Qcal_3, \Qcal_4$ all have an irreducible Minkowski scheme, whereas $\Qcal_5$ is the only one with a reducible Minkowski scheme. In addition, the ones with topologically non-trivial Minkowski schemes are $\Qcal_1, \Qcal_4, \Qcal_5$. Specifically:
\begin{itemize}
\item $\Qcal_1$ has a unique lattice Minkowski decomposition with two summands, and so $\Mcal^{\textnormal{red}}(\Qcal_1)  \simeq \Cbb^2$ and $\ol{\Mcal}^{\textnormal{red}}(\Qcal_1) \simeq \Cbb$ as affine varieties. 
\item $\Qcal_2$ has no lattice Minkowski decomposition, so $\ol{\Mcal}^{\textnormal{red}}(\Qcal_2)$ is only a point. 
\item $\Qcal_3$ has only a trivial lattice Minkowski decomposition, therefore $\ol{\Mcal}^{\textnormal{red}}(\Qcal_3)$ is merely a point. By Altmann's computation \cite[Section (9.2)]{Alt97}, we see that $\ol{\Mcal}(\Qcal_3) \simeq \Spec \Cbb[\varepsilon]/\varepsilon^2$ as schemes. 
\item $\Qcal_4$ has one lattice Minkowski decomposition into two summands, so that $\ol{\Mcal}^{\textnormal{red}}(\Qcal_4) \simeq \Cbb$. 
\item $\Qcal_5$ has two lattice maximal Minkowski decompositions (into two and three summands) and $\ol{\Mcal}^{\textnormal{red}}(\Qcal_5)$ has two irreducible components corresponding to $\Cbb$ and $\Cbb^2$, respectively.
\end{itemize}
\end{example}

\subsection{Deformations and Minkowski decompositions}\label{s:4.3}
Recall that $\Pcal$ is a two-dimensional toric diagram. 
Each vertex \( a \in \Pcal \) can be reached by walking along the edges of \( \Pcal \). Hence there are \( \lambda_1, \dots, \lambda_N \in \Zbb^N \) such that
\[ a = \sum \lambda_i d_i. \]
Given an element \( t \in C(\Pcal) \) and the polygon $\Pcal_t$ obtained by walking along the edges $t_i d_i$, we obtain the vertex \( a_t \in \Pcal_t \) by 
\[ a_t = \sum_{i=1}^{N} t_i \lambda_i d_i. \] 
The \emph{tautological cone} \( \wt{C}(\Pcal) \subset \mathbb{R}^2 \times V \) is defined as
\[ \wt{C}(\Pcal) := \set{(a_t, t), \; a_t \in \Pcal_t, t \in C(\Pcal)}. \]
Let $V_{\Zbb}$ denote the lattice points of $V$. By the classification of toric morphisms \cite[Theorem 3.3.4]{CLS11}, the natural inclusion \( C(\Pcal) \hookrightarrow \wt{C}(\Pcal) \) induces a toric morphism 
\[ \pi: X := \Spec \mathbb{C}[\wt{C}(\Pcal)^{\vee} \cap (\Zbb^2 \times V_{\Zbb}^{*})] \to S := \Spec \Cbb[C(\Pcal)^{\vee} \cap V_{\Zbb}^{*}]. \] 
The space \( S \) turns out to be too large for the morphism to be flat. To circumvent this issue, Altmann \cite[Sections (4.4) \& (5.2)]{Alt97}  
first remarks that the inclusion \( C(\Pcal) \subset \mathbb{R}^N_{\geq 0} \) induces a morphism 
\[ \nu_S: S \to \mathbb{C}^N \]
whose scheme-theoretic image \( \ol{S} \) is given as the kernel of 
\[ \mathbb{C}[t_1, \dots, t_N] \to \mathbb{C}[C(\Pcal)^{\vee} \cap V_{\Zbb}^{*}]. \]
The map \( \nu_S: S \to \ol{S} \) is moreover a normalization morphism. We have the following property.

\begin{theorem}[{\cite[Remark (4.4)]{Alt97}}]
The space \( \Mcal \) is the largest closed subscheme included in \( \ol{S} \). 
\end{theorem}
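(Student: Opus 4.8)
The plan is first to make the phrase ``largest closed subscheme included in $\ol{S}$'' precise and to reduce the assertion to an equality of ideals in $\Cbb[t_1,\dots,t_N]$. Writing $\sprod{g_1}$ for the ideal generated by the two components of the linear map $g_1(t)=\sum_i t_i d_i$, we have $V_{\Cbb}=\Spec \Cbb[t_1,\dots,t_N]/\sprod{g_1}$, while $\ol{S}$ is by construction cut out by $J:=\ker\bigl(\Cbb[t_1,\dots,t_N]\to\Cbb[C(\Pcal)^{\vee}\cap V_{\Zbb}^{*}]\bigr)$. A closed subscheme $Z\subset V_{\Cbb}$ is contained in $\ol{S}$ precisely when its ideal contains both $\sprod{g_1}$ and $J$; the largest such $Z$ therefore corresponds to the smallest ideal containing both, namely $\sprod{g_1}+J$. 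Since $\Mcal\subset V_{\Cbb}$ always, the theorem is equivalent to the ideal identity $\Ical=\sprod{g_1}+J$, and recording this reduction (a formal consequence of the universal property of the sum of ideals) is the first step.

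Next I would describe $J$ explicitly. As the scheme-theoretic image of $S$ in $\Cbb^N$, with $\nu_S\colon S\to\ol{S}$ its normalization, the scheme $\ol{S}$ is the closure of the subtorus $T_V\subset(\Cbb^{*})^N$ attached to the saturated sublattice $V_{\Zbb}\hookrightarrow\Zbb^N$; hence $J$ is the lattice (binomial) ideal generated by the $t^{a_{+}}-t^{a_{-}}$ with $a$ ranging over $L:=V^{\perp}\cap\Zbb^N$, where $a=a_{+}-a_{-}$ is the splitting into positive and negative parts. The crucial link between the two descriptions is that $a\in V^{\perp}$ means $a_i=\sprod{w,d_i}$ for some $w$, so that the scalar $\sprod{w,g_k(t)}=\sum_i a_i t_i^{k}$ is a weighted power sum lying in $\Ical$: the vector-valued generators $g_k$ of $\Ical$ and the binomial generators of $J$ are indexed by the same linear data.

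The core is then a Vandermonde/Newton dictionary, applied in both directions on $V_{\Cbb}$. Grouping the indices by the common value $\tau$ taken by the coordinates $t_i$, the vanishing $\sum_i a_i t_i^{k}=0$ for all $k$ is equivalent—by invertibility of the Vandermonde matrix in the distinct nonzero values $\tau$—to the balancing relations $\sum_{i:\,t_i=\tau}d_i=0$ for every $\tau$, and these are in turn equivalent to $g_1(t)=0$ together with all the binomial relations $t^{a_{+}}=t^{a_{-}}$. This already yields the equality of the zero loci of $\Ical$ and $\sprod{g_1}+J$, namely the Minkowski-summand locus. To promote it to the ideal identity I would work modulo $\sprod{g_1}$: Newton's identities within each value-group, governed by the same Vandermonde nondegeneracy, let one pass between the additive data (the restricted weighted power sums, i.e.\ the components of the $g_k$) and the multiplicative data (the restricted binomials), giving both inclusions $\sprod{g_1}+J\subseteq\Ical$ and $\Ical\subseteq\sprod{g_1}+J$. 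The mechanism is already visible in degree two, where each component of $g_2$ restricts on $V_{\Cbb}$ to a scalar multiple of a quadratic binomial of $J$, and conversely the quadratic binomials lie in $\sprod{g_1}+\sprod{g_2}$.

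The main obstacle is exactly this upgrade from supports to scheme structures: one must match the two ideals in every degree, not merely check that they define the same reduced locus—indeed $\Mcal$ and $V_{\Cbb}\cap\ol{S}$ are typically non-reduced and arise as an excess intersection, so a dimension count does not suffice. I would control this using Altmann's finiteness lemma recalled above: if $\Pcal$ lies between two parallel lines at lattice distance $k_0$, then the $g_k$ with $k>k_0$ are superfluous, so $\Ical$ is generated in bounded degree and the comparison with the finitely many low-degree binomial generators of $J$ reduces to a finite, Vandermonde-governed linear-algebra verification. Checking that this degree truncation is compatible with a generating set of the lattice ideal $J$ is the delicate bookkeeping on which the proof ultimately rests.
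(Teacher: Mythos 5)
The paper never proves this statement---it is imported verbatim from Altmann's Remark (4.4)---so there is no argument of the authors' to compare yours against, and I can only assess the proposal on its own terms. Your opening reduction, that the theorem amounts to the ideal identity $\Ical=\sprod{g_1}+J$ with $J=\ker\bigl(\Cbb[t_1,\dots,t_N]\to\Cbb[C(\Pcal)^{\vee}\cap V_{\Zbb}^{*}]\bigr)$ the lattice ideal of $L=V^{\perp}\cap\Zbb^{N}$, is indeed the natural reading of the (ambiguously transcribed) statement, and one half of it is sound: for $a\in L$, writing $p_k^{(a)}=\sum_i a_it_i^k$, the identity
\begin{equation*}
\prod_i(1-t_iz)^{(a_+)_i}-\prod_i(1-t_iz)^{(a_-)_i}
=\prod_i(1-t_iz)^{(a_-)_i}\Bigl(\exp\Bigl(-\textstyle\sum_{k\ge1}\tfrac{z^k}{k}\,p_k^{(a)}\Bigr)-1\Bigr)
\end{equation*}
shows, on extracting the top coefficient in $z$, that every binomial $t^{a_+}-t^{a_-}$ lies in $\Ical$. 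This is exactly the Newton-identity mechanism you invoke, and it cleanly proves $J\subseteq\Ical$, i.e.\ $\Mcal\subseteq\ol{S}$, which is the containment the paper actually needs in order to form $\ol{X}\times_{\ol{S}}\Mcal$.

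The reverse inclusion $\Ical\subseteq\sprod{g_1}+J$ is, however, false in general, so the scheme-theoretic upgrade you correctly flag as the main obstacle is not a bookkeeping problem but a genuine obstruction. Take the heptagon $\Pcal^{2,2}$ from the paper's own family, with counterclockwise edges $d_1=(1,1)$, $d_2=(1,2)$, $d_3=(1,5)$, $d_4=(-1,-1)$, $d_5=(-1,-2)$, $d_6=(-1,-4)$, $d_7=(0,-1)$. Both $\sprod{g_1}$ and $J$ are homogeneous, so $(\sprod{g_1}+J)_2=\sprod{g_1}_2+J_2$. A nonzero $a\in L$ with $|a_+|\le 2$ has at most four nonzero entries, hence at least $N-4=3$ entries with $\sprod{w,d_i}=0$ for the corresponding nonzero $w\in(\Rbb^2)^{*}$; three mutually parallel edges are impossible in a convex polygon, so $J_2=0$ and $(\sprod{g_1}+J)_2$ is exactly the space of quadrics vanishing on $V_{\Cbb}$. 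But the first component of $g_2$, namely $t_1^2+t_2^2+t_3^2-t_4^2-t_5^2-t_6^2$, equals $26/9\neq 0$ at the point $\bigl(1,-\tfrac43,\tfrac13,0,0,0,0\bigr)\in V_{\Cbb}$, so it lies in $\Ical_2$ but not in $(\sprod{g_1}+J)_2$. This refutes your degree-two mechanism outright (for $N\ge 7$ the ideal $J$ contains no quadratic binomials at all) and shows $\Mcal\subsetneq V_{\Cbb}\cap\ol{S}$ as schemes, even where the reduced loci agree; the two ideals genuinely differ in degree two, so no Vandermonde or Newton manipulation can produce the inclusion $\Ical\subseteq\sprod{g_1}+J$. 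The conclusion to draw is that ``largest closed subscheme included in $\ol{S}$'' cannot mean ``largest closed subscheme of $V_{\Cbb}$ contained in $\ol{S}$''; Altmann's actual Remark (4.4) must carry a further qualification that the paper's one-line transcription drops, and a correct proof has to start from that sharper statement rather than from the reduction on which your entire argument rests.
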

\noindent Similarly, the inclusion \( \wt{C}(\Pcal) \subset \Zbb^2 \times V_{\Zbb}^{*} \) induces a normalization morphism \( \nu_X: X \to \ol{X} \), where \( \ol{X} \) is the scheme theoretic image of \( X \to \mathbb{C}^{N_0} \times \mathbb{C}^N \) with $N_0$ the cardinality of the Hilbert basis of $C_0$.  

The relationship between $(X,\,S)$ and $(\overline{X},\,\overline{S})$ is given by the following.

\begin{theorem}[{\cite[Theorem (5.1)]{Alt97}}]
The toric morphism \( \pi: X \to S \) induces a map \( \ol{\pi}: 	\ol{X} \to \ol{S} \) such that \( \pi \) is the base change of \( \ol{\pi} \) by the normalization map \( \nu_S : S \to \ol{S} \). Moreover, the family, obtained by restricting $\ol{\pi} : \ol{X} \to \ol{S}$ to $\Mcal \subset \ol{S}$ and composing with the projection map $l:\Mcal \to \ol{\Mcal}$,
is flat at \( 0 \in \ol{\Mcal} \) with fiber at \( 0 \) equal to \( C_0 \). 
\end{theorem}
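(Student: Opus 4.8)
The plan is to treat the two assertions separately: first the construction of $\ol{\pi}$ together with the base-change identity $X \cong \ol{X} \times_{\ol{S}} S$, which is essentially formal, and then the flatness statement over $\Mcal$, which is the substantive point. For the first assertion, I would exploit the universal property of the scheme-theoretic image. By construction the embeddings $X \hookrightarrow \mathbb{C}^{N_0} \times \mathbb{C}^N$ and $S \hookrightarrow \mathbb{C}^N$ fit into a commutative square with $\pi$ on the left and the coordinate projection $p\colon \mathbb{C}^{N_0} \times \mathbb{C}^N \to \mathbb{C}^N$ on the right, since the $\mathbb{C}^N$-factor records the same parameters $t_1,\dots,t_N$ on both sides. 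As $\ol{X}$ and $\ol{S}$ are the scheme-theoretic images of $X$ and $S$ under these embeddings, functoriality of the scheme-theoretic image under $p$ produces a morphism $\ol{\pi}\colon \ol{X} \to \ol{S}$ restricting $p$, with the composite $X \to \ol{X} \to \ol{S}$ agreeing with $X \xrightarrow{\pi} S \xrightarrow{\nu_S} \ol{S}$.

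For the base change, I would work at the level of semigroup algebras and verify that the natural map $\Cbb[\ol{X}] \otimes_{\Cbb[\ol{S}]} \Cbb[S] \to \Cbb[X]$ is an isomorphism. The key structural input is that the tautological cone $\wt{C}(\Pcal)$ is precisely the fibered cone over $C(\Pcal)$ whose slice over $t \in C(\Pcal)$ is the summand polygon $\Pcal_t$; concretely, the projection $\Zbb^2 \times V_{\Zbb} \to V_{\Zbb}$ carries $\wt{C}(\Pcal)$ onto $C(\Pcal)$. Since $\nu_S\colon S \to \ol{S}$ is the normalization and $\nu_X$ normalizes $\ol{X}$ in the complementary $\mathbb{C}^{N_0}$-directions, one checks that pulling $\ol{X}$ back along $\nu_S$ reproduces the normal toric variety $X$. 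This step is a somewhat involved combinatorial verification rather than a genuine obstacle.

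The heart of the theorem is flatness of the restricted family $\ol{X}|_{\Mcal} \to \Mcal \xrightarrow{l} \ol{\Mcal}$ at the marked point. I would use that the whole construction is graded by the Gorenstein element $\gamma$, under which $C_0$ is the cone over $\Pcal \times \set{1}$: the defining equations of $\ol{X}$ in the variables $Z_1,\dots,Z_{N_0}$ are homogeneous of fixed $\gamma$-degree, the $t$-dependent correction terms carrying the same $Z$-degree as the binomial they perturb, with the $t_i$ themselves treated as degree-zero parameters. Consequently the coordinate ring is a graded $\Cbb[\ol{\Mcal}]$-algebra with finite-dimensional graded pieces, and flatness reduces to each graded piece being locally free over the base, equivalently to the fibers having locally constant multigraded Hilbert function. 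I expect this to be the main obstacle: one must show that the scheme $\Mcal$, cut out by the polynomials $g_k(t) = \sum_i t_i^k d_i$, is exactly the locus over which the perturbed binomial equations retain the Hilbert function of those of $C_0$. The cleanest route is the lifting-of-relations criterion for flatness: write the ideal of $C_0 \subset \mathbb{C}^{N_0}$ by its binomial generators and show that every linear syzygy among them lifts to a syzygy of the $t$-perturbed generators precisely when the parameters satisfy the equations defining $\Mcal$. The combinatorics of the Minkowski summand cone is engineered so that these liftings exist.

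Finally, I would identify the central fiber. The marked point $0 \in \ol{\Mcal}$ is represented by $t = \rho(\Pcal) = (1,\dots,1) \in \Mcal$, and indeed $g_k(\rho(\Pcal)) = \sum_i d_i = 0$, so the whole line $\Cbb\,\rho(\Pcal)$ lies in $\Mcal$ and is collapsed by $l$ (the deformation along this direction being trivial rescaling of $\Pcal$). Evaluating the tautological construction at $t = \rho(\Pcal)$ returns the slice $\Pcal_{\rho(\Pcal)} = \Pcal$, so the fiber $\ol{\pi}^{-1}(\rho(\Pcal)) = \ol{X} \cap (\mathbb{C}^{N_0} \times \set{\rho(\Pcal)})$ recovers the Hilbert-basis embedding of $\textnormal{Cone}(\Pcal \times \set{1})$, namely $C_0$. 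This identifies the fiber at $0$ with $C_0$ and, together with the flatness established above, completes the proof.
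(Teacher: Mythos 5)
This statement is imported verbatim from Altmann \cite[Theorem (5.1)]{Alt97}; the paper gives no proof of it, so there is no in-paper argument to measure yours against. Taken on its own terms, your outline of the first assertion (functoriality of the scheme-theoretic image to produce $\ol{\pi}$, then checking $\Cbb[\ol{X}]\otimes_{\Cbb[\ol{S}]}\Cbb[S]\cong\Cbb[X]$ on semigroup algebras) and your strategy for flatness (homogeneity in the Gorenstein degree plus lifting of relations over $\Mcal$) are consistent with how Altmann actually proceeds. But both substantive steps are asserted rather than carried out: the base-change isomorphism is waved off as ``combinatorial verification,'' and the claim that the syzygies of the binomial generators of $I(C_0)$ lift exactly over the locus $\set{g_k=0}$ is the entire content of the theorem, which you close with ``the combinatorics \dots is engineered so that these liftings exist.'' As a proof this is an outline, not an argument.

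There is also a concrete error in your identification of the central fiber. Since $\wt{C}(\Pcal)\subset\Rbb^2\times V$ has dimension $2+\dim C(\Pcal)=N$, the varieties $X$ and $\ol{X}$ are $N$-dimensional while $S$ and $\ol{S}$ are $(N-2)$-dimensional; the fiber of $\ol{\pi}$ over any single point of $\ol{S}$ --- in particular over $\rho(\Pcal)=(1,\dots,1)$ --- is therefore $2$-dimensional and cannot equal the threefold $C_0$. (Compare Example \ref{examplesss}: for $\Qcal_1$ the total space is the $4$-dimensional toric variety of $\wt{\Ccal}\subset\Rbb^4$, so its fibers over $S\cong\Cbb^2$ are surfaces.) The fiber of the composed family $l\circ\ol{\pi}$ at $0\in\ol{\Mcal}$ is not $\ol{\pi}^{-1}(\rho(\Pcal))$ but $\ol{\pi}^{-1}\bigl(l^{-1}(0)\bigr)=\ol{\pi}^{-1}\bigl(\Cbb\rho(\Pcal)\cap\Mcal\bigr)$, the preimage of the entire diagonal line that you correctly note lies in $\Mcal$ and is collapsed by $l$. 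That preimage is the toric variety of the slice $\wt{C}(\Pcal)\cap\bigl(\Rbb^2\times\Rbb\rho(\Pcal)\bigr)=\set{(a,\lambda\rho(\Pcal))\,:\,a\in\lambda\Pcal}\cong\textnormal{Cone}(\Pcal\times\set{1})=\Ccal$, and it is this $3$-dimensional preimage that recovers $C_0$. So the last paragraph of your argument identifies the wrong object; the rescaling direction $\Cbb\rho(\Pcal)$ is not merely ``collapsed'' by $l$ --- it supplies the missing cone direction of the special fiber.
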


The above constructions can be summarized in the following commutative diagram of scheme morphisms:

\begin{equation} \label{diagram_versalfamily}
		\xymatrix{\ol{X} \times_{\ol{S}} \Mcal \ar[d]_{\ol{\pi}} \ar[rr]^{\pi_1} && \ol{X}
		\ar[d]_{\ol{\pi}} 
		&&  \ar[ll]_{\nu_X} X
		\ar[d]_{\pi} \\
		\Mcal \ar[d]_{l} \ar[rr]^{\textnormal{inclusion}} && \ol{S} &&\ar[ll]_{\nu_S} S. \\
        \ol{\Mcal}}
\end{equation}

By a detailed consideration of the Kodaira--Spencer map \cite[Section 6]{Alt97} and the obstruction map \cite[Section 7]{Alt97}, one arrives at the main result of \cite{Alt97}, stated as follows.

\begin{theorem}[{\cite[Corollary (7.2)]{Alt97}}] \label{thm:versalfamily}
The family \( \ol{X} \times_{\ol{S}} \Mcal \xrightarrow[]{l \circ \ol{\pi}} \ol{\Mcal} \) is a versal affine deformation family of $C_0$. 
\end{theorem}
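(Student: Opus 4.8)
The plan is to treat the flat deformation of \cite[Theorem (5.1)]{Alt97} as the input---it already has central fibre $C_0$ over the marked point $0 \in \ol{\Mcal}$---and to promote flatness to versality by the standard infinitesimal criterion: a flat deformation over $(\ol{\Mcal},0)$ is versal (equivalently semi-universal) as soon as (i) its Kodaira--Spencer map $\kappa \colon T_0\ol{\Mcal} \to T^1_{C_0}$ is bijective and (ii) the base $\ol{\Mcal}$ is, order by order, cut out precisely by the obstruction equations valued in $T^2_{C_0}$. Both checks are combinatorial, the crucial leverage being that for a Gorenstein isolated singularity the graded pieces of $T^1_{C_0}$ and $T^2_{C_0}$ over the weight lattice $\Mcal$ are finite-dimensional (Theorem \ref{thm:rigidity}) and concentrated in a controllable range of multidegrees.

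First I would pin down $T_0\ol{\Mcal}$. Inside $V_{\Cbb}$ the scheme $\Mcal$ is cut out by the polynomials $g_k(t)=\sum_i t_i^k d_i$; since $g_1$ vanishes identically on $V_{\Cbb}$ by the defining relation \eqref{eq:summandvectorspace} and every remaining $g_k$ with $k\geq 2$ has trivial linear part at the origin, one finds $T_0\Mcal = V_{\Cbb}$ and hence $T_0\ol{\Mcal} = V_{\Cbb}/\Cbb\,\rho(\Pcal)$, a space of dimension $N-3$. The content of the Kodaira--Spencer step (Altmann's Section~6) is then to identify $T^1_{C_0}$, in the single multidegree determined by the Gorenstein element $\gamma$, with this very quotient $V_{\Cbb}/\Cbb\,\rho(\Pcal)$, and to verify that $\kappa$ is exactly this identification. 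The Gorenstein hypothesis is what forces the deformation-carrying part of $T^1_{C_0}$ into that one degree, collapsing the a priori infinite range of degrees to a single relevant one.

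Next I would handle the obstructions (Altmann's Section~7). Here the task is to grade $T^2_{C_0}$, compute the obstruction map of the family, and show that the higher polynomials $g_k$ ($k\geq 2$) realise this map faithfully---that is, that the ideal $\Ical$ defining $\Mcal$ (and hence, after applying $l$, the base $\ol{\Mcal}$) agrees degree by degree with the ideal generated by the image of the obstruction map in $T^2_{C_0}$. Combined with the Kodaira--Spencer bijection from the previous step and the flatness supplied by \cite[Theorem (5.1)]{Alt97}, this furnishes both completeness and uniqueness on tangent spaces, which is exactly versality.

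The main obstacle is the obstruction computation: matching the purely combinatorial equations $g_k=0$ with the genuine deformation-theoretic obstruction map into $T^2_{C_0}$, and confirming that no obstructions persist in degrees other than the one singled out by $\gamma$. It is precisely here that the Gorenstein property together with the isolated-singularity hypothesis do the essential work---through the finiteness of Theorem \ref{thm:rigidity} and the concentration in a single multidegree---since without them the relevant spaces could be infinite-dimensional and the ``cut out by obstructions'' description would break down.
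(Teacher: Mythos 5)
Your outline follows exactly the route the paper attributes to Altmann: the paper offers no proof of its own beyond citing \cite[Corollary (7.2)]{Alt97} and remarking that the result follows from ``a detailed consideration of the Kodaira--Spencer map \cite[Section 6]{Alt97} and the obstruction map \cite[Section 7]{Alt97}'', which are precisely your steps (i) and (ii) applied to the flat family of \cite[Theorem (5.1)]{Alt97}. Your identification of $T_0\ol{\Mcal}$ with $V_{\Cbb}/\Cbb\,\rho(\Pcal)$ and the concentration of $T^1_{C_0}$ in the single Gorenstein multidegree are consistent with Altmann's argument, so the proposal is correct and takes essentially the same approach.
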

\noindent Thus, the Minkowski scheme $\ol{\Mcal}$ introduced in Definition \ref{def:minkscheme} is indeed the versal base space of $C_0$. The irreducible components of $\ol{\Mcal}$ are described in Theorem \ref{thm:minkschemecomp}. The restriction of the versal family to each component is again a toric family as suggested by Christophersen's observation for two-dimensional toric cones \cite{Chr91}, and has an almost explicit description. 

\begin{theorem}[{\cite[Theorem (8.1)]{Alt97}}] \label{theorem_deformation_classification}
Let $C_0$ be a toric Gorenstein singularity defined by a toric diagram $\Pcal$. Let $\ol{\Mcal}^{\textnormal{red}}$ be the reduced scheme structure of the Minkowski scheme $\ol{\Mcal}$.
Then: 
\begin{itemize} 
\item Non-trivial toric \( m\)-parameter deformations of \( C_0 \) correspond bijectively (up to equivalences) to non-trivial maximal Minkowski decompositions of \( \Pcal \) into \(( m + 1) \)-lattice summands. 
\item The total space of \( m\)-parameter deformation \( X_0 \) is obtained by restricting the versal family in \eqref{diagram_versalfamily} to an irreducible component $\Cbb^m$ of \( \ol{\Mcal}^{\textnormal{red}} \). In addition, $X_0$ is itself an affine toric variety and $\pi_0: X_0 \to \Cbb^m$ is a toric morphism. 
\end{itemize}
\end{theorem}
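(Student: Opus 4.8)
The plan is to deduce the statement by combining the versality established in Theorem \ref{thm:versalfamily} with the explicit description of the irreducible components of $\ol{\Mcal}^{\textnormal{red}}$ supplied by Theorem \ref{thm:minkschemecomp}, and then to transport the toric structure through the tautological cone construction underlying the diagram \eqref{diagram_versalfamily}. First I would set up the bijection. Since Theorem \ref{thm:versalfamily} exhibits $\ol{X} \times_{\ol{S}} \Mcal \to \ol{\Mcal}$ as a versal deformation of $C_0$, every deformation of $C_0$ is obtained by base change along a morphism into $\ol{\Mcal}$, and infinitesimally this identifies $T^1_{C_0}$ with the Zariski tangent space of $\ol{\Mcal}$ at the marked point. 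Passing to the reduced structure $\ol{\Mcal}^{\textnormal{red}}$, whose irreducible components are by Theorem \ref{thm:minkschemecomp} in bijection with the maximal lattice Minkowski decompositions of $\Pcal$ — a decomposition into $(m+1)$ summands yielding a component isomorphic to $\Cbb^m$ — I would restrict the versal family to such a component to obtain, by base change, a flat family $\pi_0 : X_0 \to \Cbb^m$ with central fiber $C_0$. Conversely, a toric $m$-parameter deformation induces, by versality, an equivariant morphism from $\Cbb^m$ into $\ol{\Mcal}^{\textnormal{red}}$, which by dimension, irreducibility, and compatibility with the torus actions identifies $\Cbb^m$ with one of these components.

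Next I would establish the toric structure of the total space. Fixing a maximal decomposition $\Pcal = \Pcal_0 + \dots + \Pcal_m$ corresponds to the face of $C(\Pcal)$ spanned by $\rho(\Pcal_0), \dots, \rho(\Pcal_m)$, equivalently to the subspace of $V$ obtained by equating $t_i = t_j$ whenever $i,j$ lie in the same block of the associated partition (cf.\ Remark \ref{rmk:toricdiagramminkdec}). Restricting the tautological cone $\wt{C}(\Pcal) \subset \Rbb^2 \times V$ to this subspace produces a rational polyhedral cone, and the semigroup algebra of the lattice points of its dual is the coordinate ring of $X_0$. Because every arrow in \eqref{diagram_versalfamily} arises from a map of cones and lattices — the inclusion $C(\Pcal) \hookrightarrow \wt{C}(\Pcal)$, the normalizations $\nu_S, \nu_X$, and the projection $l$ — and because the relevant component of $\Mcal^{\textnormal{red}}$ is by Theorem \ref{thm:minkschemecomp} the affine toric variety $\Cbb^{m+1}$, the restricted total space $X_0$ inherits a torus action with a dense orbit, and $\pi_0 : X_0 \to \Cbb^m$ is equivariant, hence a toric morphism.

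It then remains to reconcile the shift from $(m+1)$ summands to an $m$-parameter family. Here I would note that the projection $l$ quotients $V_{\Cbb}$ by the line $\Cbb \rho(\Pcal) = \Cbb(1,\dots,1)$, which records the overall homothety of $\Pcal$ and contributes only the trivial rescaling and not a genuine deformation parameter. Thus a component $\Cbb^{m+1}$ of $\Mcal^{\textnormal{red}}$ descends to a component $\Cbb^{m}$ of $\ol{\Mcal}^{\textnormal{red}}$, the trivial decomposition $(\Pcal$ itself, $m=0)$ maps to the marked point, and non-triviality of the deformation becomes equivalent to $m \geq 1$, i.e.\ to the decomposition having at least two lattice summands, with the bijection respecting the number of summands exactly as claimed.

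The hard part will be the second step: verifying that restricting $\ol{X} \times_{\ol{S}} \Mcal$ to an irreducible component of $\ol{\Mcal}^{\textnormal{red}}$ remains an \emph{affine toric variety} with $\pi_0$ toric. The difficulty is that the fiber product together with the normalizations $\nu_S, \nu_X$ could a priori destroy toricity or reintroduce non-normality, so one must check that passing to a toric component commutes with these operations. This is precisely where the detailed Kodaira–Spencer and obstruction analysis of \cite[Sections~6--7]{Alt97} behind Theorem \ref{thm:versalfamily}, together with careful semigroup bookkeeping for $\wt{C}(\Pcal)$, is required; granting those inputs, the toricity of $X_0$ follows because the restriction amounts to passing to a face of the tautological cone, an operation that stays within the toric category.
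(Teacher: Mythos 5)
The paper does not prove this statement: it is imported verbatim as \cite[Theorem (8.1)]{Alt97}, with the surrounding text only recording that it follows from Altmann's Kodaira--Spencer and obstruction analysis. So there is no in-paper argument to compare yours against; what can be assessed is whether your sketch genuinely derives the theorem from the ingredients the paper does state (Theorems \ref{thm:versalfamily} and \ref{thm:minkschemecomp} and the tautological-cone construction), and it does not quite do so. Your forward direction is fine and matches the paper's own gloss after the theorem statement: a maximal lattice decomposition $\Pcal=\Pcal_0+\dots+\Pcal_m$ singles out the subcone of $C(\Pcal)$ spanned by $\rho(\Pcal_0),\dots,\rho(\Pcal_m)$, the component $\Cbb^m$ of $\ol{\Mcal}^{\textnormal{red}}$, and the cone $\wt{\Ccal}=\textnormal{Cone}\bigl(\bigcup_k \Pcal_k\times\set{e_k}\bigr)$, and restricting the versal family there produces a flat family over $\Cbb^m$ with central fibre $C_0$.

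The gaps are in the other two assertions. First, the converse half of the bijection --- that \emph{every} non-trivial toric $m$-parameter deformation of $C_0$ arises, up to equivalence, from such a decomposition --- does not follow from versality plus ``dimension, irreducibility, and compatibility with the torus actions.'' Versality only supplies a classifying morphism $\Cbb^m\to\ol{\Mcal}$ that is unique on tangent spaces; nothing you have said forces its image to be an entire irreducible component, nor the map to be an isomorphism onto one, nor the induced family to be equivalent to the toric model. This is precisely the content Altmann establishes in \cite[Section 8]{Alt97} by analyzing toric (i.e.\ $T$-equivariant) deformations directly, and it cannot be recovered from Theorems \ref{thm:versalfamily} and \ref{thm:minkschemecomp} alone. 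Second, you correctly identify that the restriction of $\ol{X}\times_{\ol{S}}\Mcal$ to a component might a priori fail to be the normal toric variety $\Spec\Cbb[\wt{\Ccal}^{\vee}\cap(\Zbb^2\times V^{*}_{\Zbb})]$ --- the fibre product and the normalizations $\nu_S,\nu_X$ could interfere --- but you then resolve this by ``granting'' Altmann's Sections 6--7, which is the same deferral the paper makes by citing the theorem. As a self-contained proof the proposal therefore has a genuine gap at both the surjectivity/injectivity of the correspondence and the identification of $X_0$ with the toric model; as a reconstruction of why the cited theorem is plausible given the earlier statements, it is a reasonable outline.
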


Given a lattice maximal Minkowski decomposition \( \Pcal = \Pcal_0 + \dots + \Pcal_m \) of the toric diagram $\mathcal{P}$, the restriction of the total space of the versal family to the reduced component \( \mathbb{C}^m \) is the toric variety $X_0$ associated with the cone
    \[ \wt{\mathcal{C}} := \textnormal{Cone}\left(\bigcup_{k=0}^m \Pcal_k \times \set{e_k}\right), \]
    \( e_k \) here denoting the \( k \)-th element of the canonical basis of \( \mathbb{C}^{m+1} \). In other words,  $$X_0 = \Spec( \wt{\mathcal{C}}^{\vee} \cap (\Zbb^2 \cap V_{\Zbb}^{*})).$$

We conclude this subsection with some examples, illustrating the above theory.
\begin{example}\label{examplesss}
From Example \ref{example_toricdelpezzo_minkowski_scheme}, the only toric del Pezzo cones with non-trivial versal deformations are those associated with the toric diagrams $\Qcal_1, \Qcal_4, \Qcal_5$. We consider each individually.
\begin{itemize}
\item The toric diagram $\Qcal_1$ has only one lattice maximal Minkowski decomposition with two summands. The only non-trivial versal family is then given by an affine morphism $\pi_0:X_0 \to \Cbb$, where $X_0$ is the toric affine variety arising from the polyhedron
\[ \wt{\mathcal{C}} = \textnormal{Cone} \set{ \Spvek{0;0;1;0},\Spvek{1;1;1;0},\Spvek{0;0;0;1},\Spvek{-1;1;0;1}}. \]

\item The reduced versal base space $\ol{\Mcal}^{\textnormal{red}}(\Qcal_4)$ has only one irreducible component $\Cbb$ corresponding to the unique lattice maximal Minkowski decomposition into two summands. The total space over this component arises as the toric variety defined by 
\begin{equation*}
 \wt{\mathcal{C}} = \textnormal{Cone}\set{\Spvek{0;0;1;0},\Spvek{0;1;1;0},\Spvek{1;1;1;0},\Spvek{0;0;0;1},\Spvek{-1;1;0;1}}.
\end{equation*}
\item The reduced versal base space $\ol{\Mcal}^{\textnormal{red}}(\Qcal_5)$ has two irreducible components, namely $\Cbb, \Cbb^2$, corresponding to its lattice Minkowski decomposition into two and three summands, respectively. The versal family restricted to each component has toric total deformation space defined by 
\begin{equation*}
\begin{split}
\wt{\mathcal{C}} &= \textnormal{Cone}\set{\Spvek{0;0;1;0}, \Spvek{1;0;1;0},\Spvek{1;1;1;0}, \Spvek{0;0;0;1},\Spvek{0;1;0;1},\Spvek{1;1;0;1}},  \\
\wt{\mathcal{C}} &= \textnormal{Cone} \set{ \Spvek{0;0;1;0;0},\Spvek{1;0;1;0;0},\Spvek{0;0;0;1;0},\Spvek{0;1;0;1;0},\Spvek{0;0;0;0;1},\Spvek{1;1;0;0;1}},
\end{split}    
\end{equation*}
respectively.
\end{itemize}
\end{example}

\subsection{Computing the deformations} \label{subsection_minkowski}
Let \( \Pcal \) be the toric diagram of the Gorenstein cone \( \mathcal{C} \subset \Ncal_{\mathbb{R}} \). As suggested by Altmann's theory \cite{Alt97}, we proceed with the following steps to get a precise description of the versal deformation space of the corresponding toric Gorenstein cone.
\begin{itemize}
    \item Compute the Minkowski summand cone $C(\Pcal)$, from which one can read off the Minkowski summands and deduce possible lattice maximal Minkowski decompositions of the polytope $\mathcal{P}$. This in turn yields the irreducible components of the versal base space. 
   
    \item Obtain the total space $X_0$ of the versal deformation space over each irreducible component by first writing down the moment cone $\wt{\mathcal{C}}^{\vee}$ of $X_0$, then computing the toric ideal of $X_0$ using the Hilbert basis of $\wt{\Ccal}^{\vee}$. 
\end{itemize}
The relevant Macaulay2 codes are provided in Appendix \ref{s:A1} (and Figure \ref{figure_macaulay2_cfo13}). Note however that we choose not to include the lines of code that would output the total space since the information does not pertain to our purpose. The interested reader may write down such a code by noting that the total space is itself a toric variety and the defining convex cone is already known. We next consider the example of the Gauntlett--Martelli--Sparks--Waldram (GMSW) family of toric Calabi--Yau cones \cite{GMSW}.

\subsection{Example: The Gauntlett--Martelli--Sparks--Waldram family of toric Calabi--Yau cones}\label{egsss}

\begin{figure} 
    \begin{subfigure}[b]{0.45\textwidth}
    \centering
        \resizebox{\linewidth}{!}{
                    \begin{tikzpicture}
                \draw[step=1cm,gray,dashed] (-1.9,-1.9) grid (1.9,1.9);
                \node at (0,0) [circle, fill, inner sep = 1.5pt] {};

                \draw (-1,-1) -- (0,-1) -- (1,1) -- (-1,0) -- cycle;
                \end{tikzpicture}
        }
        \caption{\( \Ycal^{2,1} \)}   
    \end{subfigure}
    \begin{subfigure}[b]{0.45\textwidth}
        \centering
        \resizebox{\linewidth}{!}{
                            \begin{tikzpicture}
                \draw[step=1cm,gray,dashed] (-1.9,-1.9) grid (4.9,4.9);
                \node at (0,0) [circle, fill, inner sep = 1.5pt] {};
                \draw (-1,-1) -- (0,-1) -- (4,4) -- (0,1) -- cycle;
            \end{tikzpicture}
        }
        \caption{\( \Ycal^{5,3} \)}
    \end{subfigure}
\caption{Toric diagrams of the members $Y^{2,1}, Y^{5,3}$ in the Gauntlett--Martelli--Sparks--Waldram family \eqref{eq:gmsw}. The polygon $\Ycal^{2,1}$ is affine equivalent to $\Qcal_3$ (cf.~Figure \ref{figure_delpezzo_family}). Here, $(p-1)$ is the number of interior lattice points.} 

\label{figure_gmsw}
\end{figure}
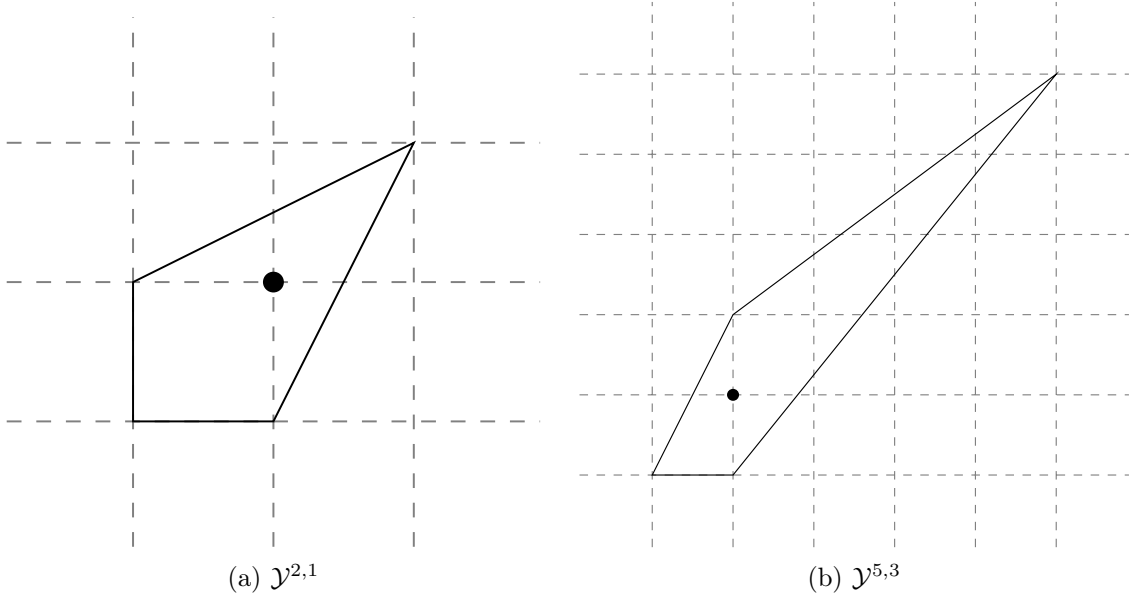

In \cite{GMSW}, Gauntlett--Martelli--Sparks--Waldram consider a family of Calabi--Yau toric cones $Y^{p,q}$ with toric diagram $\Ycal^{p,q}$ given by the convex hull of the points
\begin{equation} \label{eq:gmsw}
v_1 = \Spvek{0;0}, \quad v_2 = \Spvek{1;0}, \quad v_3 = \Spvek{p;p}, \quad v_4 = \Spvek{p-q-1;p-q}, \qquad p,q \in \Nbb,\quad p > q, 
\end{equation}
in $\mathbb{R}^{2}$ (see \cite{MSY08} for details). For general $p > q$, the cones are irregular. Being the first such examples, they probably form the most well-known infinite family of irregular Calabi--Yau cones in the literature. Unfortunately each cone in this family turns out to have trivial versal deformation, hence they cannot be deformed into affine Calabi--Yau manifolds. This last fact was observed in \cite[Introduction]{Conlon14} without proof (where ``rigid'' in \cite{Conlon14} means triviality of the versal family, but not rigid in the sense adopted here). The triviality of the versal family can be seen by appealing to Lemma \ref{lem:practical-crit}. Regardless of this short account, we provide a detailed proof to illustrate Altmann's theory. 

\begin{prop} \label{prop:gmsw_trivial}
Every toric cone in the GMSW family is non-rigid, but its versal family is trivial. 
\end{prop}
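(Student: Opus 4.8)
The plan is to prove the two halves of the statement separately, in both cases exploiting Altmann's identification of the versal base of $C_0$ with the Minkowski scheme $\ol{\Mcal}$ (Theorem \ref{thm:versalfamily}) together with the description of its reduced components in Theorem \ref{thm:minkschemecomp}. Write $\Pcal := \Ycal^{p,q}$ for the quadrilateral \eqref{eq:gmsw}, so that $N = 4$ and $p > q \geq 1$.

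To see that $C_0$ is \emph{non-rigid}, I would compute the Kodaira--Spencer space through the identification $T^1_{C_0} = T_0\ol{\Mcal}$ recorded in Section \ref{prelim}. Recall that $\ol{\Mcal} = l(\Mcal)$ lives inside $V_\Cbb/\Cbb\rho(\Pcal) \cong \Cbb^{N-3}$, and that on the ambient space $V_\Cbb = V(\Pcal)\otimes_\Rbb\Cbb$ the linear equation $g_1 = 0$ holds identically, so that $\Mcal$ is carved out of $V_\Cbb$ by the polynomials $g_k(t) = \sum_i t_i^k d_i$ with $k \geq 2$ alone. Each such $g_k$ vanishes to order $\geq 2$ at the origin; consequently the defining ideal of $\ol{\Mcal}$ has no linear part at $0$ and $T_0\ol{\Mcal}$ is the full ambient $\Cbb^{N-3}$. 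The only point needing care is that the projection $l$ creates no spurious linear relations, which is immediate because $l^{*}$ is a graded inclusion of polynomial rings and the $g_k$ are homogeneous of degree $\geq 2$. For the quadrilateral $\Ycal^{p,q}$ we have $N - 3 = 1$, whence $T^1_{C_0}\cong\Cbb\neq 0$. Conceptually, this reflects the fact that $N>3$ forces the Minkowski summand cone $C(\Pcal)$ to be genuinely two-dimensional by Corollary \ref{cor:minkalways}, i.e. $\Pcal$ has non-trivial real Minkowski summands.

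To see that the versal family is \emph{trivial}, I would show that the reduced Minkowski scheme $\ol{\Mcal}^{\textnormal{red}}$ is a single reduced point. By Theorem \ref{thm:minkschemecomp} its irreducible components are in bijection with the maximal decompositions of $\Pcal$ into lattice summands, so it suffices to prove that $\Ycal^{p,q}$ has no non-trivial lattice Minkowski decomposition. As $\Ycal^{p,q}$ is a quadrilateral, Lemma \ref{lem:practical-crit} reduces this to checking that it is never a parallelogram. Taking the counter-clockwise oriented edges $d_1 = (1,0)$, $d_2 = (p-1,p)$, $d_3 = (-q-1,-q)$, $d_4 = (q+1-p,\,q-p)$ (which sum to zero), the quadrilateral is a parallelogram exactly when $d_1 = -d_3$, i.e. when $(1,0) = (q+1,q)$, which forces $q = 0$. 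Since $q \geq 1$, $\Ycal^{p,q}$ is never a parallelogram, and Lemma \ref{lem:practical-crit} yields the absence of any non-trivial lattice decomposition. Hence $\ol{\Mcal}^{\textnormal{red}}$ is a point and the versal family restricted to the reduced base is the trivial family with fibre $C_0$.

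The difficulty here is conceptual rather than computational: the two conclusions look contradictory, and the proof must make precise the sense in which they coexist. Non-rigidity says that $\ol{\Mcal}$ carries a nonzero tangent space at $0$, i.e. it is a non-reduced fat point---for $\Ycal^{2,1}\cong\Qcal_3$ it is exactly $\Spec\Cbb[\varepsilon]/\varepsilon^2$ by Example \ref{example_toricdelpezzo_minkowski_scheme}---while triviality of the versal family is a statement about its reduced structure $\ol{\Mcal}^{\textnormal{red}}$. The real content is therefore that every first-order deformation counted by $T^1\neq 0$ is obstructed and fails to integrate, the obstruction being encoded precisely by the higher polynomials $g_k$ ($k\geq 2$) that thicken $\ol{\Mcal}$ to a point. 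I would state the conclusion so that ``trivial versal family'' is unambiguously read as triviality over the reduced versal base $\ol{\Mcal}^{\textnormal{red}}$, consistent with the fact that these cones admit no genuine (reduced, positive-dimensional) smoothing.
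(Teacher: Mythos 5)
Your proof is correct, but it takes a genuinely different route from the paper's. The paper establishes both claims simultaneously by direct computation: it writes out the generators $g_k$ of the ideal $\Ical$ for $\Ycal^{p,q}$, reduces to the two coordinates $t=t_1$, $s=t_1-t_4$ on $V(\Ycal^{p,q})$, and shows that the quadratic relation collapses to $t^2=0$ while the linear relation then forces $s=0$; hence $\ol{\Mcal}$ is exhibited as the fat point $\Spec \Cbb[t]/(t^2)$, which gives $\dim_{\Cbb} T^1_{C_0}=\dim_{\Cbb} T_0\ol{\Mcal}=1$ (non-rigidity) and $\ol{\Mcal}^{\textnormal{red}}=\{\mathrm{pt}\}$ (triviality) in one stroke. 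You instead decouple the two claims and argue softly: non-rigidity because $\Ical$ is homogeneous with only $g_1$ in degree one, so that $T_0\ol{\Mcal}$ is the full ambient $\Cbb^{N-3}=\Cbb$ --- an argument that in fact shows \emph{every} toric diagram with more than three vertices is non-rigid, not just $\Ycal^{p,q}$; and triviality via Lemma \ref{lem:practical-crit} together with Theorem \ref{thm:minkschemecomp}, after the one-line check that $\Ycal^{p,q}$ is never a parallelogram (which requires $q=0$). The paper explicitly flags your second argument as the ``short account'' (``The triviality of the versal family can be seen by appealing to Lemma \ref{lem:practical-crit}'') but deliberately opts for the computation in order to illustrate Altmann's machinery. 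What your route buys is brevity and generality, with no dependence on the particular edge vectors beyond $N=4$ and non-parallelism of opposite edges; what the paper's route buys is the precise scheme structure of $\ol{\Mcal}$ rather than merely its tangent space and reduction, plus a worked example of the theory. Your closing paragraph reconciling ``non-rigid'' with ``trivial versal family'' --- the obstruction of every first-order deformation, encoded in the higher $g_k$ --- is exactly the intended reading of the statement.
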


\begin{proof} 
For a given $p, q$, the oriented edges of \( \Ycal^{p,q} \) are
\[ d_1 = (1,0), \quad d_2 = (p-1,p), \quad d_3 = (-q-1,-q), \quad d_4 = (-p+q+1, -p+q). \] 
The versal base space (or Minkowski scheme) $\Mcal$ of \( \mathcal{Y}^{p,q} \) is given by 
\begin{align*}
\Ical &= \sprod{t_1^k d_1 + t_2^k d_2 + t_3^k d_3 + t_4^k d_4 \,|\, k \geq 1} \\
&= \sprod{t_1^k + (p-1)t_2^k -(q+1) t_3^k + (-p+q+1)t_4^k, \; pt_2^k - q t_3^k + (q-p) t_4^k,\,|\, k \geq 1}. 
\end{align*}
After simplifying the first equation using the second, we find that
\[ \Ical = \sprod{t_1^k -(t_2^k - t_4^k) + (t_4^k - t_3^k), \; p(t_2^k - t_4^k) + q(t_4^k - t_3^k)\,|\, k \geq 1}. \]
The linear equations defining \( V(\Ycal^{p,q}) \) are \( t_1 - t_2 - t_3 + 2t_4 = 0,  p(t_2 - t_4) + q(t_4 - t_3) = 0 \), hence only two coordinates \( t := t_1 \), \( s := t_1 - t_4 \) suffice to determine $V(\Ycal^{p,q})$.  
We derive that
\[ t_2 = \left(\frac{p+2q}{p+q}\right)t - s, \qquad t_3 =\left(\frac{q+2p}{p+q}\right)t - s, \qquad t_4 = t - s. \] 
It follows that
\[ p(t_2^2 - t_4^2) = \left(\frac{pq}{p+q}\right)t\set{\left(\frac{2p+3q}{p+q}\right)t - 2s}, \quad q(t_4^2 - t_3^2) = -\left(\frac{pq}{p+q}\right)t\set{\left(\frac{3p+2q}{p+q}\right) t - 2s }. \] 
The quadratic equation \( p(t_2^2 - t_4^2) + q(t_4^2 - t_3^2) = 0 \) then becomes
\[ p(t_2^2 - t_4^2) + q(t_4^2 - t_3^2) = \left(\frac{pq}{p+q}\right) t . \left(\frac{q-p}{q+p}\right)t = 0 \iff t^2 = 0,\]
and the first equation implies that \( s = 0 \). 
It follows that the versal base \( \ol{\Mcal} \) is a fat point (and \( \ol{\Mcal}^{\textnormal{red}} \) is a point). Thus, \( \dim_{\Cbb} T_0 \ol{\Mcal} = 1 = \dim_{\Cbb} T^1_{Y_{p,q}} \) and the GMSW family is non-rigid, but the versal family is of course trivial because \( \ol{\Mcal}^{\textnormal{red}} \) is only a point. 
\end{proof}

\section{Theorem \ref{mtheorem_cfo}}\label{s:thmb}

In this section, we provide the proof of Theorem \ref{mtheorem_cfo}. Recall from the Introduction that the Cho--Futaki--Ono family is defined by the toric diagram $\Pcal^{r,s}$ of \( (2r + 3) \) vertices and \( (s -1) \) interior lattice points given by
\begin{align*}
&(p_0, q_0) = (0,0), \dots,(p_k, q_k) = \left(k,\frac{k(k+1)}{2}\right), \quad 0 \leq k \leq r, \\
&(p_{r+1+j},q_{r+1+j}) = \left(r+1-j, \frac{(r+1)(r+2)}{2} + s-\frac{j(j+1)}{2} \right), \quad 0 \leq j \leq r-1, \\
&\;(p_{2r+2}, q_{2r+2}) = (0,1),
\end{align*}
where \( r, s \in \Nbb_{> 0} \). We take the lowest interior lattice point as the origin; that is, we translate \( (p_j,q_j) \) by \( (-1,-2) \). 

In Section \ref{s:mink}, we present the unique lattice maximal Minkowski decomposition of these cones, before determining the Reeb field of a particular subfamily in Section \ref{s:minn} through minimizing the volume function. In Section \ref{s:minn}, we demonstrate how to compute explicitly the Reeb field of some examples using a computer algorithm, the syntax of which is contained in Appendix \ref{s:A2} (and Figure \ref{figure_mathematica_cfo13}).

\subsection{Minkowski decomposition}\label{s:mink}

Our first theorem gives the lattice maximal Minkowski decompositions of \( \Pcal^{r,s} \). 
\begin{theorem} \label{theorem_minkowski_decomposition}
The unique lattice maximal Minkowski decomposition of \( \Pcal^{r,s} \) is given by
\[ \Pcal^{r,s} = \Lcal^1  + \dots + \Lcal^r + \Delta^s, \]
where \( \Lcal^j = \textnormal{Conv} \set{\Spvek{0;0},\Spvek{1;j}}, j =1,\dots,r \), \( \Delta^{s} = \textnormal{Conv} \set{\Spvek{0;0},\Spvek{0;1},\Spvek{1;s+1}} \). In particular, the versal base space \( \ol{\Mcal} \) of the corresponding toric cone is irreducible and \( \ol{\Mcal}^{\textnormal{red}} = \Cbb^{r} \). 
\end{theorem}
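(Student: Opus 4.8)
The plan is to reduce the entire statement to the combinatorics of the oriented edges of $\Pcal^{r,s}$. By Remark \ref{rmk:toricdiagramminkdec}, a lattice Minkowski decomposition of $\Pcal^{r,s}$ is the same datum as a partition of its edge multiset $\set{d_1,\dots,d_N}$ into blocks each summing to zero, and by Corollary \ref{cor:minkalways} such a decomposition is \emph{maximal} precisely when every block is either an antiparallel pair $\set{v,-v}$ (a segment) or a triple of pairwise non-parallel vectors summing to zero (a triangle). So I would first compute the edges from the vertex list: reading off consecutive differences, the ascending chain $(p_k,q_k)=(k,k(k+1)/2)$ contributes the edges $(1,k)$ for $k=1,\dots,r$, the descending chain contributes $(-1,-k)$ for $k=1,\dots,r$, and the three remaining boundary edges are $(1,r+1+s)$, $(-1,-(r+s))$ and $(0,-1)$. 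One checks directly that these $2r+3$ vectors sum to zero.

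Existence of the advertised decomposition can then be read off immediately: the $r$ antiparallel pairs $\set{(1,k),(-1,-k)}$ are the segments $\Lcal^k=\textnormal{Conv}\set{\Spvek{0;0},\Spvek{1;k}}$, while the three leftover edges $(1,r+1+s)$, $(-1,-(r+s))$, $(0,-1)$ sum to zero and are pairwise non-parallel, hence bound a single lattice triangle, namely the triangular summand $\Delta^s$ of the statement. By Corollary \ref{cor:minkalways} this is a lattice maximal decomposition into $r+1$ summands.

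The heart of the proof, and the step I expect to require the most care, is \emph{uniqueness}. Here the decisive device is to track the first coordinate of each edge, which always lies in $\set{-1,0,1}$. Since every block sums to zero, its first coordinates must cancel, so a segment has first-coordinate profile $\set{1,-1}$ while a triangle has profile $\set{1,0,-1}$; the profile $\set{0,0,0}$ is impossible because $(0,-1)$ is the unique edge with vanishing first coordinate. In particular every triangle contains $(0,-1)$, so any maximal decomposition contains at most one triangle. I would then observe that exactly three edges have no antiparallel partner in the edge multiset: the steepest edge $(1,r+1+s)$, whose would-be partner $(-1,-(r+1+s))$ is absent; the edge $(-1,-(r+s))$, whose partner $(1,r+s)$ is absent since $r+s>r$; and $(0,-1)$. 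None of these can lie in a segment, so each must lie in the (unique) triangle; since a triangle has only three edges, it is exactly the block $\set{(1,r+1+s),(-1,-(r+s)),(0,-1)}$, that is $\Delta^s$. The remaining $2r$ edges $(1,k),(-1,-k)$ $(k=1,\dots,r)$ have pairwise distinct slopes, so $(1,k)$ can only be matched antiparallel with $(-1,-k)$; this pairing is forced, producing $\Lcal^1,\dots,\Lcal^r$ and nothing else. The only delicate bookkeeping is to confirm that the two ``large'' slopes $r+1+s$ and $r+s$ never collide with $\set{1,\dots,r}$ for any $r,s\ge 1$, which is clear as both exceed $r$.

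Finally, the assertion about the versal base is a direct application of Theorem \ref{thm:minkschemecomp}. The decomposition just shown to be the unique maximal lattice decomposition has $m+1=r+1$ summands, so $\Mcal^{\textnormal{red}}$ has a single irreducible component, isomorphic to $\Cbb^{r+1}$, and the corresponding component of $\ol{\Mcal}^{\textnormal{red}}$ is $\Cbb^{m}=\Cbb^{r}$. Uniqueness of this component gives $\ol{\Mcal}^{\textnormal{red}}=\Cbb^{r}$, which is irreducible; and since $\ol{\Mcal}$ and $\ol{\Mcal}^{\textnormal{red}}$ have the same underlying topological space, $\ol{\Mcal}$ is irreducible as well.
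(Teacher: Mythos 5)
Your proof is correct, and for the existence half it is essentially the paper's argument: compute the oriented edges, exhibit the $r$ antiparallel pairs $\{(1,k),(-1,-k)\}$, and note that the three leftover edges sum to zero and close up into a triangle (the paper invokes Lemma \ref{lem:practical-crit} at this point, you invoke Corollary \ref{cor:minkalways} and Remark \ref{rmk:toricdiagramminkdec}; same content). Where you genuinely diverge is in the uniqueness step, and your version is the tighter one. The paper argues that segments can only be formed from the $2r$ chain edges and that any further triangle would need at least two such edges, whose pairwise sums are $(\pm 2,\cdot)$ or $(0,\pm(j-k))$; as written this does not dispose of the zero-sum triple $(1,k+1)+(-1,-k)+(0,-1)=(0,0)$, which for $r\ge 2$ is a genuine lattice triangle inscribed in the edge multiset. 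Your first-coordinate bookkeeping closes that loophole cleanly: since $(0,-1)$ is the unique edge with vanishing first coordinate, every triangle block must contain it, so there is at most one triangle; the three partnerless edges $(1,r+1+s)$, $(-1,-(r+s))$, $(0,-1)$ cannot sit in segments and therefore exhaust that unique triangle; the remaining $2r$ edges are then forced into the antiparallel pairs $(1,k)$ with $(-1,-k)$. (The spurious triple above is excluded not because it fails to sum to zero but because using it would strand $(1,k)$ and $(-1,-(k+1))$ with no admissible blocks left -- a point your argument handles automatically and the paper's does not address.) One cosmetic caveat: with the paper's general vertex formula the steep edge is $(1,r+1+s)$, so the leftover triangle is $\textnormal{Conv}\{(0,0),(0,1),(1,r+1+s)\}$ rather than literally the $\Delta^{s}$ of the statement; this off-by-$r$ discrepancy is internal to the paper (its explicit $\Pcal^{1,s}$ has steep edge $(1,s+1)$) and not a flaw in your reasoning. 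The final passage to the versal base via Theorem \ref{thm:minkschemecomp} is exactly as intended, and you are right that uniqueness of the maximal lattice decomposition is what yields irreducibility of $\ol{\Mcal}$ and $\ol{\Mcal}^{\textnormal{red}}=\Cbb^{r}$.
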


\begin{proof}  
We denote by \( (a\to a+1) \) the oriented edge \( (p_{a+1},q_{a+1})-(p_a,q_a)\). 
Observe that \( \Pcal^{r,s}\) contains \( r \) pairs of parallel edges with equal length:
\[\set{(j\to j+1), (r+j+1 \to r+j+2)}, \quad 0 \leq j \leq r-1. \]
Indeed, one can readily check that 
\begin{align*}
&(p_{j+1},q_{j+1}) - (p_j,q_j) = (1,j+1), \\
&(p_{r+j+2},q_{r+j+2}) - (p_{r+j+1},q_{r+j+1}) = (-1,-j-1).
\end{align*}
The three remaining edges \( \set{(2r+1 \to 2r+2), (2r+2\to 0), (r \to r+1)}\)
 sum up to zero and form a triangle by walking along them. 
An application of Lemma \ref{lem:practical-crit} then shows that $\Pcal^{r,s}$ has the desired lattice Minkowski decomposition, which is necessarily maximal as the summands are segments and triangles. Conversely, any other lattice maximal Minkowski decomposition of \( \Pcal^{r,s} \) must also be a sum of triangles and segments. However, no oriented edge of \( \Lcal^{j} \) is parallel to any edge of \( \Delta^{s} \), and so no segments can be formed by walking along an edge of $\Delta^{s}$ and an edge of any $\Lcal^{j}$. 
Any other triangle summand, if such a summand exists, must have at least two oriented edges coming from the segments, but the sums of \( \pm(1,j+1) \) and \( \pm(1,k+1) \) are $(\pm2,\pm(j+k+2))$ and $(0,\pm(j-k))$, respectively. Thus, the lattice Minkowski decomposition in the statement of the theorem is unique.  
\end{proof}

The following lemma seems not to have been observed before and is more general than \cite[Proposition 5.3]{vC10}. Recall that a morphism between complex varieties \( f: X \to Y \) is \emph{smooth} if \( f \) is a flat morphism and every fiber of \( f \) is smooth. 

\begin{lemma} \label{lemma_generally_smooth}
Let $C_0$ be a toric Gorenstein singularity and let \( \pi_0: X_0 \to \mathbb{C}^m \) be the family obtained by restricting the versal family of $C_0$ to an irreducible component of \( \ol{\Mcal}^{\textnormal{red}} \). Then \( \pi_0 \) is generally 
smooth, i.e., there is a nonempty Zariski-open subset of \( U \subset X_0 \) such that \( \pi_0 : U \to \mathbb{C}^m \) is smooth.
\end{lemma}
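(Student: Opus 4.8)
The plan is to deduce the statement from generic smoothness in characteristic zero, after passing to the smooth locus of $X_0$ to remedy the fact that $X_0$ is in general a singular toric variety. Recall from Theorem~\ref{theorem_deformation_classification} that $X_0$ is an irreducible affine toric variety over $\Cbb$ and that $\pi_0\colon X_0 \to \Cbb^m$ is a toric morphism; moreover, being the restriction of the versal deformation family of Theorem~\ref{thm:versalfamily} to an irreducible component $\Cbb^m \hookrightarrow \ol{\Mcal}^{\textnormal{red}} \hookrightarrow \ol{\Mcal}$ of the base, it is obtained by base change from a flat morphism (deformation families are flat by definition; cf.~Section~\ref{prelim}) and is therefore itself flat.

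First I would check that $\pi_0$ is dominant. Since $\pi_0$ is flat and of finite type, it is an open map, so $\pi_0(X_0)$ is open in $\Cbb^m$; as it contains the image of the special fibre $\pi_0^{-1}(0) = C_0$, it is a nonempty open subset of the irreducible variety $\Cbb^m$ and hence dense. Thus $\pi_0$ is dominant. Next, because $X_0$ is a variety over $\Cbb$, its singular locus is a proper closed subset, so the smooth locus $X_0^{\textnormal{reg}}$ is a dense Zariski-open subset which is a smooth variety over $\Cbb$; the restriction $\pi_0|_{X_0^{\textnormal{reg}}}$ remains dominant since $X_0^{\textnormal{reg}}$ is dense in $X_0$.

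Now I would invoke generic smoothness in characteristic zero (\cite[Chapter~III, Corollary~10.7]{Har77}) applied to the dominant morphism $\pi_0|_{X_0^{\textnormal{reg}}}\colon X_0^{\textnormal{reg}} \to \Cbb^m$ from the \emph{smooth} source $X_0^{\textnormal{reg}}$: this produces a nonempty Zariski-open subset $V \subseteq \Cbb^m$ such that $(\pi_0|_{X_0^{\textnormal{reg}}})^{-1}(V) \to V$ is smooth. Setting $U := X_0^{\textnormal{reg}} \cap \pi_0^{-1}(V) = (\pi_0|_{X_0^{\textnormal{reg}}})^{-1}(V)$, which is nonempty because the dominant map $\pi_0|_{X_0^{\textnormal{reg}}}$ has image meeting the nonempty open set $V$, gives a Zariski-open subset of $X_0$ on which $\pi_0$ is smooth. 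Here one uses that smoothness of a morphism is local on the target, so that $\pi_0|_U\colon U \to \Cbb^m$ is smooth because it factors through the open immersion $V \hookrightarrow \Cbb^m$ over which it is already smooth. The main obstacle to a one-line citation is precisely that generic smoothness requires a nonsingular source while $X_0$ is typically singular; the detour through $X_0^{\textnormal{reg}}$, combined with the openness of the flat map $\pi_0$ (which delivers both dominance and the nonemptiness of $U$), is exactly what makes the argument go through.
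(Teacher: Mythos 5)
Your argument is correct and follows the same overall strategy as the paper's proof: show that $\pi_0$ is dominant, then invoke characteristic-zero generic smoothness from \cite[Chapter III]{Har77}. There are two local differences worth recording. First, the paper obtains dominance by observing that $\pi_0$ is the morphism induced by an injective ring homomorphism (citing \cite[Section (8.3)]{Alt97} together with \cite[Chapter II, Exercise 2.18(b)]{Har77}), whereas you derive it from flatness of the deformation family via openness of flat finite-type morphisms; both routes are valid, yours being self-contained modulo the (definitional) flatness of $\pi_0$ as a deformation family, while the paper's leans on Altmann's explicit description of the comorphism. Second, the paper cites \cite[Chapter III, Lemma 10.5]{Har77}, which is stated for a dominant morphism of integral finite-type schemes over a field of characteristic zero and therefore already absorbs the passage to the smooth locus of the (typically singular) source $X_0$; your detour through $X_0^{\textnormal{reg}}$ followed by \cite[Chapter III, Corollary 10.7]{Har77} reproduces by hand exactly the reduction performed inside the proof of that lemma, and your observation that dominance is what guarantees $U \neq \emptyset$ is the right point to flag. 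Nothing is gained or lost mathematically; the two proofs are interchangeable.
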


\begin{proof}
Notice that \( \pi_0 \) is a dominant morphism (that is, \( \ol{\pi_0(X_0)} = \mathbb{C}^m \)) of affine varieties over \( \mathbb{C} \). Indeed, \( \pi_0 \) is the regular function induced from an injective morphism of rings (see \cite[Section (8.3)]{Alt97}), hence dominant by \cite[Chapter II, Exercise 2.18(b)]{Har77}. It follows that \( \pi_0 \) is a generally smooth morphism by \cite[Chapter III, Lemma 10.5]{Har77}. 
\end{proof}

With this, we arrive at:
\begin{corollary} \label{corollary_cy_existence}
Let $(C_0^{r,s},\,T_{\mathbb{C}})$ be the toric Calabi--Yau cone associated with the toric diagram $\Pcal^{r,s}$ and let \( \pi_s: \mathcal{W}^{r,s} \to \Cbb^r \) denote the base change to \( \ol{\Mcal}^{\textnormal{red}} = \Cbb^r \) of the Altmann versal family of $C_0^{r,s}$. Then:
\begin{itemize}
\item For every \( r,s \), the family \( \pi_{r,s}: \mathcal{W}^{r,s} \to \Cbb^r \) is generally smooth. \item 
For any smooth fibre $M$, there exists a diffeomorphism \( \Phi\) between complements of compact subsets of \( C_0^{r,s} \) and \( M \) such that for any Kähler class \( \mathfrak{k} \) and every \( g \in T_{\C} \), there exists a complete Calabi--Yau metric \( \omega_g \in \mathfrak{k}\) such that \( \Phi^{*}\omega_g \) is asymptotic to \( g^{*} \omega_0 \). 
\end{itemize}
\end{corollary}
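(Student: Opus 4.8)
The plan is to treat the two bullets separately: the first is a direct consequence of the deformation-theoretic machinery already assembled, while the second is essentially a matter of checking the hypotheses of the existence theorem \cite[Theorem 4.3]{CH24} and invoking it.

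For the first bullet, I would argue as follows. By Theorem \ref{theorem_minkowski_decomposition}, the reduced versal base space of $C_0^{r,s}$ is the single irreducible component $\ol{\Mcal}^{\textnormal{red}} = \Cbb^r$, coming from the unique lattice maximal Minkowski decomposition of $\Pcal^{r,s}$. The family $\pi_{r,s}: \mathcal{W}^{r,s} \to \Cbb^r$ is precisely the base change of Altmann's versal family (Theorem \ref{thm:versalfamily}) to this component, which by Theorem \ref{theorem_deformation_classification} is again a toric morphism of affine varieties. Lemma \ref{lemma_generally_smooth}, applied to this component, then yields immediately that $\pi_{r,s}$ is generally smooth; in particular a generic fiber $M = \pi_{r,s}^{-1}(t)$ is a smooth affine variety, that is, an affine smoothing of $C_0^{r,s}$.

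For the second bullet, the strategy is to match the hypotheses of \cite[Theorem 4.3]{CH24}. First, since $\Pcal^{r,s}$ is a toric diagram in the sense of Definition \ref{def:toric_diagram}, Proposition \ref{prop:goodequiv} shows that $C_0^{r,s}$ has smooth link and hence an isolated singularity at the apex; being toric and Gorenstein it is a Calabi--Yau cone by Theorem \ref{equiv}, carrying a Ricci-flat K\"ahler cone metric $\omega_0$ whose Reeb field is the volume minimizer. Crucially, Theorem \ref{thm:rigidity} guarantees that the isolated toric singularity $C_0^{r,s}$ has \emph{finite-dimensional} Kodaira--Spencer space $T^1_{C_0^{r,s}}$; as emphasised in the Introduction, this finiteness is exactly what permits a direct application of \cite[Theorem 4.3]{CH24}. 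Taking $M$ to be a smooth fiber of the versal family produced in the first bullet, that theorem then furnishes, for every K\"ahler class $\mathfrak{k}$ on $M$ and every $g \in T_{\C}$, a complete Calabi--Yau metric $\omega_g \in \mathfrak{k}$ together with a diffeomorphism $\Phi$ between the ends of $C_0^{r,s}$ and $M$ for which $\Phi^{*}\omega_g$ is asymptotic, in the sense of Definition \ref{ACCY}, to the cone metric $g^{*}\omega_0$. The freedom in $g \in T_{\C}$ reflects precisely the ambiguity of the asymptotic identification up to the torus action, under which the cone metric is invariant.

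I expect the only genuine subtlety to be the bookkeeping of hypotheses: confirming that the smooth fiber $M$ obtained abstractly from the versal family is literally the geometric object to which \cite[Theorem 4.3]{CH24} applies, i.e.\ that it is realised as a smoothing with asymptotic cone $C_0^{r,s}$ sitting inside a finite-dimensional versal deformation, and that the finiteness from Theorem \ref{thm:rigidity} combined with the versality from Theorem \ref{thm:versalfamily} meets the precise input of \cite{CH24}. All of the hard analysis, namely the actual construction and completeness of the AC Calabi--Yau metric in each K\"ahler class and for each asymptotic identification, is carried out in \cite[Theorem 4.3]{CH24}, so once the hypotheses are matched the conclusion follows.
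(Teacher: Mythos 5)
Your proof is correct and follows essentially the same route as the paper, whose own argument is simply that general smoothness follows from Lemma \ref{lemma_generally_smooth} applied to the irreducible component $\Cbb^r$ of the reduced versal base and that existence in every K\"ahler class follows from \cite[Theorem 4.3]{CH24}. Your additional hypothesis-checking (smooth link via Proposition \ref{prop:goodequiv}, finiteness of $T^1$ via Theorem \ref{thm:rigidity}) is consistent with the framework the paper sets up in the Introduction and Section \ref{section_deformation}, just made explicit.
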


\begin{proof}
The smoothness of the fibers outside a Zariski-closed set follows from Lemma \ref{lemma_generally_smooth}. Existence in every Kähler class results from \cite[Theorem 4.3]{CH24}. 
\end{proof}

\subsection{The Reeb field as the volume minimizer}\label{s:minn}
We next compute the Calabi--Yau Reeb fields of the subfamily 
\[ \Pcal^{s} := \Pcal^{1,s} = \textnormal{Conv} \set{\Spvek{-1;-2},\Spvek{0;-1}, \Spvek{1;s}, \Spvek{0;s-1},\Spvek{-1;-1}}, \]
which we restrict to due to computational constraints (as noted in the Introduction). We begin by providing a formula for the volume (which recall only depends on the choice of Reeb field).

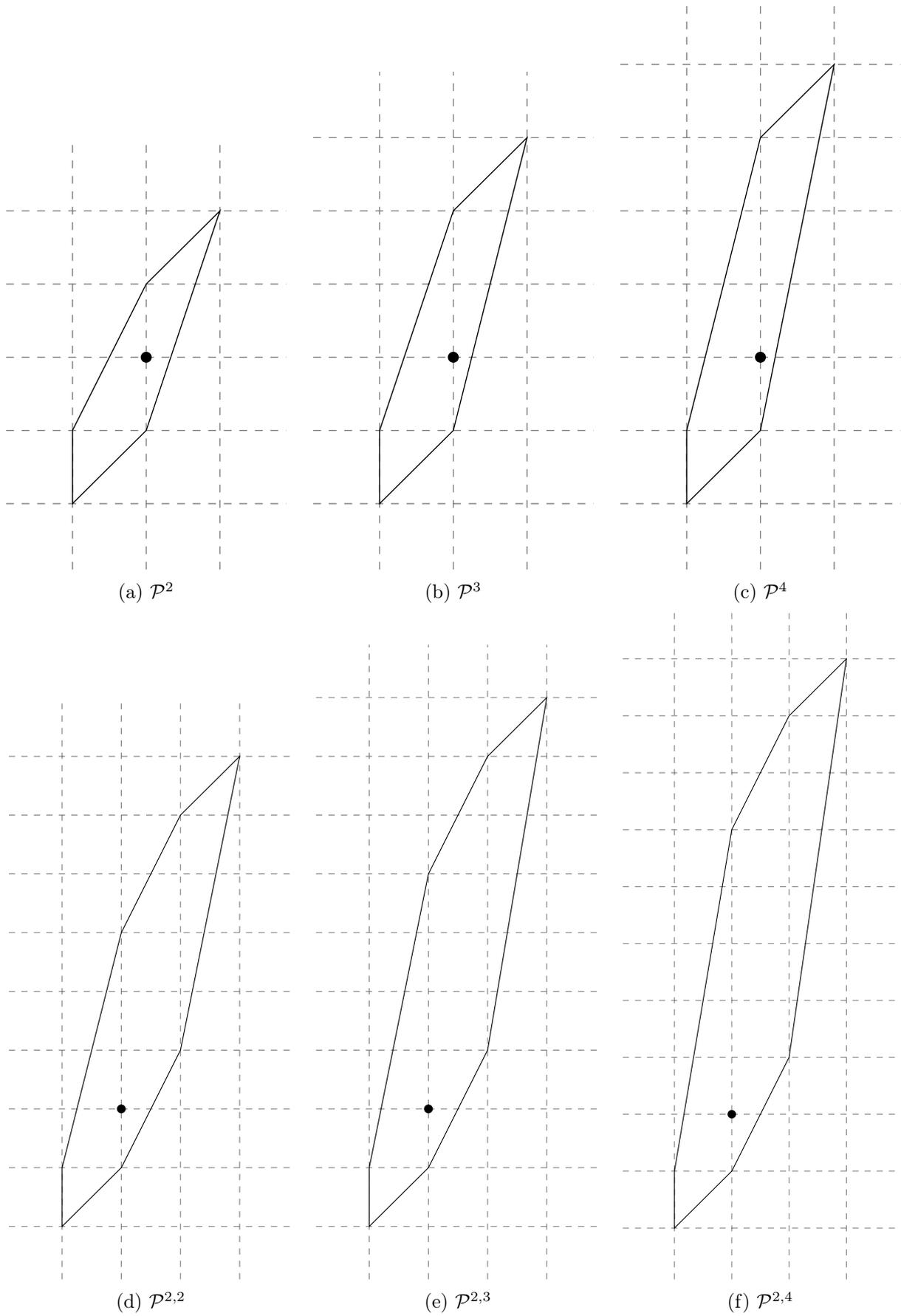
\begin{figure} 
    \begin{subfigure}[b]{0.32\textwidth}
        \centering
        \resizebox{\linewidth}{!}{
                            \begin{tikzpicture}
                \draw[step=1cm,gray,dashed] (-1.9,-2.9) grid (1.9,2.9);
                                \node at (0,0) [circle, fill, inner sep = 1.5pt] {};
                \draw (-1,-2) -- (0,-1) -- (1,2) -- (0,1) -- (-1,-1) -- cycle;
            \end{tikzpicture}
        }
        \caption{\( \Pcal^2 \)}
        \label{fig:subfig1}
    \end{subfigure}
    \begin{subfigure}[b]{0.32\textwidth}
    \centering
        \resizebox{\linewidth}{!}{
                    \begin{tikzpicture}
                \draw[step=1cm,gray,dashed] (-1.9,-2.9) grid (1.9,3.9);
                                \node at (0,0) [circle, fill, inner sep = 1.5pt] {};
                \draw (-1,-2) -- (0,-1) -- (1,3) -- (0,2) -- (-1,-1) -- cycle;
                \end{tikzpicture}
        }
        \caption{\( \Pcal^3 \)}   
        \label{fig:subfig2}
    \end{subfigure}
    \begin{subfigure}[b]{0.32\textwidth}
        \centering
        \resizebox{\linewidth}{!}{
            \begin{tikzpicture}
                \draw[step=1cm,gray,dashed] (-1.9,-2.9) grid (1.9,4.9);
                                \node at (0,0) [circle, fill, inner sep = 1.5pt] {};

                \draw (-1,-2) -- (0,-1) -- (1,4) -- (0,3) -- (-1,-1) -- cycle;
            \end{tikzpicture}
        }
        \caption{\( \Pcal^4 \) }
        \label{fig:subfig3}
    \end{subfigure}
    \begin{subfigure}[b]{0.32\textwidth}
        \centering
        \resizebox{\linewidth}{!}{
                            \begin{tikzpicture}
                \draw[step=1cm,gray,dashed] (-1.9,-2.9) grid (2.9,6.9);
                                \node at (0,0) [circle, fill, inner sep = 1.5pt] {};

                \draw (-1,-2) -- (0,-1) -- (1,1) -- (2,6) -- (1,5) -- (0,3) -- (-1,-1) -- cycle;
            \end{tikzpicture}
        }
        \caption{\( \Pcal^{2,2} \)}
        \label{fig:subfig4}     
    \end{subfigure}
    \begin{subfigure}[b]{0.32\textwidth}
        \centering
        \resizebox{\linewidth}{!}{
                            \begin{tikzpicture}
                \draw[step=1cm,gray,dashed] (-1.9,-2.9) grid (2.9,7.9);
                                \node at (0,0) [circle, fill, inner sep = 1.5pt] {};

                \draw (-1,-2) -- (0,-1) -- (1,1) -- (2,7) -- (1,6) -- (0,4) -- (-1,-1) -- cycle;
            \end{tikzpicture}
        }
        \caption{\( \Pcal^{2,3} \)}
        \label{fig:subfig5}
    \end{subfigure}
    \begin{subfigure}[b]{0.31\textwidth}
        \centering
        \resizebox{\linewidth}{!}{
                            \begin{tikzpicture}
                \draw[step=1cm,gray,dashed] (-1.9,-2.9) grid (2.9,8.9);
                                \node at (0,0) [circle, fill, inner sep = 1.5pt] {};

                \draw (-1,-2) -- (0,-1) -- (1,1) -- (2,8) -- (1,7) -- (0,5) -- (-1,-1) -- cycle;
            \end{tikzpicture}
        }
        \caption{\( \Pcal^{2,4} \)}
        \label{fig:subfig6}
    \end{subfigure}
\caption{Toric diagrams of the members in the Cho--Futaki--Ono family for \( r = 1,2 \), \( s = 2, 3, 4 \). }
\label{figure_cfo_family}
\end{figure}

\begin{prop} \label{prop_volume}
The volume of the toric Gorenstein cone defined by \( \Pcal^{s} \) for a Reeb field \( \xi = (a,b,c) \) is given by 
\begin{align*}
a_0(\xi) = \frac{-a^2 + 2ab - b^2 + (s^3 - s^2 + 2) ac - 2bc + (s^3 - 1) c^2}{(a-b-c)((s+1)a - b -c)(a+c)(a-b+(s-1)c)(s a - b +(s-1)c)}.  
\end{align*}
\end{prop}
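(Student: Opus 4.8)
The plan is to avoid computing the full multigraded Hilbert series and instead extract $a_{0}$ directly from the leading term of the index character, as permitted by Proposition~\ref{prop:indexcharexpansion}, which gives $a_{0}(\xi)=\lim_{t\to0}t^{3}F(\xi,t)$. First I would set $\Ccal=\textnormal{Cone}(\Pcal^{s}\times\set{1})$, whose rays are the five lifted vertices of the pentagon, and compute the primitive generators of the dual moment cone $\Ccal^{*}$. Each such generator is the inward normal to the facet of $\Ccal$ spanned by two cyclically consecutive lifted vertices; it is obtained as their cross product, with sign fixed by the requirement that it pair non-negatively with every lifted vertex. A direct calculation gives, in cyclic order,
\[
r_{1}=(-1,1,1),\quad r_{2}=(-1-s,1,1),\quad r_{3}=(1,-1,s-1),\quad r_{4}=(s,-1,s-1),\quad r_{5}=(1,0,1),
\]
and for $\xi=(a,b,c)$ one reads off $\langle\xi,r_{1}\rangle=-a+b+c$, $\langle\xi,r_{2}\rangle=-(s+1)a+b+c$, $\langle\xi,r_{3}\rangle=a-b+(s-1)c$, $\langle\xi,r_{4}\rangle=sa-b+(s-1)c$, $\langle\xi,r_{5}\rangle=a+c$. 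The product $\prod_{i=1}^{5}\langle\xi,r_{i}\rangle$ then equals the asserted denominator, the two minus signs from $r_{1},r_{2}$ cancelling.

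Next I would use that, $C_{0}$ being a normal toric cone, $F(\xi,t)=\sum_{m\in\Ccal^{*}\cap\Zbb^{3}}e^{-t\langle\xi,m\rangle}$, whose $t\to0$ leading behaviour is governed by the integral $\int_{\Ccal^{*}}e^{-t\langle\xi,y\rangle}\,dy$. Choosing any triangulation $\set{\sigma}$ of $\Ccal^{*}$ into simplicial subcones and substituting $y=\sum_{i}u_{i}r_{i}$ on each yields $\int_{\sigma}e^{-t\langle\xi,y\rangle}\,dy=\lvert\det(\text{generators of }\sigma)\rvert\big/\big(t^{3}\prod_{r\in\sigma}\langle\xi,r\rangle\big)$, so that
\[
a_{0}(\xi)=\lim_{t\to0}t^{3}F(\xi,t)=\sum_{\sigma}\frac{\lvert\det(\text{generators of }\sigma)\rvert}{\prod_{r\in\sigma}\langle\xi,r\rangle};
\]
the overall constant is exactly $1$ because the limit of $t^{3}F$ is precisely this lattice-volume integral, matching the normalization built into Definition~\ref{d:volume}. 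This is the toric volume formula of Martelli--Sparks--Yau, here derived facetwise.

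I would then fix the fan triangulation based at $r_{1}$, namely $\sigma_{a}=\langle r_{1},r_{2},r_{3}\rangle$, $\sigma_{b}=\langle r_{1},r_{3},r_{4}\rangle$, $\sigma_{c}=\langle r_{1},r_{4},r_{5}\rangle$, whose determinants evaluate to $s^{2}$, $s(s-1)$ and $1$ (all positive for the genuine pentagon, $s\geq2$, so the three cones carry a consistent orientation and cover $\Ccal^{*}$). Placing the three resulting fractions over the common denominator $\prod_{i=1}^{5}\langle\xi,r_{i}\rangle$ leaves the numerator
\[
s^{2}\,\langle\xi,r_{4}\rangle\langle\xi,r_{5}\rangle+s(s-1)\,\langle\xi,r_{2}\rangle\langle\xi,r_{5}\rangle+\langle\xi,r_{2}\rangle\langle\xi,r_{3}\rangle,
\]
and a monomial-by-monomial expansion shows that the coefficients of $a^{2},ab,b^{2},ac,bc,c^{2}$ are $-1,\,2,\,-1,\,s^{3}-s^{2}+2,\,-2,\,s^{3}-1$, which is exactly the stated numerator.

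The main obstacle is purely computational bookkeeping rather than conceptual: one must get the inward-normal signs of the $r_{i}$ consistent so that all three determinants share one orientation, and then carry the degree-two expansion through without error. The cancellation of the high-in-$s$ contributions (the $s^{3}$ and $s^{2}$ terms in the $a^{2}$, $ab$, and $bc$ coefficients) is where a sign or coefficient slip would surface, so those three coefficients serve as the natural consistency checks. One should also note that at $s=1$ the pentagon degenerates ($r_{3}=r_{4}$), so the genuine-pentagon argument applies for $s\geq 2$, with the remaining value handled by specialization.
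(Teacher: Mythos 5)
Your proposal is correct — I checked the normals, the three determinants $s^{2}$, $s(s-1)$, $1$, and all six coefficients of the quadratic numerator, and they all come out as you claim — but it reaches the formula by a genuinely different decomposition than the paper. The paper invokes the Martelli--Sparks--Yau equivariant localization formula \cite[equation (7.26)]{MSY08}: it star-triangulates the cone $\Ccal$ over $\Pcal^{s}$ at the interior ray $(0,0,1)$, producing a sum over the \emph{five} resulting fixed points $p_{A}$, each weighted by an orbifold index $d_{p_{A}}$ (equal to $1,1,s-1,s-1,1$), and then simplifies the five rational terms. You instead work entirely on the moment-cone side: you identify $a_{0}(\xi)=\lim_{t\to0}t^{3}F(\xi,t)$ with $\int_{\Ccal^{*}}e^{-\sprod{\xi,y}}\,dy$ (a Riemann-sum argument valid since each weight space of the normal toric ring is one-dimensional), and evaluate the integral by fanning $\Ccal^{*}$ at $r_{1}$ into \emph{three} simplicial cones. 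Your route has two modest advantages: fewer terms and no orbifold indices to track, and it pins down the overall normalization constant as exactly $1$, whereas the paper's proof only asserts the localization formula ``up to a positive multiple'' and never closes that gap explicitly. The paper's route has the advantage of citing a ready-made formula rather than re-deriving the integral identity. Both arguments share the same caveat at $s=1$, where the pentagon degenerates (your $r_{3}=r_{4}$; the paper's $d_{p_{3}}=d_{p_{4}}=0$), so in either case the computation is really for $s\geq2$; your closing remark about handling $s=1$ ``by specialization'' is no worse than what the paper does, but note that for $s=1$ the diagram $\Pcal^{s}$ as written is not even a valid toric diagram, so nothing is lost.
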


\begin{proof}
We will use the localization formula \cite[equation (7.26)]{MSY08} to prove this. Recall that up to a positive multiple, we have that
\begin{equation} \label{equation_volume_toric}
a_0(\xi) =\sum_{p_A} \frac{1}{d_{p_A}} \frac{1}{\sprod{u_{p_A}, \xi} \sprod{u_{p_A,(1)}, \xi} \sprod{u_{p_A,(2)}, \xi}}.  
\end{equation}
Here, $p_A$ runs over the set of vertices of the polytope dual to $\Pcal^{s}$ triangulated using \( v = \Spvek{0;0;1} \), whereas \( u_{p,(j)}, j=1,2,\) is a primitive lattice vector originating from the vertex, \( u_{p_A}\) is the primitive generator of the cone, and \( d_{p_A} \) is the orbifold index at \( p_A \).

Recall that the vertices of \( \Pcal^{s}\) are given in $\Rbb^3$ by
\[v_1 = \Spvek{-1;-2;1}, \; v_2= \Spvek{0;-1;1}, \; v_3 = \Spvek{1;s;1}, \; v_4 = \Spvek{0;s-1;1}, \; v_5 = \Spvek{-1;-1;1}.\]
The outward primitive normal vectors can be identified with the two-forms \( u_j = v_j \wedge v_{j+1}, j \in \Zbb / 5 \Zbb\), and so
\[ u_1 = \Spvek{-1;1;1}, \; u_2= \Spvek{-(1+s);1;1}, \; u_3 = \Spvek{1;-1;s-1}, \; u_4 = \Spvek{s;-1;s-1}, \; u_5 = \Spvek{1;0;1}.\]
Taking the triangulation with respect to \( v=(0,0,1)\), we see that \(u_{i,(1)}, u_{i,(2)} = v_i \wedge v, v\wedge v_{i+1}, i \in \Zbb/5 \Zbb\), leaving us with
\begin{align*}
&u_1 = \Spvek{-1;1;1}, \; u_{1,(1)} = \Spvek{-2;1;0}, \; u_{1,(2)} = \Spvek{1;0;0}, \\
&u_2 = \Spvek{-(1+s);1;1}, \; u_{2,(1)} = \Spvek{-1;0;0}, \; u_{2,(2)} = \Spvek{-s;1;0}, \\
&(s-1)u_3 = \Spvek{1;-1;s-1}, \; (s-1)u_{3,(1)} = \Spvek{s;-1;0}, \; (s-1)u_{3,(2)} = \Spvek{1-s;0;0}, \\
&(s-1)u_4 = \Spvek{s;-1;s-1}, \; (s-1)u_{4,(1)} = \Spvek{s-1;0;0}, \; (s-1)u_{4,(2)} = \Spvek{1;-1;0}, \\
&u_5 = \Spvek{1;0;1}, \; u_{5,(1)} = \Spvek{-1;1;0}, \; u_{5,(2)} = \Spvek{2;-1;0}.
\end{align*}
The orbifold index at a vertex is the absolute determinant of the corresponding normal vectors. Consequently, \( d_{p_1} = d_{p_2} = d_{p_5} = 1 \), whereas
\( d_{p_3} = d_{p_4} = (s-1)\). It subsequently follows that 
\begin{align*}
a_0(\xi) = &\frac{1}{(-a+b+c)(-2a+b)a} +\frac{1}{(a (-s-1)+b+c)(-a)(-sa+b)} \\
&+\frac{(s-1)^2}{ (sa-b) (a-b+c (s-1))a(1-s)}+\frac{(s-1)^2}{(a s-b+c
   (s-1))(s-1)a(a-b)}\\
   &+\frac{1}{
   (a+c)(-a+b)(2a-b)}.  
\end{align*}
After simplifying, we obtain the expression shown.  
\end{proof}

As it turns out, the Reeb field that minimizes the volume of $\mathcal{P}^{s}$ is irregular.

\begin{prop}\label{prop:irreg}
The volume minimizer of \( \Pcal^{s}\) is irregular. 
\end{prop}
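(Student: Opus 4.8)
The plan is to exploit the explicit volume formula of Proposition~\ref{prop_volume} together with the elementary observation that quasi-regularity would force the minimizer, and hence the minimal volume, to be rational. First I would normalize via Lemma~\ref{lem:reebnormalization}, setting $c = 3$, so that the volume minimizer is the unique critical point $\xi = (a,b,3)$ of the strictly convex, proper function $(a,b) \mapsto a_0(a,b,3)$ on the slice $\{c = 3\}$ of the Reeb cone $\Ccal_0$. By the characterization of (ir)regularity recalled after Definition~\ref{d:index}, the cone is quasi-regular precisely when $a/3, b/3 \in \Qbb$, i.e.\ when $a, b \in \Qbb$; since $a_0$ is a rational function with coefficients in $\Qbb$ for each fixed integer $s$, this would force $a_0(\xi) \in \Qbb$. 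Thus it suffices to prove that the minimizing coordinates $a, b$ (equivalently, the minimal value $a_0(\xi)$, which is in any case algebraic) are irrational.

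Writing $a_0 = N/D$ with $N = -(a-b)^2 + (s^3-s^2+2)ac - 2bc + (s^3-1)c^2$ the quadratic numerator and $D = f_1 f_2 f_3 f_4 f_5$ the product of the five linear factors $f_1 = a-b-c$, $f_2 = (s+1)a-b-c$, $f_3 = a+c$, $f_4 = a-b+(s-1)c$, $f_5 = sa-b+(s-1)c$ of Proposition~\ref{prop_volume}, the critical point equations $\partial_a a_0 = \partial_b a_0 = 0$ become the logarithmic-derivative identities $N_a/N = \partial_a \log D$ and $N_b/N = \partial_b \log D$. A convenient first step is to add them: one computes $N_a + N_b = s^2(s-1)c = 3s^2(s-1)$, a constant in $(a,b)$, while the terms from $f_1$ and $f_4$ drop out of $\partial_a \log D + \partial_b \log D$ because $\partial_a f_i + \partial_b f_i = 0$ for $i = 1, 4$. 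This collapses the sum to the clean relation
\[
\frac{3s^2(s-1)}{N} \;=\; \frac{s}{f_2} + \frac{1}{f_3} + \frac{s-1}{f_5},
\]
which, together with $N_b/N = -\tfrac{1}{f_1} - \tfrac{1}{f_2} - \tfrac{1}{f_4} - \tfrac{1}{f_5}$, I would clear of denominators to obtain a polynomial system $P_1(a,b) = P_2(a,b) = 0$ with coefficients in $\Zbb[s]$.

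Next I would eliminate one variable, taking the resultant $Q(a) := \operatorname{Res}_b(P_1, P_2) \in \Zbb[s][a]$, so that the minimizing $a$ is a root of $Q$ lying in the $a$-interval cut out by $\Ccal_0 \cap \{c = 3\}$. After discarding factors of $Q$ whose roots fall outside this interval or correspond to spurious critical points (such as poles of $a_0$ where some $f_i$ vanishes), the relevant factor is expected to reduce to a quadratic in $a$ with discriminant a polynomial $\Delta(s) \in \Zbb[s]$; more generally, it suffices that this factor have no rational root. Irrationality of the minimizer then amounts to showing that $\Delta(s)$ is never a perfect square for $s \in \Nbb_{>0}$. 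Granting this, $a \notin \Qbb$, hence $a/3 \notin \Qbb$, and the minimizer is irregular.

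The main obstacle is precisely this last, uniform-in-$s$ step: verifying that $\Delta(s)$ is not a perfect square for any positive integer $s$. I expect to handle it by a squeezing argument, exhibiting a polynomial $g(s)$ with $g(s)^2 < \Delta(s) < (g(s)+1)^2$ for all sufficiently large $s$ (so that $\Delta(s)$ cannot be a square there), and then checking the finitely many remaining small values of $s$ directly; a congruence obstruction modulo a small integer would serve as an alternative should the squeeze fail to be clean. For any individual value of $s$ the irrationality can instead be read off from the computer output of Appendix~\ref{s:A2}, but it is the squeezing (or congruence) argument that makes the statement uniform across the whole subfamily $\Pcal^{s}$.
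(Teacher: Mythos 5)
Your overall skeleton -- normalize $c=3$, reduce irregularity to irrationality of a coordinate of the minimizer, eliminate $b$ from the critical-point equations, and show the resulting univariate polynomial has no rational root -- is exactly the paper's strategy, and your preliminary simplifications check out (indeed $N_a+N_b=s^2(s-1)c$ and the $f_1,f_4$ terms cancel in the summed logarithmic derivative). The gap is in the endgame. The paper carries out the elimination (via a Gr\"obner basis in lex order $b>a$) and finds that the eliminant is a degree-$17$ polynomial of the form $a(a-3)(a+3)(a-3s)^3((s-1)a+3s)^3P(a)$, where the relevant factor $P$ has degree $8$ in $a$, not $2$. Your plan hinges on the relevant factor "reducing to a quadratic with discriminant $\Delta(s)$" so that irrationality becomes "$\Delta(s)$ is never a perfect square," provable by squeezing between consecutive squares. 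That mechanism is unavailable for a degree-$8$ factor, and your fallbacks (a congruence obstruction, or the true but unexploited remark that "it suffices that this factor have no rational root") come with no method that works uniformly in $s$. The paper instead concludes by checking that $P$ is irreducible over $\Qbb$ (using \texttt{IrreduciblePolynomialQ} in Mathematica), so that its necessarily existing real root is irrational.

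Two smaller points. First, the spurious roots are not all outside the admissible interval: $a=0$ lies in $(-3,3)$, and the paper must run a separate argument (substituting $a=0$ into $\partial_b a_0=0$ and showing the resulting cubic in $b$ has no root compatible with the Reeb-cone inequalities $b+3>0$, $b<3(s-1)$) to exclude it; your "discard spurious critical points" step needs this. Second, your parenthetical "(equivalently, the minimal value $a_0(\xi)$)" is not an equivalence -- irrationality of $a_0(\xi)$ implies irrationality of some coordinate but not conversely -- though you only use the direction that is correct, so this is cosmetic.
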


\begin{proof}
Under the normalization condition \( c = 3 \) (cf.~Lemma \ref{lem:reebnormalization}), we have that
\begin{align*}
a_0(\xi) = \frac{-a^2 - 6 b + 2 a b - b^2 + 9 (-1 + s^3) + 
 3 a (2 - s^2 + s^3)}{(3 + a) (-3 + a - b) (a - b + 
   3 (-1 + s)) (-b + 3 (-1 + s) + a s) (-3 - b + a (1 + s))}.
\end{align*}
We first compute the partial derivatives of \( a_0 \) with \( c = 3\), then proceed to take the Gröbner basis $G$ of their numerators with respect to the lexicographic (lex) order \( b > a\). (We remark here that if one takes the reverse lex order $a > b$, then the computation takes substantially more time.) Since \( G \) is non-empty, the Elimination Theorem \cite[Chapter 3, Theorem §1.2]{CLO07} tells us that $G$ always contains one polynomial in the variable \( a \) with solution \( a_0 \), where \(a_0\) is the first component of \( \xi_0=(a_0,b_0,3) \). The polynomial must be nonzero, for otherwise \( \xi_0 \) would not be unique. Explicitly, this is a polynomial of degree \( 17 \) of the form
\begin{align*} 
&a(a-3)(a+3)(a-3s)^3((s-1)a+3s)^3P(a),
\end{align*}
where \( P \) is a polynomial of degree \( 8\) with the following expression:
\begin{align*}
P(a) = &11664 s^2(s-1)(3s-1) + 1944 s (-4 + 27 s - 34 s^2 - 26 s^3 + 32 s^4) a\\ 
&+ 1296 (1 - 14 s + 24 s^2 + 79 s^3 - 185 s^4 + 80 s^5) a^2 \\
&+ 54 (35 - 50 s - 1433 s^2 + 4956 s^3 - 4872 s^4 + 1280 s^5) a^3 \\
    &+ 9 (-53 + 2666 s - 14205 s^2 + 24552 s^3 - 15088 s^4 + 2560 s^5) a^4 \\
&+ 3 (-829 + 8842 s - 26685 s^2 + 30096 s^3 - 12272 s^4 + 1280 s^5) a^5 \\
&+ (s-1) (1879 - 10367 s + 14528 s^2 - 4784 s^3 + 256 s^4) a^6 \\
&+ (s-1)^2 (-599 + 1448 s - 272 s^2) a^7 + 72 (s-1)^3 a^8.   
\end{align*}

Let us verify that the first coordinate of the Reeb minimizer must be a solution of \( P\). First note that \( \xi = (a,b,3) \) belongs to the Reeb cone iff \( \sprod{\xi,u_j} > 0\) (cf.~Proposition \ref{prop_volume}), i.e., \begin{align*}
b-a+3 > 0, \; b-a(1+s)+3>0, \; a-b+3(s-1) > 0, \; sa-b+3(s-1) > 0, \; a +3 > 0.   
\end{align*}
From these conditions, one can deduce that 
\begin{align*} 
-3 < a < 3 < 3s, \quad (s-1)a+3s > 0,
\end{align*}
so it remains to check that \( a = 0\) cannot be a solution. Assume the contrary. Then, after replacing \( a = 0 \) in the equation \( \del_b a_0 = 0\) and factorizing the numerator, one obtains the equation
\begin{align*}
2 (3 + b) (3 + b - 3 s) ( b^3 + 9 b^2 + b(27-18s^3) + 27(s^4 - 
   2s^3 + 1))  = 0.    
\end{align*}
Since \( a = 0\), one must have \( b + 3 > 0 \) and \( b< 3(s-1)\). \( b \) must therefore be a solution of the cubic factor. But \( b < 3(s-1)\), hence
\[ b^3 + 9 b^2 + b(27-18s^3) + 27(s^4 - 
   2s^3 + 1) < 27s^3(1-s) < 0. \]
It follows that \( a = 0 \) cannot be a solution. Thus, the first coordinate of the volume minimizer must be a solution of \( P \). One can check that \( P \) is irreducible over \( \Qbb \). There are well-known polynomial-time algorithms to
determine this; see for example \cite{LLL}. Here, we use directly the function \texttt{IrreduciblePolynomialQ} in Mathematica (which should be built upon some well-known algorithm), yielding \texttt{True} when applied to \( P \). It follows that $P$ is irreducible over $\Qbb$, hence has no solution over $\Qbb$, but since $P$ must have at least one real solution, this solution must be irrational. 
\end{proof}

\subsection{Examples}\label{egs}
We compute the Calabi--Yau Reeb fields and the unique lattice Minkowski decomposition of  several members of the Cho--Futaki--Ono family using the code from Appendix \ref{appendix_hilbert_volume}. Each example proceeds in the following order. We begin by computing the Hilbert basis, Hilbert series, and Minkowski decomposition using the Macaulay2 code from Appendix \ref{s:A1} (see Figure \ref{figure_macaulay2_cfo13}). We then obtain the volume function and the Calabi--Yau Reeb field from the Hilbert series output by means of the Mathematica code in Appendix \ref{s:A2} (see Figure \ref{figure_mathematica_cfo13}). For practical purposes, we only show the Hilbert basis (whose rows serve as
input for the Mathematica code), the volume function, and the numerical coordinates of the volume minimizer. 

\begin{figure} 
    \begin{subfigure}[b]{0.45\textwidth}
    \centering
        \resizebox{\linewidth}{!}{
                    \begin{tikzpicture}[ thick,decoration={markings,mark=at position 0.5 with {\arrow{Stealth}}}]
                \draw[step=1cm,gray,dashed] (-1.9,-2.9) grid (6.9,3.9);
                \node at (0,0) [circle, fill, inner sep = 1.5pt] {};

                \draw[postaction={decorate}] (-1,-2) node[below]{\((-1,-2)\)} to node[below right]{\(d_2\)} (0,-1) node[below right]{\((0,-1)\)};
                \draw[postaction={decorate}] (0,-1) to node[below right]{\(d_3\)} (1,3) node[below right]{\( (1,3) \)};
                \draw[postaction={decorate}](1,3) to node[above left]{\(d_5\)} (0,2) node[above left]{\( (0,2) \)};
                \draw[postaction={decorate}] (0,2) to node[above left]{\(d_4\)} (-1,-1) node[above left]{\((-1,-1)\)};
                \draw[postaction={decorate}] (-1,-1) to node[ left]{\(d_1\)} (-1,-2);
                \node at (1,0) {\huge \textbf{=}};
                \node at (2,0) [circle, fill, inner sep = 1.5pt]{};
                \draw[postaction={decorate}] (2,0) -- (3,1);
                \draw[postaction={decorate}] (3,1) -- (2,0);
                \node at (4,0) {\huge \textbf{+}};
                \node at (5,0) [circle, fill, inner sep = 1.5pt]{};
                \draw[postaction={decorate}] (5,0) -- (5,-1);
                \draw[postaction={decorate}] (5,-1) -- (6,3);
                \draw[postaction={decorate}] (6,3) -- (5,0);
                \end{tikzpicture}
        }
        \caption{\( \Pcal^3 = \Lcal^{1} + \Delta^{3} \)}   
    \end{subfigure}
    \begin{subfigure}[b]{0.45\textwidth}
        \centering
        \resizebox{\linewidth}{!}{
                            \begin{tikzpicture}[ thick,decoration={markings,mark=at position 0.5 with {\arrow{Stealth}}}]
                \draw[step=1cm,gray,dashed] (-1.9,-2.9) grid (9.9,7.9);
                                \node at (0,0) [circle, fill, inner sep = 1.5pt] {};

                \draw[postaction={decorate}] (-1,-2) -- (0,-1);
                \draw[postaction={decorate}] (0,-1) -- (1,1);
                \draw[postaction={decorate}] (1,1) -- (2,7);
                \draw[postaction={decorate}] (2,7) -- (1,6);
                \draw[postaction={decorate}] (1,6) -- (0,4);
                \draw[postaction={decorate}] (0,4) -- (-1,-1);
                \draw[postaction={decorate}] (-1,-1) -- (-1,-2);
                \node at (1,0) {\huge \textbf{=}}; 
                \node at (2,0) [circle, fill, inner sep = 1.5pt]{};
                \draw[postaction={decorate}] (2,0) -- (3,1);
                \draw[postaction={decorate}] (3,1)-- (2,0);
                \node at (4,0) {\huge \textbf{+}};
                \node at (5,0) [circle, fill, inner sep = 1.5pt]{};
                \draw[postaction={decorate}] (5,0) -- (6,2);
                \draw[postaction={decorate}] (6,2) -- (5,0);
                \node at (7,0) {\huge \textbf{+}};
                \node at (8,0) [circle, fill, inner sep = 1.5pt]{};
                \draw[postaction={decorate}] (8,0) -- (8,-1);
                \draw[postaction={decorate}] (8,-1) -- (9,5);
                \draw[postaction={decorate}] (9,5) -- (8,0);
            \end{tikzpicture}
        }
        \caption{\( \Pcal^{2,3} = \Lcal^{1} + \Lcal^{2} + \Delta^{3} \)}
    \end{subfigure}

\caption{Lattice Minkowski decompositions of the Cho--Futaki--Ono members $\Pcal^{3}$ and $\Pcal^{2,3}$. The solid circle on each polygon represents the origin in their relative vector space. In Macaulay2, the oriented edges of $\Pcal^{s}$ are labeled in the following order: \( d_1 = (0,-1), \; d_2 = (1,1), \; d_3 = (1,s+1), \; d_4 = (-1,-s), \; d_5 = (-1,-1)\); see also Figure \ref{figure_macaulay2_cfo13} for the detailed Macaulay2 computation for $\mathcal{P}^{3}$.}
\label{figure_cfo_family_decomposition}
\end{figure}
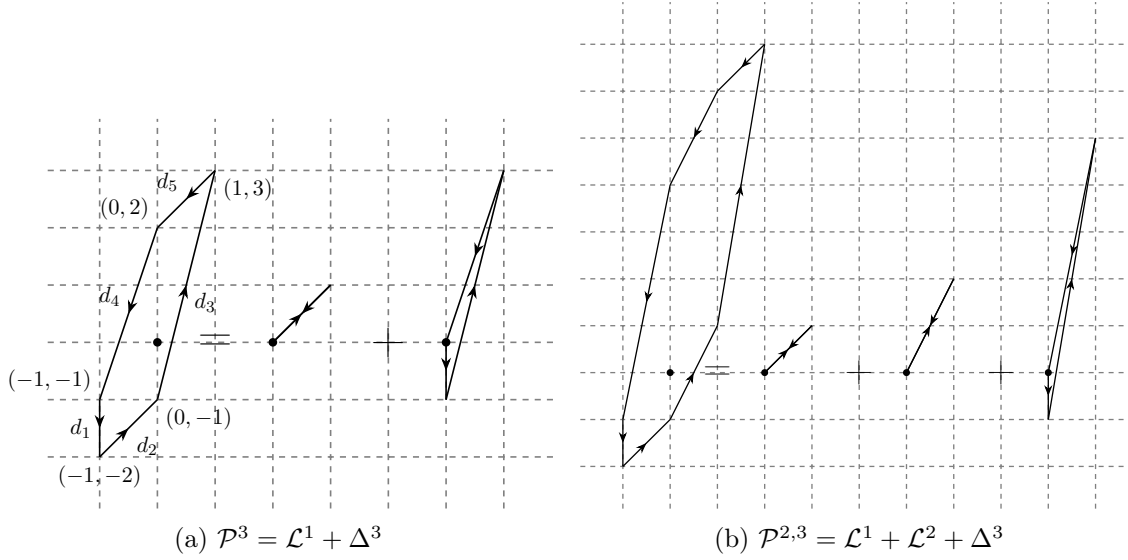

\begin{example}
Consider the member of the family defined by  \( r = 1, s = 2 \), that is,  \( k = 2r = 2 \), \( n = 2r + 3 = 5 \). The toric diagram is given by
\[ \Pcal^{2} := \textnormal{Conv} \set{\Spvek{-1;-2}, \Spvek{0;-1}, \Spvek{1;2}, \Spvek{0;1}, \Spvek{-1;-1}}. \] 
It is clear that \( \Pcal^{2} \) has the same area as $\Qcal_4$ by Pick's theorem. Hence, by \cite[Proposition 4.1]{CFO08}, there is an \( \SL_2(\Zbb) \)-action that transforms \( \Pcal^{2} \) into \( \Qcal_4 \). The volume function and the Reeb minimizer of the latter were already computed in Example \ref{ex:dp2reeb}.
The unique lattice Minkowski decomposition of $\Pcal^{2}$ is given by
\[ \Pcal^{2} = \Lcal^{1} + \Delta^{2}, \]
where \( \Lcal^{1} = \textnormal{Conv}\set{\Spvek{0;0},\Spvek{1;1}} \) and \( \Delta^{2} = \textnormal{Conv} \set{\Spvek{0;0},\Spvek{0;1},\Spvek{1;3}} \). 
\end{example}

\begin{example} \label{example_cfo_13}
Consider the Cho--Futaki--Ono member with parameters \( r = 1, s = 3 \), that is, \( k = 2r = 2, n = 2r + 3 = 5\). The toric diagram is given by
\[ \Pcal^{3} = \textnormal{Conv} \set{\Spvek{-1;-2}, \Spvek{0;-1}, \Spvek{1;3}, \Spvek{0;2}, \Spvek{-1;-1}}. \]
Using the code in Appendix \ref{s:A1}, we find that the Hilbert basis has \( 10 \) elements, forming the columns of the weight matrix
\[ W = \left(\!\begin{array}{cccccccccc}
-1&1&-4&0&2&-3&1&3&-2&-1\\
0&-1&1&0&-1&1&0&-1&1&1\\
1&2&1&1&2&1&1&2&1&1
\end{array}\!\right) = \begin{pmatrix}
    e_1 \\
    e_2 \\
    e_3
\end{pmatrix}.
\]
The normalized volume is given by 
\[a_0(\xi) = \frac{-a^2 + 2 a b - b^2 + 20 a c - 2 b c + 
 26 c^2}{(a - b - c) (4 a - b - c) (a + c) (a - b + 2 c) (3 a - b + 
   2 c)}. \] 
Minimizing this function under the constraints \( \xi = a e_1 + b e_2 + c e_3 > 0 \) and $c = 3$ yields an irrational solution with numerical expression
\[ a = -0.2119737845\dots, \quad b = 1.244664220\dots, \quad a_0(\xi_0) = 0.1787519891\dots \;. \]
The irrationality of the latter can be verified using Mathematica as demonstrated in Figure \ref{figure_mathematica_cfo13}. 
The unique lattice Minkowski decomposition is given by 
\[ \Pcal^{3} =  \Lcal^{1} + \Delta^{3}, \]
with \( \Lcal^{1} = \textnormal{Conv} \set{\Spvek{0;0},\Spvek{1;1}} \) and \( \Delta^{3}  = \textnormal{Conv} \set{\Spvek{0;0},\Spvek{0;1},\Spvek{1;4}} \). 
\end{example}

\begin{example}
For \( r = 1, s = 4 \), we have 
\begin{align*}
\Pcal^{4} = \textnormal{Conv}\set{\Spvek{-1;-2},\Spvek{0;-1},\Spvek{1;4},\Spvek{0;3},\Spvek{-1;-1}},   
\end{align*}
with the following weight matrix $W$ representing the \( 12 \) elements of the Hilbert basis: 
\begin{align*}
W = \left(\!\begin{array}{cccccccccccc}
-1&1&-5&0&1&2&-4&3&-3&4&-2&-1\\
0&-1&1&0&0&-1&1&-1&1&-1&1&1\\
1&3&1&1&1&3&1&3&1&3&1&1
\end{array}\!\right).   
\end{align*}
The normalized volume is given by
\[ a_0(\xi) = \frac{-a^2 + 2 a b - b^2 + 50 a c - 2 b c + 
 63 c^2}{(a - b - c) (5 a - b - c) (a + c) (a - b + 3 c) (4 a - b + 
   3 c)},\]
   which is minimized under the constraint $c=3$ by 
\[ a = -0.1547965799\dots, \quad b = 2.785162197\dots, \quad a_0(\xi_0) = 0.1379166501\dots. \] 
The unique lattice Minkowski decomposition is
given by \[ \Pcal^{4} =  \Lcal^{1} + \Delta^{4}, \]
with \( \Lcal^{1} = \textnormal{Conv} \set{\Spvek{0;0},\Spvek{1;1}} \) and \( \Delta^{4}  = \textnormal{Conv} \set{\Spvek{0;0},\Spvek{0;1},\Spvek{1;5}} \). 
\end{example}

\section{Theorem \ref{mtheorem_quadrilateral+segment}
and Example \ref{ex:gmsw21+segment}}\label{s:thma}

\begin{figure} [b]
    \begin{subfigure}[b]{0.45\textwidth}
        \centering
        \resizebox{\linewidth}{!}{
                            \begin{tikzpicture} [thick,decoration={markings,mark=at position 0.5 with {\arrow{Stealth}}}]  
                \draw[step=1cm,gray,dashed] (-1.9,-3.9) grid (8.9,2.9);
                                \node at (0,0) [circle, fill, inner sep = 1.5pt] {};
                \draw[postaction={decorate}]  (0,-1) -- (1,0); \draw[postaction={decorate}](1,0) -- (2,2);
                \draw[postaction={decorate}] (2,2)-- (0,1);
                \draw[postaction={decorate}] (0,1)-- (-1,0); 
                \draw[postaction={decorate}] (-1,0)-- (-1,-1); \draw[postaction={decorate}] (-1,-1) -- (0,-1);
            \node at (2.5,0) {\huge \textbf{=}};   
            \node at (4,0) [circle, fill, inner sep = 1.5pt]{};
            \draw [postaction={decorate}] (3,-1) -- (4,-1);
            \draw [postaction={decorate}] (4,-1) -- (5,1);
            \draw [postaction={decorate}](5,1) -- (3,0);
            \draw [postaction={decorate}] (3,0) -- (3,-1);
             \node at (6,0) {\huge \textbf{+}};
             \node at (7,0) [circle, fill, inner sep = 1.5pt]{};
             \draw [postaction={decorate}] (7,0) -- (8,1);
             \draw [postaction={decorate}] (8,1) -- (7,0);
            
             \node at (2.5, -2) {\huge \textbf{=}};
              \node at (3,-3) [circle, fill, inner sep = 1.5pt]{};
             \draw[postaction={decorate}] (3,-3) -- (4,-2);
             \draw[postaction={decorate}] (4,-2) -- (5,-2);
             \draw[postaction={decorate}] (5,-2) -- (3,-3);
             \node at (6,-2) {\huge \textbf{+}};
              \node at (7,-2) [circle, fill, inner sep = 1.5pt]{};
             \draw[postaction={decorate}] (7,-2) -- (7,-3);
             \draw[postaction={decorate}] (7,-3) -- (8,-1);
             \draw[postaction={decorate}] (8,-1) -- (7,-2);
            \end{tikzpicture}
        }
        \caption{$\mathcal{Q}^{2,1} = \Ycal^{2,1} + \textnormal{Conv}\set{\Spvek{0;0},\Spvek{1;1}}$}
     \end{subfigure}   
    \begin{subfigure}[b]{0.45\textwidth}
        \centering
        \resizebox{\linewidth}{!}{
                            \begin{tikzpicture} [ thick,decoration={markings,mark=at position 0.5 with {\arrow{Stealth}}}] 
                \draw[step=1cm,gray,dashed] (-1.9,-3.9) grid (8.9,2.9);
                                \node at (0,0) [circle, fill, inner sep = 1.5pt] {};
                \draw[postaction={decorate}] (0,-1) -- (1,-1);
                \draw[postaction={decorate}] (1,-1) -- (2,1);
                \draw[postaction={decorate}] (2,1) -- (1,2);
                \draw[postaction={decorate}] (1,2) -- (-1,1);
                \draw[postaction={decorate}] (-1,1) -- (-1,0);
                \draw[postaction={decorate}] (-1,0) -- (0,-1);
            \node at (2.5,0) {\huge \textbf{=}};   
            \node at (4,0) [circle, fill, inner sep = 1.5pt]{};
            \draw[postaction={decorate}] (3,-1) -- (4,-1);
            \draw[postaction={decorate}] (4,-1) -- (5,1);
            \draw[postaction={decorate}](5,1) -- (3,0);
            \draw[postaction={decorate}] (3,0) -- (3,-1);
             \node at (6,0) {\huge \textbf{+}};
             \node at (7,0) [circle, fill, inner sep = 1.5pt]{};
             \draw[postaction={decorate}] (7,0) -- (8,-1);
             \draw[postaction={decorate}] (8,-1) -- (7,0);

             \node at (2.5, -2) {\huge \textbf{=}};
              \node at (4,-2) [circle, fill, inner sep = 1.5pt]{};
             \draw[postaction={decorate}] (3,-2) -- (4,-3);
             \draw[postaction={decorate}] (4,-3) -- (5,-1);
             \draw[postaction={decorate}] (5,-1) -- (3,-2);
             \node at (6,-2) {\huge \textbf{+}};
              \node at (7,-2) [circle, fill, inner sep = 1.5pt]{};
             \draw[postaction={decorate}] (7,-2) -- (7,-3);
             \draw[postaction={decorate}] (7,-3) -- (8,-3);
             \draw[postaction={decorate}] (8,-3) -- (7,-2);
            \end{tikzpicture}
        }
        \caption{$\mathcal{Q}^{2,1} = \Ycal^{2,1} + \textnormal{Conv}\set{\Spvek{0;0},\Spvek{1;-1}}$}
    \end{subfigure}
\caption{Two ways to generate smoothable toric Calabi--Yau cones from the non-smoothable toric Calabi--Yau cone defined by $\Ycal^{2,1}$. Both correspond to summing $\Ycal^{2,1}$ with two suitably chosen lattice segments in $\Rbb^2$, yielding toric diagrams with non-trivial lattice maximal decompositions. }
\label{figure_quadrilateral+segment}
\end{figure}
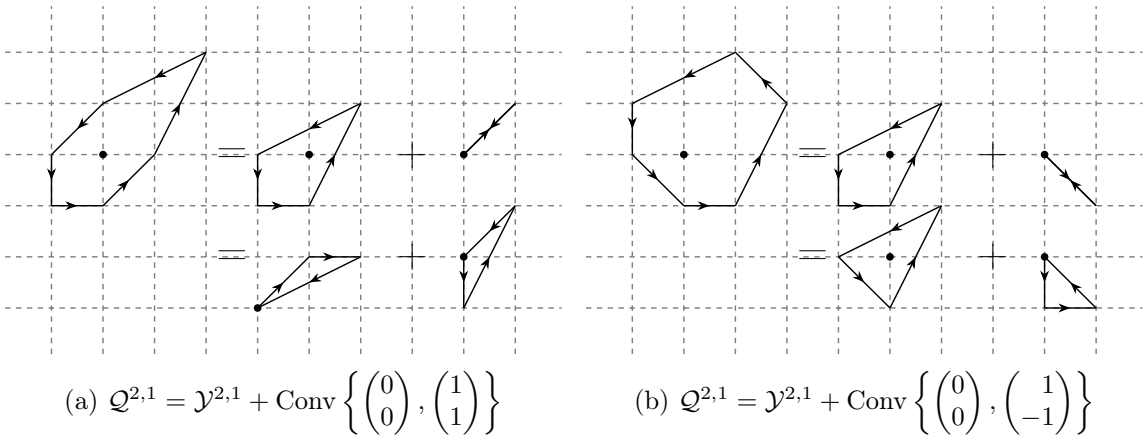

In this section, we give the proof of Theorem \ref{mtheorem_quadrilateral+segment}, after which we provide the details of Example \ref{ex:gmsw21+segment}.

\subsection{Proof of Theorem \ref{mtheorem_quadrilateral+segment}}\label{s:6.1}

Theorem \ref{mtheorem_quadrilateral+segment}\ref{mtheorem_diagram} follows from Lemma \ref{lem:sumtoricdiag}, whereas part \ref{mtheorem_smoothable} will be a consequence of the following lemma. 

\begin{lemma} \label{lem:gmsw-extrarigid}
Let $\Lcal$ be a lattice segment such that $\Ycal^{p,q} + \Lcal$ is a toric diagram of an isolated Gorenstein singularity. Then $\Ycal^{p,q} + \Lcal$ has a lattice maximal Minkowski decomposition if and only if  either $q= 1$ or $(p-q)$ is odd.   
\end{lemma}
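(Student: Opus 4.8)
The plan is to derive the result from Lemma~\ref{cor:sumtoricdiag}, reading the statement existentially: a lattice segment $\Lcal$ with $\Ycal^{p,q}+\Lcal$ a toric diagram admitting a lattice maximal Minkowski decomposition exists if and only if $q=1$ or $(p-q)$ is odd. First I would record the oriented edges of $\Ycal^{p,q}$ from \eqref{eq:gmsw}, namely $d_1=(1,0)$, $d_2=(p-1,p)$, $d_3=(-q-1,-q)$, $d_4=(-p+q+1,-p+q)$, and verify via short determinant computations that no two of them are parallel. This shows $\Ycal^{p,q}$ is not a parallelogram (consistent with Lemma~\ref{lem:practical-crit}), so the hypotheses of Lemma~\ref{cor:sumtoricdiag} are met.

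By Lemma~\ref{cor:sumtoricdiag}, $\Ycal^{p,q}+\Lcal$ has a maximal lattice decomposition---necessarily into two triangles---exactly when the suitably oriented edge $d$ of $\Lcal$ is one of the three pairwise edge-sums. Computing these gives $d_1+d_2=(p,p)$, $-(d_1+d_3)=(q,q)$, and $d_1+d_4=(-p+q+2,-p+q)$. The decisive additional constraint is that, for $\Ycal^{p,q}+\Lcal$ to be a toric diagram at all, $\Lcal$ must contain no interior lattice point by Lemma~\ref{lem:sumtoricdiag}, i.e.\ $d$ must be primitive. So the core of the argument is a primitivity test on the three candidates: $(p,p)$ has $\gcd=p\ge 2$ and is never primitive; $(q,q)$ is primitive if and only if $q=1$; and writing $m:=p-q$ so that $(-p+q+2,-p+q)=(2-m,-m)$, one computes $\gcd(2-m,-m)=\gcd(2,m)$, which equals $1$ if and only if $m=p-q$ is odd. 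This settles both implications simultaneously: if such a decomposition exists, the edge $d$ is a primitive member of the list, forcing $q=1$ or $p-q$ odd; conversely, in those cases one exhibits $\Lcal=\textnormal{Conv}\{0,(1,1)\}$ or $\Lcal=\textnormal{Conv}\{0,(-p+q+2,-p+q)\}$ respectively.

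To complete the converse I would verify that these explicit segments are not parallel to any $d_i$ (another handful of determinant checks), so that Lemma~\ref{lem:sumtoricdiag} guarantees $\Ycal^{p,q}+\Lcal$ is genuinely a toric diagram and Lemma~\ref{cor:sumtoricdiag} then produces the triangle--triangle decomposition. I expect the only subtle point to be the $\gcd$ computation for the third edge: recognizing that $\gcd(2-m,-m)$ collapses to $\gcd(2,m)$ is precisely what turns the combinatorial condition into the parity statement that $(p-q)$ is odd, while the exclusion of $(p,p)$ and the emergence of the case $q=1$ are immediate once primitivity is imposed. The one piece of bookkeeping to handle carefully is the quantification over $\Lcal$, but Lemma~\ref{cor:sumtoricdiag} already enumerates all possible decompositions, so no decomposition is missed.
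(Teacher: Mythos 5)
Your proposal is correct and follows essentially the same route as the paper: reduce to the three candidate edge-sums via Lemma~\ref{cor:sumtoricdiag}, then use the no-interior-lattice-point condition from Lemma~\ref{lem:sumtoricdiag} to discard $(p,p)$, force $q=1$ for $(q,q)$, and turn primitivity of $(-p+q+2,-p+q)$ into the parity condition on $p-q$ (your $\gcd(2-m,-m)=\gcd(2,m)$ is the paper's observation that the two coordinates share parity). Your extra care with the existential reading of the statement and the non-parallelism checks for the converse only makes explicit what the paper leaves implicit.
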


\begin{proof}
Recall from Proposition \ref{prop:gmsw_trivial} that for a given pair $(p, q)$, the oriented edges of \( \Ycal^{p,q} \) are given by
\[ d_1 = (1,0), \quad d_2 = (p-1,p), \quad d_3 = (-q-1,-q), \quad d_4 = (-p+q+1, -p+q). \] 
Let $d$ denote the oriented edge built from $\Lcal$. If $\Ycal^{p,q} + \Lcal$ has lattice maximal decompositions, then $d$ must satisfy one of the conditions \ref{12}--\ref{14} of Lemma \ref{cor:sumtoricdiag}. Clearly, we have that
\begin{enumerate}
    \item $d_1 + d_2 = d$ if and only if $d=(p,p)$;
    \item $d_1 + d_3 = -d$ if and only if $d = (q,q)$; 
    \item $d_1 + d_4 = d$ if and only if $d = (-p+q+2,-p+q)$.
\end{enumerate}
Since $\Ycal^{p,q} + \Lcal$ is also a toric diagram of an isolated Gorenstein singularity, $\Lcal$ has no interior lattice points (by Lemma \ref{lem:sumtoricdiag}). This excludes the cases $d = (p,p)$ (because $p \geq 2$) and $d = (q,q)$ with $q \geq 2$, leaving us with the only possibilities that either $q = 1$, or $(-p+q+2)$ and $(-p+q)$ are coprime. Clearly, $(-p+q+2)$ and $(-p+q)$ have the same parity, and so are coprime if and only if $(p-q)$ is odd. 
\end{proof}

To conclude the proof of part \ref{mtheorem_smoothable}, we remark that if $\mathcal{Q}^{p,q}$ has a lattice maximal decomposition, then $\mathcal{Q}^{p,q}$ can only split into two lattice triangles (cf.~Lemma \ref{cor:sumtoricdiag}). It follows from Theorem \ref{theorem_deformation_classification} that the reduced versal base space of $C_0^{p,q}$ is affine-isomorphic to $\Cbb$. The versal family of $C_0^{p,q}$ restricted to $\Cbb$ is then a one-parameter deformation of $C_0^{p,q}$ with generally smooth fiber by virtue of Lemma \ref{lemma_generally_smooth}. Moreover, every smooth fiber is an affine smoothing of $C_0^{p,q}$ and admits in every Kähler class a family of AC Calabi--Yau metrics with asymptotic cone $C_0^{p,q}$ by \cite[Theorem 4.3]{CH24}. This completes the proof of part \ref{mtheorem_smoothable}. 

To obtain the vertices $(a_0,\dots,a_5)$ of $\mathcal{Q}^{p,q}$, we walk along the edges of $\Ycal^{p,q}$ and $\set{d,-d}$ in a suitable order, starting from the point $a_0 =(0,0)$. For example, after adding the segment $\Lcal = \textnormal{Conv}\set{\Spvek{0;0},\Spvek{1;1}}$ to $\Ycal^{p,1}$, the resulting polygon is given by
\begin{equation*}
\mathcal{Q}^{p,q} = \textnormal{Conv} \set{\Spvek{0;0},\Spvek{1;0},\Spvek{2;1},\Spvek{p+1;p+1},\Spvek{p-1;p},\Spvek{p-2;p-1}}.
\end{equation*}
The first vertex is prescribed by $a_0 = (0,0)$, and the subsequent ones are obtained by adding successively the oriented edges: $a_1 = a_0 + d_1 = (1,0)$, $a_2 = a_1 + d = (2,1)$, $a_3 = a_2 + d_2 = (2,1) + (p-1,p) = (p+1,p+1),$ and so on. 
Likewise, adding $\Lcal = \textnormal{Conv}\set{\Spvek{0;0},\Spvek{-p+q+2;-p+q}}$ to $\Ycal^{p,q}$ yields
\begin{equation*}
\mathcal{Q}^{p,q} = \textnormal{Conv} \set{\Spvek{0;0},\Spvek{-p+q+2;-p+q},\Spvek{-p+q+3;-p+q},\Spvek{q+2;q},\Spvek{p;p},\Spvek{p-q-1;p-q}}.
\end{equation*}
The polygons $\mathcal{Q}^{p,q}$ in both cases are represented in Figure \ref{figure_quadrilateral+segment} for $(p,q) = (2,1)$. 

\subsection{Example \ref{ex:gmsw21+segment}}\label{s:6.2}
We now justify Example \ref{ex:gmsw21+segment}. Up to translation, the sums 
\[ \Ycal^{2,1} +  \textnormal{Conv}\set{\Spvek{0;0},\Spvek{1;1}}, \qquad\textnormal{Conv}\set{\Spvek{0;0},\Spvek{1;-1}}\] are given by
\begin{equation*}
\mathcal{Q}^{2,1} = \textnormal{Conv} \set{\Spvek{0;-1},\Spvek{1;0},\Spvek{2;2},\Spvek{0;1},\Spvek{-1;0},\Spvek{-1;-1}}
\end{equation*}
and
\begin{equation*}
\mathcal{Q}^{2,1} = \textnormal{Conv} \set{\Spvek{0;-1},\Spvek{1;-1},\Spvek{2;1},\Spvek{1;2},\Spvek{-1;1},\Spvek{-1;0}},
\end{equation*}
respectively; see Figure \ref{figure_quadrilateral+segment}. Note that they have the same number of vertices, but not the same number of interior lattice points. This means that they define non-$T_{\C}$-equivariantly biholomorphic toric Calabi--Yau cones. 

We next compute the Hilbert bases and toric ideals of both cones using the Macaulay2 code found in Appendix \ref{s:A1}. The Hilbert bases are represented by the columns of the matrices
\begin{equation*}
 W = \begin{pmatrix}
 -2 & -1 & -1 &0 &0 &0 & 1 & 1&1 \\
 1 & 0 & 1 & -1 & 0 & 1 & -2 & -1 & 0 \\
 2 & 2 & 1 & 2 & 1 & 1 & 2 & 1 & 1 
 \end{pmatrix}
\end{equation*}
and 
\begin{equation*}
W =  \begin{pmatrix}
-2 & -1 & -1 &-1 & 0 & 0 & 0 & 1 & 1 & 1 & 1 \\
1 & -1 & 0 & 1 & -1 & 0 & 1 & -2 & -1 & 0  & 1\\
3 & 3 & 2 & 2 & 2 & 1 & 1 & 3 & 2 & 1 & 1 
 \end{pmatrix},
 \end{equation*}
respectively. The corresponding toric cones are then the intersections of the 
following quadrics and cubics: 
\begin{align*}
&z_{5}^{2}-z_{3}z_{8},\,\,z_{6}z_{8}-z_{5}z_{9},\,\,z_{5}z_{6}-z_{3}z_{9},\,\,z_{5}z_{7}-z_{4}z_{8},\,\,z_{3}z_{7}-z_{2}z_{8}, \\&z_{4}z_{5}-z_{2}z_{8},\,\,z_{2}z_{5}-z_{1}z_{8},\,\,z_{3}z_{4}-z_{1}z_{8},\,\,z_{2}z_{3}-z_{1}z_{5},\,\,z_{5}z_{8}^{2}-z_{7}z_{9},\\&z_{3}z_{8}^{2}-z_{4}z_{9},\,\,z_{3}z_{5}z_{8}-z_{2}z_{9},z_{3}^{2}z_{8}-z_{1}z_{9},\,\,z_{6}z_{7}-z_{4}z_{9},\,\,z_{4}z_{6}-z_{2}z_{9},\\
&z_{2}z_{6}-z_{1}z_{9},\,\,z_{3}^{2}z_{5}-z_{1}z_{6},\,\,z_{4}^{2}-z_{2}z_{7},\,\,z_{2}z_{4}-z_{1}z_{7},\,\,z_{2}^{2}-z_{1}z_{4},
\end{align*}
and
\begin{align*}
&z_{7}z_{10}-z_{6}z_{11},\,\,z_{6}z_{9}-z_{5}z_{10},\,\,z_{5}z_{7}-z_{3}z_{10},\,\,z_{6}^{3}-z_{3}z_{10},\,\,z_{4}z_{6}-z_{3}z_{7},\,\,z_{6}^{2}z_{10}-z_{5}z_{11},\,\,z_{4}z_{10}-z_{3}z_{11}, \\
&\,\,z_{7}z_{9}-z_{5}z_{11},\,\,z_{6}^{2}z_{7}-z_{3}z_{11},\,\,z_{3}z_{5}-z_{2}z_{6},\,\,z_{6}z_{10}^{2}-z_{9}z_{11},\,\,z_{3}z_{9}-z_{2}z_{10},\,\,z_{6}z_{8}-z_{5}z_{9},\,\,z_{6}z_{7}^{2}-z_{4}z_{11},\\
&\,\,z_{5}z_{6}^{2}-z_{2}z_{10}, \,\,z_{3}z_{6}^{2}-z_{2}z_{7},\,\,z_{4}z_{5}-z_{2}z_{7},\,\,z_{3}z_{4}-z_{1}z_{6},\,\,z_{3}z_{6}z_{10}-z_{2}z_{11}, \,\,z_{9}^{2}-z_{8}z_{10},\,\,z_{4}z_{9}-z_{2}z_{11},\\
&\,\,z_{7}z_{8}-z_{5}z_{6}z_{10},\,\,z_{3}z_{6}z_{7}-z_{1}z_{10},\,\,z_{4}^{2}-z_{1}z_{7},\,\,z_{5}z_{10}^{2}-z_{8}z_{11},\,\,z_{3}z_{8}-z_{2}z_{9},\,\,z_{3}z_{7}^{2}-z_{1}z_{11},\,\,z_{5}^{2}z_{6}-z_{2}z_{9}, \\
&\,\,z_{1}z_{5}-z_{3}^{2}z_{6},\,\,z_{2}z_{4}-z_{3}^{2}z_{6},\,\,z_{1}z_{9}-z_{3}^{2}z_{10},\,\,z_{4}z_{8}-z_{2}z_{6}z_{10},\,\,z_{5}^{3}-z_{2}z_{8},\,\,z_{1}z_{2}-z_{3}^{3},\,\,z_{1}z_{8}-z_{2}z_{3}z_{10},
\end{align*}
respectively. With the Hilbert series output from the Macaulay2 code, we can now compute the corresponding volume functions by running the Mathematica code in Appendix \ref{s:A2}. The results are, respectively, as follows:
\begin{equation*}
a_0(\xi) = \frac{-2 a^3 + 2 a^2 b + 2 a b^2 - 2 b^3 - 9 a^2 c + 16 a b c - 9 b^2 c + 
 4 a c^2 + 4 b c^2 + 
 19 c^3}{(2 a - b - 2 c) (a - b - c) (a + c) (a - b + c) (b + 
   c) (a - 2 b + 2 c)}   
\end{equation*}
and 
\begin{equation*}
a_0(\xi) = \frac{-2 (a^3 + 3 a^2 b + 3 a b^2 + b^3 + 5 a^2 c - 8 a b c + 5 b^2 c - 
    30 a c^2 - 30 b c^2 - 54 c^3)}{(2 a - b - 3 c) (a + b - 3 c) (a +
     c) (b + c) (a + b + c) (a - 2 b + 3 c)}    .
\end{equation*}
The output of the code in Appendix \ref{s:A2} also shows that the volume functions are minimized under the constraint $c = 3$ by irrational Reeb fields with numerical coordinates
\begin{equation*}
a = 0.979 \dots, \quad b = 0.979\dots, \qquad\qquad a = 1.38\dots,\quad b = 1.38\dots,   
\end{equation*}
respectively. The irrationality of the latter can be checked using Mathematica, as demonstrated in Figure \ref{figure_mathematica_cfo13} for the Cho--Futaki--Ono member $\Pcal^{3}$.

\newpage
\appendix
\section{Macaulay2 and Mathematica codes} \label{appendix_hilbert_volume}

\subsection{Getting Started with the Macaulay2Web Interface}\label{s:started}
A freely available Macaulay2 interface can be accessed via
\begin{center}
\href{https://www.unimelb-macaulay2.cloud.edu.au/#editor}{\textnormal{https://www.unimelb-macaulay2.cloud.edu.au/\#editor}}.
\end{center}
A detailed tutorial can be found at
\begin{center}
\href{https://www.unimelb-macaulay2.cloud.edu.au/#tutorial}{\textnormal{https://www.unimelb-macaulay2.cloud.edu.au/\#tutorial}}.
\end{center}
For convenience, we only provide an outline of the relevant sections. The web interface above features two half-screens separated by a movable blue line, which can be dragged with the cursor to expand or shrink either side.
\begin{itemize} 

\item 
On the right-hand side is the \emph{Terminal} tab, where we typically enter short Macaulay2 command sequences--such as defining an object or testing a function on an example--and press ``Enter'' to execute them. Each input line begins with \verb|i| followed by a number, which tracks the input sequence. Correspondingly, each output line begins with \verb|o| followed by its output number. For example, if the first command entered is:

\noindent\rule{8cm}{0.4pt}
\begin{verbatim}
i1: M = matrix{{-1,0},{0,1}}
\end{verbatim}
\noindent\rule{8cm}{0.4pt}

\noindent then after pressing ``Enter'', the \emph{Terminal} tab will display:

\noindent\rule{8cm}{0.4pt}
\begin{lstlisting}
o1 = $\begin{pmatrix}
-1 & 0 \\
0 & 1 
\end{pmatrix}$
o1 : Matrix $\Zbb^2 \longleftarrow \Zbb^2$ 
\end{lstlisting}
\noindent\rule{8cm}{0.4pt}

\noindent \item The left-hand side displays the \emph{Editor} tab, where we write longer pieces of code--such as defining new Macaulay2 functions, as we will do below. The four buttons at the top right of the \emph{Editor} tab execute specific commands on the code. Their functions are, from left to right:
\begin{itemize}
    \item Run all editor code. 
    \item Run selected code, or current line. 
    \item Load a file to the editor. 
    \item Save the editor file locally. 
\end{itemize}
The ``Run all editor code'' button is highlighted in Figure \ref{figure_macaulay2_cfo13}. 
\end{itemize}

\subsection{Computing the Hilbert series and Minkowski decompositions}\label{s:A1}
For a given three-dimensional toric Calabi--Yau cone, the associated polygonal toric diagram $\Pcal \subset \Rbb^2$ contains essentially all the relevant information. Thus, a natural input of the code is the toric diagram represented by a $(2 \times N)$-matrix, the columns of which are the $N$ lattice vertices of $\Pcal$. The Macaulay2 output will be the multivariate Hilbert series of the toric cone associated to $\Pcal$ and the Minkowski summands of $\Pcal$. 

On one hand, the code to compute the Hilbert series, given the polytope $\Pcal$, can be written solely with the Quasidegrees package. On the other hand, the Minkowski  decompositions of $\Pcal$ can be read using only the method function \verb|minkSummandCone|, available after loading the Polyhedra package. Since we want a code as versatile as possible, we will define a novel two-in-one function in Macaulay2 that inputs a $(2 \times N)$-matrix and outputs both the Hilbert series and Minkowski decompositions of $\Pcal$. This can be carried out in the following two steps (see the \emph{Editor} tab on the left-hand side of Figure \ref{figure_macaulay2_cfo13}): 
\begin{itemize}
\item Copy and paste the lines of code below (except the texts in gray following a ``\#'') into the \emph{Editor} tab (the left-hand side of the web interface).\\
\\

\noindent\rule{8cm}{0.4pt}
    \begin{Verbatim}[commandchars=\\\{\}]
    loadPackage "Polyhedra"
    loadPackage "Quasidegrees" 
    \textcolor{gray}{#Hilbert basis of a Gorenstein cone}
    \textcolor{gray}{#M is the (2 x N) matrix whose columns are the N lattice vertices of P}
    \textcolor{gray}{#THMS is abbreviation for ``\textbf{t}oric \textbf{H}ilbert series and \textbf{Mink}owski summands''}
    THMS = method()
    THMS := M -> 
    (
    P := convexHull M;
    m := numColumns M;
    ones := matrix table(1, m, (i,j) -> 1);
    MC := M || ones;
    C := posHull MC;
    \textcolor{gray}{#Cv is the moment cone}
    Cv := dualCone C;
    HB := hilbertBasis Cv;
    \textcolor{gray}{#grading matrix whose rows generate the Reeb cone}
    W := matrix \{HB\};
    N := #HB;
    
    \textcolor{gray}{#Hilbert series}
    R := toGradedRing(W, QQ[z_1..z_N]);
    I := toricIdeal(W,R);
    HWI := hilbertSeries(I, Reduce => true);

    \textcolor{gray}{#Minkowski decomposition}
    (CP, LP, MP) := minkSummandCone P;
    apply(values LP, vertices);
    
    \textcolor{gray}{#Output}
    << I << endl
    << W << endl
    << HWI << endl
    << texMath I << endl
    << texMath HWI << endl
    << rays CP << endl
    << apply(values LP, vertices) << endl
    << MP << endl
    )
    \end{Verbatim}
\noindent\rule{8cm}{0.4pt}
   
\noindent This defines a new function in Macaulay2 named ``THMS'', which takes as input a $(2 \times N)$-matrix and returns the relevant information.
\item 
To run the code, click on ``Run all editor code''--the leftmost button in the top-right corner of the \emph{Editor} tab (see Figure \ref{figure_macaulay2_cfo13}, where it is highlighted). Once executed, the function THMS will be defined and ready for use.
\end{itemize}
Now that the new function serving our purpose has been defined, we can apply it in the following way.
\begin{itemize}
\item Form a matrix $M$, the columns of which are the vertex coordinates of $\Pcal \subset \Rbb^2$.
\item Enter this matrix into an input line in the \emph{Terminal} tab (on the right-hand side of the web interface) by typing the following command, followed by ``Enter'': 

\noindent\rule{8cm}{0.4pt}
\begin{Verbatim}
M = matrix{{first row entries}, {second row entries}} 
\end{Verbatim}
\noindent\rule{8cm}{0.4pt}

\noindent Here, the row entries are separated by commas (see the first bullet point of Appendix \ref{s:started}).

\item Type ``THMS $M$'' into the \emph{Terminal} tab (right-hand side of the web interface), then press ``Enter''.
\end{itemize}

Again, we refer the reader to Figure \ref{figure_macaulay2_cfo13} for a code test on the Cho--Futaki--Ono toric diagram $\Pcal^{3}$ shown in Figure \ref{figure_cfo_family}. To be precise about the output:
\begin{itemize}
\item 

The first three lines of the output display the homogeneous toric ideal \( I \), the Hilbert basis matrix \( W \), and the \( W \)-graded Hilbert series \( H_{W,I} \), respectively. Recall that all linear relations among elements of the Hilbert basis are encoded in the toric ideal $I$. Specifically, \( I \) contains a binomial \( Z_i^a Z_j^b - Z_k^c Z_l^d \) if and only if \( a\alpha_i + b\alpha_j = c\alpha_k + d\alpha_l \), where \( \alpha_i \) denotes the \( i \)-th column of \( W \). The fourth and fifth lines are the output for \( I \) and \( H_{W,I} \) as \TeX-formatted strings.
\item The sixth line \( CP \) denotes the Minkowski summand cone of $\Pcal$, with \verb|rays CP| displaying the minimal generators of $CP$ in columns. 
\item The seventh line displaying the hash table \( LP \) encodes all possible Minkowski summands of \( \mathcal{P} \). Each summand \( \mathcal{P}_i \) is represented by a matrix whose columns correspond to the vertices of \( \mathcal{P}_i \). In fact, \( LP \) is in bijection with the minimal set of generators of \( C(\mathcal{P}) \) via the correspondence described in Lemma \ref{lemma_minkowski_cone}; see Example \ref{ex:dp1mink} and Remark \ref{rmk:toricdiagramminkdec} for how to recover a summand from a given generator.
\item The final line displaying the columns of the matrix \( MP \) represent all possible maximal decompositions of \( \Pcal \). Specifically, let $LP = \set{\Pcal_1, \dots, \Pcal_K}$. Then, given a column $C_j =(C_{1j}, \dots, C_{Kj})^T$ of $MP$, the corresponding decomposition is $\Pcal = \sum_{i=1}^K C_{ij} \Pcal_i$. 
\end{itemize}

\begin{remark} \label{rmk:orientation}
The reader should be aware that the default Macaulay2 labeling and orientation of the edges of 
$\Pcal$ may differ from ours. However, this only affects the presentation; the Minkowski summands remain the same up to affine transformation. Consequently, the essential information (i.e., whether the polygon is lattice-decomposable, the number of summands in a lattice maximal decomposition, etc.) remains unchanged.
\end{remark}

\subsection{Computing the index character and volume minimizer}\label{s:A2} 
The index character and volume minimizer can be computed using Mathematica. The code we present here works for Mathematica 13.0. An issue likely encountered here is how one can effectively translate the output of the Hilbert series in Macaulay2 (which usually spreads out) into Mathematica syntax. To address this, we first use the \verb|texMath| function in Macaulay2 to display the output in \TeX-form, then use the method function \verb|ToExpression| in Mathematica to convert the \TeX-form output into Mathematica syntax. 

To process the above data, first run the relevant code in Macaulay2, then copy the \TeX-formatted output of \verb|HWI| from the Macaulay2 interface 
(line $5$ of the output)
into the argument of Mathematica’s \verb|ToExpression| function. Below are the corresponding commands for Mathematica, along with a more detailed tutorial (see Figure \ref{figure_mathematica_cfo13} for an example). Note that the hotkey to execute a Mathematica command is ``Shift+Enter'' rather than ``Enter'' which, when pressed alone on any line will create a new input line. The reader should also be aware that the cursor must remain within the input line when executing a command; otherwise, Mathematica will treat the line as an input line.

\begin{itemize}
\item Copy the \TeX-form output of \verb|HWI| by first double-clicking on the fifth output line in Macaulay2 (to select the line as a whole), right-clicking the selected line, and then selecting ``Copy''.
\item Next, open the Mathematica interface and type the following in the first input line:
\begin{verbatim}
ToExpression["texMath HWI", TeXForm]    
\end{verbatim}
Now paste the line copied from the Macaulay2 output into \verb|texMath HWI| and press ``Shift + Enter'' to execute the command in Mathematica. 
You will be prompted that ``Your text contains quotes or backslashes. Do you want to escape these so they appear verbatim in the string?''. Click on ``Yes''. The output will then be the toric Hilbert series in Mathematica syntax. 
\item The Hilbert series output line ends with a bracket (farthest to the right). To begin a new input line, click on the line below this bracket (or move the cursor below the output line until it turns into a horizontal bar, then left-click). In the new input line, type the following code (except the gray text):

\noindent\rule{8cm}{0.4pt}
\begin{center}
\begin{Verbatim}[commandchars=\\\{\}]
\textcolor{gray}{#e1, e2, e3 are the generators of the Reeb cone, i.e. rows of W}
e1 := \{\};
e2 := \{\}; 
e3 := \{\};
\textcolor{gray}{#a,b,c are coordinates of the Reeb field.}
\textcolor{gray}{#copy the output of ToExpression into "HWI".}
F[x_, y_, z_, t_]:= 
FullSimplify["HWI"] /. \{Subscript[T, 0] -> Exp[-x*t], Subscript[T, 1] -> Exp[-y*t], 
Subscript[T, 2] -> Exp[-z*t]\}
Series[F[a,b,c,t], \{t,0,-2\}]
a0 := SeriesCoefficient[Series[F[a,b,c,t], \{t,0,-2\}], -3]
e := a*e1 + b*e2 + c*e3
Minimize[\{a0, e > 0 && c == 3\}, Element[\{a,b,c\},Reals]]
Element[%, Rationals]
\end{Verbatim}
\end{center}
\noindent\rule{8cm}{0.4pt}

\noindent Next, copy the Hilbert series output in Mathematica syntax into ``HWI'', then fill the parentheses following \verb|e1, e2, e3| with the $N_0$ coordinates of the corresponding Reeb cone generators (recall that $N_0$ is the cardinality of the Hilbert basis). These latter generators are precisely the rows of the weight matrix $W$ appearing in the second line of the Macaulay2 output. For example, when $N_0 = 8$ (as in Example \ref{ex:dp2reeb}), the syntax is as follows: 

\noindent\rule{8cm}{0.4pt}
\begin{Verbatim}
e1 := {-1,-1,-1,0,0,0,1,1};
e2 := {-1,0,1,-1,0,1,-1,0};
e3 := {1,1,1,1,1,1,1,1};
\end{Verbatim}
\noindent\rule{8cm}{0.4pt}
\item Press ``Shift$+$Enter'' to run the code. 
\end{itemize}

Figure \ref{figure_mathematica_cfo13} shows the Mathematica code and output for the toric diagram $\Pcal^{3}$ shown in Figure \ref{figure_cfo_family}. Here are two comments.
\begin{itemize}
    \item The semicolon at the end of a line tells Mathematica not to display that line as output. 
    \item 
    After running the code, Mathematica will display the index character expansion up to order \(-2\), with the volume function \( a_0 \) appearing as the coefficient at order \(-3\). This is followed by the minimized volume value and the coordinates of the Reeb minimizer. The final line instructs Mathematica to check whether the Reeb minimizer is rational, providing a binary output (``True'' or ``False'').
\end{itemize}
\newpage
\begin{figure}[ht]
\begin{center}Macaulay2
\includegraphics[scale=0.34]{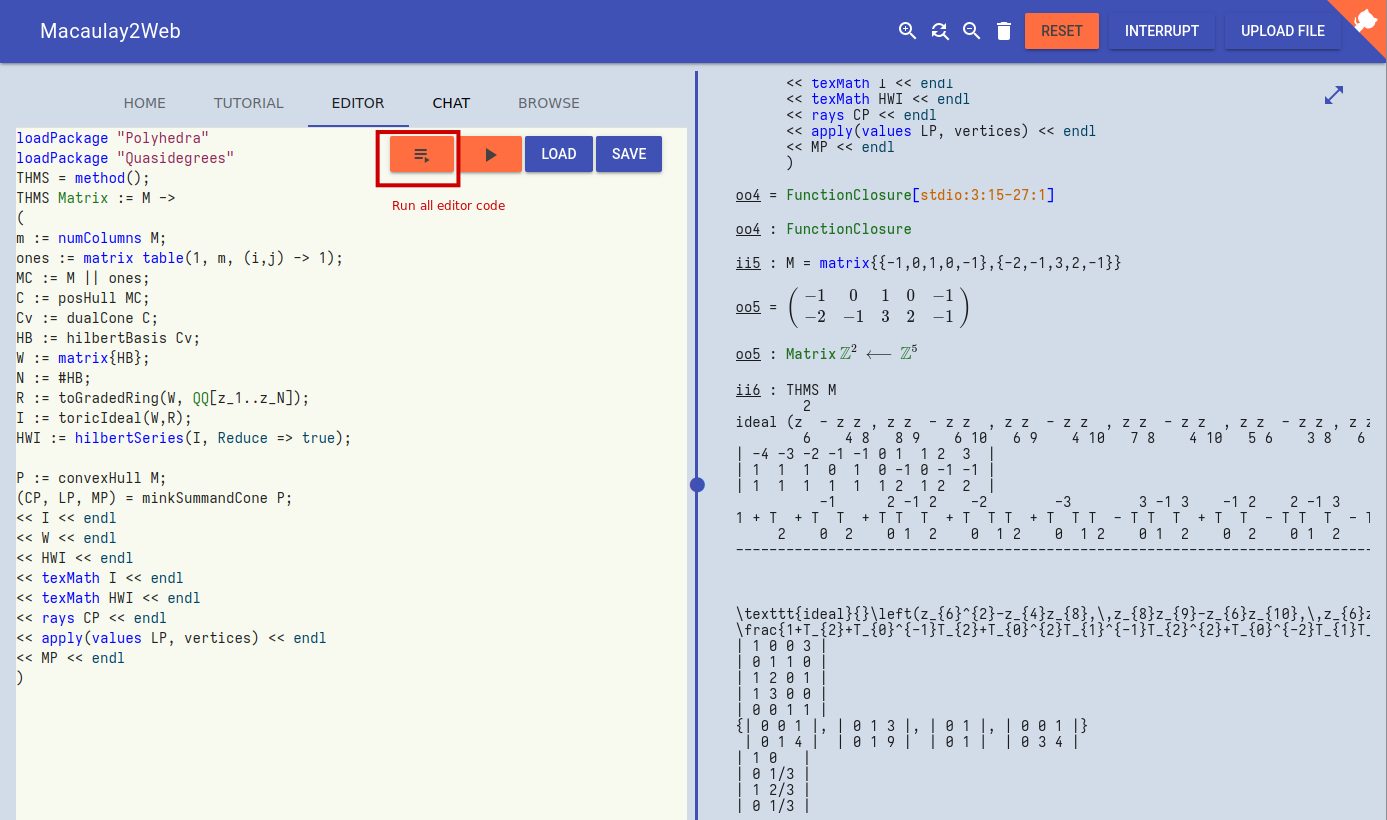}
\end{center}
\caption{Macaulay2 output for the toric diagram $\Pcal^{3}$ (cf.~Example \ref{example_cfo_13}), with oriented edges $(d_1, \dots, d_5)$ shown in Figure \ref{figure_cfo_family_decomposition}. Here, the Minkowski summand cone $C(\Pcal)$ is a three-dimensional cone in $\Rbb^5$ with minimal generators $u_1, u_2,u_3,u_4 = \Spvek{1;0;1;1;0}, \Spvek{0;1;2;3;0}, \Spvek{0;1;0;0;1}, \Spvek{3;0;1;0;1}$, respectively. The summand corresponding to $u_1$ is given by $\Pcal_1 = \Delta^{3} = \textnormal{Conv} \set{ \Spvek{0;0}, \Spvek{0;1}, \Spvek{1;4} }$, obtained by walking along $d_1, d_3, d_4$. The remaining summands $\Pcal_{2,3,4}$ are likewise obtained. The columns of the matrix $MP$ are $\Spvek{1;0;1;0}, \Spvek{0;1/3;2/3;1/3}$, corresponding to the lattice decomposition $\Pcal = \Pcal_1 + \Pcal_3 = \Delta^3 + \Lcal^1$ and a rational decomposition $\Pcal = (1/3) \Pcal_2 + (2/3) \Pcal_3 + (1/3) \Pcal_4$, respectively.}
\label{figure_macaulay2_cfo13}
\end{figure} 

\begin{figure}[ht]
\begin{center}Mathematica
\includegraphics[scale=0.38]{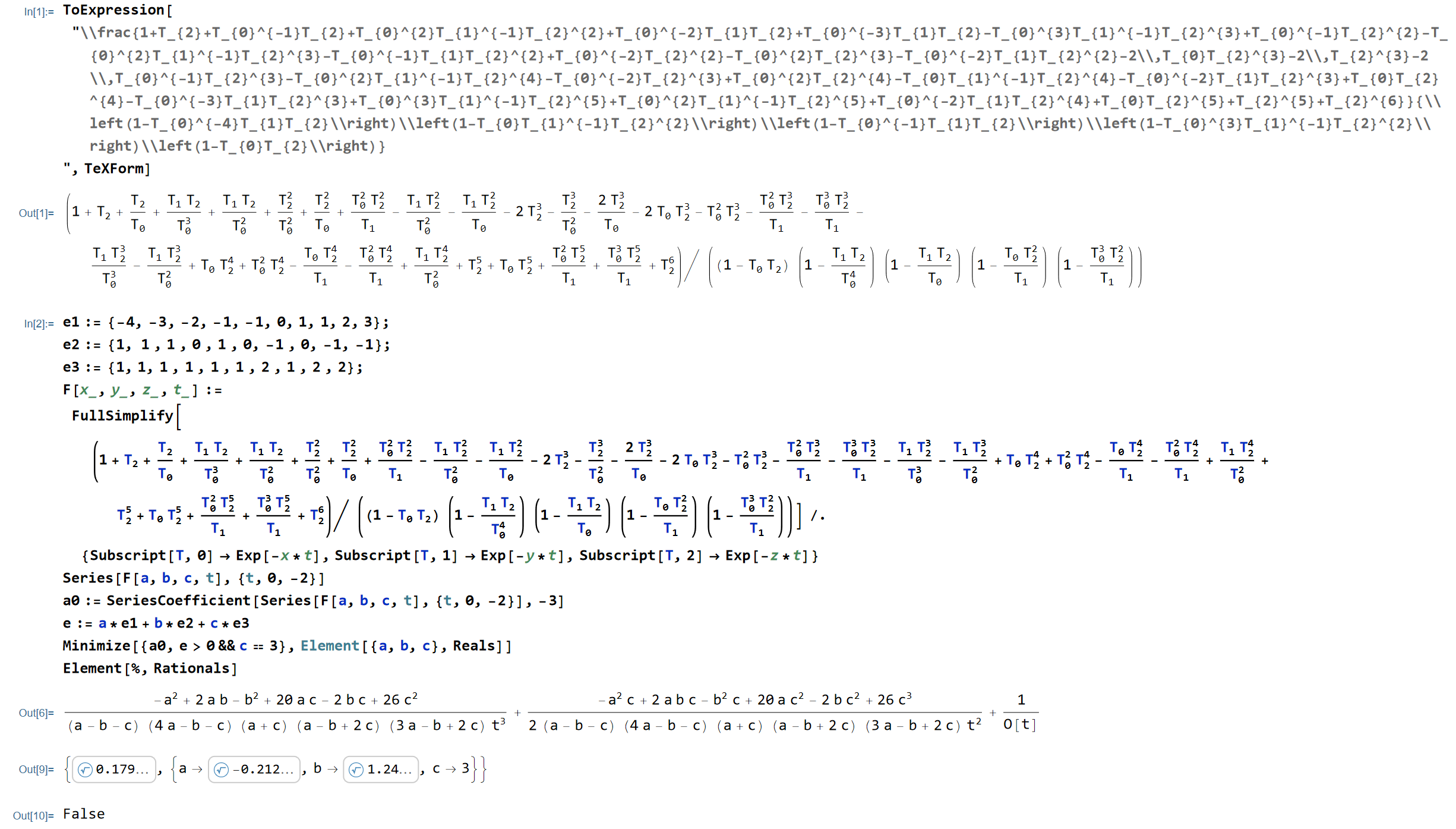}
\end{center}
\caption{Mathematica output for the toric diagram $\Pcal^{3}$ (cf.~Example \ref{example_cfo_13}).}
\label{figure_mathematica_cfo13}
\end{figure}

\newpage
\bibliography{biblio}
\bibliographystyle{amsplain}

\end{document}